\newtheorem{theorem}{Theorem}[section]
\newtheorem{cor}[theorem]{Corollary}
\newtheorem{lem}[theorem]{Lemma}
\newtheorem{prop}[theorem]{Proposition}
\newtheorem{observation}[theorem]{Observation}
\newtheorem{claim}[theorem]{Claim}
\theoremstyle{definition}
\newtheorem{definition}[theorem]{Definition}
\newtheorem{remark}[theorem]{Remark}
\newtheorem{notation}[theorem]{Notation}
\newtheorem{example}[theorem]{Example}
\newtheorem{conceptremark}[theorem]{Conceptual Remark}
\newtheorem{question}[theorem]{Question}
\newtheorem{application}[theorem]{Application}
\newtheorem*{example*}{Example}
\title[Tropical coordinates for $\mathrm{SL}_3$-webs: naturality]{Tropical Fock--Goncharov coordinates for $\mathrm{SL}_3$-webs on surfaces II: naturality}
\author[D. C. Douglas]{Daniel C. Douglas}
\author[Z. Sun]{Zhe Sun}
\address{Daniel C. Douglas\\Virginia Tech\\Department of Mathematics\\225 Stanger Street\\Blacksburg\\VA 24061 (USA)}
\email{dcdouglas@vt.edu}
\address{Zhe Sun\\University of Science and Technology of China\\School of Mathematical Sciences\\96 Jinzhai Road\\Hefei 230026 (China)}
\email{sunz@ustc.edu.cn}
\thanks{This work was partially supported by the U.S. National Science Foundation grants DMS-1107452, 1107263, 1107367 ``RNMS: GEometric structures And Representation varieties'' (the GEAR Network).  The first author was also partially supported by the U.S. National Science Foundation grants DMS-1406559 and 1711297, and the second author by the China Postdoctoral Science Foundation grant 2018T110084, the FNR AFR Bilateral grant COALAS 11802479-2, and the Huawei Young Talents Program at IHES}
\begin{document}
\begin{abstract}
In a companion article, we constructed nonnegative integer coordinates $\Phi_\mathcal{T}(\mathcal{W}_{3, \widehat{S}}) \subset \mathbb{Z}_{\geq 0}^N$ for the collection $\mathcal{W}_{3, \widehat{S}}$ of reduced $\mathrm{SL}_3$-webs on a finite-type punctured surface $\widehat{S}$, depending on an ideal triangulation $\mathcal{T}$ of $\widehat{S}$.  We show that these coordinates are natural with respect to the choice of triangulation, in the sense that if a different triangulation $\mathcal{T}^\prime$ is chosen, then the coordinate change map relating $\Phi_\mathcal{T}(\mathcal{W}_{3, \widehat{S}})$ to $\Phi_{\mathcal{T}^\prime}(\mathcal{W}_{3, \widehat{S}})$ is a tropical $\mathcal{A}$-coordinate cluster transformation.  We can therefore view the webs $\mathcal{W}_{3, \widehat{S}}$ as a concrete topological model for the Fock--Goncharov--Shen positive integer tropical points $\mathcal{A}_{\mathrm{PGL}_3, \widehat{S}}^+(\mathbb{Z}^t)$.
\end{abstract}
\maketitle
\begin{figure}[h]
\includegraphics[width=.75\textwidth]{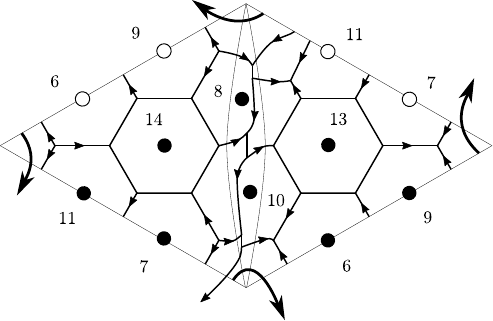}
\caption{    Positive tropical integer $\mathcal{A}$-coordinates for a reduced $\mathrm{SL}_3$-web on the once punctured torus, with respect to an ideal triangulation $\mathcal{T}$.}
\label{fig:coordinates-example-alt}
\end{figure}
	
For a finitely generated group $\Gamma$ and  a suitable Lie group $G$, a primary object of study in higher Teichm\"uller theory  \cite{wienhard2018invitation}  is the $G$-character variety 
\begin{equation*}
\mathcal{R}_{G, \Gamma} = \{  \rho : \Gamma \to G  \} /\!\!/ G
\end{equation*}	
consisting of group homomorphisms from the group $\Gamma$ to the Lie group $G$, considered up to conjugation.  Here, the double bar indicates that the quotient is being taken in the algebraic geometric sense of geometric invariant theory \cite{Mumford94}.   
		
We are interested in studying the character variety $\mathcal{R}_{\mathrm{SL}_3, \pi_1(S)}$, which we simply denote by $\mathcal{R}_{\mathrm{SL}_3, S}$, in the case where the group $\Gamma = \pi_1(S)$ is the fundamental group of a finite-type punctured surface $S$ with negative Euler characteristic, and where the Lie group $G=\mathrm{SL}_3$ is the special linear group.  

Sikora  \cite{SikoraTrans01} associated to any  $\mathrm{SL}_3$-web $W$   in the surface $S$ (Figure \ref{fig:coordinates-example-alt}) a trace regular function $\mathrm{Tr}_W \in \mathcal{O}(\mathcal{R}_{\mathrm{SL}_3, S})$ on the $\mathrm{SL}_3$-character variety.  A theorem of Sikora--Westbury \cite{SikoraAlgGeomTop07} implies that the preferred subset $\mathcal{W}_{3,S}$ of reduced $\mathrm{SL}_3$-webs indexes, by taking trace functions,  a linear basis for the algebra $\mathcal{O}(\mathcal{R}_{\mathrm{SL}_3, S})$ of regular functions on the $\mathrm{SL}_3$-character variety.
	
In a companion paper \cite{DouglasArxiv20}, we constructed explicit nonnegative integer coordinates for this $\mathrm{SL}_3$-web basis $\mathcal{W}_{3, S}$.  In particular, we identified $\mathcal{W}_{3, S}$ with the set of solutions in $\mathbb{Z}_{\geq 0}^N$ of finitely many Knutson--Tao inequalities \cite{KnutsonJAmerMathsoc99} and modulo 3 congruence conditions.  These coordinates depend on a choice of an ideal triangulation $\mathcal{T}$ of the punctured surface $S$.
	
In the present article, we prove that these web coordinates satisfy a surprising naturality property with respect to this choice of ideal triangulation $\mathcal{T}$.  Specifically, if another ideal triangulation $\mathcal{T}^\prime$ is chosen, then the induced coordinate change map takes the form of a tropicalized $\mathcal{A}$-coordinate cluster transformation \cite{FockIHES06, FominJAmerMathSoc02}.

\subsection*{Global aspects}
	
More precisely, let $\widehat{S}$ be a marked surface, namely a compact oriented surface together with a finite subset $M \subset \partial \widehat{S}$ of preferred points, called marked points, lying on some of the boundary components of $\widehat{S}$.  By a puncture we mean a boundary component of $\widehat{S}$ containing no marked points, which is thought of as shrunk down to a point.  We say the surface $\widehat{S} = S$ is non-marked if $M = \emptyset$.  We always assume that $\widehat{S}$ admits an ideal triangulation $\mathcal{T}$, namely a triangulation whose vertex set is equal to the set of punctures and marked points.  See Section \ref{ssec:markedsurfacesidealtriangulations}.   

\subsubsection*{Fock--Goncharov duality}
	
Fock--Goncharov \cite{FockIHES06} introduced a pair of mutually dual moduli spaces $\mathcal{X}_{\mathrm{PGL}_n,\widehat{S}}$ and $ \mathcal{A}_{\mathrm{SL}_n,\widehat{S}}$ (as well as for more general Lie groups).  In the case $\widehat{S} = S$ of non-marked surfaces, the spaces $\mathcal{X}_{\mathrm{PGL}_n,S}$ and $\mathcal{A}_{\mathrm{SL}_n,S}$ are variations of the $\mathrm{PGL}_n$- and $\mathrm{SL}_n$-character varieties; for $n=2$, they generalize the enhanced Teichm\"uller space \cite{Fock07} and the decorated Teichm\"uller space \cite{penner1987decorated}, respectively.  Fock--Goncharov duality is a canonical mapping
\begin{equation*}
\mathbb{I} : \mathcal{A}_{\mathrm{SL}_n, S}(\mathbb{Z}^t)
\to  \mathcal{O}(\mathcal{X}_{\mathrm{PGL}_n, S})
\end{equation*}
from the discrete set $\mathcal{A}_{\mathrm{SL}_n, S}(\mathbb{Z}^t)$ of tropical integer points of the moduli space $\mathcal{A}_{\mathrm{SL}_n,S}$ to the algebra $\mathcal{O}(\mathcal{X}_{\mathrm{PGL}_n, S})$ of regular functions on the moduli space $\mathcal{X}_{\mathrm{PGL}_n, S}$, satisfying enjoyable properties; for instance, the image of $\mathbb{I}$ should form a linear basis for the algebra of functions $\mathcal{O}(\mathcal{X}_{\mathrm{PGL}_n, S})$.  In the case $n=2$, Fock--Goncharov  gave a concrete topological construction of duality by identifying the tropical integer points with laminations on the surface.  

There are various ways to formulate  Fock--Goncharov duality.  A closely related version is 
\begin{equation*}
\mathbb{I} : \mathcal{A}_{\mathrm{PGL}_n, S}(\mathbb{Z}^t)
\to  \mathcal{O}(\mathcal{X}_{\mathrm{SL}_n, S})
\end{equation*} 
(compare \cite[Theorem 12.3 and the following Remark]{FockIHES06} for $n=2$).  There are also formulations of duality in the setting of marked surfaces $\widehat{S}$, where the moduli spaces $\mathcal{X}_{\mathrm{PGL}_n, \widehat{S}}$ and $\mathcal{X}_{\mathrm{SL}_n, \widehat{S}}$ are replaced \cite{GoncharovInvent15, GoncharovArxiv19} by slightly more general constructions $\mathcal{P}_{\mathrm{PGL}_n, \widehat{S}}$ and $\mathcal{P}_{\mathrm{SL}_n, \widehat{S}}$.  

Investigating Fock--Goncharov duality has led to many exciting developments.  By employing powerful conceptual methods (scattering diagrams, broken lines, theta functions, Donaldson--Thomas transformations),  works such as \cite{GoncharovAdvMath18, GoncharovArxiv19, GrossJAmerMathSoc18} have  established general formulations of duality.  On the other hand, explicit higher rank constructions, in the spirit of Fock--Goncharov's topological approach in the case $n=2$, are not as well understood.  

Following \cite{GoncharovInvent15} (see also \cite[Proposition 12.2]{FockIHES06}), we focus on the positive points $ \mathcal{A}_{\mathrm{PGL}_n, \widehat{S}}^+(\mathbb{Z}^t) \subset  \mathcal{A}_{\mathrm{PGL}_n, \widehat{S}}(\mathbb{Z}^t)$, defined with respect to the tropicalized Goncharov--Shen potential $P^t : \mathcal{A}_{\mathrm{PGL}_n, \widehat{S}}(\mathbb{Z}^t) \to \mathbb{Z}$ by $\mathcal{A}_{\mathrm{PGL}_n, \widehat{S}}^+(\mathbb{Z}^t) = (P^t)^{-1}(\mathbb{Z}_{\geq 0})$.  These positive tropical integer points play an important role in a variation of the previously mentioned duality, 
\begin{equation*}\tag{a}
\label{eq:duality-conjecture}
\mathbb{I} : \mathcal{A}_{\mathrm{PGL}_n, \widehat{S}}^+(\mathbb{Z}^t)
\to  \mathcal{O}(\mathcal{R}_{\mathrm{SL}_n, \widehat{S}})
\end{equation*}
(see \cite[Conjecture 10.11 and Theorem 10.12, as well as Theorems 10.14, 10.15 for $G=\mathrm{PGL}_2$]{GoncharovInvent15}).  Here, the space $\mathcal{R}_{\mathrm{SL}_n, \widehat{S}}$, introduced in \cite[Section $10.2$]{GoncharovInvent15} (they denote it  by $\mathrm{Loc}_{\mathrm{SL}_n, \widehat{S}}$), is a generalized (twisted) version of the $\mathrm{SL}_n$-character variety $\mathcal{R}_{\mathrm{SL}_n, S}$ valid for marked surfaces $\widehat{S}$.

As $\mathrm{PGL}_n$ is not simply connected, the moduli space $\mathcal{A}_{\mathrm{PGL}_n, \widehat{S}}$ does not have the standard Fock--Goncharov cluster structure, but it does have a positive structure.  So the tropical spaces $\mathcal{A}_{\mathrm{PGL}_n, \widehat{S}}(\mathbb{Z}^t)$ and $\mathcal{A}_{\mathrm{PGL}_n, \widehat{S}}^+(\mathbb{Z}^t)$ are defined; moreover, they are contained in the real tropical space $\mathcal{A}_{\mathrm{SL}_n, \widehat{S}}(\mathbb{R}^t)$, which has a tropical cluster structure.   Our goal is to construct, in the case $n=3$, a concrete topological model for the space $\mathcal{A}_{\mathrm{PGL}_3, \widehat{S}}^+(\mathbb{Z}^t)$ of positive tropical integer points, which also exhibits this tropical cluster structure.  

See  Appendix \ref{sec:preliminary-for-tropical-points} for a brief overview of the underlying Fock--Goncharov--Shen theory.

\subsubsection*{Topological indexing of linear bases}
	
One of our guiding principles is that tropical integer points should correspond to topological objects generalizing laminations \cite{Thurston97} on surfaces in the case $n=2$.  Such so-called higher laminations can be studied from many points of view, blending ideas from geometry, topology, and physics; see, for instance, \cite{akhmejanovMR4213127, FontaineCompositio13,  GoncharovInvent15, LeGeomTop16, MR4339354, XieArxiv13}.  In the present article, we focus attention on one of the topological approaches to studying higher laminations, via  webs \cite{CautisMathAnn14, KuperbergCommMathPhys96, SikoraTrans01}; see also \cite{GaiottoAnnHenriPoincare13}.  Webs are certain $n$-valent graphs-with-boundary embedded in the marked surface $\widehat{S}$ (considered up to equivalence in $\widehat{S} - M$).  Webs also appear naturally in the context of quantizations of character varieties via skein modules and algebras  \cite{jordan2021quantumdecoratedcharacterstacks, MullerQuantumTopology16, PrzytyckiBullPolishAcad91, SikoraAlgGeomTop05, Turaev89, WittenCommMathPhys89}.  

We begin by reviewing the case $n=2$.  For a marked surface $\widehat{S}$, define the set $\mathcal{L}_{2, \widehat{S}}$ of (positive bounded) $2$-laminations on $\widehat{S}$ so that  $\ell \in \mathcal{L}_{2, \widehat{S}}$ is a finite collection of mutually-non-intersecting simple loops and arcs on $\widehat{S}$  such that: first, there are no contractible loops; and, second, arcs end only on boundary components of $\widehat{S}$ containing marked points, and there are no arcs contracting to a boundary interval without marked points.  

In the case where the surface $\widehat{S} = S$ is non-marked, a 2-lamination $\ell \in \mathcal{L}_{2, S}$ corresponds to a trace function $\mathrm{Tr}_\ell \in \mathcal{O}(\mathcal{R}_{\mathrm{SL}_2, S})$, namely the regular function on the character variety $\mathcal{R}_{\mathrm{SL}_2, S}$ defined by sending $\rho: \pi_1(S) \to \mathrm{SL}_2$ to the product $\prod_\gamma \mathrm{Tr}(\rho(\gamma))$ of the traces along the components $\gamma$ of $\ell$.  It is well-known \cite{BullockCommentMathHelv97, PrzytyckiTopology00} that the trace functions $\mathrm{Tr}_\ell$, varying over the $2$-laminations $\ell \in \mathcal{L}_{2, S}$, form a linear basis for the algebra $\mathcal{O}(\mathcal{R}_{\mathrm{SL}_2, S})$ of regular functions on the $\mathrm{SL}_2$-character variety.  

On the opposite topological extreme, consider the case where the surface $\widehat{S} = \widehat{D}$ is a disk with $k$ marked points $m_i$ on its boundary, cyclically ordered.  For each $i$, assign a positive integer $n_i$ to the $i$-th boundary interval located between the marked points $m_i$ and $m_{i+1}$.  This determines a subset $\mathcal{L}_{2, \widehat{D}}(n_1, \dots, n_k) \subset \mathcal{L}_{2, \widehat{D}}$ consisting of the $2$-laminations $\ell$ having geometric intersection number equal to $n_i$ on the $i$-th boundary interval.  It follows from the Clebsch--Gordan theorem (see, for instance, \cite[Section $2.2$,$2.3$]{KuperbergCommMathPhys96}) that the subset $\mathcal{L}_{2, \widehat{D}}(n_1, \dots, n_k)$ of $2$-laminations indexes a linear basis for the space of $\mathrm{SL}_2$-invariant tensors $(V_{n_1}  \otimes \cdots \otimes V_{n_k})^{\mathrm{SL}_2}$, where $V_{n_i}$ is the unique $n_i$-dimensional irreducible representation of $\mathrm{SL}_2$.  

For a general marked surface $\widehat{S}$, Goncharov--Shen's moduli space $\mathcal{R}_{\mathrm{SL}_2, \widehat{S}}$ simultaneously generalizes both (a twisted version of) the character variety $\mathcal{R}_{\mathrm{SL}_2, S}$ for non-marked surfaces $\widehat{S} = S$, as well as the spaces of invariant tensors $(V_{n_1} \otimes V_{n_2} \otimes \cdots \otimes V_{n_k})^{\mathrm{SL}_2}$ for marked disks  $\widehat{S} = \widehat{D}$.   By \cite[Theorem 10.14]{GoncharovInvent15}, the set of $2$-laminations $\mathcal{L}_{2, \widehat{S}}$ canonically indexes a linear basis for the algebra of functions $\mathcal{O}(\mathcal{R}_{\mathrm{SL}_2, \widehat{S}})$ on the generalized character variety for the marked surface $\widehat{S}$, closely related to the linear bases in the specialized cases $\widehat{S} = S$ and $\widehat{S} = \widehat{D}$.  

We now turn to the case $n=3$.  In the setting of the disk $\widehat{S} = \widehat{D}$ with $k$ marked points on its boundary, the integers $n_i$ are replaced with highest weights $\lambda_i$ of irreducible $\mathrm{SL}_3$-representations $V_{\lambda_i}$, and the object of interest is the space $(V_{\lambda_1} \otimes V_{\lambda_2} \otimes \cdots \otimes V_{\lambda_k})^{\mathrm{SL}_3}$ of $\mathrm{SL}_3$-invariant tensors.  Kuperberg \cite{KuperbergCommMathPhys96} proved that the set $\mathcal{W}_{3, \widehat{D}}(\lambda_1, \dots, \lambda_k)$ of non-convex non-elliptic $3$-webs $W$ on $\widehat{D}$, matching certain fixed topological boundary conditions corresponding to the weights $\lambda_i$, indexes a linear basis for the invariant space $(V_{\lambda_1} \otimes V_{\lambda_2} \otimes \cdots \otimes V_{\lambda_k})^{\mathrm{SL}_3}$ (so can be thought of as the $\mathrm{SL}_3$-analogue of the subset $\mathcal{L}_{2, \widehat{D}}(n_1, \dots, n_k) \subset \mathcal{L}_{2, \widehat{D}}$).  

On the other hand, for non-marked surfaces $\widehat{S} = S$, Sikora \cite{SikoraTrans01} constructed,  for any $3$-web $W$ on $S$, a trace function $\mathrm{Tr}_W$ on the character variety $\mathcal{R}_{\mathrm{SL}_3, S}$, generalizing the trace functions $\mathrm{Tr}_\ell$ for $2$-laminations $\ell \in \mathcal{L}_{2, S}$ (Sikora also constructed $\mathrm{Tr}_W \in \mathcal{O}(\mathcal{R}_{\mathrm{SL}_n, S})$ for any $n$-web $W$).  A theorem of Sikora--Westbury \cite{SikoraAlgGeomTop07} implies that the subset $\mathcal{W}_{3, S}$ of non-elliptic $3$-webs $W$ indexes, by taking trace functions $\mathrm{Tr}_W$,  a linear basis for the algebra  of regular functions $\mathcal{O}(\mathcal{R}_{\mathrm{SL}_3, S})$ on the $\mathrm{SL}_3$-character variety.  

For a general marked surface $\widehat{S}$, Frohman--Sikora's work \cite{FrohmanMathZ2022} suggests that a good definition for the (positive bounded) $3$-laminations  is the set $\mathcal{W}_{3, \widehat{S}}$ of reduced $3$-webs $W$ on $\widehat{S}$, which in particular are allowed to have boundary; see Section \ref{section:wa}.  Indeed, by \cite[Proposition 4]{FrohmanMathZ2022}, this set $\mathcal{W}_{3, \widehat{S}}$ forms a linear basis  for the reduced $\mathrm{SL}_3$-skein algebra. As for non-marked surfaces $S$, where skein algebras quantize character varieties, we suspect that Frohman--Sikora's reduced $\mathrm{SL}_3$-skein algebra is a quantization of Goncharov--Shen's generalized $\mathrm{SL}_3$-character variety $\mathcal{R}_{\mathrm{SL}_3, \widehat{S}}$. In particular, we suspect that the set $\mathcal{W}_{3, \widehat{S}}$ indexes a canonical linear basis for the algebra of regular functions $\mathcal{O}(\mathcal{R}_{\mathrm{SL}_3, \widehat{S}})$, generalizing the case $n=2$ \cite[Theorem 10.14]{GoncharovInvent15}; see   \cite[Conjecture $23$]{FrohmanMathZ2022}.  

\subsubsection*{Tropical coordinates for higher laminations}  
	
Let a positive integer cone mean a subset of $\mathbb{Z}_{\geq 0}^k$ closed under addition and containing zero.  

As in \cite{FockIHES06, Fock07}, in the case $n=2$, given a choice of ideal triangulation $\mathcal{T}$, with $N_2$ edges,  of the marked surface $\widehat{S}$,  one assigns $N_2$ nonnegative integer coordinates to a given $2$-lamination $\ell \in \mathcal{L}_{2, \widehat{S}}$ by taking the geometric intersection numbers of $\ell$ with the edges of the ideal triangulation $\mathcal{T}$.   This assignment determines an injective coordinate mapping
\begin{equation*}
\Phi^{(2)}_\mathcal{T} : \mathcal{L}_{2, \widehat{S}} \hookrightarrow \mathbb{Z}_{\geq 0}^{N_2}
\end{equation*}
on the set of $2$-laminations $\mathcal{L}_{2, \widehat{S}}$.  Moreover, the image of $\Phi_\mathcal{T}^{(2)}$ is a positive integer cone in $\mathbb{Z}_{\geq 0}^{N_2}$, which is characterized as the set of solutions of finitely many inequalities and parity conditions of the form
\begin{equation*}
a+b-c \geq 0 
\text{ and }
a + b - c \in  2\mathbb{Z}
\,\,
(  a, b, c \in \mathbb{Z}_{\geq 0}  ).
\end{equation*}

Moreover, these integer coordinates are natural with respect to the choice of $\mathcal{T}$, in the sense that if a different ideal triangulation $\mathcal{T}^\prime$ is chosen, then the induced coordinate transformation is the $\mathrm{SL}_2$ tropical $\mathcal{A}$-coordinate cluster transformation \cite[Figure 8]{Fock07}.  These natural coordinates provide an identification $\mathcal{L}_{2, \widehat{S}} \cong \mathcal{A}_{\mathrm{PGL}_2, \widehat{S}}^+(\mathbb{Z}^t)$ as in \cite[Theorem 10.15]{GoncharovInvent15}.  Taken together, \cite[Theorems 10.14, 10.15]{GoncharovInvent15} constitute a compelling topological version of the duality \eqref{eq:duality-conjecture}  in the case $n=2$; see \cite[the two paragraphs after Theorem 10.15]{GoncharovInvent15}.  

Our main result generalizes these natural coordinates to the setting $n=3$.  

More precisely, given an ideal triangulation $\mathcal{T}$ of a marked surface $\widehat{S}$, put $N_3$ to be twice the number of edges (including boundary edges) of $\mathcal{T}$ plus the number of triangles of $\mathcal{T}$.  Recall the set $\mathcal{W}_{3, \widehat{S}}$ of (equivalence classes of) reduced $3$-webs on $\widehat{S}$, discussed above.  

\begin{theorem}
\label{thm:first-theorem-intro}
Given an ideal triangulation $\mathcal{T}$ of the marked surface $\widehat{S}$,  there is an injection
\begin{equation*}
\Phi_\mathcal{T} : \mathcal{W}_{3, \widehat{S}}
\hookrightarrow
\mathbb{Z}_{\geq 0}^{N_3}
\end{equation*}  
satisfying the property that the image of $\Phi_\mathcal{T}$ is a positive integer cone in  $\mathbb{Z}_{\geq 0}^{N_3}$,  which is characterized  as the set of solutions of finitely many Knutson--Tao rhombus inequalities \cite{KnutsonJAmerMathsoc99} and modulo $3$ congruence conditions of the form
\begin{equation*}
a+b-c-d \geq 0 
\text{ and }
a + b - c-d   \in  3\mathbb{Z}
\,\,
(  a, b, c, d \in \mathbb{Z}_{\geq 0}  ).
\end{equation*}

Moreover, these coordinates are natural with respect to the action of the mapping class group of the marked surface $\widehat{S}$.  More precisely, if a different ideal triangulation $\mathcal{T}^\prime$ is chosen, then the coordinate change map relating $\Phi_\mathcal{T}$ and $\Phi_{\mathcal{T}^\prime}$ is given by the $\mathrm{SL}_3$ tropical $\mathcal{A}$-coordinate cluster transformation \cite{FockIHES06, FominJAmerMathSoc02}, expressed locally as in  \eqref{eq:boundarycoords}-\eqref{equation:mu4}; see Figure {\upshape\ref{figure:flip}}.
\end{theorem}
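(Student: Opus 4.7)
The first part of the theorem --- existence of the injection $\Phi_\mathscr{T}$ and the characterization of its image as a positive integer cone cut out by Knutson-Tao rhombus inequalities and modulo $3$ congruences --- is established in the companion paper \cite{DouglasArxiv20}, so the work of this paper concentrates on the naturality statement.

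My plan is to reduce the naturality claim to a purely local verification on a quadrilateral. Since any two ideal triangulations of a marked surface $\widehat{S}$ are connected by a finite sequence of flips of interior edges, it suffices to show: if $\mathscr{T}^\prime$ is obtained from $\mathscr{T}$ by flipping a single interior edge $e$, then on the image of $\Phi_\mathscr{T}$ the coordinate change map agrees with the $\mathrm{SL}_3$ tropical $\mathcal{A}$-coordinate cluster transformation recalled in the theorem. Outside the quadrilateral $Q$ formed by the two triangles adjacent to $e$, both triangulations coincide; since $\Phi_\mathscr{T}$ is defined edge-by-edge and triangle-by-triangle, the coordinates attached to regions disjoint from $\mathrm{int}(Q)$ are manifestly unchanged. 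Only the coordinates associated with the interior of $Q$ --- namely the two numbers attached to the diagonal $e$ and the one number attached to each of the two triangles --- can change.

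The main computational step is to evaluate $\Phi_{\mathscr{T}^\prime}(W)$ directly in terms of $\Phi_\mathscr{T}(W)$ for a reduced web $W$ meeting $Q$. I would first isotope $W \cap Q$ into a canonical normal form adapted to the old triangulation $\mathscr{T}$, by applying Kuperberg's non-elliptic reduction moves inside each of the two triangles of $Q$ and pulling strands tight across $e$. In this normal form, the coordinates $\Phi_\mathscr{T}(W)$ are read off as intersection data with the five edges of $Q$ together with a Knutson-Tao hive number in each triangle. I would then re-straighten $W \cap Q$ with respect to $\mathscr{T}^\prime$ --- equivalently, perform the analogous normalization across the flipped diagonal $e^\prime$ --- and read off $\Phi_{\mathscr{T}^\prime}(W)$ in the same way. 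Comparing the two readings produces an explicit piecewise-linear formula, which I would then check, term by term, against the tropical $\mathcal{A}$-cluster transformation formulas displayed in the theorem statement.

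The hardest step will be classifying how a reduced $\mathrm{SL}_3$-web can meet the flipped edge $e$. Unlike the $n=2$ case, where a lamination crosses $e$ only as a bundle of parallel strands, a reduced $3$-web can interact with $e$ in several intrinsically different ways: as oriented strands of the two colors (incoming or outgoing), as H-shaped ladders straddling $e$, and through trivalent vertices abutting $e$. Ensuring that each local pattern transforms correctly under the flip is a delicate case analysis. To organize it, I would single out a short list of elementary local configurations whose coordinate contributions are additive under superposition, verify the cluster transformation formulas directly on these elementary pieces, and then argue that every reduced $W$ in $Q$ decomposes into such pieces without altering the coordinate data. This decomposition, combined with the piecewise-linear (tropical) nature of the cluster transformation, would yield naturality for a single flip, and hence, by the connectedness of the flip graph, for arbitrary pairs of triangulations.
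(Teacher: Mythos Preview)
Your overall reduction --- to a single flip in a quadrilateral, with boundary coordinates unchanged --- matches the paper exactly. But the final paragraph contains a genuine gap: the tropical cluster transformation $\mu_{\mathscr{T}',\mathscr{T}}$ involves $\max$, so it is \emph{not} additive. Even if you decompose $W$ into elementary pieces $W_1,\dots,W_m$ with $\Phi_\mathscr{T}(W)=\sum_i\Phi_\mathscr{T}(W_i)$ and $\Phi_{\mathscr{T}'}(W)=\sum_i\Phi_{\mathscr{T}'}(W_i)$, and you verify $\mu(\Phi_\mathscr{T}(W_i))=\Phi_{\mathscr{T}'}(W_i)$ for each piece, this does \emph{not} give $\mu(\Phi_\mathscr{T}(W))=\Phi_{\mathscr{T}'}(W)$. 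Piecewise-linearity is not linearity.

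The paper's workaround is more delicate. It observes (Lemma~\ref{lemma:add}) that $\mu$ \emph{is} additive with respect to the eight corner arcs, because for each corner arc the two arguments of every $\max$ in Equations~\eqref{equation:mu1}--\eqref{equation:mu4} coincide, collapsing $\max\{u,u\}-v$ to a linear expression. This reduces the problem to \emph{cornerless} webs $W=W_c$. For these, rather than re-straightening with respect to $\mathscr{T}'$ (which the paper explicitly calls the ``main difficulty'' and avoids), the paper exploits a hidden identity: for a cornerless web, $z_2=x_2'+x_3'=x_2+x_3$, so the new interior coordinate is determined by the unchanged boundary coordinates. One then checks algebraically that $x_2+x_3=\max(x_2+y_3,\,x_3+y_1)-y_2$ using two topological facts specific to cornerless webs in good position (the bigon in/out-strand balance, and the fact that certain pairs of corner-arc counts cannot both be nonzero). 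Rotational symmetry gives the other three equations.

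Your re-straightening approach \emph{can} be made to work --- the paper carries it out in \S\ref{ssec:examplessquare} as an alternative check, and it leads to a classification into 42 families (Proposition~\ref{lem:42families}) rather than a short list --- but you must then verify the formulas family by family, not by superposition.
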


See Theorems \ref{thm:main-theorem}, \ref{thm:second-main-theorem} and Corollary \ref{cor:second-main-theorem}.  The construction of $\Phi_\mathcal{T}$ (Theorem \ref{thm:main-theorem}) was done in \cite{DouglasArxiv20}.

This construction was motivated by earlier work of Xie \cite{XieArxiv13} and Goncharov--Shen \cite{GoncharovInvent15}.  

In particular, Goncharov--Shen used the Knutson--Tao rhombus inequalities associated to an ideal triangulation $\mathcal{T}$ of  $\widehat{S}$ to index the set  of positive $\mathcal{A}$ tropical integer points, which they showed parametrizes a linear basis for the algebra of regular functions $\mathcal{O}(\mathcal{R}_{\mathrm{SL}_3, \widehat{S}})$; see \cite[Section $3.1$ and Theorem 10.12 (stated for more general Lie groups)]{GoncharovInvent15}.  Their parametrization is not mapping class group equivariant; see the remark in \cite[page 614]{GoncharovInvent15} immediately after the aforementioned theorem.  In \cite{GoncharovArxiv19} they construct equivariant bases using the abstract machinery of \cite{GrossJAmerMathSoc18}.  Theorem \ref{thm:first-theorem-intro} provides a concrete  model  indexing the set $\mathcal{A}_{\mathrm{PGL}_3, \widehat{S}}^+(\mathbb{Z}^t)$ of positive tropical integer points, also based on the Knutson--Tao inequalities, which in addition is equivariant with respect to the action of the mapping class group.  This natural indexing $\mathcal{W}_{3, \widehat{S}} \cong \mathcal{A}_{\mathrm{PGL}_3, \widehat{S}}^+(\mathbb{Z}^t)$ provided by Theorem \ref{thm:first-theorem-intro}  generalizes the $n=2$ case \cite[Theorem $10.15$]{GoncharovInvent15}.

We think of the web coordinates of Theorem \ref{thm:first-theorem-intro} as positive tropical integer $\mathcal{A}$-coordinates.  We call the positive integer cone $\Phi_\mathcal{T}(\mathcal{W}_{3, \widehat{S}}) \subset \mathbb{Z}_{\geq 0}^{N_3}$ the $\mathrm{SL}_3$ Knutson--Tao--Goncharov--Shen (KTGS) cone  with respect to the ideal triangulation $\mathcal{T}$ of $\widehat{S}$.

These  tropical web coordinates were constructed for some simple examples, such as the triangle webs shown in Figure  \ref{figure:triangle}, in \cite{XieArxiv13}.  They also appeared implicitly in \cite[Theorem 8.22]{SunGeomFunctAnal20}, in the geometric context of eruption flows on the $\mathrm{PGL}_n(\mathbb{R})$-Hitchin component ($n=3$).  Xie \cite{XieArxiv13} checked the mapping class group equivariance, in the above sense, of these coordinates on a handful of examples.

Frohman--Sikora \cite{FrohmanMathZ2022} independently constructed nonnegative integer coordinates for the set $\mathcal{W}_{3, \widehat{S}}$ of reduced 3-webs.  Their coordinates are  related to, but different than, the coordinates of Theorem \ref{thm:first-theorem-intro}.  

As an application, Kim \cite{KimArxiv20} constructed an explicit $\mathrm{SL}_3$-version of Fock--Goncharov  duality using the tropical web coordinates of Theorem \ref{thm:first-theorem-intro}.  We expect that Kim's approach, together with the $\mathrm{SL}_3$-quantum trace map \cite{MR4790744, Douglas1, KimArxiv20}, will lead to an explicit $\mathrm{SL}_3$-version of quantum Fock--Goncharov duality \cite{FockENS09}; see \cite{AllegrettiAdvMath17} for the $n=2$ case.  

As another application, Ishibashi--Kano \cite{Ishibashi22} generalized the coordinates of Theorem \ref{thm:first-theorem-intro} to an $\mathrm{SL}_3$-version of shearing coordinates for (unbounded) 3-laminations (with pinnings).  

To end this section, we briefly recall from \cite{DouglasArxiv20} the construction of the coordinate map $\Phi_\mathcal{T}$ from Theorem \ref{thm:first-theorem-intro}; see Section \ref{section:wa}.  Given the ideal triangulation $\mathcal{T}$, form the split ideal triangulation $\widehat{\mathcal{T}}$ by replacing each edge $E$ of $\mathcal{T}$ with two parallel edges $E^\prime$ and $E^{\prime\prime}$; in other words, fatten each edge $E$ into a bigon.  One then puts a given reduced $3$-web $W \in \mathcal{W}_{3, \widehat{S}}$ into good position with respect to the split ideal triangulation $\widehat{\mathcal{T}}$.  The result is that most of the complexity of the $3$-web $W$ is pushed into the bigons (Figure \ref{figure:bigon}), whereas over each triangle there is only a single (possibly empty) honeycomb together with finitely many arcs lying on the corners (Figure \ref{figure:localtr}).  Once the 3-web $W$ is in good position, its coordinates $\Phi_\mathcal{T}(W) \in \mathbb{Z}_{\geq 0}^{N_3}$ are readily computed.  For an example in the once punctured torus, see Figure \ref{fig:coordinates-example-alt}.

As has already been partly discussed, in principle the model presented in this paper should be translatable into the language of \cite{akhmejanovMR4213127, GoncharovArxiv19, le2019imrn, le2019cluster}.  In particular, compare Theorem \ref{thm:second-main-theorem} to the main result of \cite{akhmejanovMR4213127}.

\subsection*{Local aspects}
	
The first new contribution of the present work is a proof of the naturality statement appearing in Theorem \ref{thm:first-theorem-intro}; see Section \ref{section:fe}.  This is a completely local statement, since any two ideal triangulations $\mathcal{T}$ and $\mathcal{T}^\prime$ are related by a sequence of diagonal flips inside ideal squares.  It therefore suffices to check the desired tropical coordinate change formulas  for a single square:
\begin{equation*}\tag{b}
\label{eq:boundarycoords}
x_i = x_i^\prime \,\, ( i=1,2,\dots,8 ),
\end{equation*}
\begin{equation*}\tag{c}
\label{equation:mu1}
\max\{x_2+y_3, y_1+x_3\}-y_2=z_2,
\end{equation*}
\begin{equation*}\tag{d}
\label{equation:mu2}
\max\{y_1+x_6, x_7+y_3\}-y_4=z_4,
\end{equation*}
\begin{equation*}\tag{e}
\label{equation:mu3}
\max\{x_1^\prime+z_4, x_8^\prime+z_2\}-y_1=z_1,
\end{equation*}
\begin{equation*}\tag{f}
\label{equation:mu4}
\max\{z_2+x_5^\prime, z_4+x_4^\prime\}-y_3=z_3.
\end{equation*}
See Figure \ref{figure:flip} for the notation.  

\begin{figure}[t]
\includegraphics[scale=0.64]{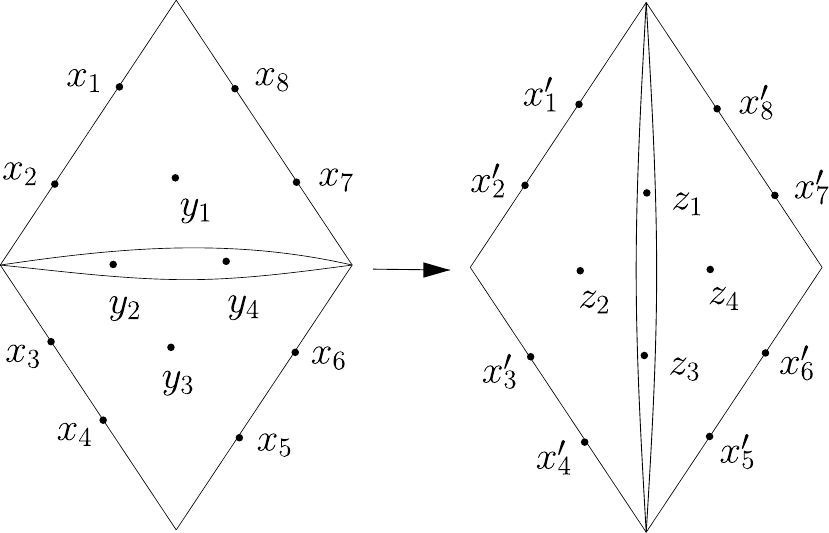}
\caption{     Local $\mathrm{SL}_3$ tropical $\mathcal{A}$-coordinate cluster transformation, corresponding to a diagonal flip $\mathcal{T} \to \mathcal{T}^\prime$ in the square.  See \eqref{eq:boundarycoords}-\eqref{equation:mu4}.  }
\label{figure:flip}
\end{figure}

Given a $3$-web $W \in \mathcal{W}_{3, \widehat{S}}$ in good position with respect to $\mathcal{T}$, the restriction $W|_\Box$ of $W$ to a triangulated ideal square $(\Box, \mathcal{T}|_\Box) \subset (\widehat{S}, \mathcal{T})$   falls into one of 42 families $\mathcal{W}^k_{\mathcal{T}|_\Box} \subset \mathcal{W}_{3, \Box}$ for $k=1, 2, \dots, 42$; see Section \ref{ssec:42-reduced-web-families-in-the-square}.    Depending on which family $\mathcal{W}^k_{\mathcal{T}|_\Box}$ the restricted web $W|_\Box$ belongs to, there is an explicit topological description of how $W|_\Box$ rearranges itself into good position after the flip; see Appendix \ref{ssec:flip-examples}.  These local 42 families of $3$-webs in the square  have a  geometric interpretation, leading to our second main result.  

Let $\widehat{S} = \Box$ be a disk with four marked points, namely an ideal square, and let $\mathcal{T}$ be a choice of diagonal of $\Box$.  Theorem \ref{thm:first-theorem-intro} says that the set $\mathcal{W}_{3, \Box}$ of reduced $3$-webs in $\Box$ embeds via $\Phi_\mathcal{T}$ as a positive integer cone inside $\mathbb{Z}_{\geq 0}^{12}$.  This cone possesses a finite subset of irreducible elements spanning it over $\mathbb{Z}_{\geq 0}$, called its Hilbert basis \cite{hilbert1890ueber, schrijver1981total}; see Section \ref{section:Hsq}.

\begin{theorem}
\label{thm:second-theorem-intro}
The Knutson--Tao--Goncharov--Shen cone $\Phi_\mathcal{T}(\mathcal{W}_{3, \Box}) \subset \mathbb{Z}_{\geq 0}^{12}$ associated to the triangulated ideal square $(\Box, \mathcal{T})$ has a Hilbert basis consisting of $22$ elements, corresponding via $\Phi_\mathcal{T}$ to $22$ reduced $3$-webs $W_\mathcal{T}^i \in \mathcal{W}_{3, \Box}$ for $i=1, 2, \dots, 22$.  
	
Moreover, this positive integer cone
\begin{equation*}
\Phi_\mathcal{T}(\mathcal{W}_{3, \Box}) = \bigcup_{k=1}^{42} \mathcal{C}_\mathcal{T}^k 
 \subset \mathbb{Z}_{\geq 0}^{12}
\end{equation*}
can be decomposed into $42$ sectors $\mathcal{C}_\mathcal{T}^k$ such that:  
\begin{enumerate}[label=\textnormal{(\Roman*)}]     
\item each sector is generated over $\mathbb{Z}_{\geq 0}$ by $12$ of the $22$ Hilbert basis elements;  
\item adjacent sectors are separated by a codimension $1$ wall, and these $42$ sectors $\mathcal{C}_\mathcal{T}^k$ are  in one-to-one correspondence with the $42$ families $\mathcal{W}^k_{\mathcal{T}} \subset \mathcal{W}_{3, \Box}$ of $3$-webs in the square, discussed just above.
\end{enumerate}

Lastly,  each family $\mathcal{W}^k_{\mathcal{T}} \subset \mathcal{W}_{3, \Box}$ contains $12$ distinguished $3$-webs $W_\mathcal{T}^{i(k, j)} \in \{ W_\mathcal{T}^i \}_{i=1,2,\dots,22}$ for $j=1, 2, \dots, 12$, corresponding via $\Phi_\mathcal{T}$ to the $12$ Hilbert basis elements generating the sector $\mathcal{C}_\mathcal{T}^k$.  We refer to the set $\{ W_\mathcal{T}^{i(k, j)} \}_{j=1,2,\dots,12}$ of these $12$ distinguished $3$-webs as the topological type of the sector $\mathcal{C}_\mathcal{T}^k$.  Then, two sectors $\mathcal{C}_\mathcal{T}^k$ and $\mathcal{C}_\mathcal{T}^{k^\prime}$ are adjacent if and only if their topological types  differ by exactly one distinguished $3$-web; see Figure {\upshape\ref{figure:wallscross}}.  
\end{theorem}

\begin{figure}[t]
\includegraphics[width=.95\textwidth]{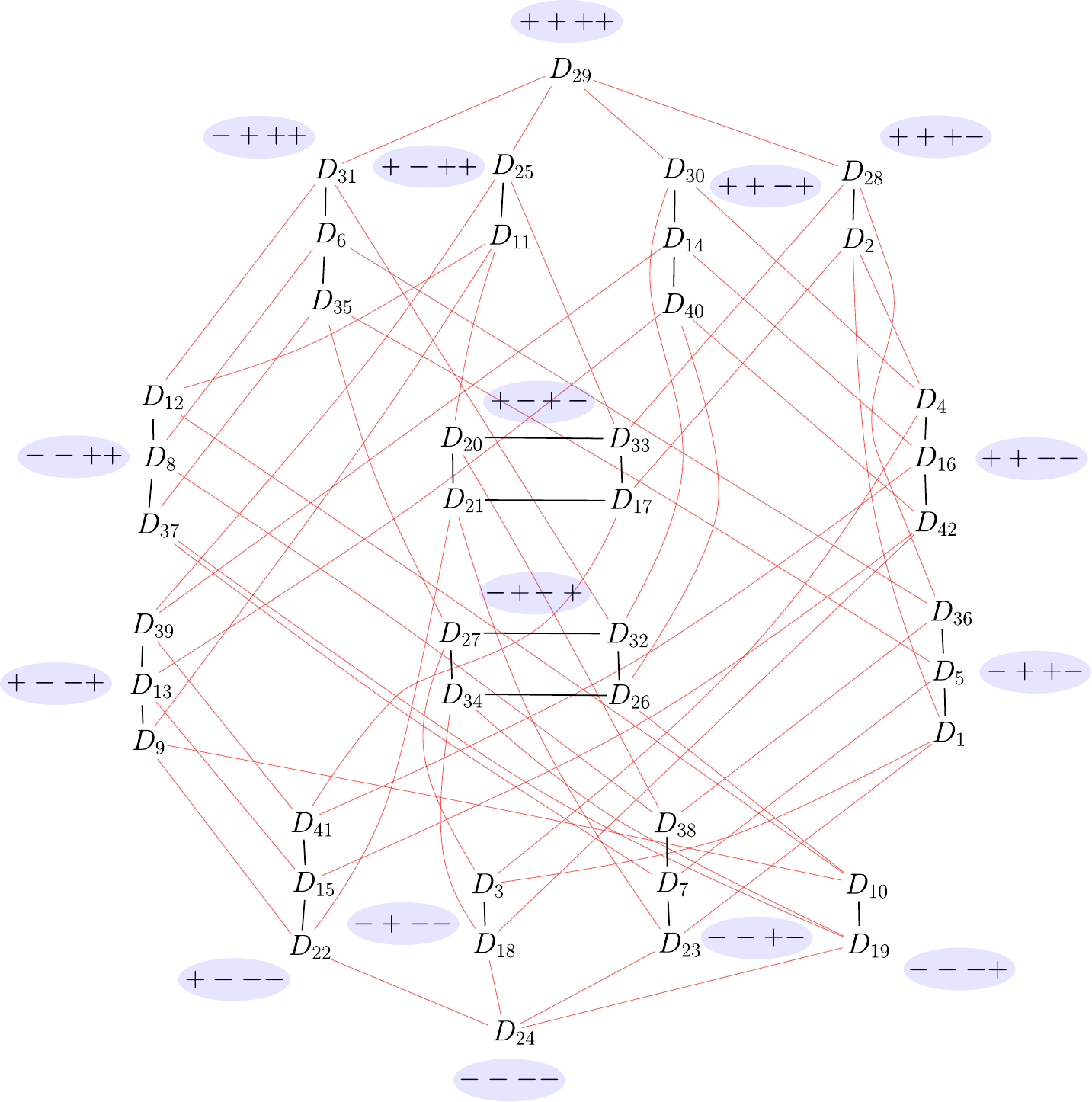}
\caption{     Sectors and walls in the Knutson--Tao--Goncharov--Shen (KTGS) cone $\Phi_\mathcal{T}(\mathcal{W}_{3, \Box}) \subset \mathbb{Z}_{\geq 0}^{12}$ for a triangulated ideal square $(\Box, \mathcal{T})$.  More precisely, displayed is a corresponding sector decomposition $\{ D_i \}_{i=1,2,\dots,42}$ of (a projection to $\mathbb{R}^4$ of a real version of) an isomorphic cone in $\mathbb{Z}_+^8 \times \mathbb{Z}^4$, obtained from the KTGS cone via a transformation  defined using the 4 tropical integer $\mathcal{X}$-coordinates.  The sectors $D_i$ are grouped depending on which orthant of $\mathbb{R}^4$ they belong to.  These sectors  are the vertices of a 4-valent graph, where two sectors are connected by an edge if and only if they share a wall; equivalently, their topological types differ by a single web.  See Theorem \ref{thm:second-theorem-intro}. }  
\label{figure:wallscross}
\end{figure} 

See Theorems \ref{theorem:basis} and \ref{theorem:decomp} (as well as Remark \ref{rem:topwallcrossphen}).

For a related appearance of Hilbert bases, in the $n=2$ setting, see \cite{AbdielAlgGeomTop17}. 

The proof of Theorem \ref{thm:second-theorem-intro} is geometric in nature and might be of independent interest. Recall \cite{FockIHES06} there are two dual sets of coordinates for the two dual moduli spaces of interest, respectively, the $\mathcal{A}$-coordinates and the $\mathcal{X}$-coordinates, as well as their tropical counterparts.  For a triangulated ideal square $(\Box, \mathcal{T})$, via the mapping $\Phi_\mathcal{T}$ each $3$-web $W \in \mathcal{W}_{3, \Box}$ is assigned 12 positive tropical integer $\mathcal{A}$-coordinates $\Phi_\mathcal{T}(W) \in \mathbb{Z}_{\geq 0}^{12}$.  We show that there are also assigned to $W$ four internal tropical integer $\mathcal{X}$-coordinates valued in $\mathbb{Z}$, two associated to the unique internal edge of $\mathcal{T}$ and one for each triangle of $\mathcal{T}$; see Figure \ref{fig:tropical-X-coords}.  We find that the decomposition of the $\mathcal{A}$-cone $\Phi_\mathcal{T}(\mathcal{W}_{3, \Box}) \subset \mathbb{Z}^{12}_{\geq 0}$ into 42 sectors is mirrored by a corresponding decomposition of the $\mathcal{X}$-lattice $\mathbb{Z}^4$ into 42 sectors; see   Figure \ref{figure:wallscross}.  We think of this as a manifestation of Fock--Goncharov's tropicalized canonical map:
\begin{equation*}
p^t : \mathcal{W}_{3, \Box} \cong \Phi_\mathcal{T}(\mathcal{W}_{3, \Box}) \cong \mathcal{A}^+_{\mathrm{PGL}_3, \Box}(\mathbb{Z}^t)_\mathcal{T} \subset \mathcal{A}_{\mathrm{SL}_3, \Box}(\mathbb{R}^t)_\mathcal{T} \overset{\mathrm{canonical}}{\to} \mathcal{X}_{\mathrm{PGL}_3, \Box}(\mathbb{R}^t)_\mathcal{T}.
\end{equation*} 
The image of the map $p^t$ is $\mathcal{X}_{\mathrm{PGL}_3, \Box}(\mathbb{Z}^t)_\mathcal{T} \cong \mathbb{Z}^4$, and $p^t$ maps sectors of the positive integer cone $\Phi_\mathcal{T}(\mathcal{W}_{3, \Box}) \cong \mathcal{A}^+_{\mathrm{PGL}_3, \Box}(\mathbb{Z}^t)_\mathcal{T}$ to sectors of the integer lattice $\mathcal{X}_{\mathrm{PGL}_3, \Box}(\mathbb{Z}^t)_\mathcal{T} \cong \mathbb{Z}^4$.  See Section \ref{section:ssq}.

\section*{Acknowledgements}
We are profoundly grateful to Dylan Allegretti, Francis Bonahon, Charlie Frohman, Sasha Goncharov, Linhui Shen, Daping Weng, Tommaso Cremaschi, and Subhadip Dey for many very helpful conversations and for generously offering their time as this project developed.  

Much of this work was completed during very enjoyable visits to Tsinghua University in Beijing, supported by a GEAR graduate internship grant, and the University of Southern California in Los Angeles.  We would like to take this opportunity to extend our enormous gratitude to these institutions for their warm hospitality.

\section{Tropical points and webs}
\label{section:wa}

We introduce the main object of study, the Knutson-Tao-Goncharov-Shen cone $\mathcal{C}_\mathcal{T} \subset \mathbb{Z}_+^N$ associated to an ideal triangulation $\mathcal{T}$ of a marked surface $\widehat{S}$, and we summarize the work of \cite{DouglasArxiv20} relating tropical points to topological objects called webs.

\subsection{Marked surfaces, ideal triangulations, and rhombi}
\label{ssec:markedsurfacesidealtriangulations}
	  
\begin{definition}
A \emph{marked surface} $\widehat{S}$ is a pair $(S,m_b)$ where $S$ is a compact oriented finite-type surface with at least one boundary component, and $m_b \subset \partial S$ is a finite set of \emph{marked points} on $\partial S$.  Let $m_p \subset \{ \text{components of } \partial S \}$ be the set of \emph{punctures}, defined as the subset of boundary components without marked points; as is common in the literature, for the remainder of the article we identify such unmarked boundary components in $m_p$ with the  (actual) punctures obtained by removing them and shrinking the resulting hole down to a point.

We assume the Euler characteristic condition $\chi(S) < d/2$, where $d$ is the number of components  of $\partial S - m_b$ limiting to a marked point.  (For example, $d=3$ for a once punctured disk with three marked points on its boundary.)  This topological condition is equivalent to the existence of an \emph{ideal triangulation} $\mathcal{T}$ of $\widehat{S}$, namely a triangulation of the compactified surface whose set of vertices is equal to $m_b \cup m_p$; the vertices of $\mathcal{T}$ are called  \emph{ideal vertices}.

For simplicity, we always assume that $\mathcal{T}$ does not contain any \emph{self-folded triangles}.  That is, we assume each triangle of $\mathcal{T}$ has three distinct sides.  (Our results should generalize, essentially without change, to allow for self-folded triangles.)

Given an ideal triangulation $\mathcal{T}$ of $\widehat{S}$, we define the \emph{ideal $3$-triangulation} $\mathcal{T}_3$ of $\mathcal{T}$ to be the triangulation of $\widehat{S}$ obtained by subdividing each ideal triangle $\Delta$ of $\mathcal{T}$ into $9$ triangles; see Figure \ref{figure:acoor}.  The 3-triangulation $\mathcal{T}_3$ has as many ideal vertices as $\mathcal{T}$, and has $N$ \emph{non-ideal vertices}, where $N$ is defined in Notation \ref{not:plus2}.

A \emph{pointed ideal triangle} is a triangle $\Delta$ in an ideal triangulation $\mathcal{T}$ together with a preferred ideal vertex; $\Delta$ is called a \emph{pointed ideal 3-triangle} when subdivided as part of the associated 3-triangulation $\mathcal{T}_3$.   

Given a pointed ideal 3-triangle, we may talk about the three associated \emph{rhombi}; see Figure \ref{figure:acoor1}.  In the figure, the red rhombus is called the \emph{corner rhombus}, and the yellow and green rhombi are called the \emph{interior rhombi}.  Each rhombus has two \emph{acute vertices} and two \emph{obtuse vertices}.  Note that exactly one of these eight vertices, the \emph{corner vertex}, is an ideal vertex of $\mathcal{T}_3$; specifically, the top (acute) vertex of the corner rhombus.  (We will see that the other vertices correspond to Fock-Goncharov $\mathcal{A}$-coordinates.)
\end{definition}

\begin{notation}
\label{not:plus2}
$ $
\begin{enumerate}[label=\textnormal{(\Roman*)}]
\item  The natural number $N$ is defined as twice the total number of edges (including boundary edges) of $\mathcal{T}$ plus the number of triangles of $\mathcal{T}$.  (Note that $N$ is what we called $N_3$ in the introduction.)
\item  It will be convenient to denote the nonnegative real numbers by $\mathbb{R}_+ = \mathbb{R}_{\geq 0}$ and the nonnegative integers by $\mathbb{Z}_+ = \mathbb{Z}_{\geq 0}$.  Similarly, put $\mathbb{R}_- = \mathbb{R}_{\leq 0}$ and $\mathbb{Z}_- = \mathbb{Z}_{\leq 0}$.
\end{enumerate}
\end{notation}

\subsection{The Knutson--Tao--Goncharov--Shen cone and reduced webs}
\label{ssec:reducedwebsandtheKTGScone}
	  
Let $\widehat{S}$ be a marked surface.  (In this subsection, we will use some of the terminology of Appendix \ref{sec:cones}.)

\subsubsection{KTGS cone}
\label{sssec:KTGS-cone}
	
\begin{definition}
Given a pointed ideal triangle $\Delta$ in an ideal triangulation $\mathcal{T}$ of $\widehat{S}$ (Section \ref{ssec:markedsurfacesidealtriangulations}), assume  integers (see also Remark \ref{rem:realsolutionsareinteger}\eqref{item:annoyingminusisgn})  $a,b,c,d \in \mathbb{Z}$ (resp. $a, b, c \in \mathbb{Z}$) are assigned to some interior (resp. corner) rhombus, where the numbers $a,b$ are assigned to the two obtuse vertices, and the numbers $c,d$ are assigned to the two acute vertices.  To such an assigned rhombus, we associate a \emph{Knutson-Tao rhombus inequality} $a+b-c-d \geq 0$ and a \emph{modulo 3 congruence condition} $(a+b-c-d)/3 \in \mathbb{Z}$.  Here, we set $d=0$ if the rhombus is a corner rhombus, where then $d$ corresponds to the corner vertex.  
\end{definition}

Recall  the definition (Notation \ref{not:plus2}) of the natural number $N$.  This is the same as the number of non-ideal points of the 3-triangulation $\mathcal{T}_3$.  We order these $N$ non-ideal points arbitrarily in the following definition,   so that to each such non-ideal point of $\mathcal{T}_3$ we associate a coordinate of $\mathbb{Z}^N$.  In this way, a point of $\mathbb{Z}^N$ assigns to each rhombus in a pointed ideal triangle $\Delta$ four numbers $a,b,c,d \in \mathbb{Z}$ as above.

\begin{definition}
\label{def:KTGS-cone}
Given an ideal triangulation $\mathcal{T}$ of $\widehat{S}$, let the \emph{Knutson-Tao-Goncharov-Shen cone} $\mathcal{C}_\mathcal{T} \subset \mathbb{Z}^N$, or just the \emph{KTGS cone} for short, be the submonoid (Definition \ref{def:submonoid}) defined by the property that its points satisfy all of the Knutson-Tao rhombus inequalities and modulo 3 congruence conditions, varying over all rhombi of all pointed ideal triangles $\Delta$ of $\mathcal{T}$.  
\end{definition}

\begin{prop}
\label{prop:KTGS-is-positive-integer-cone}
The KTGS cone $\mathcal{C}_\mathcal{T} \subset \mathbb{Z}_+^N \subset \mathbb{Z}^N$ is a positive integer cone \textup{(}Definition {\upshape\ref{def:positive-integer-cone}}\textup{)}.
\end{prop}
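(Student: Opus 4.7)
The plan is to check the three properties of Definition~\ref{def:positive-integer-cone}: (i) $0 \in \mathscr{C}_\mathscr{T}$, (ii) closure of $\mathscr{C}_\mathscr{T}$ under coordinatewise addition, and (iii) $\mathscr{C}_\mathscr{T} \subset \mathbb{Z}_+^N$. Properties (i) and (ii) are immediate: each Knutson-Tao rhombus inequality $a+b-c-d \geq 0$ is a $\mathbb{Z}$-linear constraint with zero right-hand side, and each modulo-$3$ congruence $a+b-c-d \in 3\mathbb{Z}$ is a $\mathbb{Z}$-linear condition modulo $3$, so both are satisfied by the zero vector and preserved under vector addition. All the substance lies in proving (iii), namely that every coordinate of every $x \in \mathscr{C}_\mathscr{T}$ is nonnegative; notably, this will follow from the rhombus inequalities alone, without invoking the modulo-$3$ conditions.

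For (iii), my strategy is to exhibit, for each non-ideal vertex $V$ of the 3-triangulation $\mathscr{T}_3$, a nonnegative integer combination of the Knutson-Tao rhombus-inequality expressions $(\mathrm{obtuse}+\mathrm{obtuse})-(\mathrm{acute}+\mathrm{acute})$ whose total equals a positive integer multiple of the coordinate at $V$, thereby certifying $A_V \geq 0$. Since each rhombus sits inside a single ideal triangle of $\mathscr{T}$, each such combination involves only rhombi from one ideal triangle $\Delta$; this is important, as a non-ideal vertex lying on a boundary edge of $\widehat{S}$ belongs to only one triangle.

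Concretely, fix an ideal triangle $\Delta=(a,b,c)$, whose seven non-ideal subvertices in $\mathscr{T}_3$ are the centroid together with six edge-midpoints. For the centroid: summing the three corner-rhombus inequalities (pointed at $a$, $b$, $c$) gives $\sum_V A_V \geq 3 A_{\mathrm{centroid}}$, where $V$ ranges over the six edge-midpoints of $\Delta$; a suitable sum of interior-rhombus inequalities yields the reverse bound $6 A_{\mathrm{centroid}} \geq \sum_V A_V$; combining gives $3 A_{\mathrm{centroid}} \geq 0$. For an edge-midpoint $V$ on an edge $\overline{ab}$ of $\Delta$, adding the corner-rhombus inequality at $a$ to a specific interior-rhombus inequality pointed at $b$ telescopes to $2A_V - A_W \geq 0$, where $W$ is the other edge-midpoint on $\overline{ab}$; the symmetric pairing with $a$ and $b$ swapped yields $2 A_W - A_V \geq 0$; a weighted sum with coefficients $(2,1)$ produces $3 A_V \geq 0$. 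The remaining four edge-midpoints of $\Delta$ follow by cyclic symmetry. The main obstacle is thus the careful bookkeeping required to identify the acute versus obtuse vertices of each of the nine rhombi per pointed ideal triangle---a bookkeeping made mildly delicate by the sign convention of the KTGS cone, which is opposite to the naive tropicalization $(\alpha_{a;b,c}^{i,j,k})^t$ of Definition~\ref{defn:FGGS}, as noted in Remark~\ref{rem:realsolutionsareinteger}.
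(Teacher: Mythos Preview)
Your proof is correct. Properties (i) and (ii) are indeed immediate from linearity, and your certificate for (iii) checks out: summing the three corner-rhombus inequalities in $\Delta$ gives $\sum_{\text{edge}} A_V \geq 3A_{\text{centroid}}$; summing all six interior-rhombus inequalities gives $6A_{\text{centroid}} \geq \sum_{\text{edge}} A_V$; and for the edge-midpoint $V=v^{2,1,0}$ on $\overline{ab}$, the corner rhombus at $a$ plus the interior rhombus $\alpha_{b;c,a}^{1,1,1}$ telescopes exactly to $2A_V - A_W \geq 0$ with $W=v^{1,2,0}$, whence the $(2,1)$ weighted combination yields $3A_V \geq 0$. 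Your observation that each certificate lives entirely within a single triangle is the right structural point, since it covers boundary-edge vertices uniformly.

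Your route is genuinely different from the paper's. The paper does not argue directly; it simply cites Corollary~46 and Definition~49 of the companion article \cite{DouglasArxiv20}, where nonnegativity emerges from the full construction of the web tropical coordinate map and an auxiliary linear change of coordinates (the precursor of the map $\theta_\mathscr{T}$ discussed later in \S\ref{sssec:a-linear-isomorphism}). Your argument, by contrast, is self-contained and purely combinatorial: it derives nonnegativity from the rhombus inequalities alone, with no reference to webs, to $\Phi_\mathscr{T}$, or to the companion paper. What your approach buys is logical independence and an explicit nonnegative certificate for each coordinate; what the paper's approach buys is brevity, plus the stronger structural information that $\mathscr{C}_\mathscr{T}$ is exactly the image of $\Phi_\mathscr{T}$, which is used repeatedly downstream.
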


\begin{proof}
This is by \cite[Corollary 6.7 and Definition 6.10]{DouglasArxiv20}; see also Remark \ref{rem:realsolutionsareinteger}\eqref{item:annoyingminusisgn}.
\end{proof}

\begin{conceptremark}[throughout the paper, conceptual remarks make reference to the theory reviewed in Appendix \ref{sec:preliminary-for-tropical-points}]
\label{rem:conceptual-remarks}
We think of the KTGS cone $\mathcal{C}_\mathcal{T} \subset \mathbb{Z}_+^N$, defined above, as the isomorphic coordinate chart  
\begin{equation*}
\mathcal{C}_\mathcal{T} \cong -3 \mathcal{A}^+_{\mathrm{PGL}_3,\widehat{S}}(\mathbb{Z}^t)_\mathcal{T}
\end{equation*}
where  $\mathcal{A}_{\mathrm{PGL}_3,\widehat{S}}(\mathbb{Z}^t)_\mathcal{T} \subset ((1/3) \mathbb{Z})^N \cong \mathcal{A}_{\mathrm{SL}_3,\widehat{S}}((1/3)\mathbb{Z}^t)_\mathcal{T}$ and $\mathcal{A}^+_{\mathrm{PGL}_3,\widehat{S}}(\mathbb{Z}^t)_\mathcal{T} \subset   \mathcal{A}_{\mathrm{PGL}_3,\widehat{S}}(\mathbb{Z}^t)_\mathcal{T} \cap (-(1/3) \mathbb{Z}_+)^N$ as in Remark \ref{rem:realsolutionsareinteger}\eqref{item:annoyingminusisgn}.  
\end{conceptremark}

\subsubsection{Reduced webs}
\label{sssec:reduced-webs}
	
\begin{definition}
A \emph{web (possibly with boundary)} $W$ in $\widehat{S}$ \cite[Section $9.1$]{DouglasArxiv20} is an oriented trivalent graph embedded in $\widehat{S}$ such that:
\begin{enumerate}[label=\textnormal{(\Roman*)}]
\item  the boundary $\partial W = W \cap (\partial \widehat{S} - m_b)$ of the web lies on the boundary of the surface (minus the marked points) and may be nonempty, in which case its boundary points are required to be monovalent vertices;
\item  the three edges of $W$ at an internal vertex are either all oriented in or all oriented out;
\item  we allow  $W$ to have components homeomorphic to the circle, called \emph{loops}, which do not contain any vertices;
\item  we allow $W$ to have components homeomorphic to the closed interval, called \emph{arcs}, which have exactly two vertices on $\partial \widehat{S} - m_b$ and do not have any internal vertices.
\end{enumerate}
Webs are considered up to \emph{parallel equivalence}, meaning related either by an ambient isotopy of $\widehat{S} - m_b$ or a homotopy in $\widehat{S} - m_b$ exchanging two `parallel' loop (resp. arc) components of $W$ bounding an embedded annulus (resp. rectangle, two of whose sides are contained in $\partial \widehat{S} - m_b$, as  in Figure \ref{fig:boundary-parallel-move}).

A \emph{face} of a web $W$ \cite[Section $9.1$]{DouglasArxiv20}  is a contractible component of the complement $W^c \subset \widehat{S}$.  \emph{Internal} (resp. \emph{external}) faces are those not intersecting (resp. intersecting) the boundary $\partial \widehat{S} - m_b$.  A face with $n$ sides (counted with multiplicity, and including sides on the boundary $\partial \widehat{S} - m_b$) is called a \emph{$n$-face}.  Internal faces always have an even number of sides.  An \emph{external H-4-face} is an external 4-face limiting to a single component of $W$ (there is only one type of external 2- or 3-face).  

A web $W$ is \emph{reduced} if each internal face has at least six sides, and there are no external 2-, 3-, or H-4-faces.  (Reduced webs were called `rung-less essential webs' in \cite[Section $9.2$]{DouglasArxiv20}; see also \cite{FrohmanMathZ2022}.)  Denote by $\mathcal{W}_{\widehat{S}}$ the set of reduced webs up to parallel equivalence.  (Note that $\mathcal{W}_{\widehat{S}}$ is what we called $\mathcal{W}_{3, \widehat{S}}$ in the introduction.)  
\end{definition}

\begin{remark}
As a caution, throughout we tend to be sloppy about distinguishing between web equivalence classes and their representatives, for example writing $W \in \mathcal{W}_{\widehat{S}}$ to indicate a representative web $W$, as this distinction will generally be immaterial for our purposes.
\end{remark}

\begin{figure}[t]
\includegraphics[scale=1.08]{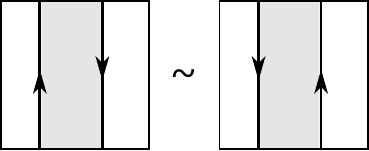}
\caption{     Boundary parallel move in the ideal square.}
\label{fig:boundary-parallel-move}
\end{figure}

\subsection{Web tropical coordinate map}
\label{ssec:tropicalwebcoordinatemap}
	  
In \cite[Section $9.2$]{DouglasArxiv20}, for any marked surface $\widehat{S}$ and for each ideal triangulation $\mathcal{T}$ of $\widehat{S}$, we defined a bijection of sets 
\begin{equation*}
\Phi_{\mathcal{T}}:\mathcal{W}_{\widehat{S}}\overset{\sim}{\to} \mathcal{C}_\mathcal{T}
\end{equation*}
from the set $\mathcal{W}_{\widehat{S}}$ of parallel equivalence classes of reduced webs to the KTGS cone $\mathcal{C}_\mathcal{T} \subset \mathbb{Z}_+^N$, called the `web tropical coordinate map'.  We now recall the definition of this map.  

\subsubsection{Split ideal triangulations, good positions, and web schematics}
\label{sssec:splitidealtriangulationsandgoodposition}
	  
\begin{definition}
The \emph{split ideal triangulation} associated to $\mathcal{T}$, which by abuse of notation we also denote by $\mathcal{T}$, is defined by splitting each ideal edge of $\mathcal{T}$ (including boundary edges) into two disjoint ideal edges. In particular, the surface $\widehat{S}$ is cut into ideal triangles and \emph{bigons}, as shown in Figure \ref{figure:tr}.  Note that although bigons do not admit ideal triangulations (in particular, they do not satisfy the hypothesis $\chi < d/2$ of Section \ref{ssec:markedsurfacesidealtriangulations} since $d=2$), we can still consider them as marked surfaces, where all the definitions for webs make sense.

As proved in \cite{FrohmanMathZ2022} and \cite[Section $9.2$]{DouglasArxiv20}, by isotopy we can put any reduced web $W \in \mathcal{W}_{\widehat{S}}$ into \emph{good position} with respect to the split ideal triangulation $\mathcal{T}$, meaning (see just below for more details):
\begin{enumerate}[label=\textnormal{(\Roman*)}]
\item the restriction of the web $W$ to any bigon of $\mathcal{T}$ is a \emph{ladder web} (see the left hand side of Figure \ref{figure:bigon});
\item the restriction of the web $W$ to any triangle of $\mathcal{T}$ is a \emph{honeycomb web}, namely an oriented honeycomb  together with oriented corner arcs (see the left hand side of Figure \ref{figure:localtr}).  
\end{enumerate}
\end{definition}

More precisely, the triangle condition (called `rung-less essential' in \cite[Section $4.4$]{DouglasArxiv20}) is equivalent to saying that the restriction of $W$ to the triangle is reduced.  The bigon condition (called `essential' in \cite[Section $4.4$]{DouglasArxiv20}) is equivalent to asking that (1) all internal faces have at least six sides; and (2) for each edge $E$ of the bigon, and for every compact embedded arc $\alpha$ in the bigon such that $\partial \alpha = \alpha \cap E$ and such that $\alpha$ intersects $W$ generically, we have that the number of intersection points $W \cap \overline{E}$ does not exceed the number of intersection points $W \cap \alpha$; here, $\overline{E} \subset E$ is the segment in $E$ between the two endpoints of $\alpha$.  Note this is a weaker condition than $W|_\text{bigon}$ being reduced, since, although it does not allow for external 2- or 3-faces, it does allow for external H-4-faces (also called  `rungs' of the ladder web).

In particular,  $W$ has minimal geometric intersection with the split ideal triangulation $\mathcal{T}$.  

Note that for a web $W$ in good position: there are two types of honeycombs in triangles, `out-' and `in-honeycombs' (see Figure \ref{figure:localtr}); there may or may not be a honeycomb in a given triangle; and,  no conditions on the orientations of the corner arcs in a triangle are assumed.  

\begin{remark}
For an earlier appearance of these honeycomb webs in ideal triangles, see \cite[pp. 140-141]{KuperbergCommMathPhys96}.  
\end{remark}

In the right hand side of Figure \ref{figure:bigon} we show the `bigon schematic diagram' for a ladder web in a bigon, where each `H' is replaced by a crossing.  

In the right hand side of Figure \ref{figure:localtr} we show the `triangle schematic diagram' for a honeycomb web in a triangle.  Here, the honeycomb component is completely determined by two pieces of information:  its orientation (either all in or all out) and a nonnegative integer $x \in \mathbb{Z}_+$.  Note that the schematic for corner arcs is not a `faithful' diagrammatic representation, in general, because it forgets the ordering of the oriented arcs on each corner; see Remark \ref{rem:cornerarcschematics}.  However, as we will see, this schematic is sufficient to recover the web tropical coordinates.  

\begin{remark}
\label{rem:cornerarcschematics}
Note that the schematic is indeed faithful at the level of parallel equivalence classes of reduced webs in the ideal triangle.  This is because permuting corner arcs preserves the equivalence class of the web; see Section \ref{ssec:reducedwebsandtheKTGScone}.  (Recall also Figure \ref{fig:boundary-parallel-move}, showing a boundary parallel move in the ideal square.)
\end{remark}

\begin{definition}
Given the split ideal triangulation as in Figure \ref{figure:adm}, suppose we are given two oriented arcs intersecting  in the bigon along the ideal edge between the two triangles. The intersection is called a:
\begin{enumerate}[label=\textnormal{(\Roman*)}]
\item \emph{non-admissible crossing} if the arcs go toward a common ideal triangle (left hand side of Figure \ref{figure:adm});
\item \emph{admissible crossing} if the arcs go toward different ideal triangles (right hand side of Figure \ref{figure:adm}).
\end{enumerate} 
\end{definition}

The following fact is essentially by definition.  

\begin{observation}
\label{lemma:bigon}
For any reduced web $W$ in good position with respect to the split ideal triangulation $\mathcal{T}$, the schematic diagram \textup{(}right hand side of Figure {\upshape \ref{figure:bigon}}\textup{)} of any ladder web obtained by restricting $W$ to a bigon has only admissible crossings.  \qed
\end{observation}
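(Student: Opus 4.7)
The plan is to verify the observation by a direct local check at each crossing of the schematic, which, as the text hints by calling it ``essentially by definition,'' reduces to tracking the orientations forced on a single H by the trivalent sink/source rule for $3$-webs.

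First, I would zoom in on one H of the ladder web in the bigon, which by construction corresponds to exactly one crossing in the bigon schematic. This H consists of two internal trivalent vertices joined by the crossbar edge, together with four leg edges exiting the bigon through its two ideal edges (two legs toward the ``top'' triangle, two toward the ``bottom'' triangle). Since at a trivalent vertex of a $3$-web all edges are oriented in or all out, and the crossbar is shared, one of the two vertices of the H must be a sink and the other a source; the two cases are symmetric, so I would fix notation with, say, the left vertex a sink.

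Next, I would trace the two arcs that the schematic identifies as ``crossing'' through the H: one arc follows top-left leg $\to$ crossbar $\to$ bottom-right leg, and the other follows bottom-left leg $\to$ crossbar $\to$ top-right leg (these are the two topological strands created by smoothing the H to an X). With the left-sink/right-source convention, the top-left leg is oriented downward into the sink and the bottom-right leg is oriented downward out of the source, so the first arc is coherently oriented from the top triangle toward the bottom triangle; the same bookkeeping shows the second arc is coherently oriented from bottom to top. Hence, at the crossing, the two arrowheads point to \emph{different} adjacent triangles, which is exactly Definition $\ref{figure:adm}$(2)'s admissible case. The other choice of source/sink at the H reverses both arrows but keeps them pointing to opposite triangles, so the conclusion is the same.

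Finally, since good position is defined so that $W|_\text{bigon}$ is a ladder web, every crossing in its schematic arises from an H by the construction above, and the previous paragraph shows every such crossing is admissible. The only ``obstacle'' is being careful with the orientation bookkeeping in the H and matching it against the pictures in Figures~\ref{figure:bigon}(2) and \ref{figure:adm}; once the sink/source dichotomy is invoked, nothing more is needed. No appeal to the reducedness of $W$ or to the minimal-intersection condition for bigons is required — the statement follows purely from the trivalent orientation rule for $3$-webs applied locally at a single H.
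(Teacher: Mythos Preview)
Your argument is correct and matches the paper's approach: the paper states the observation ``is essentially by definition'' and gives no proof, and you have simply spelled out the local orientation check that makes it immediate. The key point---that the shared crossbar forces one vertex of the H to be a sink and the other a source, so the two schematic strands through the crossing necessarily point toward opposite triangles---is exactly the content of ``by definition.''
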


\begin{figure}[t]
\includegraphics[scale=0.35]{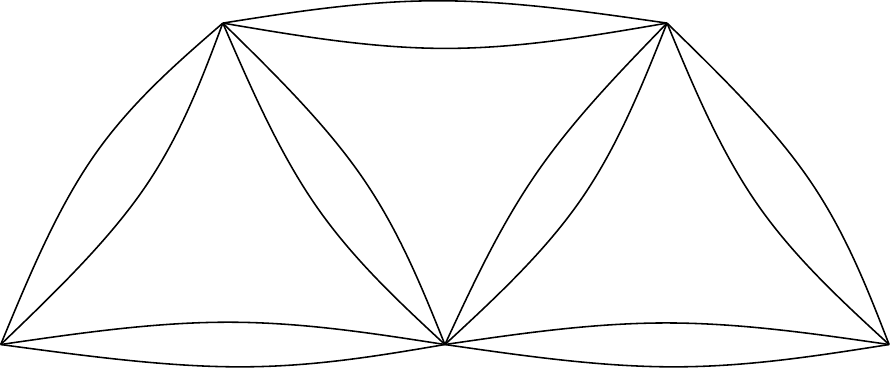}
\caption{     Split ideal triangulation.}
\label{figure:tr}
\end{figure}

\begin{figure}[t]
\includegraphics[scale=0.5]{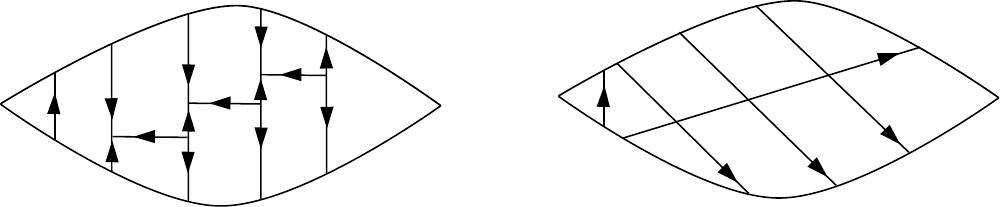}
\caption{      Ladder web in  a bigon.}
\label{figure:bigon}
\end{figure}

\begin{figure}[t]
\includegraphics[scale=0.4]{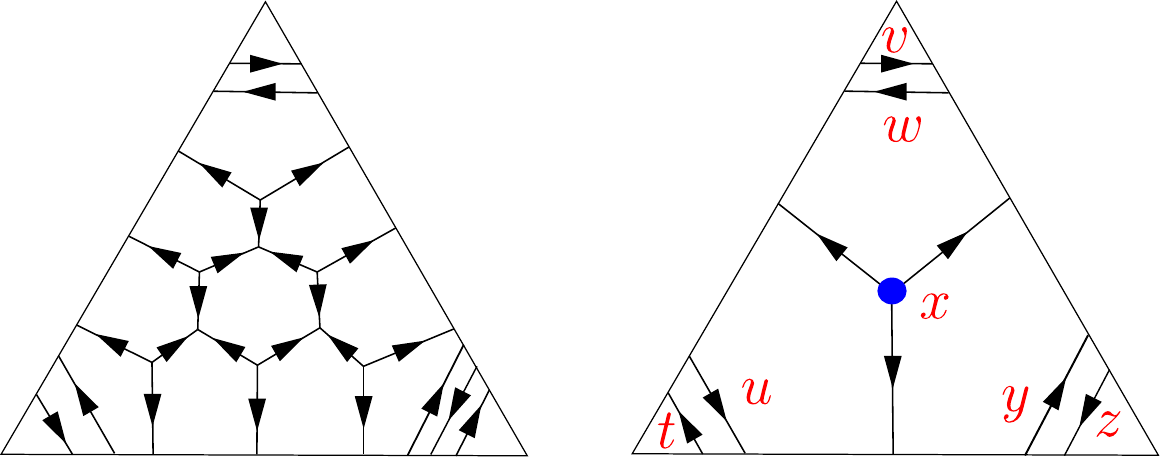}
\caption{     Honeycomb web in a triangle:  $x=3$, $y=2$, and $z=t=u=v=w=1$.  Here the honeycomb is oriented outward (there may also be inward oriented honeycombs). }
\label{figure:localtr}
\end{figure}

\begin{figure}[t]
\includegraphics[scale=0.51]{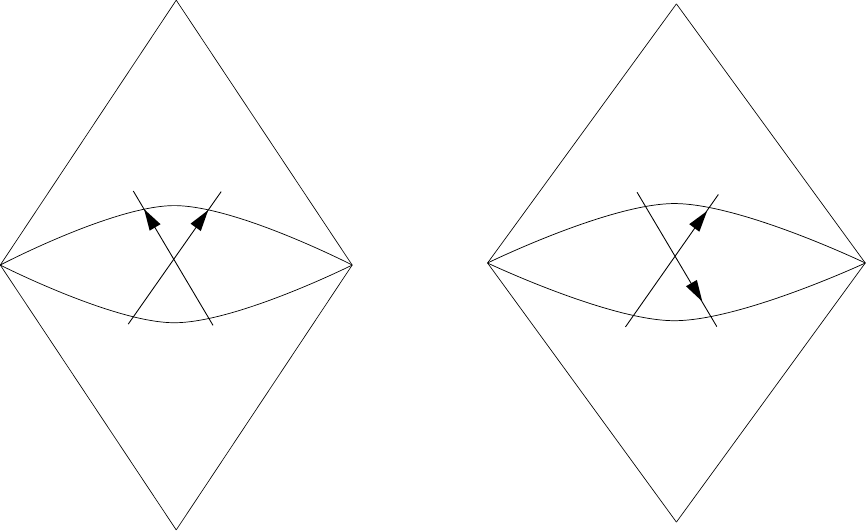}
\caption{     Left: Non-admissible crossing. Right: Admissible crossing.}
\label{figure:adm}
\end{figure}

\subsubsection{Definition of the web tropical coordinates}
\label{sssec:definitionofthetropicalwebcoordinates}
	  
Another way to think of an ideal triangle $\Delta$ is as an ideal polygon with three marked points $(a,b,c)$ on its boundary, labeled counterclockwise say.  (An ideal square is an ideal polygon with four marked points on its boundary, and so on.)  

Let a reduced web $W$ be in good position with respect to a split ideal triangulation $\mathcal{T}$ of $\widehat{S}$.  We start by defining the web tropical coordinates $\Phi(W|_\Delta) \in \mathcal{C}_\Delta$ `locally' for each restriction $W|_\Delta$ of $W$ to an ideal triangle $\Delta$ of $\mathcal{T}$, as in the left hand side of Figure \ref{figure:localtr}.  

First, the images in $\mathcal{C}_\Delta \subset \mathbb{Z}_+^7$ under $\Phi$ of the eight `irreducible' (see Section \ref{section:Hsq})  local reduced webs $R_a,L_a,R_b,L_b,R_c,L_c, T_{in}, T_{out}$ displayed in Figure \ref{figure:triangle} are defined as in that figure.  One checks directly that these images satisfy the Knutson-Tao rhombus inequalities and the modulo 3 congruence conditions (Section \ref{ssec:reducedwebsandtheKTGScone}).  

Then, the image under $\Phi$ of the restriction $W|_\Delta$ is defined as follows.  Let $T \in \{ T_{in}, T_{out} \}$ be the oriented honeycomb appearing in $W|_\Delta$.  Let the nonnegative integers $(x,w,v,u,t,y,z) \in  \mathbb{Z}_{+}^7$ be defined by the schematic for $W|_\Delta$, as in the right hand side of Figure \ref{figure:localtr}.  Put
\begin{equation*}
\Phi(W|_\Delta) := x \Phi(T) + v \Phi(L_a) + w \Phi(R_a) + t \Phi(L_b) + u \Phi(R_b) + z \Phi(L_c) + y \Phi(R_c)   \in \mathcal{C}_\Delta \subset \mathbb{Z}_+^7.  
\end{equation*}

Lastly, the web tropical coordinates $\Phi_\mathcal{T}(W) \in \mathcal{C}_\mathcal{T} \subset \mathbb{Z}_+^N$ for $W$ are defined by `gluing together' the local coordinates $\Phi(W|_\Delta$) for the triangles $\Delta$ across the edges of $\mathcal{T}$.  Note that the pair of coordinates of $\Phi(W|_\Delta)$ along an edge $E$ at the bigon interface between two triangles $\Delta$ and $\Delta^\prime$ matches the corresponding pair of coordinates of $\Phi(W|_{\Delta^\prime})$ along the other bigon edge $E^\prime$, since these coordinates depend only on the number of oriented in- and out-strands crossing the bigon at either boundary edge $E$ or $E^\prime$. Thus, this gluing procedure is well-defined.  In particular, in this way coordinates are assigned to the un-split ideal triangulation $\mathcal{T}$; this is why, in practice, one can go back and forth between the split and un-split triangulation.

See Figure \ref{fig:coordinates-example} for an example where $\widehat{S}$ is the once punctured torus.  As another example, the face coordinate (namely, the coordinate that is 3 for $T_{in}$ and $T_{out}$) for the honeycomb web $W$ shown in the left hand side of Figure \ref{figure:localtr} is $3\times3+4\times1+3\times2=19$.  There are plenty of examples of computing web coordinates throughout the paper; for instance, see Appendix \ref{ssec:flip-examples}.

In \cite[Section $9.2$]{DouglasArxiv20} we showed $\Phi_\mathcal{T}(W) \in \mathcal{C}_\mathcal{T}$ is independent of the choice of good position of $W$ with respect to the split ideal triangulation $\mathcal{T}$.  Moreover, we proved the result mentioned at the beginning of this subsection:  

\begin{theorem}[{\cite[Theorem 9.1]{DouglasArxiv20}}]
\label{thm:main-theorem}
For each ideal triangulation $\mathcal{T}$ of the marked surface $\widehat{S}$, the web tropical coordinate map
\begin{equation*}
\Phi_{\mathcal{T}}:\mathcal{W}_{\widehat{S}}\overset{\sim}{\to}\mathcal{C}_{\mathcal{T}}
\end{equation*}
from the set $\mathcal{W}_{\widehat{S}}$ of parallel equivalence classes of reduced webs in $\widehat{S}$ to the Knutson-Tao-Goncharov-Shen cone $\mathcal{C}_\mathcal{T} \subset \mathbb{Z}_+^N$ is a bijection of sets.  
\end{theorem}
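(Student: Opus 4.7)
The plan is to exhibit an explicit inverse $\Psi_\mathscr{T} : \mathscr{C}_\mathscr{T} \to \mathscr{W}_{\widehat{S}}$ that reverses the three-step construction of $\Phi_\mathscr{T}$: (i) decompose the tropical coordinates locally on each ideal triangle into the triangle schematic data of Figure \ref{figure:localtr} (honeycomb size, honeycomb orientation, and six corner arc counts); (ii) construct on each bigon the unique admissible ladder web realizing the matching boundary data, invoking Observation \ref{lemma:bigon}; and (iii) glue these local pieces into a reduced web $W$ on $\widehat{S}$. Well-definedness of $\Phi_\mathscr{T}$ at the level of parallel equivalence classes of webs in good position is already available from the companion paper \cite{DouglasArxiv20}; membership of the image in $\mathscr{C}_\mathscr{T}$ follows by directly checking the rhombus inequalities and modulo $3$ congruences on the eight irreducible webs of Figure \ref{figure:triangle} and extending by linearity over $\mathbb{Z}_+$.

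For the local triangle step, I would prove the following uniqueness statement. Let $\mathscr{C}_\Delta \subset \mathbb{Z}_+^7$ be the subset of $7$-tuples satisfying the three Knutson--Tao rhombus inequalities and the three modulo $3$ congruences of a pointed ideal triangle $\Delta = (a,b,c)$. Then every element of $\mathscr{C}_\Delta$ admits a unique expression
\begin{equation*}
x\,\Phi(T) + v\,\Phi(L_a) + w\,\Phi(R_a) + t\,\Phi(L_b) + u\,\Phi(R_b) + z\,\Phi(L_c) + y\,\Phi(R_c)
\end{equation*}
with $T \in \{T_{\mathrm{in}}, T_{\mathrm{out}}\}$ and the remaining coefficients in $\mathbb{Z}_+$, normalized so that at most one honeycomb orientation is used. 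The modulo $3$ congruences should pin down the honeycomb orientation and size $x$ from the residue classes of the coordinates, and the rhombus inequalities should then translate into the nonnegativity of the corner arc multiplicities after subtracting off $x\,\Phi(T)$.

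On each bigon, the boundary data at its two edges (the counts of inward and outward strands) agree tautologically, since each side is read off from coordinates shared with the adjacent triangle. Observation \ref{lemma:bigon} guarantees that the restriction of any reduced web in good position is an admissible ladder web, and such a ladder is determined up to isotopy by its boundary data: every admissible crossing configuration realizes a unique combinatorial matching of in- and out-strands across the bigon. Combining this with the local triangle reconstruction gives the inverse $\Psi_\mathscr{T}$; injectivity of $\Phi_\mathscr{T}$ then follows formally, since two reduced webs in good position with identical coordinates yield identical triangle schematics and identical bigon ladders, and hence are parallel equivalent after the allowed reordering of corner arcs in each triangle (compare Figure \ref{fig:boundary-parallel-move}).

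The main obstacle, I expect, is the local triangle claim: proving that the positive integer cone in $\mathbb{Z}^7$ generated over $\mathbb{Z}_+$ by the eight irreducible webs $\{T_{\mathrm{in}}, T_{\mathrm{out}}, R_a, L_a, R_b, L_b, R_c, L_c\}$ of Figure \ref{figure:triangle} coincides exactly with $\mathscr{C}_\Delta$. The inclusion $\subseteq$ is a direct computation on each generator; the reverse inclusion is a modest integer-programming problem, requiring one to invert a rank-six linear system on the corner coordinates, verify that the mod $3$ residues uniquely determine the honeycomb contribution, and show that the leftover corner arc multiplicities are all nonnegative precisely when the rhombus inequalities hold. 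Once this local statement is in hand, globalization to arbitrary $\mathscr{T}$ is essentially automatic, because the inequalities and congruences defining $\mathscr{C}_\mathscr{T}$ are all local to the individual triangles of $\mathscr{T}$, while the bigon reconstruction is unique by admissibility.
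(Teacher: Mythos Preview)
This theorem is not proved in the present paper; it is cited as \cite[Theorem~80]{DouglasArxiv20}, so there is no local proof to compare against.  That said, your proposal has a genuine gap at the bigon step that would also block a direct comparison with the companion paper's argument.

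Your claim that ``such a ladder is determined up to isotopy by its boundary data: every admissible crossing configuration realizes a unique combinatorial matching of in- and out-strands across the bigon'' is false as stated.  The edge coordinates on a bigon record only the \emph{counts} of in- and out-strands on each boundary edge, not the \emph{matching} (which strand on $E$ connects to which strand on $E'$).  Many inequivalent admissible ladder webs share the same in/out counts.  This breaks both halves of your argument.  For surjectivity, the construction of $\Psi_\mathscr{T}$ requires an unspecified choice of ladder in each bigon, and you do not check that the glued result is a \emph{reduced} web on $\widehat{S}$ (nor that different choices yield parallel-equivalent webs).  For injectivity, two reduced webs in good position can have identical triangle schematics---hence identical coordinates---yet different corner-arc orderings in the triangles and correspondingly different bigon ladders.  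Your appeal to ``the allowed reordering of corner arcs in each triangle'' is exactly the move that Remark~\ref{rem:cornerarcschematics} warns is available \emph{only} for ideal polygons; on a general marked surface, permuting corner arcs in a triangle of $\mathscr{T}$ is not a parallel-equivalence move in $\widehat{S}$ and can change the global isotopy class.

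The local triangle step is essentially correct: as recorded later in the paper (Proposition~\ref{prop:sector-decomp-triangle} and Remark~\ref{rem:word-of-caution}), the cone $\mathscr{C}_\Delta$ decomposes into two full $7$-dimensional sectors $\mathscr{C}_\Delta^{\mathrm{out}}$ and $\mathscr{C}_\Delta^{\mathrm{in}}$, each freely generated by seven of your eight irreducibles, so the honeycomb-plus-corner-arc decomposition is indeed unique once a honeycomb sign is fixed.  But the hard content of the theorem lies precisely in controlling the global ambiguity coming from the bigons---understanding why all good positions with the same triangle data are parallel-equivalent and why some (in fact any) admissible gluing produces a reduced representative---and your outline does not engage with this.
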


We will need the following fact, which is immediate from the definitions.  
\begin{observation}
\label{lemma:ww}
For any disjoint reduced webs  $W,W'\in \mathcal{W}_{\widehat{S}}$, we have $W \cup W^\prime \in \mathcal{W}_{\widehat{S}}$ and
\begin{equation*}\Phi_{\mathcal{T}}(W\cup W')=\Phi_{\mathcal{T}}(W)+\Phi_{\mathcal{T}}(W')  \in \mathcal{C}_\mathcal{T}.\qed\end{equation*}
\end{observation}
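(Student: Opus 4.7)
The assertion has two parts: (i) $W \cup W' \in \mathscr{W}_{\widehat{S}}$, i.e.\ the disjoint union of two reduced webs is reduced, and (ii) the coordinate map is additive on disjoint pairs. Both follow essentially from the definitions.

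For (i), the reduced condition (\S \ref{sssec:reduced-webs}) forbids only internal faces of fewer than six sides and the specific external $2$-, $3$-, and H-$4$-faces. Since $W$ and $W'$ are disjoint, each face of $W \cup W'$ is a piece of a face (or non-face complementary region) of one web cut by components of the other. A short case analysis shows that cutting an allowed face of $W$ by a disjoint subweb of $W'$ (or vice versa) cannot produce any of the forbidden face types, so $W \cup W'$ remains reduced.

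For (ii), I would put $W$ and $W'$ simultaneously into good position (\S \ref{sssec:splitidealtriangulationsandgoodposition}) with respect to the split ideal triangulation $\mathscr{T}$, maintaining disjointness by a small perturbation so that $W \cup W'$ is also in good position. In each ideal triangle $\Delta$, the $7$-tuple $(x,w,v,u,t,y,z) \in \mathbb{Z}_+^7$ recording the schematic of $(W \cup W')|_\Delta$ is the componentwise sum of the corresponding $7$-tuples for $W|_\Delta$ and $W'|_\Delta$: the six corner-arc counts (and similarly the bigon ladder-web strand counts) are additive as intersection numbers, and the honeycomb size $x$ is additive because same-oriented honeycombs in good position combine into a single honeycomb of summed size. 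Since the local coordinate formula from \S \ref{sssec:definitionofthetropicalwebcoordinates} expresses $\Phi(W|_\Delta)$ as a fixed linear combination of the eight basis webs of Figure~\ref{figure:triangle} with these $7$-tuple coefficients, and since gluing local coordinates across the edges of $\mathscr{T}$ commutes with addition, the desired identity $\Phi_{\mathscr{T}}(W \cup W') = \Phi_{\mathscr{T}}(W) + \Phi_{\mathscr{T}}(W')$ follows.

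The main delicate point is the simultaneous good-position arrangement — in particular, ensuring that disjoint same-oriented honeycombs can be isotoped to merge and that opposite-oriented honeycombs of $W$ and $W'$ do not obstruct good position of the union. The invariance of $\Phi_{\mathscr{T}}$ under the choice of good-position representative (Theorem \ref{thm:main-theorem}) makes the argument independent of such choices, so any difficult cases can be handled by passing to a convenient representative of the parallel equivalence class of $W \cup W'$.
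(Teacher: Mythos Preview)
The paper does not prove this statement; it is declared ``immediate from the definitions.'' Your approach is the natural one, and your sketch of part~(i) is essentially correct: since $W$ and $W'$ are disjoint, every internal face of $W\cup W'$ has boundary a connected cycle lying entirely in one of $W$ or $W'$ (hence $\geq 6$ sides), and any external face touching both webs has at least four sides and at least two components, so is never a forbidden $2$-, $3$-, or H-$4$-face.

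For part~(ii), however, there is a real gap. Superposing good-position representatives of $W$ and $W'$ does \emph{not} put $W\cup W'$ in good position: two separate honeycombs in a single triangle---whether same- or opposite-oriented---is not a honeycomb web. Your appeal to invariance of $\Phi_{\mathscr{T}}$ does not close this, since once you re-position $W\cup W'$ you must still relate its coordinates to those of $W$ and $W'$ computed in \emph{their} good positions. A cleaner route is to put $W\cup W'$ itself into good position first: since the honeycomb in each triangle is connected and $W,W'$ remain disjoint under the isotopy, the honeycomb lies entirely in one of them, so in every triangle the decomposition is (honeycomb $+$ corner arcs) $\sqcup$ (corner arcs only), and local additivity is then transparent. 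One still has to check that $W$ and $W'$ are themselves in good position after this isotopy (the bigon minimality condition for each piece is not automatic), which is where ``immediate'' becomes less immediate. That said, in every application the paper makes of this observation (Lemma~\ref{lemma:add} and the proof of Theorem~\ref{thm:second-main-theorem}) one of the two webs is a disjoint union of corner arcs, and in that case your argument goes through without any of these subtleties.
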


\begin{figure}[t]
\includegraphics[scale=0.47]{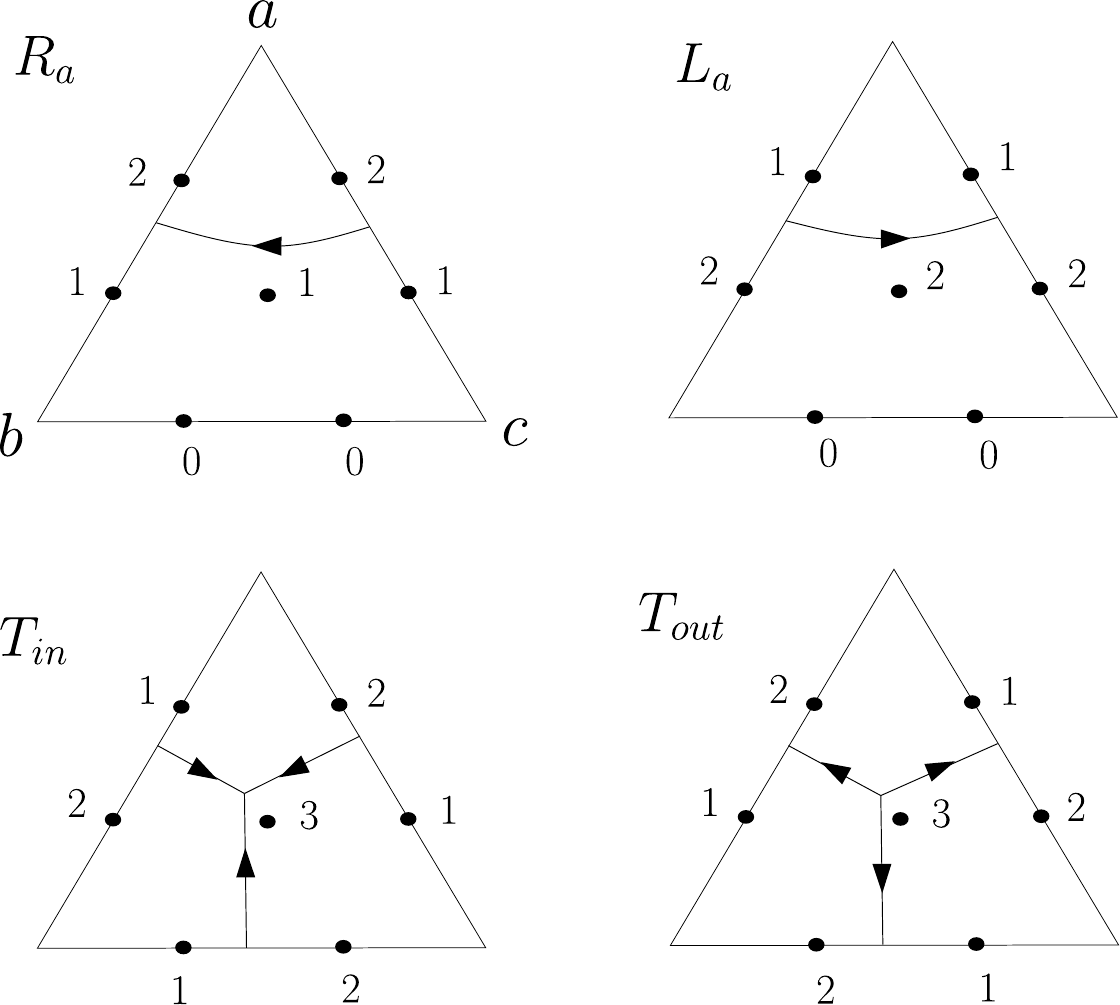}
\caption{     Tropical web coordinates for the eight `irreducible' reduced webs in the triangle.  (The coordinates for the other four arcs $R_b$, $L_b$, $R_c$, $L_c$ are obtained by triangular symmetry.)}
\label{figure:triangle}
\end{figure}

\begin{figure}[t]
\includegraphics[width=\textwidth]{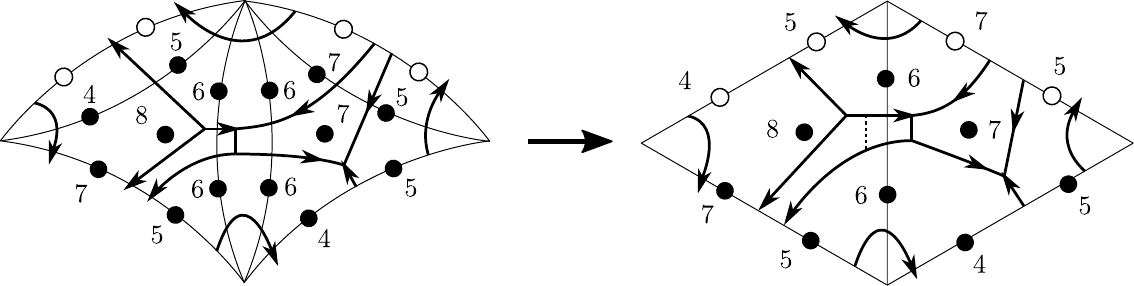}
\caption{     Gluing construction for the tropical coordinates for a reduced web in the once punctured torus.}
\label{fig:coordinates-example}
\end{figure}

\section{Naturality of the web tropical coordinates}
\label{section:fe}
	
In Section \ref{section:wa}, we recalled the construction \cite{DouglasArxiv20} of the web tropical coordinate map $\Phi_\mathcal{T} : \mathcal{W}_{\widehat{S}} \to \mathcal{C}_\mathcal{T} \subset \mathbb{Z}_+^N$, depending on a choice of ideal triangulation $\mathcal{T}$ of the marked surface $\widehat{S}$.  By Theorem \ref{thm:main-theorem}, $\Phi_\mathcal{T}$ is a bijection. 

In this section, we show that these coordinates are `natural' with respect to changing the triangulation $\mathcal{T} \to \mathcal{T}^\prime$.  That is, the induced coordinate change map $\mathcal{C}_\mathcal{T} \to \mathcal{C}_{\mathcal{T}^\prime}$ is a tropical $\mathcal{A}$-coordinate cluster transformation, in the language of Fock-Goncharov \cite{FockIHES06}.  

\begin{remark}
\label{rem:regarding-flip-proof}
See \cite{Shen1} for a more conceptual proof of the main result of this section, and paper,   Theorem \ref{thm:second-main-theorem}.  Moreover, in \cite{Shen1} the tropical web coordinates are further realized in a new and more topological way, via an algebraic intersection number between the webs and the dual web laminations.  The work of \cite{Shen1} more satisfactorily explains the generalization to the rank $2$ setting of Fock--Goncharov's theory of rank $1$ laminations.

The first version of the arXiv version \cite{DouglasArxiv20b} of this article proved Theorem \ref{thm:second-main-theorem} via a case-by-case analysis consisting of $42$ cases, $9$ of which were demonstrated and the remaining $33$ of which were variants of these $9$ cases and so were omitted; compare Section \ref{ssec:42-reduced-web-families-in-the-square}.  In the subsequent arXiv versions and present version of this article, we have replaced this case-by-case analysis with a more uniform and complete proof.  (In Appendix \ref{ssec:flip-examples}, as a concrete demonstration we still provide $3$ of the cases from our original proof of Theorem \ref{thm:second-main-theorem}.)
\end{remark}

\subsection{Naturality for the square}
\label{ssec:precisestatementofnaturality}
	  
\begin{definition}	
Recall that an \emph{ideal square} $\square$ is a disk with four marked points on its boundary.    An ideal triangulation of $\Box$ is a choice of diagonal of the square; there are two such triangulations, related by a \emph{diagonal flip}.  
\end{definition}
	
\begin{definition}
\label{def:mutationmap}
Let $\mathcal{T}$ and $\mathcal{T}^\prime$ be the two ideal triangulations of the square $\Box$, as in Figure \ref{figure:flip} ($\mathcal{T}$ on the left, $\mathcal{T}^\prime$ on the right).  The \emph{tropical $\mathcal{A}$-coordinate cluster transformation for the square} is the piecewise-linear function 
\begin{equation*}
\mu_{\mathcal{T}^\prime, \mathcal{T}} : \mathbb{Z}^{12} \to \mathbb{Z}^{12}
\end{equation*}
defined by
\begin{equation*}
\mu_{\mathcal{T}^\prime, \mathcal{T}}(x_1, x_2, x_3, x_4, x_5, x_6, x_7, x_8, y_1, y_2, y_3, y_4)
=
(x^\prime_1, x^\prime_2, x^\prime_3, x^\prime_4, x^\prime_5, x^\prime_6, x^\prime_7, x^\prime_8, z_1, z_2, z_3, z_4),
\end{equation*}
where the right hand side of the equation is given by   \eqref{eq:boundarycoords}, \eqref{equation:mu1}, \eqref{equation:mu2}, \eqref{equation:mu3}, \eqref{equation:mu4} from the introduction.  See also Figure \ref{figure:flip}.  (Here, we think of the domain of $\mu_{\mathcal{T}^\prime, \mathcal{T}}$ as associated to $\mathcal{T}$, and the codomain to $\mathcal{T}^\prime$.)  
\end{definition}	
	
Note  that  \eqref{equation:mu3}, \eqref{equation:mu4} use  \eqref{eq:boundarycoords}, \eqref{equation:mu1}, \eqref{equation:mu2}.
	
The main result of this paper is: 
	
\begin{theorem}
\label{thm:second-main-theorem}
Let $\mathcal{T}$ and $\mathcal{T}^\prime$ be the two ideal triangulations of the square $\Box$, and let $\Phi_\mathcal{T} : \mathcal{W}_\Box \to \mathcal{C}_\mathcal{T} \subset \mathbb{Z}_+^{12}$ and $\Phi_{\mathcal{T}^\prime} : \mathcal{W}_\Box \to \mathcal{C}_{\mathcal{T}^\prime} \subset \mathbb{Z}_+^{12}$ be the associated web tropical coordinate maps. Then,
\begin{equation*}
\mu_{\mathcal{T}^\prime, \mathcal{T}}(c) =\Phi_{\mathcal{T}^\prime} \circ \Phi_\mathcal{T}^{-1}(c)   \in  \mathcal{C}_{\mathcal{T}^\prime}  \,\,  ( c \in \mathcal{C}_\mathcal{T} ).
\end{equation*}
\end{theorem}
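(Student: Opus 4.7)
The plan is to fix an arbitrary reduced web $W \in \mathscr{W}_\Box$, directly compute $\Phi_\mathscr{T}(W)$ from a choice of good position for $W$ with respect to the split triangulation $\mathscr{T}$, and then explicitly rearrange $W$ into good position with respect to $\mathscr{T}^\prime$ in order to compute $\Phi_{\mathscr{T}^\prime}(W)$; comparing the two computations will yield the mutation formula. Since $\Phi_\mathscr{T}$ is a bijection onto $\mathscr{C}_\mathscr{T}$ by Theorem \ref{thm:main-theorem}, it suffices to verify the identity $\Phi_{\mathscr{T}^\prime}(W) = \mu_{\mathscr{T}^\prime, \mathscr{T}}(\Phi_\mathscr{T}(W))$ for every $W \in \mathscr{W}_\Box$.

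The first step is to classify webs in good position with respect to $\mathscr{T}$ into the 42 combinatorial families $\mathscr{W}^k_{\mathscr{T}|_\Box}$ alluded to in \S \ref{sec:intro}. Each family is determined by the orientations (if any) of the honeycombs in the two ideal triangles of $(\Box,\mathscr{T})$, together with the admissible ladder pattern on the central bigon and the corner arc data; within each family, a web is parameterized by nonnegative integers recording honeycomb sizes, corner arc counts, and rung counts. For each of the 42 families I would give an explicit topological isotopy $W \mapsto W^\prime$ putting $W$ into good position with respect to $\mathscr{T}^\prime$: this amounts to sliding honeycombs across the flipped diagonal, cancelling whatever external 2-, 3-, or H-4-faces are introduced, and re-routing the central ladder through the new bigon. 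The parameters of $W^\prime$ with respect to $\mathscr{T}^\prime$ are then explicit functions of the original parameters, from which both sides of the desired identity can be read off.

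A crucial observation is that the $\max$'s appearing in Equations \eqref{equation:mu1}--\eqref{equation:mu4} correspond exactly to which of two competing topological resolutions survives the flip — for instance, whether an out-honeycomb in one triangle and an in-honeycomb in the other partially cancel, and in which direction the surviving honeycomb propagates. Thus the 42 sectors $\mathscr{C}^k_\mathscr{T}$ of Theorem \ref{thm:second-theorem-intro} should correspond precisely to the 42 linearity regions of $\mu_{\mathscr{T}^\prime,\mathscr{T}}$, and within each sector the formula collapses to a linear map that I would verify by direct computation. To cut down the bookkeeping substantially, I would invoke Observation \ref{lemma:ww}: tropical web coordinates are additive under disjoint union, and $\mu_{\mathscr{T}^\prime,\mathscr{T}}$ is linear on each sector, so the identity needs only be checked on a $\mathbb{Z}_+$-spanning set for each sector — the 12 Hilbert-basis webs generating $\mathscr{C}^k_\mathscr{T}$, drawn from a global set of 22 atomic webs.

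The main obstacle is twofold. First, one must prove that the topological rearrangement procedure actually produces a web in good position with respect to $\mathscr{T}^\prime$; equivalently, the resulting configuration must be reduced and admissible so that Theorem \ref{thm:main-theorem} applies to extract its coordinates. This is where the careful analysis of parallel moves (Figure \ref{fig:boundary-parallel-move}) and the distinction between the interior and corner rhombus inequalities enters. Second, for each of the 42 families one must verify that the correct branch of each $\max$ in Equations \eqref{equation:mu1}--\eqref{equation:mu4} is selected by the topological resolution; this amounts to a consistency check matching the sector decomposition of $\mathscr{C}_\mathscr{T}$ with that of $\mathscr{C}_{\mathscr{T}^\prime}$ under $\mu_{\mathscr{T}^\prime,\mathscr{T}}$. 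Once the 22 atomic cases are handled, additivity and sector-wise linearity propagate the identity to all of $\mathscr{W}_\Box$, completing the proof.
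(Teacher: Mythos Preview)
Your reduction step contains a genuine gap. You claim that the 42 sectors $\mathscr{C}^k_\mathscr{T}$ coincide with the linearity regions of $\mu_{\mathscr{T}',\mathscr{T}}$, so that checking the identity on the 12 Hilbert-basis generators of each sector suffices. This is false: the paper's own Family~(7) computation in \S\ref{ssec:flip-examples} shows that within the single sector $\mathscr{C}^2_\mathscr{T}$ there is a further case split $z \geq t$ versus $z \leq t$, and Equation~\eqref{equation:mu4} selects different branches in the two sub-cases. So $\mu_{\mathscr{T}',\mathscr{T}}$ is not linear on $\mathscr{C}^2_\mathscr{T}$. More fundamentally, even if $\mu$ were sector-wise linear, Observation~\ref{lemma:ww} only gives additivity of $\Phi_\mathscr{T}$ under \emph{disjoint} union; it does not say that $\Phi_{\mathscr{T}'} \circ \Phi_\mathscr{T}^{-1}$ is linear on a sector, because an arbitrary element of $\mathscr{C}^k_\mathscr{T}$ need not equal $\Phi_\mathscr{T}$ of a disjoint union of the 12 generating webs.

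The paper takes a different route that sidesteps explicit rearrangement entirely. The key trick is to split any $W$ into its corner part $W_r$ and cornerless part $W_c$. Lemma~\ref{lemma:add} shows $\mu$ is genuinely additive across this particular decomposition --- not because $\mu$ is linear, but because for each corner arc every $\max\{u,v\}$ in \eqref{equation:mu1}--\eqref{equation:mu4} degenerates to $\max\{u,u\}$. This reduces the problem to cornerless webs, and by $90^\circ$ symmetry to the single equation~\eqref{equation:mu1}. For that equation the paper exploits a hidden algebraic identity: for cornerless $W$ one has $z_2 = x_2' + x_3' = x_2 + x_3$ directly from the triangle coordinate rules, \emph{without ever computing the $\mathscr{T}'$-good position}, and then a short topological argument (Claims~\ref{claim:firstmainclaiminproof}--\ref{claim:secondmainclaiminproof}, using that for cornerless webs two specific bigon strand-counts cannot both be nonzero) gives $x_2 + x_3 = \max\{x_2+y_3,\, x_3+y_1\} - y_2$. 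Your family-by-family rearrangement could in principle be made to work --- the paper carries out three sample families in \S\ref{ssec:examplessquare} as a sanity check --- but it requires parametrizing and verifying all sub-cases within each family directly, not the Hilbert-basis shortcut you proposed.
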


\begin{remark}
Note it is not even clear, a priori, from the definitions that $\mu_{\mathcal{T}^\prime, \mathcal{T}}(c) \geq 0$ for $c \in \mathcal{C}_\mathcal{T}$.  
\end{remark}

\subsection{Proof of Theorem \ref{thm:second-main-theorem}}
\label{ssec:proofofnaturality}
	   
By definition of the tropical coordinates, and of good position of a reduced web $W$ in $\mathcal{W}_\Box$ with respect to the triangulations $\mathcal{T}$ and $\mathcal{T}^\prime$, we immediately get:  

\begin{observation}
\label{obs:eq1forallwebs}
For all $W \in \mathcal{W}_\Box$,  the images $c=\Phi_\mathcal{T}(W) \in \mathcal{C}_\mathcal{T}$ and $\Phi_{\mathcal{T}^\prime} \circ \Phi_\mathcal{T}^{-1}(c) \in \mathcal{C}_{\mathcal{T}^\prime}$ satisfy  \eqref{eq:boundarycoords}.  \qed
\end{observation}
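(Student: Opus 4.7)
The plan is to verify this observation essentially directly from the construction of $\Phi_\mathscr{T}$ recalled in \S\ref{sssec:definitionofthetropicalwebcoordinates}, by exploiting the fact that the coordinates $x_1,\dots,x_8$ sit at non-ideal vertices of the $3$-triangulation that lie on $\partial\Box$, and $\partial\Box$ is the same for both triangulations $\mathscr{T}$ and $\mathscr{T}'$.

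First I would identify the eight boundary coordinates $x_i$ with coordinates attached to the eight non-ideal vertices on $\partial\Box$ in the $3$-triangulation (two per boundary edge of $\Box$). Since the only difference between $\mathscr{T}$ and $\mathscr{T}'$ is a flip of the interior diagonal, the two boundary non-ideal vertices on each boundary edge $E$ of $\Box$ are the same element of $V_{\mathscr{T}_3}$ and of $V_{\mathscr{T}'_3}$; it therefore makes sense to compare the values $x_i$ and $x_i^\prime$.

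Next I would put a representative reduced web $W\in\mathscr{W}_\Box$ simultaneously into good position with respect to the split triangulations of $\mathscr{T}$ and of $\mathscr{T}'$, and invoke the ``gluing'' remark at the end of \S\ref{sssec:definitionofthetropicalwebcoordinates}: the pair of coordinates of $\Phi(W|_\Delta)$ along any edge $E$ depends only on the number of in- and out-oriented strands of $W$ crossing $E$, not on the rest of the triangle. Applied to the four boundary edges of $\Box$, each of which is incident to a single triangle in both $\mathscr{T}$ and in $\mathscr{T}'$, this gives that the two coordinates at the non-ideal vertices of $E$ are read off from the oriented intersection $W\cap E$ alone. But $W\cap E$ is an invariant of (the parallel equivalence class of) $W$ and of the edge $E$, independent of how the interior of $\Box$ is triangulated. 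Concretely, unfolding the formula $\Phi(W|_\Delta)=x\Phi(T)+v\Phi(L_a)+\cdots+y\Phi(R_c)$ on the adjacent triangle shows that $x_i$ is an explicit nonnegative integer combination of the schematic counts of corner arcs and honeycomb edges meeting $E$, and these counts are determined by the transverse intersection data along $E$.

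I expect essentially no obstacle: the content reduces to the remark, already noted in \S\ref{sssec:definitionofthetropicalwebcoordinates}, that the edge coordinates in the gluing are shared between the two adjacent triangles precisely because they measure the bigon crossing data at that edge. The only point requiring a sentence of care is that good position with respect to $\mathscr{T}$ and $\mathscr{T}'$ can be arranged so that $W$ has the same intersection with each boundary edge of $\Box$; this follows because good position only constrains $W$ locally near each triangle and bigon of the split triangulation, and one may first isotope $W$ to have minimal boundary intersection (which is a topological invariant of $W$) and then independently arrange the interior.
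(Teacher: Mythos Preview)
Your proposal is correct and follows essentially the same reasoning as the paper, which justifies the observation in a single sentence (``By definition of the tropical coordinates, and of good position \ldots\ we immediately get''). Your unpacking of why the two edge coordinates on a boundary edge $E$ depend only on the oriented intersection $W\cap E$---and hence are unchanged by flipping the interior diagonal---is exactly the intended content; the only simplification is that you need not arrange \emph{simultaneous} good position, since $\Phi_\mathscr{T}(W)$ and $\Phi_{\mathscr{T}'}(W)$ are each independent of the chosen good position, and it suffices that both good positions realize the (invariant) minimal geometric intersection with~$\partial\Box$.
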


\begin{definition}
\label{def:corner-arcs}
Let the punctures of the square $\Box$ be labeled $a$, $b$, $c$, $d$ as in Figure \ref{figure:square1}.  Also as in the figure, define the 8 oriented \emph{corner arcs} $L_a, R_a, L_b, R_b, L_c, R_c, L_d, R_d$ in $\mathcal{W}_\Box$.  Their 12 coordinates are provided in the figure as well.  
\end{definition}

One checks by direct computation that:

\begin{observation}
\label{obs:thmforarcs}
The images  $c=\Phi_\mathcal{T}(W) \in \mathcal{C}_\mathcal{T}$, for $W=L_a, R_a, L_b, R_b, L_c, R_c, L_d, R_d$ any of the $8$ corner arcs, satisfy Theorem {\upshape\ref{thm:second-main-theorem}}.  \qed
\end{observation}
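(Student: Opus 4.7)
The plan is a direct verification, arc by arc. Given a corner arc $W \in \{L_a, R_a, L_b, R_b, L_c, R_c, L_d, R_d\}$, the coordinates $c = \Phi_\mathscr{T}(W) \in \mathbb{Z}_+^{12}$ are tabulated in Figure \ref{figure:square1}; similarly, the coordinates $c^\prime = \Phi_{\mathscr{T}^\prime}(W)$ are read from the analogous figure for $\mathscr{T}^\prime$, or computed afresh by placing $W$ in good position with respect to the split $\mathscr{T}^\prime$ and applying the local rule of \S \ref{sssec:definitionofthetropicalwebcoordinates}. The claim then reduces to plugging $c$ into Equations \eqref{eq:boundarycoords}-\eqref{equation:mu4} and checking that the output coincides with $c^\prime$.

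Before running through eight cases, I would exploit the symmetries that are compatible with the flip. The flip $\mathscr{T} \leftrightarrow \mathscr{T}^\prime$ is equivariant under the $\pi$-rotation of $\square$ swapping $a \leftrightarrow c$ and $b \leftrightarrow d$, and under an appropriate orientation-reversing reflection that exchanges $L$- and $R$-type arcs. Combining these reduces the eight corner arcs to essentially two inequivalent cases, distinguished by whether the arc sits at a corner lying on the diagonal of $\mathscr{T}$ (i.e., $a$ or $c$, say) or on the diagonal of $\mathscr{T}^\prime$ (i.e., $b$ or $d$). Each remaining case then has to be checked once.

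In each case the computation is very short because almost all of the 12 coordinates of a corner arc vanish. The only nonzero entries are the two boundary coordinates $x_i$ adjacent to the arc's corner and the one or two face coordinates $y_j$ of the triangles containing these boundary edges. Each of Equations \eqref{equation:mu1}-\eqref{equation:mu4} is then a maximum of two sums, most of whose summands are zero, so each $\max$ resolves to a single explicit integer, and the resulting tuple $\mu_{\mathscr{T}^\prime, \mathscr{T}}(c)$ can be read off directly and compared with $\Phi_{\mathscr{T}^\prime}(W)$. For instance, for $W = L_a$ where $a$ is a corner on the diagonal of $\mathscr{T}$, one finds that after the flip $a$ is no longer on the diagonal of $\mathscr{T}^\prime$, so the two face coordinates $y$ redistribute predictably, and the $\max$ in each mutation formula picks out exactly the term corresponding to the triangle of $\mathscr{T}^\prime$ in which the arc now lies.

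There is no conceptual obstacle. The only care required is precise bookkeeping: aligning the index labels of Figure \ref{figure:flip} with those of Figure \ref{figure:square1} for each of the two triangulations, and tracking which of $y_1, y_2, y_3, y_4$ (respectively $z_1, z_2, z_3, z_4$) corresponds to which triangle of $\mathscr{T}$ (respectively $\mathscr{T}^\prime$). Once this bookkeeping is fixed consistently, the verification in each of the two essentially different cases amounts to a handful of small integer additions and comparisons, after which Observation \ref{obs:thmforarcs} follows.
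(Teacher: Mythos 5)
Your proposal is correct and matches the paper's approach: the paper likewise disposes of Observation \ref{obs:thmforarcs} by direct computation, reading the $12$ coordinates of each corner arc from Figure \ref{figure:square1} and checking Equations \eqref{eq:boundarycoords}--\eqref{equation:mu4} against the coordinates in the flipped triangulation. Your symmetry reduction to two essentially distinct cases (corner on the diagonal of $\mathscr{T}$ versus on the diagonal of $\mathscr{T}^\prime$) is a harmless efficiency refinement of the same case check; just note that an arc at a corner lying on the diagonal also has a nonzero diagonal-edge coordinate, not only boundary and face coordinates.
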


\begin{definition}
\label{def:cornerless-webs-in-the-square}
A given reduced web $W$ in $\mathcal{W}_\Box$ is the disjoint union of, first, all its corner arc components, together called the \emph{corner part} and denoted $W_r$; and, second, their complement $W_c = W - W_r$, which we call the \emph{cornerless part} of the web $W$.    

A reduced web $W$ is \emph{cornerless} if $W=W_c$.  That is, $W$ has no corner arcs.  

Let $\mathcal{R} \subset \mathcal{W}_\Box$ be the set of \emph{corner webs}, that is, webs whose cornerless parts are empty:  $W = W_r$.   That is, an element of $\mathcal{R}$ is a disjoint union of corner arcs.  
\end{definition}

\begin{lem}
\label{lemma:add}
For any disjoint reduced webs $W \in \mathcal{R}$ and $W'\in \mathcal{W}_\square$, we have $W \cup W^\prime \in \mathcal{W}_\Box$ and 
\begin{equation*}
\mu_{\mathcal{T}^\prime, \mathcal{T}}(\Phi_\mathcal{T}(W))+\mu_{\mathcal{T}^\prime, \mathcal{T}}(\Phi_\mathcal{T}(W'))=\mu_{\mathcal{T}^\prime, \mathcal{T}}(\Phi_\mathcal{T}(W\cup W'))    \in  \mathbb{Z}^{12}.
\end{equation*}
\end{lem}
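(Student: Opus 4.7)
The plan is to reduce the statement, via additivity of $\Phi_\mathscr{T}$, to a finite verification that each of the four $\max$-expressions in the definition of $\mu := \mu_{\mathscr{T}', \mathscr{T}}$ is linear when restricted to the coordinate direction of any single corner arc.

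By Definition \ref{def:cornerless-webs-in-the-square}, a corner web $W \in \mathscr{R}$ is a disjoint union of corner arcs $C_1, \ldots, C_k$. Iterating Observation \ref{lemma:ww} gives
\begin{equation*}
\Phi_\mathscr{T}(W \cup W') \;=\; \Phi_\mathscr{T}(W) + \Phi_\mathscr{T}(W') \;=\; \sum_{i=1}^{k} \Phi_\mathscr{T}(C_i) + \Phi_\mathscr{T}(W').
\end{equation*}
An easy induction on $k$ therefore reduces the conclusion to proving
\begin{equation*}
\mu\bigl(\Phi_\mathscr{T}(C) + v\bigr) \;=\; \mu\bigl(\Phi_\mathscr{T}(C)\bigr) + \mu(v)
\end{equation*}
for each single corner arc $C \in \{L_a, R_a, L_b, R_b, L_c, R_c, L_d, R_d\}$ and each $v \in \mathbb{Z}^{12}$.

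Unpacking Definition \ref{def:mutationmap}, the eight identities \eqref{eq:boundarycoords} are automatically additive, so only the four output coordinates $z_1, z_2, z_3, z_4$ require attention. After substituting \eqref{equation:mu1}--\eqref{equation:mu2} into \eqref{equation:mu3}--\eqref{equation:mu4}, each $z_j$ acquires the shape $\max\{A_j, B_j\} - L_j$, with $A_j, B_j, L_j$ piecewise-linear functions of $(x_i, y_k)$. The central claim I would prove is
\begin{equation*}
A_j\bigl(\Phi_\mathscr{T}(C)\bigr) \;=\; B_j\bigl(\Phi_\mathscr{T}(C)\bigr) \qquad (j \in \{1, 2, 3, 4\})
\end{equation*}
for every corner arc $C$. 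Given this, the elementary identity $\max\{\alpha + a, \beta + b\} = \max\{\alpha, \beta\} + a$ whenever $a = b$ propagates through the nested structure: the cases $j = 2, 4$ first force $z_2$ and $z_4$ to be linear along the ray through $\Phi_\mathscr{T}(C)$, which in turn legitimizes treating the outer expressions for $z_1$ and $z_3$ as genuine $\max$'s of linear forms, after which the cases $j = 1, 3$ yield their linearity as well. This is exactly the desired additivity.

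What remains, and is the main obstacle, is the proof of the claim itself. This is a finite case analysis over the eight corner arcs of Definition \ref{def:corner-arcs} (whose coordinates are read off from Figure \ref{figure:square1}) and the four $\max$-expressions. The dihedral symmetry of the marked square, cycling the four corners and swapping the two triangulations by a flip, together with the orientation-reversing symmetry exchanging $L_\bullet$ and $R_\bullet$, should reduce the $32$ equalities to a small number of representatives. The nested cases $j = 1, 3$ are the most delicate, since their inner forms involve the already-substituted expressions for $z_2$ and $z_4$; I would therefore handle $j = 2, 4$ first and $j = 1, 3$ afterward.
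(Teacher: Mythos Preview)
Your approach is essentially the same as the paper's: both reduce to single corner arcs via Observation~\ref{lemma:ww}, and both hinge on the key fact that for each corner arc the two arguments of every $\max$ in \eqref{equation:mu1}--\eqref{equation:mu4} coincide (the paper phrases this as ``always of the form $\max\{u,u\}-v$''), after which the identity $\max\{u+x,u+y\}-(v+z)=(\max\{u,u\}-v)+(\max\{x,y\}-z)$ gives additivity. Your treatment of the nested dependence of $z_1,z_3$ on $z_2,z_4$ is more explicit than the paper's, but the argument is the same.
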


\begin{proof}
By Observation \ref{lemma:ww}, we get
\begin{equation*}
\Phi_\mathcal{T}(W)+\Phi_\mathcal{T}(W')=\Phi_\mathcal{T}(W\cup W')    \in  \mathcal{C}_\mathcal{T}.
\end{equation*}
For any corner arc (Figure \ref{figure:square1}) thus for any $W\in \mathcal{R}$ (again by Observation \ref{lemma:ww}), the left hand sides of  \eqref{equation:mu1}, \eqref{equation:mu2}, \eqref{equation:mu3}, \eqref{equation:mu4} are always of the form $\max\{u,u\}-v$. Since 
\begin{equation*}
(\max\{u,u\}-v)+ (\max\{x,y\}-z)=\max\{u+x,u+y\}-(v+z)  \in \mathbb{Z}
\end{equation*}
we obtain the desired equality.  
\end{proof}

\begin{proof}[Proof of Theorem {\upshape \ref{thm:second-main-theorem}}]

Recall by Theorem \ref{thm:main-theorem} that any $c \in \mathcal{C}_\mathcal{T}$ is of the form $c=\Phi_\mathcal{T}(W)$ for some $W \in \mathcal{W}_\Box$. For any reduced web $W\in \mathcal{W}_\square$, suppose that its coordinates via $\Phi_\mathcal{T}$ are labeled as in the left hand side of Figure \ref{figure:flip},  and via $\Phi_{\mathcal{T}'}$ as in the right hand side of Figure \ref{figure:flip}.  By Observation \ref{obs:eq1forallwebs},  \eqref{eq:boundarycoords} is satisfied for any web $W$ in $\mathcal{W}_\Box$.  In addition,  by Observation \ref{obs:thmforarcs},  the    \eqref{equation:mu1}, \eqref{equation:mu2}, \eqref{equation:mu3}, \eqref{equation:mu4} are satisfied for any web $W$ in $\mathcal{R}$, that is, $W$ consisting only of corner arcs.  By Lemma   \ref{lemma:add} together with another application of Observation \ref{lemma:ww} to $\mathcal{T}^\prime$, we have thus reduced the problem to establishing  \eqref{equation:mu1}, \eqref{equation:mu2}, \eqref{equation:mu3}, \eqref{equation:mu4} for any cornerless web $W=W_c$. 

The main difficulty  is that, for a given cornerless web $W=W_c$ in good position with respect to the ideal triangulation $\mathcal{T}$, after flipping the diagonal $\mathcal{T} \to \mathcal{T}^\prime$ it is not obvious how $W$ `rearranges itself' back into good position with respect to the new triangulation $\mathcal{T}'$.  (See, however, Appendix \ref{ssec:flip-examples} for examples of this rearrangement into good position after the flip.)  

We circumvent this difficulty by solving the problem `uniformly', that is, without knowing how the new good position looks after the flip.  The hypothesis that the web $W=W_c$ does not have any corner arcs will be important here. 

To start, observe that it suffices to establish just  \eqref{equation:mu1}.  Indeed,  \eqref{equation:mu2}, \eqref{equation:mu3}, \eqref{equation:mu4} then immediately follow by 90 degree rotational symmetry.  (Solve for $y_1$ and $y_3$, respectively, in the last two equations.)

With this goal in mind, we argue
\begin{equation*}\tag{g}
\label{eq:main-lemma}
z_2 = x^\prime_2 + x^\prime_3 = x_2 + x_3 
=  \mathrm{max}(x_2 + y_3, x_3 + y_1) - y_2
  \in  \mathbb{Z}_+.
\end{equation*}

Throughout, consider Figure \ref{fig:flip-proof}, recalling the notion of a web schematic; see Section \ref{sssec:splitidealtriangulationsandgoodposition} and Remark \ref{rem:cornerarcschematics}.   

The second equation of \eqref{eq:main-lemma} has already been justified, by Observation \ref{obs:eq1forallwebs}.  

Let us justify the first equation of \eqref{eq:main-lemma}.  There are two cases, namely when $m^\prime$ represents an out- or an in-honeycomb.  

When $m^\prime$ is `out', we compute:
\begin{equation*}
x_2^\prime = b^\prime + 2 z^\prime + m^\prime, \,\,
x_3^\prime =  c^\prime + 2 y^\prime + 2 m^\prime, \,\,
z_2 = b^\prime + c^\prime + 2 y^\prime + 2 z^\prime + 3 m^\prime.
\end{equation*}
When $m^\prime$ is `in', we compute:
\begin{equation*}
x_2^\prime = b^\prime + 2 z^\prime + 2m^\prime, \,\,
x_3^\prime =  c^\prime + 2 y^\prime + m^\prime, \,\,
z_2 = b^\prime + c^\prime + 2 y^\prime + 2 z^\prime + 3 m^\prime.
\end{equation*}
In both cases, the desired formula $z_2 = x^\prime_2 + x^\prime_3$ holds.  

The justification of the third equation of \eqref{eq:main-lemma} is  more involved.  We begin with a topological consequence. 

\begin{claim}
\label{claim:firstmainclaiminproof}
Let $W = W_c$ be a cornerless reduced web in the square.  Up to $180$ degree rotational symmetry of the square, there are three cases.
\begin{enumerate}[label=\textnormal{(\Roman*)}]
\item  When the $n$ and $m$ honeycombs are both `out':  Then, 
\begin{equation*}
a + n + x = b + y \text{ and }  w+d = z + m +c.
\end{equation*}
Moreover, if $y \geq n+x$, then $b=0$; and, if $y \leq n+x$, then $a=0$.  
	 
\textup{(}Note this is the case displayed in the left hand side of  Figure {\upshape\ref{fig:flip-proof}}.\textup{)}	 
\item  When the $n$ honeycomb is `out', and the $m$ honeycomb is `in':  Then,  
\begin{equation*}
a + n + x = b + y + m \text{ and }  w+d = z  +c.
\end{equation*}
Moreover, if $y+m \geq n+x$, then $b=0$; and, if $y+m  \leq n+x$, then $a=0$.  	
\item  When the $n$ and $m$ honeycombs are both `in':  Then, 
\begin{equation*}
a +  x = b + y + m \text{ and }  w+d + n = z  +c.
\end{equation*}
Moreover, if $y +m \geq x$, then $b=0$; and, if $y+m \leq x$, then $a=0$.  
\end{enumerate}
\end{claim}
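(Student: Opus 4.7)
The plan is to prove Claim \ref{claim:firstmainclaiminproof} by a direct topological strand-counting argument, carried out separately in each of the three orientation sub-cases. In each sub-case, the two linear equations will express conservation of strands at the two endpoints of the diagonal of $\mathscr{T}$, while the dichotomy between $a = 0$ and $b = 0$ (and similarly for $c, d$) will come from combining the cornerless hypothesis with the admissibility of bigon crossings (Observation \ref{lemma:bigon}).

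For Case (1), where both honeycombs are out-oriented, I would first invoke the cornerless hypothesis to conclude that $W|_{\Delta_i}$ in each of the two triangles of $\mathscr{T}$ consists of exactly a single out-honeycomb (of size $n$ in one triangle, $m$ in the other), with no corner arcs whatsoever. By the schematic convention recalled in \S\ref{sssec:splitidealtriangulationsandgoodposition}, such an out-honeycomb contributes an equal number of outward-oriented strands to each of its three incident edges. Tracking how these honeycomb-induced strands are routed through the bigon along the diagonal and then into the honeycomb of the other triangle, one obtains the balance equation $a + n + x = b + y$ as a conservation law at the top endpoint of the diagonal, and the symmetric equation $w + d = z + m + c$ at the bottom endpoint.

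The alternative ``$a = 0$ or $b = 0$'' is the more delicate point, and this is where the cornerless hypothesis and Observation \ref{lemma:bigon} enter crucially. Because $W$ has no corner arcs, any strand visible near the top endpoint of the diagonal must pass through the adjacent bigon and connect on the other side rather than wrap around the corner. The variables $a$ and $b$ count two groups of such bigon strands with opposite relative orientation on the two sides of this endpoint; if both were nonzero, the corresponding strands would produce a non-admissible crossing in the bigon, contradicting Observation \ref{lemma:bigon}. Hence at most one of $a, b$ is nonzero. Feeding this into the balance equation $a + n + x = b + y$ then forces $b = 0$ exactly when $y \geq n + x$ and $a = 0$ exactly when $y \leq n + x$, as claimed; the statement for $c$ and $d$ follows identically from the second balance equation.

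Cases (2) and (3) will be handled by exactly the same method. Reversing the orientation of a honeycomb swaps its ``outgoing'' and ``incoming'' contributions on each edge, which moves the corresponding $n$ or $m$ term from one side of the balance equation to the other; this is precisely how one gets $a + n + x = b + y + m$ in Case (2) and $a + x = b + y + m$ in Case (3). The $180^\circ$ rotational symmetry noted in the claim reduces the a priori four orientation possibilities to the three sub-cases listed. I expect the main obstacle to lie in setting up an unambiguous bookkeeping convention for strand counts and their orientations at each vertex, and in verifying that admissibility truly forces the bigon strands counted by $a$ and $b$ to have opposite orientations near the relevant endpoint; once this geometric setup is made precise, the rest of the argument reduces to elementary counting.
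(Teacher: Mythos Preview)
Your proposal rests on a misreading of what ``cornerless'' means. Cornerlessness of $W$ in the square $\Box$ forbids only the eight corner arcs of Definition~\ref{def:corner-arcs}, i.e., arcs joining two adjacent \emph{boundary} edges of $\Box$. It does \emph{not} force the restrictions $W|_\Delta$, $W|_{\Delta'}$ to the two triangles of $\mathscr{T}$ to be pure honeycombs: a corner arc of a triangle at one of the two diagonal endpoints runs from a boundary edge of $\Box$ to the diagonal (an interior edge), and so is not a corner arc of the square. The eight parameters $a,b,c,d,x,y,z,w$ in Figure~\ref{fig:flip-proof} are exactly these triangle corner-arc counts at the diagonal endpoints; the cornerless hypothesis only sets the counts at the two \emph{off-diagonal} vertices to zero. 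Your opening move---that each $W|_{\Delta_i}$ ``consists of exactly a single out-honeycomb \dots\ with no corner arcs whatsoever''---is therefore false, and since the entire content of the claim is a relation among these generally nonzero counts, the proposal never engages with what must actually be proved. (Your later gloss of $a,b$ as ``bigon strands'' is closer to reality but contradicts your own setup.)

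For comparison, the paper obtains the two balance equations not from a vertex conservation law but from the bigon property: out-strands on one bigon edge equal in-strands on the other. In case~(1) the top triangle contributes $a+n+x$ out-strands and $w+d$ in-strands to its bigon edge ($a,x$ and $w,d$ being the two orientation classes of triangle corner arcs at the diagonal corners), while the bottom contributes $b+y$ in and $z+m+c$ out; matching gives both identities at once. For the dichotomy, the paper argues that if $y \ge n+x$ and $b \neq 0$, the identity forces $a \neq 0$; since $a$ and $b$ sit at the same bigon vertex and the off-diagonal corners are empty, an $a$-arc must then attach through the bigon to a $b$-arc, producing a corner arc of $\Box$---contradicting cornerlessness. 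Your admissibility route to ``not both nonzero'' is also legitimate (the caption to Figure~\ref{fig:flip-proof} notes it), but it too requires first identifying $a,b$ correctly as triangle corner arcs at a common diagonal vertex.
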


The key topological property used to prove all three statements of the claim is the following:  The number of `out' strands (resp. `in' strands) along one boundary edge of the bigon, as displayed on the left hand side of Figure \ref{fig:flip-proof}, is equal to the number of `in' strands (resp. `out' strands) along the other boundary edge of the bigon.  

We prove the first statement, (I), of the claim; the proofs of the second and third statements are similar.  So assume the $n$ and $m$ honeycombs are both `out'.  

By the above topological property, we have the desired two identities of the statement.  

For the second part of the statement:  When $y \geq n+x$, if $b$ were nonzero, then $a$ would have to be nonzero, since $b+y=a+n+x $.  Then $b$ would be attaching to $a$; see the schematic shown in the left hand side of Figure   \ref{fig:flip-proof-lemma} (see also  the caption of Figure \ref{fig:flip-proof}).   But this contradicts the hypothesis that $W$ has no corner arcs.  Similarly, $a=0$ when $y \leq n+x$; see  the right hand side of Figure   \ref{fig:flip-proof-lemma}.  This establishes the claim. 

\begin{claim}
\label{claim:secondmainclaiminproof}
Let $W = W_c$ be a cornerless reduced web in the square.  Up to $180$ degree rotational symmetry of the square, there are three cases.
\begin{enumerate}[label=\textnormal{(\Roman*)}]
\item  When the $n$ and $m$ honeycombs are both `out':  Then, $x_2 + y_3 \nabla x_3 + y_1$ if and only if $y \nabla n+x$, for $\nabla\in\{>,=,<\}$.

\textup{(}Note this is the case displayed in the left hand side of  Figure {\upshape\ref{fig:flip-proof}}.\textup{)}
\item  When the $n$ honeycomb is `out', and the $m$ honeycomb is `in':  Then, $x_2 + y_3 \nabla x_3 + y_1$ if and only if $y+m \nabla n+x$, for $\nabla\in\{>,=,<\}$.
\item  When the $n$ and $m$ honeycombs are both `in':  Then, $x_2 + y_3 \nabla x_3 + y_1$ if and only if $y+m \nabla x$, for $\nabla\in\{>,=,<\}$.  
\end{enumerate}
\end{claim}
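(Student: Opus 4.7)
Our plan is a direct computation in each of the three orientation cases. Each of the four quantities $x_2, x_3, y_1, y_3$ is a tropical $\mathcal{A}$-coordinate at a specific non-ideal vertex of the $3$-triangulation of $(\Box, \mathscr{T})$: explicitly, $x_2, x_3$ lie on the two boundary edges of the triangle containing the face-center $y_2$, while $y_1, y_3$ are the two non-ideal vertices on the shared diagonal. Using the gluing construction of \S \ref{sssec:definitionofthetropicalwebcoordinates} together with the explicit coordinates of the eight irreducible webs from Figure \ref{figure:triangle}, we may read each of these four numbers as a $\mathbb{Z}_+$-linear combination of the ten schematic parameters $a, b, c, d, x, y, z, w, n, m$ of Figure \ref{fig:flip-proof}, with coefficients depending on the orientations of the honeycombs $n$ and $m$.

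For case (1), where both $n$ and $m$ are out-oriented, we would form the difference $D := (x_2 + y_3) - (x_3 + y_1)$ and identify the cancellations. Many of the ten parameters appear with the same coefficient on both sides of $D$ and so drop out; the resulting expression should involve only a small subset of the variables. Substituting the topological identity $a + n + x = b + y$ from Claim \ref{claim:firstmainclaiminproof}(1), together with the cornerless dichotomy $ab = 0$, this remainder should simplify to a positive multiple of $y - (n + x)$, which yields the biconditional in case (1).

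Cases (2) and (3) are handled in parallel. The only changes are the coefficients with which the honeycomb $m$ (respectively $m$ and $n$) contributes to the coordinates on the edges of its own triangle when its orientation is reversed from out to in. Applying the identities and dichotomies in parts (2) and (3) of Claim \ref{claim:firstmainclaiminproof} in place of those of part (1), we expect $D$ to reduce to a positive multiple of $(y + m) - (n + x)$ in case (2), and of $(y + m) - x$ in case (3), completing the equivalence in each case.

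The main obstacle is the bookkeeping. For each of the three cases one must carefully read off Figure \ref{figure:triangle} and track how each of the ten schematic parameters contributes, with the appropriate small nonnegative coefficient, to each of the four coordinates $x_2, x_3, y_1, y_3$. Conceptually the cancellations that allow $D$ to reduce to a clean form are forced by the symmetry of the Fock-Goncharov quiver around the mutating vertex $y_2$ in Equation~\eqref{equation:mu1}: the fact that only four neighbors appear there (rather than the full six suggested by the $3$-triangulation geometry) already reflects such a cancellation, and the one we need in $D$ mirrors it. Verifying the details directly, however, is the most delicate step of the argument.
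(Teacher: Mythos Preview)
Your overall strategy --- compute $x_2, x_3, y_1, y_3$ explicitly from the schematic parameters, form the difference $D = (x_2 + y_3) - (x_3 + y_1)$, and simplify using the identities of Claim~\ref{claim:firstmainclaiminproof} --- is exactly the paper's approach. However, the specific simplification you sketch does not work as stated.

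In case (1) (both honeycombs ``out''), the paper computes
\[
x_2 = w + 2a + n, \quad y_3 = b + c + 2y + 2z + 3m, \quad x_3 = z + 2b + 2m, \quad y_1 = a + d + 2x + 2w + 3n,
\]
so that
\[
D = -w + a - 2n - b + c + 2y + z + m - d - 2x.
\]
Contrary to your expectation, no parameter drops out at this stage: all ten of $a,b,c,d,x,y,z,w,n,m$ survive in $D$ with nonzero coefficient. More importantly, the single identity $a + n + x = b + y$ together with the dichotomy $ab = 0$ is \emph{not} sufficient to reduce $D$ to a multiple of $y - (n+x)$. Replacing $a - b$ by $y - n - x$ gives $D = 3(y - n - x) + (c + z + m - d - w)$, and the residual term $c + z + m - d - w$ vanishes only by the \emph{second} identity $w + d = z + m + c$ of Claim~\ref{claim:firstmainclaiminproof}(1), which you did not invoke. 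The dichotomy $ab = 0$ plays no role in this claim; it is used only afterwards, in the verification of Equation~\eqref{equation:mu1} itself. The same correction applies to cases (2) and (3): you need \emph{both} identities from the corresponding part of Claim~\ref{claim:firstmainclaiminproof}, not one identity plus the dichotomy.
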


We prove the first statement; the proofs of the second and third statements are similar.  So assume the $n$ and $m$ honeycombs are both `out'.  By Figure \ref{fig:flip-proof}, we compute:
\begin{gather*}
x_2 = w + 2a +n,  \,\,    
y_3 = b + c + 2y +2z + 3m,    \\
x_3 =  z + 2b + 2m,    \,\,
y_1 =  a + d + 2x + 2w + 3n.
\end{gather*}
Thus,
\begin{gather*}
(x_2 +y_3) - (x_3 + y_1) = -w +a -2n - b + c + 2y + z + m -d -2x \nabla 0
\\	\Leftrightarrow  a + c + 2y + z + m \nabla w + 2n + b + d +2x.
\end{gather*}

By applying the two identities of the first part of Claim \ref{claim:firstmainclaiminproof}, the above inequality is equivalent to $3y \nabla 3n + 3x$ as desired.  This establishes the claim.  

We are now prepared to justify the third equation of \eqref{eq:main-lemma}, which we recall is
\begin{equation*}\tag{h}
\label{eq:maincalculation}
x_2 + x_3 = \mathrm{max}(x_2 + y_3, x_3 + y_1) - y_2.
\end{equation*}

First, let us assume the $n$ and $m$ honeycombs are both `out', as in the  left hand side of  Figure \ref{fig:flip-proof}.   The values of $x_2, y_3, x_3, y_1$ were computed above, and we gather
\begin{gather*}
x_2 + x_3 = w + 2a + n + z + 2b + 2m,     \\
x_2 + y_3 = w + 2a + n + b + c + 2y + 2z + 3m,     \\
x_3 + y_1 = z + 2b + 2m + a + d + 2x + 2w + 3n.
\end{gather*}
By Figure \ref{fig:flip-proof}, there are two ways to express $y_2$:
\begin{equation*}
y_2 = w + d + 2a + 2n + 2x = z + m + c + 2b + 2y.
\end{equation*}

There are two cases to establish \eqref{eq:maincalculation}.  In the case $x_2 + y_3 \geq x_3 + y_1$, we compute, using the second form of $y_2$ above:
\begin{gather*}
\mathrm{max}(x_2 + y_3, x_3 + y_1) - y_2 = (x_2 + y_3) - y_2     \\
=     w + 2a + n - b + z + 2m    \overset{?}{=} x_2 + x_3    
\Leftrightarrow   b \overset{?}{=} 0.  
\end{gather*}
For this case, by the first part of Claim \ref{claim:secondmainclaiminproof}, we have $y \geq n+x$.  Thus, $b=0$ by the first part of Claim \ref{claim:firstmainclaiminproof}, as desired.  

In the case $x_2 + y_3 \leq x_3 + y_1$, we compute, using the first form of $y_2$ above:
\begin{gather*}
\mathrm{max}(x_2 + y_3, x_3 + y_1) - y_2 = (x_3 + y_1) - y_2     \\
=    z + 2b + 2m - a + w + n    
\overset{?}{=} x_2 + x_3    
\Leftrightarrow   a \overset{?}{=} 0.  
\end{gather*}
For this case, by the first part of Claim \ref{claim:secondmainclaiminproof}, we have $y \leq n+x$.  Thus, $a=0$ by the first part of Claim \ref{claim:firstmainclaiminproof}, as desired.  

This establishes \eqref{eq:maincalculation} when both the honeycombs are `out'.  When the $n$ honeycomb is `out', and the $m$ honeycomb is `in'; or, when the $n$ and $m$ honeycombs are both `in':  By essentially the same calculation, one computes again that, in the case $x_2 +y_3 \geq x_3 + y_1$, then \eqref{eq:maincalculation} is equivalent to $b=0$, and in the case $x_2 + y_3 \leq x_3 + y_1$, then \eqref{eq:maincalculation} is equivalent to $a=0$.  These are justified by parts (II) and (III), respectively, of Claims \ref{claim:secondmainclaiminproof} and \ref{claim:firstmainclaiminproof}.  

This completes the proof of the main result.  
\end{proof}

\begin{figure}[t]
\includegraphics[scale=.6]{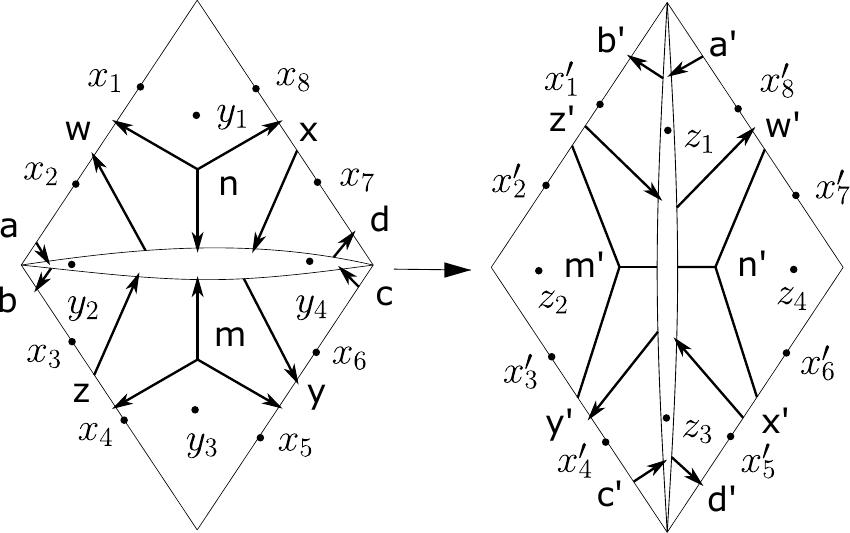}
\caption{     Schematic for the cornerless web $W=W_c$ in the square, before and after the flip.    The variables $a, b, c, d, x, y, z, w, n, m$ are known, and can be read off from the good position of $W$ with respect to $\mathcal{T}$.  The primed variables $a^\prime, b^\prime, \dots, m^\prime$ are not assumed to be known.  Because $W$ has no corner arcs, there are no arcs at the top and bottom vertices before the flip, nor at the left and right vertices after the flip; it follows by Observation \ref{lemma:bigon} that we  cannot have  $a$ and $b$ (or $c$ and $d$)  simultaneously nonzero.    To be concrete, we have shown the case where the honeycombs labeled $n$ and $m$ are out-honeycombs; we will justify the other cases as well.  Note that the orientations of the $n^\prime$ and $m^\prime$ honeycombs are not assumed to be known (and do not follow from the orientations of the $n$ and $m$ honeycombs).  }
\label{fig:flip-proof}
\end{figure}

\begin{figure}[t]
\includegraphics[scale=.59]{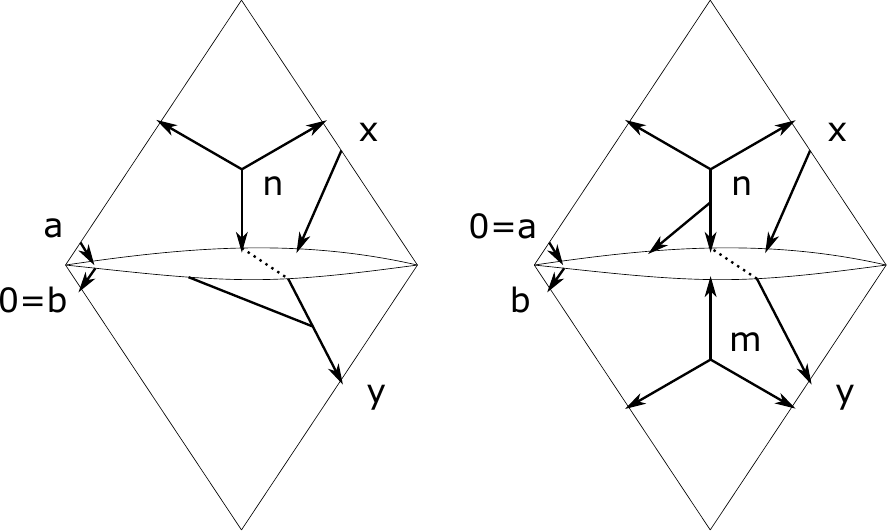}
\caption{     Proof of Claim \ref{claim:firstmainclaiminproof}.   The web is assumed not to have any corner arcs.  Shown is the case when the $n$ and $m$ honeycombs are both `out'.  Left:  $y \geq n + x$.  Right:  $y \leq n + x$.}
\label{fig:flip-proof-lemma}
\end{figure}

\subsection{Naturality for a marked surface}

We now discuss the natural generalization of Theorem \ref{thm:second-main-theorem} to any marked surface $\widehat{S}$, according to  the standard cluster theory \cite{FockIHES06, FominJAmerMathSoc02}.  

Let $\mathcal{T}$ be an ideal triangulation of $\widehat{S}$, and let $N$ denote the number of global tropical coordinates; see Section \ref{ssec:markedsurfacesidealtriangulations}.  In Section \ref{section:wa}, we introduced the web tropical coordinate map $\Phi_\mathcal{T} : \mathcal{W}_{\widehat{S}} \to \mathcal{C}_\mathcal{T}$, where we implicitly chose an inclusion $\mathcal{C}_\mathcal{T} \subset \mathbb{Z}_+^N$ of the KTGS cone of $\mathcal{T}$ (permutations of the coordinates of $\mathbb{Z}^N$ determine different inclusions).  This choice played essentially no role there, since we were only considering a single triangulation.  As we are now changing the triangulation, it becomes necessary to keep track of this choice.  

	\subsubsection{Dotted triangulations}
	\label{sec:base-triangulations}
	$  $  
		
	More precisely, let $\mathcal{T} = \mathcal{T}_0$ be an initial choice of ideal triangulation, which we mark with $N$ dots in the usual way (as for instance in Figure \ref{figure:acoor}).  Such a dotted initial triangulation $\mathcal{T} = \mathcal{T}_0$ is called a `base (dotted) triangulation'.  Fix a labeling of the dots of $\mathcal{T}$ from $1, 2, \dots, N$; we say that the base triangulation $\mathcal{T} = \mathcal{T}_0$ is `labeled'.  This determines a bijection $\left\{ 1, 2, \dots, N \right\} \to \left\{ \textnormal{dots} \right\}$, hence also an inclusion $\mathcal{C}_\mathcal{T} \subset \mathbb{Z}_+^N$ (since a point in $\mathcal{C}_\mathcal{T}$ is, most precisely, a function $\left\{ \textnormal{dots} \right\} \to \mathbb{Z}_+$; see Figure \ref{fig:coordinates-example-alt} for instance).  

 \begin{example*}[part 1]
 See the first diagram 0 in Figure \ref{fig:sl3-pentagon-relation}.  
 \end{example*}
	
	We now imagine forgetting $\mathcal{T}=\mathcal{T}_0$, but leaving the dots associated to $\mathcal{T}$ where they were on the surface.  Another triangulation $\mathcal{T}^\prime$ is `dotted (with respect to the dotting of $\mathcal{T}$)' if there are two dots (from $\mathcal{T}$) on each edge of $\mathcal{T}^\prime$ and one dot (from $\mathcal{T}$) in each face of $\mathcal{T}^\prime$.  Note that, using the labeling of the dots fixed along with the base triangulation $\mathcal{T}=\mathcal{T}_0$, each dotted triangulation $\mathcal{T}^\prime$ determines an inclusion $\mathcal{C}_{\mathcal{T}^\prime} \subset \mathbb{Z}_+^N$ just like for $\mathcal{T}=\mathcal{T}_0$.  
	
	We refer to $\mathcal{T}^\prime$ without a dotting as the underlying `topological triangulation'.  
	
	\begin{example*}[part 2] 
The triangulations $\mathcal{T}_1$, $\mathcal{T}_2$, $\mathcal{T}_5$, $\mathcal{T}_{35}$ shown in the last four  diagrams 1, 2, 5, 35 in Figure \ref{fig:sl3-pentagon-relation} are dotted with respect to the dotted triangulation $\mathcal{T}_0$ in the first diagram  0.  Note that, as topological triangulations, $\mathcal{T}_0 = \mathcal{T}_5 = \mathcal{T}_{35}$, but, as dotted triangulations, $\mathcal{T}_0 = \mathcal{T}_{35} \neq \mathcal{T}_5$.  
\end{example*}

	\begin{figure}[t]
	\includegraphics[scale=.66]{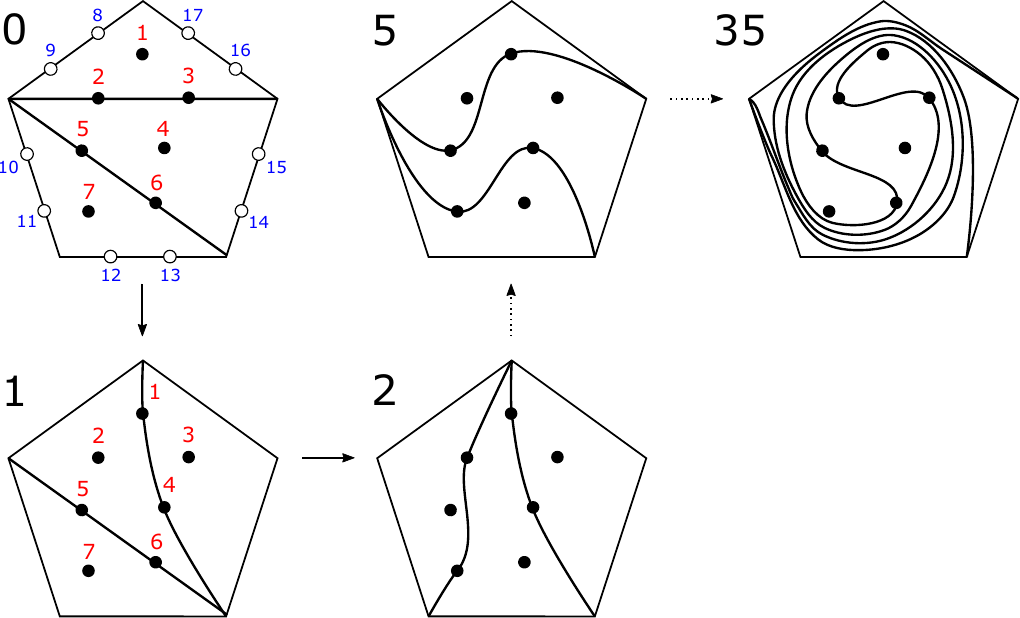}
	\caption{Pentagon relation for $\mathrm{SL}_3$, namely the sequence of five diagonal flips from $\mathcal{T}_0$ to $\mathcal{T}_5$.  It takes 35  flips to come back to the original dotted triangulation.  That is, as dotted triangulations, $\mathcal{T}_0=\mathcal{T}_{35}$ and $\mathcal{T}_0 \neq \mathcal{T}_i$ for $i=1,2,\dots,34$.  The ten boundary coordinates (shown only in the first picture) are fixed, or frozen, by these flip mutations but still enter into the computations for the interior coordinates.  See Remark \ref{rem:seven-identities} for a computation.}
	\label{fig:sl3-pentagon-relation}
\end{figure}

	\subsubsection{Tropical $\mathcal{A}$-coordinate cluster transformation for a marked surface}
	\label{def:tropical-A-coordinate-cluster-transformation}
	$ $  
	
	Pausing the discussion of the topology and geometry of webs and cones for a moment, let us discuss a purely algebraic result of Fomin--Zelevinsky and Fock--Goncharov (see Theorem   \ref{prop:mutation-naturality} below).  
	
	There is a  procedure to start from a base triangulation $\mathcal{T}=\mathcal{T}_0$ and generate new dotted triangulations in a controlled way, via diagonal flips.  Indeed, if $\mathcal{T}_i$ is dotted, and if $(\Box, \mathcal{T}_i |_\Box)$ is a triangulated square in $\mathcal{T}_i$, the diagonal flip at $\Box$ induces a new dotted triangulation $\mathcal{T}_{i+1}$ (passing `in between' the two dots of the diagonal of $(\Box, \mathcal{T}_i |_\Box)$).  Note that the induced embedding of $\mathcal{T}_{i+1}$ in $\widehat{S}$ is well-defined up to isotopy in $\widehat{S} - \left\{ \textnormal{dots} \right\}$ (since the triangulated square $(\Box, \mathcal{T}_i |_\Box)$ comes with a canonical, up to isotopy, foliation by arcs starting and ending at the two boundary points of its diagonal, and including this diagonal as one of these arcs).  (For simplicity, we always assume that $\mathcal{T}_{i+1}$ is not self-folded; see \S \ref{ssec:markedsurfacesidealtriangulations}.)  
	
	Observe that the resulting sequence $\mathcal{T}=\mathcal{T}_0 \to \mathcal{T}_1 \to \mathcal{T}_2 \to \cdots$ of dotted triangulations depends on the chosen sequence of diagonal flips, so is not predetermined.   
	
	 \begin{example*}[part 3]
See for instance the passage from  the diagrams 0 to 1 or from the diagrams 1 to 2 in Figure \ref{fig:sl3-pentagon-relation}.  
\end{example*}
 
	Let $\mathcal{T}=\mathcal{T}_0$ be labeled as well, and let $\mathcal{T}_i$ and $\mathcal{T}_{i+1}$ be as above.  Then their dottings induce a `flip mutation function' 
\begin{equation*}
	\mu_{\mathcal{T}_{i+1}, \mathcal{T}_i} : \mathbb{Z}^N \rightarrow \mathbb{Z}^N
\end{equation*} 
defined in the square $\Box$ as in Definition \ref{def:mutationmap} (that is, by the formulas  \eqref{eq:boundarycoords}, \eqref{equation:mu1}, \eqref{equation:mu2}, \eqref{equation:mu3}, \eqref{equation:mu4} of Figure \ref{figure:flip}) and outside the square as the identity.  (We remind that the domain is associated to $\mathcal{T}_i$ and the codomain to $\mathcal{T}_{i+1}$.)  (In particular, the formulas are the same irrespective of whether the boundary of the square has any self-gluings.)  If 
\begin{equation*}
\label{eq:flipseq1}
\tag{$\ast$}
	\mathcal{T}=\mathcal{T}_0 \to \mathcal{T}_1 \to \mathcal{T}_2 \to \cdots \to \mathcal{T}_{m-1} \to \mathcal{T}_m=\mathcal{T}^\prime
\end{equation*} is a sequence of flips as above, ending at a dotted triangulation $\mathcal{T}_m=\mathcal{T}^\prime$, define the associated `mutation function (for this  sequence of flips)'
\begin{equation*}
	\mu_{\mathcal{T}_m, \mathcal{T}_0} : \mathbb{Z}^N \rightarrow \mathbb{Z}^N
\end{equation*}
as the function composition
\begin{equation*}
	\mu_{\mathcal{T}_m, \mathcal{T}_0} := \mu_{\mathcal{T}_m, \mathcal{T}_{m-1}} \circ \cdots \circ \mu_{\mathcal{T}_2, \mathcal{T}_1} \circ \mu_{\mathcal{T}_1, \mathcal{T}_0}.  
\end{equation*}
(Here, we have used the usual convention that $(g \circ f)(x):=g(f(x))$.)

	Now, let    in addition
	\begin{equation*}
	\label{eq:flipseq2}
\tag{$\ast\ast$}
	\mathcal{T} = \mathcal{T}_0 \to \widetilde{\mathcal{T}}_1 \to \widetilde{\mathcal{T}}_2 \to \cdots \to \widetilde{\mathcal{T}}_{\widetilde{m}-1} \to \widetilde{\mathcal{T}}_{\widetilde{m}} = \mathcal{T}^\prime
	\end{equation*}
	be another sequence of diagonal flips starting at the same base triangulation $\mathcal{T}=\mathcal{T}_0$, and ending at the same topological triangulation $\mathcal{T}^\prime$, but where the dottings of $\mathcal{T}_m = \mathcal{T}^\prime$ and $\widetilde{\mathcal{T}}_{\widetilde{m}} = \mathcal{T}^\prime$ are possibly different.  Define the associated permutation linear map
	\begin{equation*}
		\sigma_{\widetilde{\mathcal{T}}_{\widetilde{m}}, \mathcal{T}_m} : \mathbb{Z}^N \rightarrow \mathbb{Z}^N
	\end{equation*}
	as follows:  For $i\in\left\{1,2,\dots, N\right\}$, if the $i$-labeled dot is on an edge (resp. face) of $\mathcal{T}_m =\mathcal{T}^\prime$, and if the corresponding dot on the corresponding edge (resp. face) of $\widetilde{\mathcal{T}}_{\widetilde{m}} =\mathcal{T}^\prime$ is labeled $\sigma(i)$, then $\sigma_{\widetilde{\mathcal{T}}_{\widetilde{m}}, \mathcal{T}_m}$ maps the $i$-th standard basis vector $e_i$ of $\mathbb{Z}^N$ to the $\sigma(i)$-th standard basis vector $e_{\sigma(i)}$ of $\mathbb{Z}^N$. 
	
	\begin{example*}[part 4] 
 We could take $\mathcal{T}_i = \widetilde{\mathcal{T}}_i$ in Figure \ref{fig:sl3-pentagon-relation} with $m=5$ and $\widetilde{m}=35$.  Then,
\begin{equation*}
	\sigma(1)=3, \quad \sigma(2)=1, \quad \sigma(3)=4, \quad \sigma(4)=6, \quad \sigma(5)=2, \quad \sigma(6)=7, \quad \sigma(7)=5
\end{equation*}
and $\sigma(i)=i$ is the identity on all of the boundary coordinates ($i=8,9,\dots,17$).  
\end{example*}

\begin{theorem}[{\cite{FockIHES06, FominJAmerMathSoc02}}]
\label{prop:mutation-naturality}
Let $\mathcal{T}=\mathcal{T}_0$ be a labeled base dotted triangulation.  Given two flip sequences \eqref{eq:flipseq1} and \eqref{eq:flipseq2}    of dotted triangulations ending at the same topological triangulation $\mathcal{T}^\prime$, the following diagram commutes:
\begin{equation*}
  \begin{tikzcd}
    \mathbb{Z}^N \arrow{r}{
    	\mu_{\mathcal{T}_m, \mathcal{T}_0}
    } \arrow[swap]{dr}{
    		\mu_{\widetilde{\mathcal{T}}_{\widetilde{m}}, \mathcal{T}_0}
	} & \mathbb{Z}^N \arrow{d}{
    		\sigma_{\widetilde{\mathcal{T}}_{\widetilde{m}}, \mathcal{T}_m}
	} \\
     & \mathbb{Z}^N
  \end{tikzcd}
\end{equation*}  

\end{theorem}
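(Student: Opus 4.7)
The plan is to reduce the verification of commutativity of the diagram to a finite list of ``local'' identities between flip sequences, and then to check each such identity by direct calculation using the explicit formulas \eqref{eq:boundarycoords}--\eqref{equation:mu4}. By a classical result for ideal triangulations of marked surfaces (due to Harer and Hatcher), any two finite sequences of diagonal flips relating the same pair of topological triangulations $\mathscr{T}_0$ and $\mathscr{T}^\prime$ can be connected, via a finite chain of intermediate flip sequences, by the following three types of local moves: (i) \emph{involutivity}: insertion/deletion of a pair $\Box \to \Box$ of consecutive flips on the same square; (ii) \emph{commutativity}: interchanging the order of two consecutive flips on disjoint squares; and (iii) the \emph{pentagon relation}: replacing one of the two flip sequences relating the two triangulations of a pentagon by the other (for $\mathrm{SL}_3$, this is the $35$-flip cycle of Figure \ref{fig:sl3-pentagon-relation}). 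Both sides of the desired diagram then differ by a sequence of such local moves, each of which takes place inside a fixed subsurface (a square or a pentagon) while the complement stays frozen. So it suffices to verify the theorem for each move individually.

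First I would handle commutativity. If two successive flips take place on disjoint squares $\Box_1$ and $\Box_2$ (sharing no edge), then Definition \ref{def:mutationmap} together with the ``identity outside the square'' convention from \S \ref{def:tropical-A-coordinate-cluster-transformation} immediately shows that $\mu_{\mathscr{T}_{i+2}, \mathscr{T}_{i+1}} \circ \mu_{\mathscr{T}_{i+1}, \mathscr{T}_i}$ is symmetric under swapping the order, since the formulas for the two flips involve disjoint sets of coordinates. The permutation $\sigma$ between the two resulting dottings is the identity in this case.

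Next, for involutivity, I would consider a single square $\Box$ with diagonal flip $\mathscr{T}_i \to \mathscr{T}_{i+1} \to \mathscr{T}_{i+2}$, where $\mathscr{T}_i$ and $\mathscr{T}_{i+2}$ have the same underlying topological triangulation. A direct calculation using \eqref{equation:mu1}--\eqref{equation:mu4} twice (applied with the roles of the two diagonals swapped) shows that the composition equals the permutation $\sigma_{\mathscr{T}_{i+2}, \mathscr{T}_i}$ recording how the four interior dots get relabeled by flipping back. The only point to observe is that applying \eqref{equation:mu1}, say, and then the analogous formula for the other diagonal yields $y_2$ back, which amounts to the tropical identity $\max\{A+B,A+C\} - \max\{B,C\} + \text{constants} = A + \text{constants}$ that is immediate from $\max(x,y) - \max(x,y) = 0$.

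The main obstacle is step (iii), the pentagon relation, which is genuinely a nontrivial tropical identity and is the heart of the theorem. For $\mathrm{SL}_3$ the pentagon has length $35$ (not $5$, as in $\mathrm{SL}_2$), and one must verify that the $35$-fold composition of the explicit piecewise-linear formulas equals the permutation $\sigma$ of the interior coordinates induced by the cyclic dot relabeling of Figure \ref{fig:sl3-pentagon-relation}. I would carry this out by tracking the $7$ interior coordinates (the $10$ boundary coordinates are frozen by \eqref{eq:boundarycoords}) through each of the $35$ steps, writing each new coordinate as a piecewise-linear tropical expression in the original ones. The computation organizes itself into subregions separated by tropical walls, on each of which all the $\max$'s resolve to a definite maximand, and on each such region one verifies a polynomial identity between the output and the prescribed permutation of the input. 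Equivalently, one can invoke the general Fomin-Zelevinsky result that tropical $\mathcal{A}$-coordinate cluster transformations satisfy the exchange relations for any cluster algebra coming from a quiver, and then identify the quiver associated to $\mathscr{T}_0$ in Figure \ref{figure:acoor} (restricted to a pentagon) as the standard $\mathrm{SL}_3$ quiver of the pentagon, whose pentagon relation is tabulated in \cite{FockIHES06}. Either approach yields the desired identity and concludes the proof.
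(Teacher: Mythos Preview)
The paper does not give its own proof of Theorem \ref{prop:mutation-naturality}; it is stated with attribution to \cite{FominJAmerMathSoc02, FockIHES06}, illustrated by the running Example in \S \ref{def:tropical-A-coordinate-cluster-transformation}, and accompanied only by Remark \ref{rem:FG-remark-Q}(2), which confirms your strategic claim that the Pentagon Relation is the main relation to check. So there is no in-paper argument to compare against; your sketch is a reasonable outline of the standard proof and is consistent with how the paper frames the result.

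Two corrections are worth making. First, you have misread the $35$. The $\mathrm{SL}_3$ pentagon relation is still a sequence of \emph{five} diagonal flips $\mathscr{T}_0 \to \cdots \to \mathscr{T}_5$ returning to the same topological triangulation, and the identity to verify is $\sigma_{\mathscr{T}_5,\mathscr{T}_0}^{-1} \circ \mu_{\mathscr{T}_5,\mathscr{T}_0} = \mathrm{id}$, exactly as in Example (part 5). The number $35$ in Figure \ref{fig:sl3-pentagon-relation} is the number of flips needed to return to the same \emph{dotted} triangulation, because the permutation $\sigma_{\mathscr{T}_5,\mathscr{T}_0}$ on the seven interior dots happens to be a $7$-cycle, so $5 \times 7 = 35$. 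Concretely, you have seven piecewise-linear identities $f_i(x_1,\dots,x_{17}) = x_i$ to check (Remark \ref{rem:seven-identities}), each built from five applications of \eqref{equation:mu1}--\eqref{equation:mu4}, not thirty-five; the paper's Appendix \ref{app:first-appendix} exhibits the hardest one ($i=5$) explicitly.

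Second, your option (b) --- invoking the general Fomin--Zelevinsky machinery and then identifying the quiver --- is not really an independent argument: that machinery \emph{is} the content of the citations \cite{FominJAmerMathSoc02, FockIHES06}, so you would just be restating the attribution. Your option (a), the direct verification, is the honest route, and the paper's appendix shows it is feasible.
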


An immediate consequence of this theorem is:

	\begin{observation}
	\label{obs:interesting-consequence}
Using the notation of  Theorem {\upshape\ref{prop:mutation-naturality}}, \eqref{eq:flipseq1} and \eqref{eq:flipseq2}:  For $\mathcal{T}^\prime=\mathcal{T}$ and $m=0$, we get $\mu_{\mathcal{T}_m, \mathcal{T}_0}=\mu_{\mathcal{T}_0, \mathcal{T}_0}=\mathrm{identity}$, hence 
\begin{equation*}
	\mu_{\widetilde{\mathcal{T}}_{\widetilde{m}}, \mathcal{T}_0}=\sigma_{\widetilde{\mathcal{T}}_{\widetilde{m}}, \mathcal{T}_0}
	\quad : \mathbb{Z}^N \rightarrow \mathbb{Z}^N
	\end{equation*} 
	is a permutation map.  
\end{observation}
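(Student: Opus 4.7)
The plan is to obtain the observation as a direct application of Theorem~\ref{prop:mutation-naturality}, by specializing one of the two flip sequences appearing there to be the empty (length zero) sequence. The statement of the observation essentially tells us how to carry this out: we choose the first sequence $\mathscr{T}=\mathscr{T}_0 \to \cdots \to \mathscr{T}_m = \mathscr{T}^\prime$ to be the trivial sequence with $m=0$ and $\mathscr{T}^\prime = \mathscr{T}_0 = \mathscr{T}$, and we let the second sequence $\mathscr{T}=\mathscr{T}_0 \to \widetilde{\mathscr{T}}_1 \to \cdots \to \widetilde{\mathscr{T}}_{\widetilde{m}} = \mathscr{T}$ be an arbitrary flip sequence starting and ending at the base topological triangulation $\mathscr{T}$ (with possibly different dotting on the final $\widetilde{\mathscr{T}}_{\widetilde{m}}$ compared with $\mathscr{T}_0$).

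Next, I would observe that with $m=0$, the associated mutation function $\mu_{\mathscr{T}_0,\mathscr{T}_0}: \mathbb{Z}^N \to \mathbb{Z}^N$ is the empty composition of flip mutations, which by convention is the identity map. The commutative triangle provided by Theorem~\ref{prop:mutation-naturality} then reads
\begin{equation*}
	\mu_{\widetilde{\mathscr{T}}_{\widetilde{m}}, \mathscr{T}_0}
	= \sigma_{\widetilde{\mathscr{T}}_{\widetilde{m}}, \mathscr{T}_0} \circ \mu_{\mathscr{T}_0, \mathscr{T}_0}
	= \sigma_{\widetilde{\mathscr{T}}_{\widetilde{m}}, \mathscr{T}_0} \circ \mathrm{identity}
	= \sigma_{\widetilde{\mathscr{T}}_{\widetilde{m}}, \mathscr{T}_0},
\end{equation*}
which is exactly the conclusion that the composition of the piecewise linear flip mutations is, on the nose, a coordinate permutation. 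Since $\sigma_{\widetilde{\mathscr{T}}_{\widetilde{m}},\mathscr{T}_0}$ is, by its definition, a permutation of standard basis vectors of $\mathbb{Z}^N$, the composite mutation $\mu_{\widetilde{\mathscr{T}}_{\widetilde{m}},\mathscr{T}_0}$ is a permutation map.

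The only point to verify, and the one mild obstacle I anticipate, is that the statement of Theorem~\ref{prop:mutation-naturality} permits the degenerate case $m=0$. This amounts to checking conventions: the empty flip sequence ends at the base dotted triangulation, and the empty composition of mutations $\mu_{\mathscr{T}_0,\mathscr{T}_0}$ is to be interpreted as $\mathrm{id}_{\mathbb{Z}^N}$. With these conventions fixed (as is standard), the argument is complete. The Pentagon Relation of Figure~\ref{fig:sl3-pentagon-relation} provides a concrete illustration: the $35$ composed flip mutations $\mu_{\mathscr{T}_{35},\mathscr{T}_0}$ coincide with the permutation $\sigma_{\mathscr{T}_{35},\mathscr{T}_0}$ recorded in \textbf{part 4} of the running example, even though each individual flip mutation is genuinely piecewise linear and nontrivially tropical.
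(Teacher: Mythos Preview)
Your proof is correct and follows exactly the reasoning the paper intends: the observation is stated there as an immediate consequence of Theorem~\ref{prop:mutation-naturality}, and you have simply spelled out the specialization $m=0$ that makes it so.

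One small inaccuracy in your closing illustration: since the caption of Figure~\ref{fig:sl3-pentagon-relation} records that $\mathscr{T}_0=\mathscr{T}_{35}$ as \emph{dotted} triangulations, the permutation $\sigma_{\mathscr{T}_{35},\mathscr{T}_0}$ is in fact the identity, not the nontrivial permutation listed in part~4 (which is $\sigma_{\mathscr{T}_{35},\mathscr{T}_5}$). The more interesting instance of the observation is the one in part~5, where $\widetilde{m}=5$ and $\mu_{\mathscr{T}_5,\mathscr{T}_0}=\sigma_{\mathscr{T}_5,\mathscr{T}_0}$ is a genuine $7$-cycle on the interior coordinates. This does not affect the validity of your argument.
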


\begin{example*}[part 5]
 To get a feel for the content of Theorem \ref{prop:mutation-naturality}, let us consider (similar to part 4 of this example) $\mathcal{T}_i = \widetilde{\mathcal{T}}_i$ in Figure \ref{fig:sl3-pentagon-relation} with $m=0$ and $\widetilde{m}=5$, where $\sigma_{\mathcal{T}_5, \mathcal{T}_0}$ is defined by
\begin{equation*}
	\sigma(1)=2, \quad \sigma(2)=5, \quad \sigma(3)=1, \quad \sigma(4)=3, \quad \sigma(5)=7, \quad \sigma(6)=4, \quad \sigma(7)=6
\end{equation*}
and $\sigma(i)=i$ for $i=8,9,\dots,17$.  

Then Observation \ref{obs:interesting-consequence} says $\sigma_{\mathcal{T}_5, \mathcal{T}_0}^{-1} \circ \mu_{\mathcal{T}_5, \mathcal{T}_0}$ is the identity map $\mathbb{Z}^{17} \to \mathbb{Z}^{17}$.  This is the `pentagon relation' for $\mathrm{SL}_3$.  By construction this is obvious for the 8-th through the 17-th coordinates, because these are the `frozen' coordinates on the boundary of the pentagon.  So the meat of the statement is the validity of the seven identities:
\begin{equation*}
	f_i(x_1, x_2, \dots, x_{17})=x_i
	\quad\quad  \left( i=1, 2, \dots, 7; \quad x_j \in \mathbb{Z}; \quad j=1,2,\dots,17 \right)
\end{equation*}
where $f_i : \mathbb{Z}^{17} \to \mathbb{Z}$ is defined as the $i$-th component of $\sigma_{\mathcal{T}_5, \mathcal{T}_0}^{-1} \circ \mu_{\mathcal{T}_5, \mathcal{T}_0}$.  In particular, $f_i$ is a complicated piecewise-linear function built out of the operations $+$, $-$, and $\mathrm{max}$.  

	 \end{example*}

\begin{remark}
\label{rem:seven-identities}
Of the seven identities $f_i(x_1, x_2, \dots, x_{17})=x_i$ discussed in Example (part 5), consider the case $i=5$.  	Appendix \ref{app:first-appendix} at the end of this article contains a Mathematica notebook which provides the explicit expression for $f_5(x_1, x_2, \dots, x_{17})$.  

\end{remark}

Wrapping up this digression, a well-known fact \cite{penner1987decorated} says that any two triangulations $\mathcal{T}$ and $\mathcal{T}^\prime$ are related by a sequence of diagonal flips \eqref{eq:flipseq1}.  By Theorem \ref{prop:mutation-naturality}, we immediately obtain:  

\begin{cor}
\label{cor:cluster-transformation}
Let $\mathcal{T}=\mathcal{T}_0$ be a labeled base dotted triangulation.  For any topological triangulation  $\mathcal{T}^\prime$, there is a function 
\begin{equation*}
	\mu_{\mathcal{T}^\prime, \mathcal{T}_0} : \mathbb{Z}^N \rightarrow \mathbb{Z}^N, 
\end{equation*}
defined only up to permutation of the coordinates of the codomain $\mathbb{Z}^N$, satisfying the property that if $\mathcal{T}_m=\mathcal{T}^\prime$ is related to $\mathcal{T} =\mathcal{T}_0$ by a sequence of diagonal flips \eqref{eq:flipseq1}, then
\begin{equation*}
	\mu_{\mathcal{T}^\prime, \mathcal{T}_0} = \mu_{\mathcal{T}_m, \mathcal{T}_{m-1}} \circ \cdots \circ \mu_{\mathcal{T}_2, \mathcal{T}_1} \circ \mu_{\mathcal{T}_1, \mathcal{T}_0}
\end{equation*}
where the $\mu_{\mathcal{T}_{i+1}, \mathcal{T}_i} : \mathbb{Z}^N \to \mathbb{Z}^N$ are the corresponding \textup{(}well-defined\textup{)} flip mutation functions.  \qed
\end{cor}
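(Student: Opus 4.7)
The plan is to extract Corollary \ref{cor:cluster-transformation} directly as a formal consequence of Theorem \ref{prop:mutation-naturality} combined with the classical flip-connectedness of triangulations of marked surfaces due to Penner \cite{penner1987decorated}. The argument has essentially no new content beyond those two ingredients; the entire point is to package what Theorem \ref{prop:mutation-naturality} already produces, namely a mutation composition that is well-defined up to a coordinate permutation, into a standalone definition.

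The first step is to invoke Penner's theorem: given any topological triangulation $\mathscr{T}'$ of $\widehat{S}$, there is a finite sequence of diagonal flips connecting $\mathscr{T}$ to a triangulation whose underlying topological type is $\mathscr{T}'$. Running this flip sequence from the labeled base dotted triangulation $\mathscr{T} = \mathscr{T}_0$, and using the fact (already observed in \S\ref{def:tropical-A-coordinate-cluster-transformation}) that a flip at a triangulated square induces a canonical dotting on the resulting triangulation, produces a sequence \eqref{eq:flipseq1} of dotted triangulations $\mathscr{T}_0 \to \mathscr{T}_1 \to \cdots \to \mathscr{T}_m$ with $\mathscr{T}_m$ topologically equal to $\mathscr{T}'$. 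We then define $\mu_{\mathscr{T}', \mathscr{T}_0}$ to be the composition $\mu_{\mathscr{T}_m, \mathscr{T}_{m-1}} \circ \cdots \circ \mu_{\mathscr{T}_1, \mathscr{T}_0}$ of the individual flip mutation functions introduced in \S\ref{def:tropical-A-coordinate-cluster-transformation}.

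The second and only substantive step is to check that this definition is independent of the chosen flip sequence, up to a permutation of the codomain coordinates. So suppose \eqref{eq:flipseq1} and \eqref{eq:flipseq2} are two such sequences, both starting at $\mathscr{T}_0$ and ending at dotted triangulations $\mathscr{T}_m$ and $\widetilde{\mathscr{T}}_{\widetilde{m}}$ whose underlying topological triangulation is the same $\mathscr{T}'$. Theorem \ref{prop:mutation-naturality} applies verbatim and produces the commutative triangle
\begin{equation*}
  \mu_{\widetilde{\mathscr{T}}_{\widetilde{m}}, \mathscr{T}_0} \;=\; \sigma_{\widetilde{\mathscr{T}}_{\widetilde{m}}, \mathscr{T}_m} \circ \mu_{\mathscr{T}_m, \mathscr{T}_0},
\end{equation*}
where $\sigma_{\widetilde{\mathscr{T}}_{\widetilde{m}}, \mathscr{T}_m}$ is the permutation of $\mathbb{Z}^N$ recording the change of dotting label between $\mathscr{T}_m$ and $\widetilde{\mathscr{T}}_{\widetilde{m}}$. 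Thus the two candidate definitions of $\mu_{\mathscr{T}', \mathscr{T}_0}$ differ exactly by a coordinate permutation, which is the asserted indeterminacy. This proves well-definedness.

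There is no real obstacle: the content is entirely in Theorem \ref{prop:mutation-naturality} and in Penner's flip-connectedness, both of which are cited hypotheses. The closest thing to a subtlety worth flagging in the write-up is the assumption inherited from \S\ref{def:tropical-A-coordinate-cluster-transformation} that no intermediate triangulation in the flip sequence is self-folded; this can always be arranged by choosing the flip path appropriately, and is consistent with the blanket assumption of \S\ref{ssec:markedsurfacesidealtriangulations}. The final explicit formula for $\mu_{\mathscr{T}', \mathscr{T}_0}$ as a composition is then immediate from the construction, completing the proof.
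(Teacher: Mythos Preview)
Your proposal is correct and follows exactly the paper's approach: the paper simply states that Penner's flip-connectedness together with Theorem~\ref{prop:mutation-naturality} immediately yields the corollary, and marks it with a \qed. You have merely spelled out the two-line argument that the paper leaves implicit.
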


\begin{definition}
\label{def:cluster-transformation-for-surface}
	The (pseudo-)function $\mu_{\mathcal{T}^\prime, \mathcal{T}_0} : \mathbb{Z}^N \to \mathbb{Z}^N$ from Corollary \ref{cor:cluster-transformation} is called the \emph{tropical $\mathcal{A}$-coordinate cluster transformation} for the marked surface $\widehat{S}$ associated to the labeled base dotted triangulation $\mathcal{T}=\mathcal{T}_0$ and the topological triangulation $\mathcal{T}^\prime$.  Below, we will drop the subscript and just write $\mu_{\mathcal{T}^\prime, \mathcal{T}}$ for this function.
\end{definition}

\begin{remark}
\label{rem:FG-remark-Q}
$ $
\begin{enumerate}[label=\textnormal{(\Roman*)}]
	\item  \label{rem1:FG-remark-Q}  Throughout this sub-subsection, there has been nothing special about the integers $\mathbb{Z}$ compared to, say, the rational numbers $\mathbb{Q}$.  In particular, Theorem \ref{prop:mutation-naturality} makes sense and is true with $\mathbb{Z}$ replaced by $\mathbb{Q}$.  
	
	\item  The pentagon relation for $\mathrm{SL}_3$ (Figure \ref{fig:sl3-pentagon-relation}), equivalent to the seven identities $f_i(x_1, x_2, \dots, x_{17})=x_i$ for $i=1,2,\dots,7$ discussed in Example (part 5) above is   the main relation required to establish Theorem \ref{prop:mutation-naturality}.  
	
\end{enumerate}

\end{remark}

		\subsubsection{Naturality of the web tropical coordinate map}
		\label{sssec:naturality-of-the-web-tropical-coordinate-map}
		$ $ 
		
We are now ready to generalize Theorem \ref{thm:second-main-theorem} to any marked surface $\widehat{S}$.  

Let $\mathcal{T}=\mathcal{T}_0$ be a labeled base dotted triangulation, and let $\mathcal{T}^\prime$ be any topological triangulation; see \S \ref{sec:base-triangulations}.  Associated to this topological data is the tropical $\mathcal{A}$-coordinate cluster transformation $\mu_{\mathcal{T}^\prime, \mathcal{T}} : \mathbb{Z}^N \to \mathbb{Z}^N$, which is only defined up to permutation of the coordinates of the codomain $\mathbb{Z}^N$; see Definition \ref{def:cluster-transformation-for-surface}.  

Lastly, recall from \S \ref{sec:base-triangulations} that the labels for the dots of $\mathcal{T}=\mathcal{T}_0$ determine an inclusion $\mathcal{C}_{\mathcal{T}} \subset \mathbb{Z}_+^N$ of its KTGS cone.  Since $\mathcal{T}^\prime$ is not assumed to be dotted, the inclusion  $\mathcal{C}_{\mathcal{T}^\prime} \subset \mathbb{Z}_+^N$ of its KTGS cone is only defined up to permutation of the coordinates of $\mathbb{Z}_+^N$.   

\begin{cor}
\label{cor:second-main-theorem}
	Let $\mathcal{T}$ and $\mathcal{T}^\prime$ be triangulations, and let $\mu_{\mathcal{T}^\prime, \mathcal{T}} : \mathbb{Z}^N \to \mathbb{Z}^N$ be the corresponding tropical $\mathcal{A}$-coordinate cluster transformation, as just explained.  For the associated web tropical coordinate maps $\Phi_\mathcal{T} : \mathcal{W}_{\widehat{S}} \to \mathcal{C}_\mathcal{T} \subset \mathbb{Z}_+^{N}$ and $\Phi_{\mathcal{T}^\prime} : \mathcal{W}_{\widehat{S}} \to \mathcal{C}_{\mathcal{T}^\prime} \subset \mathbb{Z}_+^{N}$, we have
\begin{equation*}
	\mu_{\mathcal{T}^\prime, \mathcal{T}}(c) =\Phi_{\mathcal{T}^\prime} \circ \Phi_\mathcal{T}^{-1}(c)  \quad  \in  \mathcal{C}_{\mathcal{T}^\prime}  \quad\quad  \left( c \in \mathcal{C}_\mathcal{T} \right),
\end{equation*}
where this equality is only defined up to permutation of the coordinates of $\mathcal{C}_{\mathcal{T}^\prime} \subset \mathbb{Z}_+^N$.  
\end{cor}

\begin{proof}
	This is essentially an immediate consequence of Theorem \ref{thm:second-main-theorem}.  
	
	Indeed, consider a flip sequence \eqref{eq:flipseq1} of dotted triangulations $\mathcal{T}_i$, namely such that $\mathcal{T}_{i+1}$ is related to $\mathcal{T}_i$ by a single diagonal flip.  Recall (\S \ref{sec:base-triangulations}) that the dotting of $\mathcal{T}_i$ induces an inclusion $\mathcal{C}_{\mathcal{T}_i} \subset \mathbb{Z}_+^N$ of its KTGS cone, and recall (\S \ref{def:tropical-A-coordinate-cluster-transformation}) that $\mu_{\mathcal{T}_{i+1}, \mathcal{T}_i} : \mathbb{Z}^N \to \mathbb{Z}^N$ is the corresponding (well-defined) flip mutation function.  
	
	By Theorem \ref{thm:second-main-theorem}, for each  $i=0,1,...,m-1$ we have
\begin{equation*}
	\mu_{\mathcal{T}_{i+1}, \mathcal{T}_i}(c_i) =\Phi_{\mathcal{T}_{i+1}} \circ \Phi_{\mathcal{T}_i}^{-1}(c_i)   \quad \in  \mathcal{C}_{\mathcal{T}_{i+1}}
	\quad\quad  \left( c_i \in \mathcal{C}_{\mathcal{T}_i} \right).  
\end{equation*}
By iterating  this equation, for any $c_0=c \in \mathcal{C}_\mathcal{T}=\mathcal{C}_{\mathcal{T}_0}$ we obtain
\begin{align*}
	\mu_{\mathcal{T}_m, \mathcal{T}_0}(c)
&=	\mu_{\mathcal{T}_m, \mathcal{T}_{m-1}} \circ \cdots 
\circ \mu_{\mathcal{T}_3, \mathcal{T}_2}
\circ \mu_{\mathcal{T}_2, \mathcal{T}_1} \circ \mu_{\mathcal{T}_1, \mathcal{T}_0}(c_0)
\\&=	\mu_{\mathcal{T}_m, \mathcal{T}_{m-1}} \circ \cdots 
\circ \mu_{\mathcal{T}_3, \mathcal{T}_2}
\circ \mu_{\mathcal{T}_2, \mathcal{T}_1}\left(  \Phi_{\mathcal{T}_{1}} \circ \Phi_{\mathcal{T}_0}^{-1}(c_0)  \right)
\\&=	\mu_{\mathcal{T}_m, \mathcal{T}_{m-1}} \circ \cdots 
\circ \mu_{\mathcal{T}_3, \mathcal{T}_2}
\left( \Phi_{\mathcal{T}_{2}} \circ \Phi_{\mathcal{T}_1}^{-1}
\left(  \Phi_{\mathcal{T}_{1}} \circ \Phi_{\mathcal{T}_0}^{-1}(c_0)  \right)
\right)
\\&=\cdots  
\\&=  \mu_{\mathcal{T}_m, \mathcal{T}_{m-1}}
\left(  \Phi_{\mathcal{T}_{m-1}} \circ \Phi_{\mathcal{T}_0}^{-1}(c_0)  \right)
\\&=  \Phi_{\mathcal{T}_m} \circ \Phi_{\mathcal{T}_0}^{-1}(c)
\quad  \in  \mathcal{C}_{\mathcal{T}_m}.  
\end{align*}
The result follows from the defining property of the function $\mu_{\mathcal{T}^\prime, \mathcal{T}}$ (Corollary~\ref{cor:cluster-transformation} and Definition~\ref{def:cluster-transformation-for-surface}).  
\end{proof}

\begin{conceptremark}
\label{rem:equivariant}
Another way to express Corollary \ref{cor:second-main-theorem} is to say that the web tropical coordinates, determined by the maps $\{ \Phi_\mathcal{T} \}_\mathcal{T}$, are equivariant with respect to the action of the mapping class group of the marked surface $\widehat{S}$.  Said another way, they form natural coordinates for the positive tropical integer $\mathrm{PGL}_3$-points $\mathcal{A}^+_{\mathrm{PGL}_3, \widehat{S}}(\mathbb{Z}^t)$, where a point in $\mathcal{A}^+_{\mathrm{PGL}_3, \widehat{S}}(\mathbb{Z}^t)$ is thought of concretely as a reduced web $W$ in $\mathcal{W}_{\hat{S}}$.  
\end{conceptremark}

\begin{application}  
\label{app:first-application}  
Generalizing Fock-Goncharov's (bounded) $\mathrm{SL}_2$-laminations \cite[Section $12$]{FockIHES06}, Kim \cite{KimArxiv20} considers the space $\widetilde{\mathcal{W}}_{\widehat{S}}$ of `(bounded) $\mathrm{SL}_3$-laminations' (he denotes this space by $\mathcal{A}_L(\widehat{S}, \mathbb{Z})$), which extends the space $\mathcal{W}_{\widehat{S}}$ of reduced webs by allowing for negative integer weights around the peripheral loops and arcs.  He also extends the web tropical coordinate map $\Phi_\mathcal{T} : \mathcal{W}_{\widehat{S}} \to \mathcal{C}_\mathcal{T} \subset \mathbb{Z}_+^N$ of Theorem \ref{thm:main-theorem} to an injective map $\widetilde{\Phi}_\mathcal{T} : \widetilde{\mathcal{W}}_{\widehat{S}} \to \mathbb{Z}^N$, and characterizes the image as an integer lattice defined by certain `balancedness' conditions; it turns out that these conditions are equivalent to the modulo 3 congruence conditions of Definition \ref{def:KTGS-cone}.  That is, whereas the reduced webs $\mathcal{W}_{\widehat{S}}$ correspond to solutions of both the modulo 3 congruence conditions and the Knutson-Tao inequalities, the $\mathrm{SL}_3$-laminations $\widetilde{\mathcal{W}}_{\widehat{S}} \supset \mathcal{W}_{\widehat{S}}$ correspond to solutions of only the modulo 3 congruence conditions.  By \cite[Proposition 3.35]{KimArxiv20}, which generalizes Corollary \ref{cor:second-main-theorem}, the lamination tropical coordinates $\{ \widetilde{\Phi}_\mathcal{T} \}_\mathcal{T}$ are also natural, thereby constituting an explicit model for the tropical integer $\mathrm{PGL}_3$-points $\mathcal{A}_{\mathrm{PGL}_3, \widehat{S}}(\mathbb{Z}^t)$; compare Remark \ref{rem:equivariant} and see also Remark \ref{rem:realsolutionsareinteger}\eqref{rem2:realsolutionsareinteger}.  

Kim's proof of \cite[Proposition 3.35]{KimArxiv20} uses Corollary   \ref{cor:second-main-theorem}.  One way to think about upgrading the naturality statement from webs to laminations is in terms of  the proof strategy of Theorem   \ref{thm:second-main-theorem}; see Section \ref{ssec:proofofnaturality}.  Indeed, since Lemma \ref{lemma:add} works as well for corner arcs with integer coefficients, the proof of Theorem   \ref{thm:second-main-theorem} works more generally for the laminations $\widetilde{\mathcal{W}}_{\widehat{S}}$.  (See also \cite{KimArxiv21}.)  
\end{application}

\begin{application}
The same strategy used in the proof of Corollary \ref{cor:second-main-theorem} provides an alternative, geometric topological proof of Theorem \ref{prop:mutation-naturality} (see also Remark \ref{rem:FG-remark-Q}\eqref{rem1:FG-remark-Q}), but only valid on the restricted cone domain $\mathcal{C}_{\mathcal{T}} \subset \mathbb{Z}_+^N \subset \mathbb{Q}_+^N$ (or, by applying Kim's result for $\mathrm{SL}_3$-laminations $\widetilde{\mathcal{W}}_{\widehat{S}}$ from Application \ref{app:first-application}, on the restricted lattice domain $\widetilde{\Phi}_{\mathcal{T}}(\widetilde{W}_{\widehat{S}}) \subset \mathbb{Z}^N \subset \mathbb{Q}^N$).  
\end{application}

\section{KTGS cone for the square:  Hilbert basis}
\label{section:Hsq}

In the remaining two sections, we study the structure of the Knutson-Tao-Goncharov-Shen cone $\mathcal{C}_\mathcal{T} \subset \mathbb{Z}_+^N$ associated to an ideal triangulation $\mathcal{T}$ of a marked surface $\widehat{S}$ (Definition \ref{def:KTGS-cone}  and Proposition \ref{prop:KTGS-is-positive-integer-cone}) when $\widehat{S}=\Box$ is an ideal square.  In this case, an ideal triangulation $\mathcal{T}$ is simply a choice of a diagonal of $\Box$.   (In this section, we will use some of the terminology and results of Appendix \ref{sec:cones}.)

\subsection{Hilbert basis of the KTGS cone for the triangle and the square}
\label{ssec:hilbert-basis-of-the-ktgs-cone-for-the-square}
	    
\subsubsection{Hilbert basis for the triangle}
\label{sssec:hilbert-basis-for-the-triangle}
	   
We begin by recalling from \cite[Section $6$]{DouglasArxiv20} the case of a single ideal triangle $\widehat{S}=\mathcal{T}=\Delta$.  Let $\mathcal{C}_\Delta \subset \mathbb{Z}_+^7$ be the corresponding KTGS positive integer cone.  
	
Recall the eight `irreducible' webs $L_a, R_a, L_b, R_b, L_c, R_c, T_{in}, T_{out}$ in $\mathcal{W}_\Delta$ defined in Section \ref{sssec:definitionofthetropicalwebcoordinates}.  For each such web $W^H$, its 7 tropical coordinates $\Phi_\Delta(W^H) \in \mathcal{C}_\Delta \subset \mathbb{Z}_+^7$ are provided in Figure \ref{figure:triangle}.    

\begin{prop}
\label{prop:hilb-base-triangle}
The $8$-element subset
\begin{equation*}
\mathcal{H}_\Delta=\{ \Phi_\Delta(W^H); 
  W^H=L_a, R_a, L_b, R_b, L_c, R_c, T_{in}, T_{out}  \}
  \subset    \mathcal{C}_\Delta
\end{equation*}
is the Hilbert basis \textup{(}Definition {\upshape\ref{def:hilbert-basis}}\textup{)} of the KTGS cone $\mathcal{C}_\Delta \subset \mathbb{Z}_+^7$ for the triangle.  
\end{prop}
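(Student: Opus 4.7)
The plan is to verify directly the two properties that, by Proposition~\ref{prop:irreducible-elements}, characterize the Hilbert basis: that $\mathscr{H}_\Delta$ $\mathbb{Z}_+$-spans $\mathscr{C}_\Delta$, and that each of its eight elements is irreducible in the sense of \S\ref{ssec:hilbert-bases}. The spanning claim is essentially a restatement of the construction recalled in \S\ref{sssec:definitionofthetropicalwebcoordinates}: by Theorem~\ref{thm:main-theorem} applied to $\widehat{S}=\Delta$, every $c\in\mathscr{C}_\Delta$ equals $\Phi_\Delta(W)$ for a unique $W\in\mathscr{W}_\Delta$, and putting $W$ in good position yields honeycomb data $(T,x)$ with $T\in\{T_{in},T_{out}\}$, $x\in\mathbb{Z}_+$, together with corner arc weights $v,w,t,u,z,y\in\mathbb{Z}_+$, so that
\begin{equation*}
c=x\Phi(T)+v\Phi(L_a)+w\Phi(R_a)+t\Phi(L_b)+u\Phi(R_b)+z\Phi(L_c)+y\Phi(R_c)
\end{equation*}
exhibits $c$ as a $\mathbb{Z}_+$-linear combination of the eight elements of $\mathscr{H}_\Delta$.

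For irreducibility, fix $W^H$ to be one of the eight basis webs and suppose $\Phi_\Delta(W^H)=c_1+c_2$ with $c_1,c_2\in\mathscr{C}_\Delta$ both nonzero. Using Theorem~\ref{thm:main-theorem} to write $c_i=\Phi_\Delta(W_i)$ and then expanding each $c_i$ in good position as above, I obtain parameters $(T_i,x_i,v_i,w_i,t_i,u_i,z_i,y_i)$ for $i=1,2$ and an identity in $\mathscr{C}_\Delta$ expressing $\Phi_\Delta(W^H)$ as a $\mathbb{Z}_+$-combination of the eight basis vectors. The bijectivity of $\Phi_\Delta$ implies that the honeycomb orientation of $W^H$ is recoverable from its coordinate vector, which rules out mixed contributions of $\Phi(T_{in})$ and $\Phi(T_{out})$ on the right-hand side. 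A direct inspection of the seven coordinate values in Figure~\ref{figure:triangle} then reduces the remaining analysis to a short case check on the possible parameters of $W_1$ and $W_2$, forcing all parameters of one of them to vanish and yielding the desired contradiction. Combined with the spanning step, Proposition~\ref{prop:irreducible-elements} then gives the result.

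The main obstacle is that $\mathscr{H}_\Delta$ consists of eight vectors inside a seven-dimensional lattice, hence is linearly dependent over $\mathbb{Z}$; one must check that the unique relation among $\Phi(L_a),\ldots,\Phi(R_c),\Phi(T_{in}),\Phi(T_{out})$ necessarily mixes $\Phi(T_{in})$ and $\Phi(T_{out})$ with coefficients of opposite sign, so that no element of $\mathscr{H}_\Delta$ can be rewritten as a nontrivial $\mathbb{Z}_+$-combination of the others inside $\mathscr{C}_\Delta$. This algebraic property is the coordinate-level shadow of the topological fact that a reduced web in the triangle contains at most one honeycomb, of a definite orientation, and should follow from the uniqueness of good position combined with Theorem~\ref{thm:main-theorem}.
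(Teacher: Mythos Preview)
Your spanning step is correct and matches the paper's. The irreducibility step, however, has a gap. The assertion that ``the bijectivity of $\Phi_\Delta$ \ldots\ rules out mixed contributions of $\Phi(T_{in})$ and $\Phi(T_{out})$ on the right-hand side'' is not valid: bijectivity pins down $W^H$ from its coordinate vector, but places no constraint on how the summands $c_1,c_2$ decompose in good position. Nothing prevents $W_1$ from carrying an in-honeycomb and $W_2$ an out-honeycomb, and the relation $\Phi(T_{in})+\Phi(T_{out})=\Phi(L_a)+\Phi(L_b)+\Phi(L_c)$ (Remark~\ref{rem:word-of-caution}) is precisely the mechanism by which such a mixed sum can collapse to a single basic vector. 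You rightly flag this relation as the obstacle in your final paragraph, but ``should follow from the uniqueness of good position combined with Theorem~\ref{thm:main-theorem}'' is not yet a proof: uniqueness of good position constrains each $c_i$ individually, not their sum.

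The paper sidesteps this by a different device: a linear isomorphism $\mathbb{R}^7\to\mathbb{R}^7$ (from the proof of \cite[Proposition~45]{DouglasArxiv20}, and made explicit later in the proof of Proposition~\ref{prop:sector-decomp-triangle}) sending the six corner-arc vectors to the first six standard basis vectors of $\mathbb{R}^6\times\mathbb{R}$ and sending $\Phi(T_{in}),\Phi(T_{out})$ to $(0,\dots,0;1)$ and $(0,1,0,1,0,1;-1)$. In these coordinates the check that no element of $\mathscr{H}_\Delta$ lies in the $\mathbb{Z}_+$-span of the other seven is immediate; combined with spanning and $\mathscr{C}_\Delta\subset\mathbb{Z}_+^7$, Proposition~\ref{prop:irreducible-elements} then finishes. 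Your route can certainly be completed --- write out the one-dimensional relation space and verify by hand that it cannot be used to express any $\Phi_\Delta(W^H)$ as a nonnegative combination of the remaining seven --- but that check must actually be carried out; the appeal to bijectivity does not supply it.
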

	
\begin{proof}
This is a consequence of \cite[Proposition 6.6]{DouglasArxiv20} and its proof.  		

Indeed, we need to show that $\mathcal{H}_\Delta$ is the set of irreducible elements.  To start, any such $\Phi_\Delta(W^H)$ is nonzero.    By the last sentence of \cite[Proposition 6.6]{DouglasArxiv20}, we have that $\mathcal{H}_\Delta$ is a $\mathbb{Z}_+$-spanning set for $\mathcal{C}_\Delta$.  One checks by hand that no single element of $\mathcal{H}_\Delta$ can be written as a $\mathbb{Z}_+$-linear combination of other elements of $\mathcal{H}_\Delta$.  (This last property is particularly clear when viewed in the isomorphic cone $\mathcal{C} \subset \mathbb{Z}_+^6 \times \mathbb{Z} \subset \mathbb{Z}^7$, namely the image of $\mathcal{C}_\Delta$ under a certain linear isomorphism $\mathbb{R}^7 \to \mathbb{R}^7$ of geometric origin; for details, see the proof of \cite[Proposition 6.6]{DouglasArxiv20}.  In \S \ref{section:ssq}, we generalize this linear isomorphism to the square case.)  The result follows by Proposition \ref{prop:irreducible-elements}.
\end{proof}
	
\begin{remark}
\label{rem:word-of-caution}
As a caution, it is not implied that an element of $\mathcal{C}_\Delta$ has a unique decomposition as a sum of  Hilbert basis elements.  Indeed, in $\mathcal{C}_\Delta$, we have the relation $\Phi_\Delta(T_{in})+\Phi_\Delta(T_{out})=\Phi_\Delta(L_a)+\Phi_\Delta(L_b)+\Phi_\Delta(L_c)$.  See also Section \ref{ssec:relations-in-the-ktgs-cone-for-the-square}.

It is also not true that if $\Phi_\Delta(W^\prime) \leq \Phi_\Delta(W) \in \mathcal{C}_\Delta \subset \mathbb{Z}_+^7$, in the sense that the inequality holds for each coordinate, then $W^\prime$ is topologically `contained in' $W$.  Indeed, in the above example, we have $\Phi_\Delta(T_{in})$ or $\Phi_\Delta(T_{out}) \leq  \Phi_\Delta(L_a)+\Phi_\Delta(L_b)+\Phi_\Delta(L_c)$ in $\mathcal{C}_\Delta$.    An even simpler example is $\Phi_\Delta(L_a) \leq \Phi_\Delta(R_b)+\Phi_\Delta(R_c)$.
\end{remark}

\subsubsection{Hilbert basis for the square}
\label{sssec:hilbert-basis-for-the-square-statement}
	   
We turn to the square $\Box$, which for the rest of this section is  equipped with an ideal triangulation $\mathcal{T}$, namely a choice of diagonal of $\Box$.  

Recall  the 8 oriented corner arcs in the square $\Box$ (Definition \ref{def:corner-arcs}); these are the `irreducible' webs (1)-(8) in  $\mathcal{W}_\Box$ depicted in Figure \ref{figure:square1}.  The triangulation $\mathcal{T}$ determines 14 more `irreducible' webs in $\Box$, namely the webs (9)-(22) in $\mathcal{W}_\Box$ depicted in Figure \ref{figure:square2}.  The bracket notation used in Figure \ref{figure:square2} is explained in the caption of the figure.  In sum, let us denote these 22 `irreducible' webs by $W^H_i \in \mathcal{W}_\Box$ for $i=1,2,\dots,22$.  

Let $\Phi_\mathcal{T} : \mathcal{W}_\Box \to \mathcal{C}_\mathcal{T}$ be the associated web tropical coordinate map.  For each web $W_i^H$, its 12 tropical coordinates $\Phi_\mathcal{T}(W_i^H) \in \mathcal{C}_\mathcal{T} \subset \mathbb{Z}_+^{12}$ are also provided in Figure \ref{figure:square2}.    

\begin{theorem}
\label{theorem:basis}
For the  webs $\{ W_i^H \}_{i=1,2,\dots,22}$ in $\mathcal{W}_\Box$ displayed in Figures {\upshape\ref{figure:square1}} and {\upshape\ref{figure:square2}},  the subset
\begin{equation*}
\mathcal{H}_{(\Box, \mathcal{T})}=\{ \Phi_\mathcal{T}(W_i^H) ; 
  i=1,2,\dots,22  \}
 \subset \mathcal{C}_\mathcal{T}
\end{equation*}
is the Hilbert basis of the KTGS cone $\mathcal{C}_\mathcal{T} \subset \mathbb{Z}_+^{12}$ for the triangulated square $(\Box, \mathcal{T})$.  
\end{theorem}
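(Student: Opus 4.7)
The plan is to invoke Proposition~\ref{prop:irreducible-elements}, which identifies the Hilbert basis with the unique minimum $\mathbb{Z}_+$-spanning set, equivalently with the set of irreducible elements of the cone. It thus suffices to establish: (a) each $\Phi_\mathscr{T}(W_i^H)$ is irreducible in $\mathscr{C}_\mathscr{T}$; and (b) the collection $\mathscr{H}_{(\Box,\mathscr{T})}$ is $\mathbb{Z}_+$-spanning in $\mathscr{C}_\mathscr{T}$.

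For the spanning property (b), I would combine Theorem~\ref{thm:main-theorem} with the corner/cornerless decomposition of Definition~\ref{def:cornerless-webs-in-the-square} and the family decomposition of Proposition~\ref{lem:42families}. Given $c \in \mathscr{C}_\mathscr{T}$, write $c = \Phi_\mathscr{T}(W)$ for the unique $W \in \mathscr{W}_\Box$ and split $W = W_r \sqcup W_c$. Iterating Observation~\ref{lemma:ww} expresses $\Phi_\mathscr{T}(W_r)$ as a $\mathbb{Z}_+$-combination of the eight corner arc coordinates $\Phi_\mathscr{T}(W_i^H)$ with $i=1,\dots,8$. For the cornerless part, $W_c$ belongs to some family $\mathscr{W}_j$ whose schematic (Figure~\ref{figure:9cases}) is parametrized by weights $x,y,z,t \in \mathbb{Z}_+$. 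Setting each weight separately to one produces four ``unit'' cornerless webs that should match four specific elements of $\{W_i^H\}_{i=9,\dots,22}$ from Figure~\ref{figure:square1}. Then, by the same local gluing argument recalled in \S\ref{sssec:definitionofthetropicalwebcoordinates} for the triangle case, the coordinate vector $\Phi_\mathscr{T}(W_c)$ is linear in $(x,y,z,t)$. Summing with $\Phi_\mathscr{T}(W_r)$ realizes $c$ as a $\mathbb{Z}_+$-combination of elements of $\mathscr{H}_{(\Box,\mathscr{T})}$.

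For the irreducibility (a), the approach follows the triangle case (Proposition~\ref{prop:hilb-base-triangle}): construct a geometrically motivated linear isomorphism carrying $\mathscr{C}_\mathscr{T} \subset \mathbb{Z}^{12}$ onto a cone in $\mathbb{Z}_+^8 \times \mathbb{Z}^4$, as alluded to in the caption of Figure~\ref{figure:wallscross} and to be developed in \S\ref{section:ssq}. Heuristically, the first eight coordinates record the boundary (frozen) data, and the remaining four record the tropical $\mathcal{X}$-type data. In the transformed picture, the eight corner arcs trivially cannot be decomposed nontrivially within $\mathscr{C}_\mathscr{T}$, since they have essentially only one nonzero entry among the first eight coordinates and any candidate summand would be forced to vanish in every other coordinate as well. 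The remaining fourteen ``interior'' Hilbert basis elements are distinguished by their $\mathcal{X}$-type coordinates lying on particular bounding walls of the sector decomposition of Theorem~\ref{thm:second-theorem-intro}: any candidate nonzero splitting $c_1 + c_2$ would force one of the summands to exit the admissible sector structure of $\mathscr{C}_\mathscr{T}$, giving a contradiction.

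The main obstacle will be executing the irreducibility step (a) cleanly. While (b) is essentially a bookkeeping exercise once the 42 family schematics are in hand, irreducibility requires ruling out every possible splitting within the twelve-dimensional cone. The cleanest route is to develop the $\mathcal{X}$-coordinate frame of \S\ref{section:ssq} in tandem with the irreducibility argument, so that each Hilbert basis element is realized as an extreme generator of the transformed cone, reducing irreducibility to an explicit but manageable finite check rather than an ad~hoc case-by-case linear programming computation.
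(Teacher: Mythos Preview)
Your spanning argument (b) is correct but takes a different route from the paper. The paper does not use the 42-family classification here; instead it proves Lemma~\ref{proposition:h}, a triangle-projection argument: if $\Phi_\mathscr{T}(W)$ is irreducible in $\mathscr{C}_\mathscr{T}$, then each restriction $\Phi_\Delta(W|_\Delta)$, $\Phi_{\Delta'}(W|_{\Delta'})$ must already lie in the triangle Hilbert basis $\mathscr{H}_\Delta$, $\mathscr{H}_{\Delta'}$, since otherwise one can glue a triangle decomposition across the bigon to decompose $\Phi_\mathscr{T}(W)$. This immediately confines the irreducibles to the $22$ compatible pairs and, via Proposition~\ref{prop:irreducible-elements}, gives spanning for free. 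Your route via Proposition~\ref{lem:42families} also works, but it forward-references the explicit $\mathscr{Q}_i$ lists and Observation~\ref{obs:families-and-their-cones} from \S\ref{section:ssq}, whereas the paper's lemma is self-contained and conceptually cleaner (local-to-global via the two triangles).

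Your irreducibility argument (a) has a genuine gap. Working in the $\phi_\mathscr{T}\circ\theta_\mathscr{T}$ frame $\mathbb{Z}_+^8\times\mathbb{Z}^4$ is not enough: the last four coordinates are signed, so for a corner arc $e_i$ a splitting $e_i=c_1+c_2$ with $c_2=(0,\dots,0;X)$ and $c_1=(e_i^{(8)};-X)$ is not ruled out by the eight nonnegativity constraints you retain. What actually kills such splittings is that the \emph{remaining ten} rhombus numbers (those dropped by $\phi_\mathscr{T}$) must also be nonnegative; e.g.\ $X_1=\beta_3-\beta_2$ together with $\beta_2=0$ forces $\beta_3=X_1\geq 0$, blocking $X_1<0$. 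The paper therefore does the irreducibility check using $\theta_\mathscr{T}$ alone, landing in $\mathbb{Z}_+^{18}$ where all $22$ images are $\{0,1\}$-vectors; any splitting must then partition the support, and one checks no proper sub-support lies in $\theta_\mathscr{T}(\mathscr{C}_\mathscr{T})$. Your proposed sector-based argument for the fourteen interior elements (``would force one of the summands to exit the admissible sector structure'') is too vague to constitute a proof: summands of a point in $\mathscr{C}_\mathscr{T}^i$ need not themselves lie in $\mathscr{C}_\mathscr{T}^i$, so the sector decomposition does not directly control decompositions.
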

	
\begin{remark}
\label{rem:hilbert-basis-depends-on-triangulation}
Note that if the other triangulation $\mathcal{T}^\prime$ of $\Box$ had been chosen, then only the webs $W^H_1, \dots, W^H_{16}$ and $W^H_{19}, \dots, W^H_{22}$ would appear among the 22 `irreducible' webs $W^{\prime H}_i$ corresponding to $\mathcal{T}^\prime$.  In other words, the set of webs corresponding to the Hilbert basis $\mathcal{H}_{(\Box, \mathcal{T})}$ of $\mathcal{C}_\mathcal{T}$ depends on which triangulation $\mathcal{T}$ of the square is chosen.
\end{remark}

We will need a little bit of preparation before proving the theorem.  

Let $\Delta$ and $\Delta^\prime$ be the two triangles appearing in the split triangulation $\mathcal{T}$ of $\Box$ (Section \ref{sssec:splitidealtriangulationsandgoodposition}).   Say, $\Delta$ is the top triangle on the left hand side of Figure \ref{figure:flip}, and $\Delta^\prime$ is the bottom triangle.  In particular, neither $\Delta$ nor $\Delta^\prime$ include the intermediate bigon.  If $W$ is a reduced web in $\Box$ in good position with respect to the split ideal triangulation $\mathcal{T}$, then the restrictions $W|_\Delta$ and $W|_{\Delta^\prime}$ are in good position in their respective triangles (by definition of good position of $W$ with respect to $\mathcal{T}$).  At the level of coordinates, this induces two projections $\pi_\Delta : \mathcal{C}_\mathcal{T} \to \mathcal{C}_\Delta$ and $\pi_{\Delta^\prime} : \mathcal{C}_{\mathcal{T}} \to \mathcal{C}_{\Delta^\prime}$ defined by $\pi_\Delta(\Phi_\mathcal{T}(W))=\Phi_\Delta(W|_\Delta)$ and $\pi_{\Delta^\prime}(\Phi_\mathcal{T}(W))=\Phi_{\Delta^\prime}(W|_{\Delta^\prime})$.  Compare Figure \ref{fig:coordinates-example}.  

\begin{lem}
\label{proposition:h}
For a reduced web $W$ in $\mathcal{W}_\Box$, suppose its image $\Phi_{\mathcal{T}}(W)$ is an  irreducible element of $\mathcal{C}_\mathcal{T}$.  Then, the projections $\pi_\Delta(\Phi_{\mathcal{T}}(W))$ and $\pi_{\Delta^\prime}(\Phi_{\mathcal{T}}(W))$ are, respectively, in the Hilbert bases $\mathcal{H}_\Delta$ and $\mathcal{H}_{\Delta^\prime}$ of the cones $\mathcal{C}_\Delta$ and $\mathcal{C}_{\Delta^\prime}$.  

Consequently, the set of irreducible elements of $\mathcal{C}_\mathcal{T}$ is finite \textup{(}thus forming a Hilbert basis\textup{)} and is a subset of $\mathcal{H}_{(\Box, \mathcal{T})}$, as defined in Theorem {\upshape\ref{theorem:basis}}.  \textup{(}This is because $\mathcal{H}_{(\Box, \mathcal{T})}$ is formed by taking all possible gluings across the bigon of irreducible elements in the two triangles.\textup{)}
\end{lem}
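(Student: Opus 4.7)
The plan is to prove the first statement by contrapositive, showing that if $\pi_\Delta(\Phi_\mathscr{T}(W))$ fails to lie in $\mathscr{H}_\Delta \cup \{0\}$ (i.e., is nonzero and reducible in $\mathscr{C}_\Delta$), then $\Phi_\mathscr{T}(W)$ is reducible in $\mathscr{C}_\mathscr{T}$. Under this assumption, I can write $\pi_\Delta(\Phi_\mathscr{T}(W)) = c_1 + c_2$ with both $c_i \in \mathscr{C}_\Delta$ nonzero, and by applying Theorem \ref{thm:main-theorem} to the single triangle $\Delta$ together with Proposition \ref{prop:hilb-base-triangle}, I can realize this at the web level as a topological decomposition $W|_\Delta = V_1 \sqcup V_2$ of the restriction, with $\Phi_\Delta(V_i) = c_i$. (The case $\pi_\Delta(\Phi_\mathscr{T}(W)) = 0$, meaning $W|_\Delta = \emptyset$, is accommodated by reading $\mathscr{H}_\Delta$ as $\mathscr{H}_\Delta \cup \{0\}$; this is consistent with the examples in Figure \ref{figure:square1} where some irreducible webs, such as $L_c, L_d, R_c, R_d$, have zero projection to one of the triangles.)

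The heart of the argument will be to extend this triangular decomposition across the intermediate bigon and into $\Delta'$, producing a web-level splitting $W = \tilde W_1 \sqcup \tilde W_2$ in $\mathscr{W}_\Box$ with both $\tilde W_i$ nonempty and $\tilde W_i|_\Delta = V_i$. To do this I would trace each oriented strand of $V_i$ that meets the diagonal edge through the bigon ladder and into $\Delta'$, using the admissibility condition of good position (Observation \ref{lemma:bigon}) to track strands uniquely. In the generic case where each rung of the bigon either lies between strands of the same $V_i$ or passes directly to $\Delta'$ unobstructed, the extension is immediate. Once the extension is realized, Observation \ref{lemma:ww} gives $\Phi_\mathscr{T}(W) = \Phi_\mathscr{T}(\tilde W_1) + \Phi_\mathscr{T}(\tilde W_2)$ with both summands nonzero, contradicting irreducibility. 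The symmetric argument handles $\pi_{\Delta'}$.

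For the final assertion, I would observe that the pair $(\pi_\Delta, \pi_{\Delta'})$ together determines $\Phi_\mathscr{T}(W)$: the $7+7=14$ coordinates they control share exactly the $2$ diagonal coordinates of $\mathscr{T}_3$, accounting for all $N=12$ coordinates of $\mathscr{C}_\mathscr{T}$. Consequently, once the first statement forces both projections to lie in the $9$-element sets $\mathscr{H}_\Delta \cup \{0\}$ and $\mathscr{H}_{\Delta'} \cup \{0\}$, at most $9 \times 9 = 81$ irreducible elements of $\mathscr{C}_\mathscr{T}$ can occur, giving finiteness immediately (and hence a Hilbert basis by Proposition \ref{prop:irreducible-elements}). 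I would then filter these $(c,c')$ pairs by the requirement that their diagonal coordinates agree (for compatibility in $\mathscr{C}_\mathscr{T}$), discard the trivial pair, and compare the surviving list against the $22$ explicit webs in Figure \ref{figure:square1} to verify the inclusion in $\mathscr{H}_{(\Box,\mathscr{T})}$.

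The main obstacle will be the lifting step across the bigon: the decomposition $\pi_\Delta(\Phi_\mathscr{T}(W)) = c_1 + c_2$ is purely a statement about triangle coordinates, and there is no a priori reason that the bigon ladder's rung structure must be compatible with the induced splitting $V_1 \sqcup V_2$ at the diagonal. The delicate case is when a rung of the bigon ladder connects a strand from $V_1$ with a strand from $V_2$, which would prevent a clean topological splitting. Addressing this will require either re-choosing the decomposition $V_1 \sqcup V_2$ (it need not be unique, as illustrated in Remark \ref{rem:word-of-caution}) or locally reorganizing the rungs, and this step is where the admissibility property of Observation \ref{lemma:bigon}, together with the flexibility of good position, will be essential.
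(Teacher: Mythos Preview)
Your contrapositive setup and the handling of the zero-projection case are correct, and your finiteness/filtering argument for the second assertion is fine. However, the core of your first-statement argument has a genuine gap, and it appears \emph{before} the bigon obstacle you identify. You claim that a coordinate-level reducibility $\pi_\Delta(\Phi_\mathscr{T}(W)) = c_1 + c_2$ can be realized as a topological splitting $W|_\Delta = V_1 \sqcup V_2$ into disjoint sub-webs. But in good position $W|_\Delta$ is a honeycomb web, and if it consists of a single honeycomb of size $x \geq 2$ with no corner arcs, it is connected and admits \emph{no} nontrivial disjoint-union decomposition, for any choice of $c_1, c_2$. Concretely, for $W = \Phi_\mathscr{T}^{-1}\bigl(2\,\Phi_\mathscr{T}([T_{out}, T_{in}])\bigr)$ the entire web $W$ is connected, so neither $W|_\Delta$ nor $W$ splits as a disjoint union of sub-webs. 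Neither Theorem~\ref{thm:main-theorem} nor Proposition~\ref{prop:hilb-base-triangle} furnishes such a splitting, and your proposed fixes (re-choosing $V_1, V_2$, or reorganizing rungs) do not touch this obstruction, which lives in the triangle and not in the bigon.

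The paper's proof avoids both this issue and your bigon obstacle by never splitting $W$ topologically. Given any nonempty $A_1, A_2 \in \mathscr{W}_\Delta$ (not sub-webs of $W|_\Delta$) with $\Phi_\Delta(A_1) + \Phi_\Delta(A_2) = \Phi_\Delta(W|_\Delta)$, the two diagonal-edge coordinates determine the in/out strand-end counts linearly, so the strand-end counts $(n_i, m_i)$ of the $A_i$ automatically sum to those of $W|_{\Delta'}$. One then \emph{constructs} $A'_i \in \mathscr{W}_{\Delta'}$ (again not sub-webs) by arbitrarily partitioning the strand-ends of $W|_{\Delta'}$ on $E'$ into groups of the prescribed sizes, assigning each corner-arc component accordingly, and replacing the single honeycomb of size $h'$ by two new honeycombs of sizes $h'^{(1)} + h'^{(2)} = h'$. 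Additivity of the triangle coordinates gives $\Phi_{\Delta'}(A'_1) + \Phi_{\Delta'}(A'_2) = \Phi_{\Delta'}(W|_{\Delta'})$, and since strand-end counts match, $A_i$ and $A'_i$ glue across a bigon to a nonempty $W_i \in \mathscr{W}_\Box$ with $\Phi_\mathscr{T}(W) = \Phi_\mathscr{T}(W_1) + \Phi_\mathscr{T}(W_2)$. The bigon ladder of the original $W$ is never consulted.
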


\begin{proof}
Assuming the first statement, the second statement immediately follows by Definition \ref{def:hilbert-basis}, Proposition \ref{prop:hilb-base-triangle}, and the construction of the 22 element set $\mathcal{H}_{(\Box, \mathcal{T})} \subset \mathcal{C}_\mathcal{T}$.  

To establish the first statement, assume $W$ is in good position with respect to $\mathcal{T}$.  It suffices to show that if $\pi_\Delta(\Phi_{\mathcal{T}}(W)) = \Phi_\Delta(W|_\Delta)\in \mathcal{C}_\Delta$ is reducible, then $\Phi_\mathcal{T}(W) \in \mathcal{C}_\mathcal{T}$ is reducible.  So assume that there are nonempty reduced webs $A_1$ and $A_2$ in $\mathcal{W}_\Delta$ such that $\Phi_\Delta(W|_\Delta)=\Phi_\Delta(A_1)+\Phi_\Delta(A_2)$ in $\mathcal{C}_\Delta$.  (At this point, one should be mindful of Remark \ref{rem:word-of-caution}.)   We explicitly construct nonempty reduced webs $W_1$ and $W_2$ in $\mathcal{W}_\Box$ such that 
\begin{equation*}\tag{i}
\label{eq:contradiction-hilbert-basis}
\Phi_\mathcal{T}(W)=\Phi_\mathcal{T}(W_1)+\Phi_\mathcal{T}(W_2)   \in \mathcal{C}_\mathcal{T}.
\end{equation*} 

Let $E$ (resp. $E^\prime$) denote the bigon edge intersecting $\Delta$ (resp. $\Delta^\prime$).   Let $n$ and $m$ (resp. $n_i$ and $m_i$ for $i=1,2$) be, respectively, the number of out- and in-strand-ends of $W|_\Delta$ (resp. $A_i$) on $E$; similarly, let $n^\prime$ and $m^\prime$ be, respectively, the number of out- and in-strand-ends of $W|_{\Delta^\prime}$ on $E^\prime$.  Note $n^\prime = m$ and $m^\prime = n$.  

By \cite[Definition 5.1, property 2]{DouglasArxiv20}, which says that the two edge coordinates on $E$ uniquely determine the number of out- and in-strand-ends on $E$ (this is a simple linear algebra calculation), we must have $n=n_1+n_2$ and $m=m_1+m_2$.  We gather $n^\prime=m_1+m_2$ and $m^\prime=n_1+n_2$.  

Now, recall from Section \ref{sssec:splitidealtriangulationsandgoodposition} that a reduced web in a triangle consists of a honeycomb (possibly empty) together with corner arcs (possibly none).  For each $i=1,2$, arbitrarily choose $m_i$ out-strand-ends and $n_i$ in-strand-ends of $W|_{\Delta^\prime}$ on $E^\prime$, which we call `$i$-strand-ends' of $W|_{\Delta^\prime}$.  Let us say that a component $C^\prime$ of $W|_{\Delta^\prime}$ is `$A_i$-connecting' if at least one of its strand-ends on $E^\prime$ is an $i$-strand-end; note that (1) a corner arc $C^\prime$ is  $A_i$-connecting for at most one $i$ (possibly none, when $C^\prime$ is on the corner opposite $E^\prime$), and (2)  a honeycomb $C^\prime$ is $A_i$-connecting for at least one $i$, and may be both $A_1$- and $A_2$-connecting.  

Let $h^\prime \in \mathbb{Z}_+$ be the size of the honeycomb $H^\prime$ of $W|_{\Delta^\prime}$, and let $h^{\prime (i)}$ be the number of $i$-strand-ends of $H^\prime$; note that $h^\prime=h^{\prime (1)}+h^{\prime (2)}$.  For each $i=1,2$, define $A^\prime_i$ to be the reduced web in $\mathcal{W}_{\Delta^\prime}$ consisting of the $A_i$-connecting corner arc components $C^\prime$ of $W|_{\Delta^\prime}$  together with a honeycomb of size $h^{\prime (i)}$ oriented as $H^\prime$ (and we  can include the non-$A_i$-connecting components $C^\prime$ into $A^\prime_1$, say); note in particular that  $\Phi_{\Delta^\prime}(W|_{\Delta^\prime})=\Phi_{\Delta^\prime}(A^\prime_1)+\Phi_{\Delta^\prime}(A^\prime_2) \in \mathcal{C}_{\Delta^\prime}$.  

Lastly, for each $i=1,2$, define $W_i$ in $\mathcal{W}_\Box$ to be the unique nonempty reduced web in the square obtained from the triangle webs $A_i \in \mathcal{W}_\Delta$ and $A^\prime_i \in \mathcal{W}_{\Delta^\prime}$ by gluing across the bigon in the usual way (as in Figure \ref{figure:bigon}).  (Technically, it is the class of $W_i$ in $\mathcal{W}_\Box$ that is unique, and $W_i$ is determined up to corner arc permutations).  By construction, \eqref{eq:contradiction-hilbert-basis} holds.
\end{proof}

\begin{proof}[Proof of Theorem   {\upshape \ref{theorem:basis}}] 
By the last paragraph of Lemma \ref{proposition:h}, it remains to show that each element of $\mathcal{H}_{(\Box, \mathcal{T})}$ is irreducible in $\mathcal{C}_\mathcal{T}$.  This property can be checked by hand. (The irreducibility becomes clearer in light of the linear map $\theta_\mathcal{T} : \mathbb{R}^{12} \to \mathbb{R}^{18}$ of Section \ref{sssec:a-linear-isomorphism}, where the image $\theta_\mathcal{T}(\mathcal{H}_{(\Box, \mathcal{T})}) \subset \mathbb{Z}_+^{18}$ is written explicitly).  
\end{proof}

\begin{remark}
\label{rem:commentsabouthilbertbasisutility}
It follows by Proposition \ref{prop:irreducible-elements} that the Hilbert basis of the KTGS cone $\mathcal{C}_\mathcal{T} \subset \mathbb{Z}_+^{12}$ for the triangulated square $(\Box, \mathcal{T})$ appearing in Theorem \ref{theorem:basis} spans $\mathcal{C}_\mathcal{T}$ over $\mathbb{Z}_+$.  Actually, we will prove this finite generation of $\mathcal{C}_\mathcal{T}$ directly in Section \ref{section:ssq} as a consequence of Theorem \ref{thm:main-theorem}; see the proof of Theorem \ref{theorem:decomp} in Section \ref{sssec:proof-of-main-theorem-2}.  Strictly speaking then, Theorem \ref{theorem:basis} is not required at the technical level in what follows; the computations throughout this section will be used, however.

Note, in particular, that for these reasons the positivity of the KTGS cone $\mathcal{C}_\mathcal{T} \subset \mathbb{Z}_+^{12}$, while possibly conceptually interesting, plays a complementary role; see also Remark \ref{rem:spanningpropertiesofsubmonoids}.
\end{remark}

\begin{figure}[t]
\includegraphics[width=\textwidth]{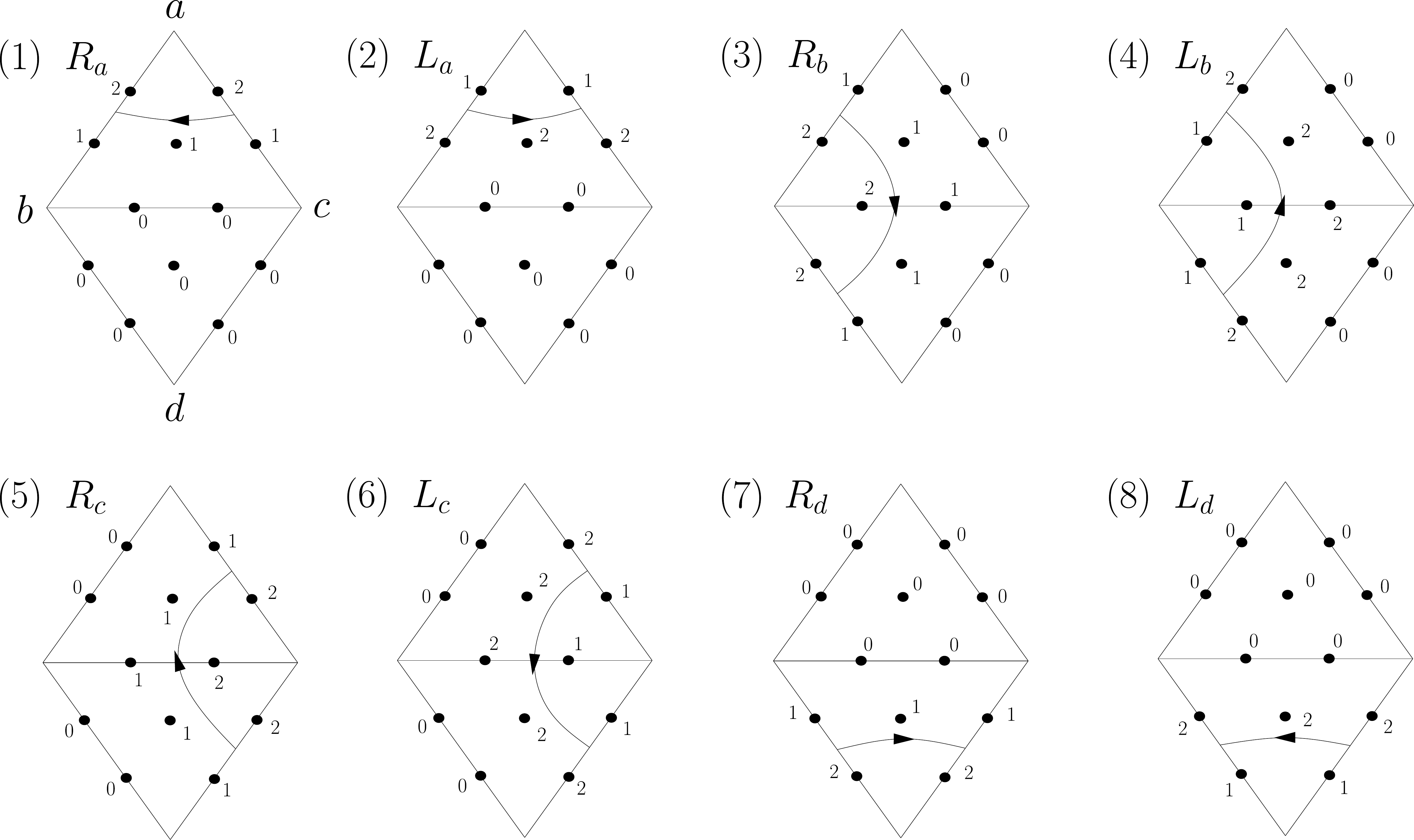}
\caption{     (See also Figure \ref{figure:square2}.)  The first 8 elements of the 22 element Hilbert basis for the KTGS cone $\mathcal{C}_\mathcal{T}$ of the triangulated square $(\Box, \mathcal{T})$, pictured via the corresponding `irreducible' reduced webs $\{ W^H_i \}_{i=1,2,\dots,22}$.   }
\label{figure:square1}
\end{figure}

\begin{figure}[t]
\includegraphics[width=.9\textwidth]{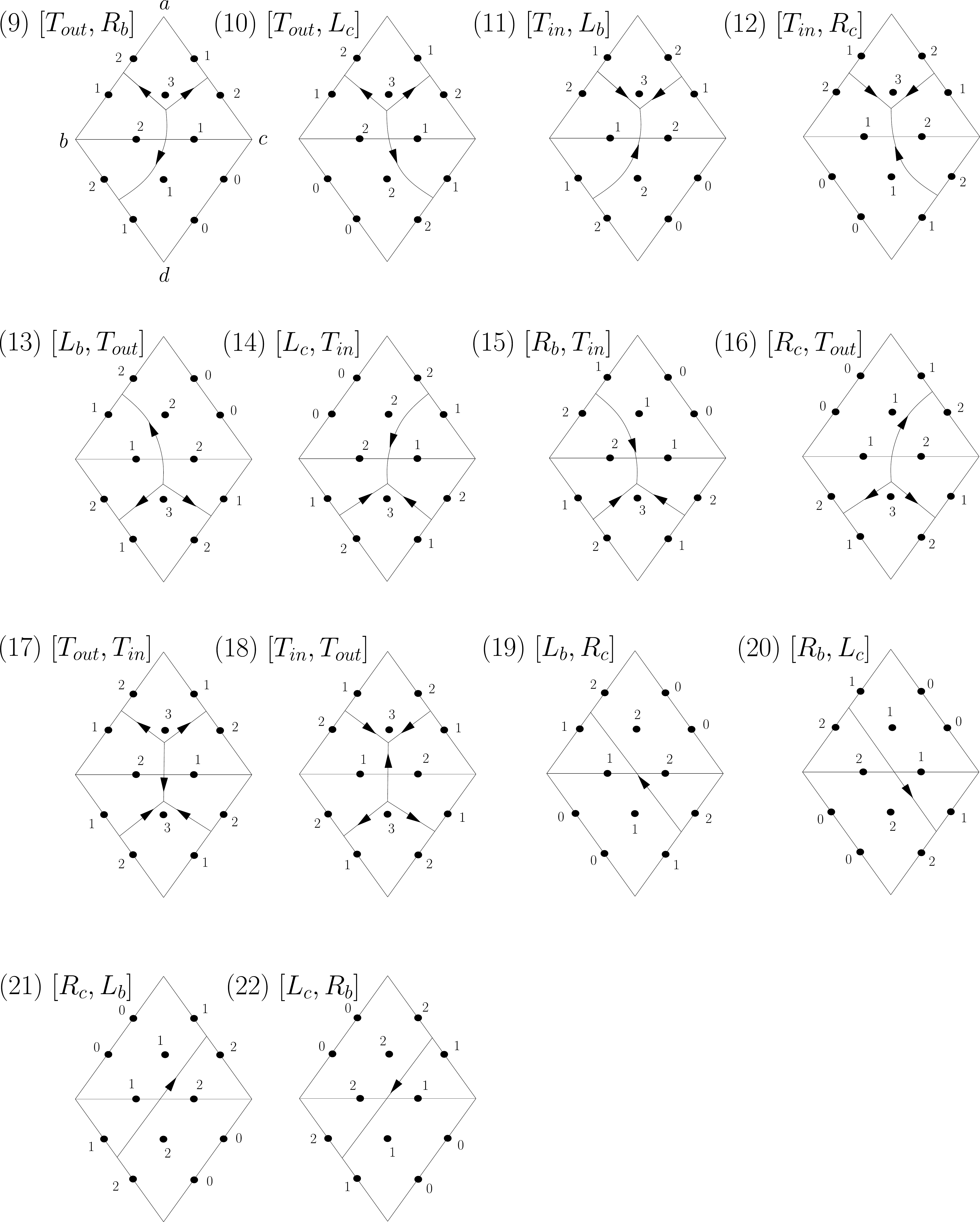}
\caption{     (See also Figure \ref{figure:square1}.)  The last 14 elements of the 22 element Hilbert basis for the KTGS cone $\mathcal{C}_\mathcal{T}$ of the triangulated square $(\Box, \mathcal{T})$, pictured via the corresponding `irreducible' reduced webs $\{ W^H_i \}_{i=1,2,\dots,22}$.  The square bracket is a purely notational device for webs (9)-(22); the first entry of $[\cdot, \cdot]$ corresponds to the top triangle, and the second entry  to the bottom triangle.}
\label{figure:square2}
\end{figure}

\subsubsection{Two linear isomorphisms:  first isomorphism $\theta_\mathcal{T}$ by rhombus numbers}
\label{sssec:a-linear-isomorphism}
		   
Recall (Definition \ref{def:KTGS-cone}) that the KTGS cone $\mathcal{C}_\mathcal{T}$ for any triangulated marked surface $(\widehat{S}, \mathcal{T})$ is defined as the points in $\mathbb{Z}^N$ satisfying two conditions per rhombus, where there are three rhombi per pointed ideal triangle $\Delta$ of $\mathcal{T}$   (Section \ref{ssec:markedsurfacesidealtriangulations}).  Both conditions involve the quantity $3\beta=a+b-c-d$ associated to the rhombus; the first being that $3\beta = a+b-c-d \geq 0$, and the second that $\beta = (a+b-c-d)/3 \in \mathbb{Z}$ is an integer. (Recall  $d=0$ if the rhombus is a corner rhombus; Section \ref{ssec:markedsurfacesidealtriangulations}.) Let $\{ \beta_i \}_i$ denote these `rhombus numbers', varying over all the rhombi of $\mathcal{T}$.

It will be convenient in the remainder of the paper to talk about real vector spaces $\mathbb{R}^N$, which we think of as containing $\mathbb{Z}^N$, in particular the KTGS cone $\mathcal{C}_\mathcal{T}$, as a subset. 

In this sub-subsection, for the triangulated ideal square $(\Box, \mathcal{T})$ we define a linear isomorphism $\theta_\mathcal{T}$ of real 12-dimensional vector spaces, which is mentioned in the proof of Theorem \ref{theorem:basis} and used in Section \ref{section:ssq}. Here, 12 is the number of tropical coordinates for the square.  Note that the triangulated square  has 18 rhombi $\{ \beta_i \}_{i=1,2,\dots,18}$, as displayed in Figure \ref{fig:tropical-X-coords} in Section \ref{section:ssq}.  
		
\begin{definition}
Let $(\Box, \mathcal{T})$ be the triangulated square, whose coordinates are labeled as in  the left hand side of Figure \ref{figure:flip}.  Define a linear map 
\begin{equation*}
\theta_\mathcal{T} : \mathbb{R}^{12} \to \mathbb{R}^{18}
\end{equation*} 
by the formula
\begin{equation*}
\theta_\mathcal{T}(x_1, x_2, \dots, x_8, y_1, \dots, y_4) = (\beta_1, \beta_2, \beta_3, \beta_4, \beta_5, \beta_6, \beta_7, \beta_8, \beta_9, \beta_{10}, \beta_{11}, \beta_{12}, \beta_{13}, \beta_{14}, \beta_{15}, \beta_{16}, \beta_{17}, \beta_{18})
\end{equation*}
where the $\{ \beta_i \}_{i=1,2,\dots,18}$ are the 18 rhombus numbers defined above.  
\end{definition}

For example, the images under $\theta_\mathcal{T}$ of the 22-element Hilbert basis $\mathcal{H}_{(\Box, \mathcal{T})} \subset \mathcal{C}_\mathcal{T}$ of Theorem \ref{theorem:basis} are calculated from Figures \ref{figure:square1}, \ref{figure:square2}, and \ref{fig:tropical-X-coords} to be:
\begin{enumerate}[label=(\arabic*)]
\item $\theta_\mathcal{T}(\Phi_\mathcal{T}([R_a]))=(1, 0, 0, 0, 0, 0, 0, 0, 0, 0, 0, 0, 0, 0, 0, 0, 0, 0) $;
\item $\theta_\mathcal{T}(\Phi_\mathcal{T}([L_a]))=(0, 1, 1, 0, 0, 0, 0, 0, 0, 0, 0, 0, 0, 0, 0, 0, 0, 0) $;
\item $\theta_\mathcal{T}(\Phi_\mathcal{T}([R_b]))=(0, 0, 0, 1, 0, 0, 0, 0, 0, 0, 0, 0, 1, 0, 0, 0, 0, 0) $;
\item $\theta_\mathcal{T}(\Phi_\mathcal{T}([L_b]))=(0, 0, 0, 0, 1, 1, 0, 0, 0, 0, 0, 0, 0, 1, 1, 0, 0, 0) $;
\item $\theta_\mathcal{T}(\Phi_\mathcal{T}([R_c]))=(0, 0, 0, 0, 0, 0, 1, 0, 0, 0, 0, 0, 0, 0, 0, 1, 0, 0) $;
\item $\theta_\mathcal{T}(\Phi_\mathcal{T}([L_c]))=(0, 0, 0, 0, 0, 0, 0, 1, 1, 0, 0, 0, 0, 0, 0, 0, 1, 1) $;
\item $\theta_\mathcal{T}(\Phi_\mathcal{T}([R_d]))=(0, 0, 0, 0, 0, 0, 0, 0, 0, 1, 0, 0, 0, 0, 0, 0, 0, 0) $;
\item $\theta_\mathcal{T}(\Phi_\mathcal{T}([L_d]))=(0, 0, 0, 0, 0, 0, 0, 0, 0, 0, 1, 1, 0, 0, 0, 0, 0, 0) $;
\item $\theta_\mathcal{T}(\Phi_\mathcal{T}([T_{out}, R_b]))=(0, 0, 1, 0, 0, 1, 0, 0, 1, 0, 0, 0, 1, 0, 0, 0, 0, 0) $;
\item $\theta_\mathcal{T}(\Phi_\mathcal{T}([T_{out}, L_c]))=(0, 0, 1, 0, 0, 1, 0, 0, 1, 0, 0, 0, 0, 0, 0, 0, 1, 1) $;
\item $\theta_\mathcal{T}(\Phi_\mathcal{T}([T_{in}, L_b]))=(0, 1, 0, 0, 1, 0, 0, 1, 0, 0, 0, 0, 0, 1, 1, 0, 0, 0) $;
\item $\theta_\mathcal{T}(\Phi_\mathcal{T}([T_{in}, R_c]))=(0, 1, 0, 0, 1, 0, 0, 1, 0, 0, 0, 0, 0, 0, 0, 1, 0, 0) $;
\item $\theta_\mathcal{T}(\Phi_\mathcal{T}([L_b, T_{out}]))=(0, 0, 0, 0, 1, 1, 0, 0, 0, 0, 0, 1, 0, 0, 1, 0, 0, 1) $;
\item $\theta_\mathcal{T}(\Phi_\mathcal{T}([L_c, T_{in}]))=(0, 0, 0, 0, 0, 0, 0, 1, 1, 0, 1, 0, 0, 1, 0, 0, 1, 0) $;
\item $\theta_\mathcal{T}(\Phi_\mathcal{T}([R_b, T_{in}]))=(0, 0, 0, 1, 0, 0, 0, 0, 0, 0, 1, 0, 0, 1, 0, 0, 1, 0) $;
\item $\theta_\mathcal{T}(\Phi_\mathcal{T}([R_c, T_{out}]))=(0, 0, 0, 0, 0, 0, 1, 0, 0, 0, 0, 1, 0, 0, 1, 0, 0, 1) $;
\item $\theta_\mathcal{T}(\Phi_\mathcal{T}([T_{out}, T_{in}]))=(0, 0, 1, 0, 0, 1, 0, 0, 1, 0, 1, 0, 0, 1, 0, 0, 1, 0) $;
\item $\theta_\mathcal{T}(\Phi_\mathcal{T}([T_{in},T_{out}]))=(0, 1, 0, 0, 1, 0, 0, 1, 0, 0, 0, 1, 0, 0, 1, 0, 0, 1) $;
\item $\theta_\mathcal{T}(\Phi_\mathcal{T}([L_b,R_c]))=(0, 0, 0, 0, 1, 1, 0, 0, 0, 0, 0, 0, 0, 0, 0, 1, 0, 0) $;
\item $\theta_\mathcal{T}(\Phi_\mathcal{T}([R_b,L_c]))=(0, 0, 0, 1, 0, 0, 0, 0, 0, 0, 0, 0, 0, 0, 0, 0, 1, 1) $;
\item $\theta_\mathcal{T}(\Phi_\mathcal{T}([R_c,L_b]))=(0, 0, 0, 0, 0, 0, 1, 0, 0, 0, 0, 0, 0, 1, 1, 0, 0, 0) $;
\item $\theta_\mathcal{T}(\Phi_\mathcal{T}([L_c, R_b]))=(0, 0, 0, 0, 0, 0, 0, 1, 1, 0, 0, 0, 1, 0, 0, 0, 0, 0) $.
\end{enumerate}

When there is no confusion, we also let $\beta_i$ denote the general $i$-th coordinate of $\mathbb{R}^{18}$.  

Consider the subspace $V_\mathcal{T} \subset \mathbb{R}^{18}$ defined by
\begin{gather*}\tag{j}
\label{eq:tropical-x-coordinates}
V_\mathcal{T} = \{ (\beta_i)_i \in \mathbb{R}_{18};  
X_1 := \beta_3 - \beta_2 = \beta_6 - \beta_5 = \beta_9 - \beta_8, 
X_2 := \beta_4 - \beta_{13} = \beta_{17} - \beta_9, 
\\ \notag	X_3 := \beta_{12} - \beta_{11} = \beta_{15}-\beta_{14} = \beta_{18} - \beta_{17}, 
X_4 := \beta_{16} - \beta_7 = \beta_5 - \beta_{15}  \}.
\end{gather*}
See  Section \ref{sssec:second-linear-isomorphism} for a discussion of the geometric meaning of the subspace $V_\mathcal{T}$ and the quantities $X_i$.
	
\begin{prop}
\label{obs:theta-is-injective}
The linear map $\theta_\mathcal{T}:\mathbb{R}^{12} \to \mathbb{R}^{18}$ is an isomorphism of $\mathbb{R}^{12}$ onto $V_\mathcal{T}$.  That is, $\theta_\mathcal{T}$ is injective, and the image of $\theta_\mathcal{T}$ is equal to $V_\mathcal{T}$.  In particular, $V_\mathcal{T}$ is $12$-dimensional.
\end{prop}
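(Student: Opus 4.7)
The plan is to prove the proposition in three steps: verify the image is contained in $V_\mathscr{T}$, establish injectivity, and conclude equality by a dimension count.

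First I would verify that $\theta_\mathscr{T}(\mathbb{R}^{12}) \subset V_\mathscr{T}$. This is a direct computation using the definition of the rhombus numbers $\beta_i = (a+b-c-d)/3$, where $(a,b,c,d)$ are the four tropical coordinates assigned to the vertices of the $i$-th rhombus, as displayed in Figure \ref{fig:tropical-X-coords}. Each of the six defining equations of $V_\mathscr{T}$ (two from $X_1$, one from $X_2$, two from $X_3$, one from $X_4$) compares differences of adjacent rhombus numbers within a single triangle, and geometrically each such difference equals the tropical $\mathcal{X}$-coordinate at a vertex of the $3$-triangulation (this is consistent with Remark \ref{rem:clusterstructure} and the interpretation anticipated in \S \ref{sssec:second-linear-isomorphism}). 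In any event, these identities reduce to elementary cancellations after substituting the explicit expressions for the $\beta_i$, and I would check them one by one.

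Next I would prove injectivity of $\theta_\mathscr{T}$ by providing explicit inversion formulas recovering each of $x_1,\ldots,x_8,y_1,\ldots,y_4$ from the rhombus numbers. The eight boundary coordinates $x_i$ and the four interior coordinates $y_j$ are each a coordinate at some vertex of the $3$-triangulation, and by inspecting Figure \ref{fig:tropical-X-coords} one sees that for each such vertex there is a rhombus whose $(a+b-c-d)/3$ expression, upon adding back suitable multiples of already-recovered coordinates, isolates it. Concretely, the corner coordinates $x_1,x_3,x_5,x_7$ appear as $\beta$'s by themselves (these rhombi have their other three vertices at ideal points, which contribute zero), which then allows one to peel off the remaining edge and face coordinates in a cascading fashion. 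Equivalently, one may simply observe that the $22\times 12$ submatrix of $\theta_\mathscr{T}$ read off from the Hilbert basis images tabulated above contains the identity as a submatrix (the first eight rows display the identification of $x_1,\ldots,x_8$ up to ordering).

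Finally, I would complete the argument by a dimension count. The subspace $V_\mathscr{T}$ is cut out of $\mathbb{R}^{18}$ by the six linear equations
\begin{equation*}
\beta_3-\beta_2 = \beta_6-\beta_5,\; \beta_6-\beta_5=\beta_9-\beta_8,\; \beta_4-\beta_{13}=\beta_{17}-\beta_9,\; \beta_{12}-\beta_{11}=\beta_{15}-\beta_{14},\; \beta_{15}-\beta_{14}=\beta_{18}-\beta_{17},\; \beta_{16}-\beta_7=\beta_5-\beta_{15},
\end{equation*}
which are manifestly linearly independent since each involves a rhombus index ($\beta_2, \beta_{13}, \beta_{11}, \beta_{14}, \beta_7$, for example) not appearing in the others. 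Hence $\dim V_\mathscr{T} = 18-6 = 12$. Combined with injectivity of $\theta_\mathscr{T}$ and the containment $\theta_\mathscr{T}(\mathbb{R}^{12})\subset V_\mathscr{T}$, this forces $\theta_\mathscr{T}(\mathbb{R}^{12})=V_\mathscr{T}$, proving the proposition. The main bookkeeping obstacle is simply to correctly label which tropical coordinates $(a,b,c,d)$ appear in each $\beta_i$ from Figure \ref{fig:tropical-X-coords}; once this is tabulated, the rest of the proof is mechanical.
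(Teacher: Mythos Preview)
Your three-step architecture (containment in $V_\mathscr{T}$, injectivity, dimension count) is exactly the paper's approach, and the proof would go through once the details are corrected. However, your injectivity step contains a factual error: the claim that ``the corner coordinates $x_1,x_3,x_5,x_7$ appear as $\beta$'s by themselves (these rhombi have their other three vertices at ideal points)'' is false. A corner rhombus has exactly \emph{one} ideal vertex (the corner itself), not three; the other three vertices carry nonzero tropical $\mathcal{A}$-coordinates, so each corner $\beta$ is a genuine three-term linear combination. Your proposed cascading inversion therefore does not get started in the way you describe. Your alternative phrasing about a ``$22\times 12$ submatrix of $\theta_\mathscr{T}$'' is also garbled: $\theta_\mathscr{T}$ is represented by an $18\times 12$ matrix, and the $22$ Hilbert basis images form a $22\times 18$ array. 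What actually works (and is what the paper does) is to verify by elementary linear algebra that those $22$ image vectors in $\mathbb{R}^{18}$ span a $12$-dimensional subspace, forcing $\theta_\mathscr{T}$ to have rank $12$.

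There is also a minor slip in your dimension count: $\beta_{14}$ appears in both the fourth and fifth defining equations, so it is not a pivot variable. The argument is easily repaired by choosing, say, $\beta_2,\beta_8,\beta_{13},\beta_{11},\beta_{18},\beta_7$ as variables unique to equations $1$ through $6$ respectively; the paper equivalently checks that the associated linear map $\mathbb{R}^{18}\to\mathbb{R}^6$ has rank $6$.
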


\begin{proof}
That $\theta_\mathcal{T}(\mathbb{R}^{12}) \subset V_\mathcal{T}$ follows from the definition of the rhombus numbers $\{ \beta_i \}_{i=1,2,\dots,18}$; compare Figure \ref{fig:tropical-X-coords}.    The remainder of the proof is elementary.  
\end{proof}

\begin{conceptremark}
\label{rem:sec6conceptremark}
Recall from Remark \ref{rem:conceptual-remarks} that we view the positive integer cone $\mathcal{C}_\mathcal{T} \cong -3 \mathcal{A}_{\mathrm{PGL}_3, \Box}^+(\mathbb{Z}^t)_\mathcal{T}$ as a $\mathcal{T}$-chart for the positive tropical integer points $\mathcal{A}_{\mathrm{PGL}_3, \Box}^+(\mathbb{Z}^t) \subset \mathcal{A}_{\mathrm{PGL}_3, \Box}(\mathbb{R}^t) = \mathcal{A}_{\mathrm{SL}_3, \Box}(\mathbb{R}^t)$, with one tropical $\mathcal{A}$-coordinate $-3(A_{a,b,c}^{i,j,k})^t$ per dot of $\mathcal{T}$.
			
We think of $\mathbb{R}^{12} \cong \mathcal{A}_{\mathrm{SL}_3, \Box}(\mathbb{R}^t)_\mathcal{T} $ as the coordinate chart of $\mathcal{A}_{\mathrm{SL}_3, \Box}(\mathbb{R}^t)$ associated to the ideal triangulation $\mathcal{T}$.  
			
We view the rhombus numbers $\beta_i$ for $i=1,2,\dots,18$ as the tropicalizations $(\alpha_{a;b,c}^{i,j,k})^t$ of the rhombus functions $\alpha_{a;b,c}^{i,j,k}$ on the moduli space $\mathcal{A}_{\mathrm{PGL}_3, \Box}$.    By Proposition \ref{obs:theta-is-injective}, we can also think of the rhombus numbers $( \beta_i )_{i} \in V_\mathcal{T} \subset \mathbb{R}^{18}$ as providing an alternative coordinate chart for $\mathcal{A}_{\mathrm{PGL}_3, \Box}(\mathbb{R}^t)$ via the isomorphism $\theta_\mathcal{T}$, that is,
\begin{equation*}
\mathcal{A}_{\mathrm{PGL}_3, \Box}(\mathbb{R}^t)_\mathcal{T} \cong V_\mathcal{T} \overset{\theta_\mathcal{T}}{\cong} \mathbb{R}^{12} \cong   \mathcal{A}_{\mathrm{SL}_3, \Box}(\mathbb{R}^t)_\mathcal{T}.
\end{equation*}  
\end{conceptremark}

\subsection{Tropical skein relations in the KTGS cone for the square}
\label{ssec:relations-in-the-ktgs-cone-for-the-square}
	   
We end this section with a noteworthy observation, which will not be needed later.

We saw in Remark \ref{rem:word-of-caution} that there are interesting relations in the KTGS cone $\mathcal{C}_\Delta \subset \mathbb{Z}_+^7$ for the triangle.  In fact, there is essentially only one relation (in the sense analogous to Proposition \ref{prop:relations-in-the-square}).  The intuitive reason there is only 1 relation for the triangle is because the Hilbert basis for $\mathcal{C}_\Delta$ has 8 elements, whereas there are   only 7 Fock-Goncharov coordinates.  
	
We now describe all of the relations in the KTGS cone $\mathcal{C}_\mathcal{T} \subset \mathbb{Z}_+^{12}$ for the square.  Intuitively, there are 10 relations because the Hilbert basis for $\mathcal{C}_\mathcal{T}$ has 22 elements, whereas there are only 12 Fock-Goncharov coordinates.  

\begin{prop}
\label{prop:relations-in-the-square}
The following $10$ linear relations are independent and complete among the $22$ elements of the Hilbert basis $\mathcal{H}_{(\Box, \mathcal{T})} \subset \mathcal{C}_\mathcal{T}$ for the KTGS cone for the square:
\begin{enumerate}[label=\textnormal{(\arabic*)}]
\item $\Phi_\mathcal{T}([T_{in},L_b])+ \Phi_\mathcal{T}([T_{out},L_c])=\Phi_\mathcal{T}([L_a])+\Phi_\mathcal{T}([L_b])+\Phi_\mathcal{T}([L_c]) $;
\item $\Phi_\mathcal{T}([T_{out},L_c])+ \Phi_\mathcal{T}([T_{in},R_c])=\Phi_\mathcal{T}([L_a])+\Phi_\mathcal{T}([L_c])+\Phi_\mathcal{T}([L_b,R_c]) $;
\item $\Phi_\mathcal{T}([L_c, T_{in}])+ \Phi_\mathcal{T}([L_b, T_{out}])=\Phi_\mathcal{T}([L_b])+\Phi_\mathcal{T}([L_c])+\Phi_\mathcal{T}([L_d]) $;
\item $\Phi_\mathcal{T}([L_b,T_{out}])+ \Phi_\mathcal{T}([R_b, T_{in}])+\Phi_\mathcal{T}([T_{out},R_b])
=\Phi_\mathcal{T}([L_b])+\Phi_\mathcal{T}([R_b])+\Phi_\mathcal{T}([L_d])+\Phi_\mathcal{T}([T_{out},L_c]) $;
\item $\Phi_\mathcal{T}([R_c,T_{out}])+ \Phi_\mathcal{T}([L_b, R_c])=\Phi_\mathcal{T}([L_b,T_{out}])+\Phi_\mathcal{T}([R_c]) $;
\item $\Phi_\mathcal{T}([T_{out},T_{in}])+ \Phi_\mathcal{T}([L_b, T_{out}])=\Phi_\mathcal{T}([L_b])+\Phi_\mathcal{T}([L_d])+\Phi_\mathcal{T}([T_{out}, L_c]) $;
\item $\Phi_\mathcal{T}([T_{in},T_{out}])+ \Phi_\mathcal{T}([T_{out},L_c])=\Phi_\mathcal{T}([L_a])+\Phi_\mathcal{T}([L_c])+\Phi_\mathcal{T}([L_b, T_{out}]) $;
\item $\Phi_\mathcal{T}([T_{out},R_b])+ \Phi_\mathcal{T}([R_b,L_c])=\Phi_\mathcal{T}([R_b])+\Phi_\mathcal{T}([T_{out}, L_c]) $;
\item $\Phi_\mathcal{T}([L_b,R_c])+ \Phi_\mathcal{T}([R_c,L_b])=\Phi_\mathcal{T}([L_b])+\Phi_\mathcal{T}([R_c]) $;
\item $\Phi_\mathcal{T}([T_{out},L_c])+ \Phi_\mathcal{T}([L_c,R_b])=\Phi_\mathcal{T}([T_{out},R_b])+\Phi_\mathcal{T}([L_c]) $.
\end{enumerate}
\end{prop}

\begin{proof}
More precisely, what is meant by the statement of the proposition is the following.  Let $f : \mathbb{R}^{22} \to \mathbb{R}^{12}$ be the linear map $ f(\lambda_1, \lambda_2, \dots, \lambda_{22}) =  \sum_{i=1}^{22} \lambda_i \Phi_\mathcal{T}(W_i^H)  \in \mathbb{R}^{12}, $ where the webs $W_i^H$ are as in Theorem \ref{theorem:basis}.    Each of the 10 relations above determines an element $r_j$ of $\mathbb{R}^{22}$.    Let $V \subset \mathbb{R}^{22}$ be the kernel of $f$.   The claim is that  the elements $\{ r_j \}_{j=1,2,\dots,10}$ form a basis of $V$.	 The remainder of the proof is elementary.  
\end{proof}

\begin{remark}
The  relations of Proposition \ref{prop:relations-in-the-square} can be viewed as `tropical $\mathrm{SL}_3$ skein relations'.  Indeed, they can be `predicted' as the result of resolving the overlapping webs in the square (corresponding to a side of a given relation in the cone) by  the  Kuperberg $\mathrm{SL}_3$ skein relation \cite[Section $4$, $q=1$]{KuperbergCommMathPhys96} (one resolution per crossing in the picture).  See also \cite{XieArxiv13}.    
\end{remark}

\section{KTGS cone for the square:  sector decomposition}
\label{section:ssq}

In Section \ref{section:Hsq}, we saw that the Knutson-Tao-Goncharov-Shen cone $\mathcal{C}_\mathcal{T} \subset \mathbb{Z}_+^{12}$ for the triangulated square $(\Box, \mathcal{T})$ has a Hilbert basis $\mathcal{H}_{(\Box, \mathcal{T})} \subset \mathcal{C}_\mathcal{T}$ consisting of 22 elements (Figures \ref{figure:square1} and \ref{figure:square2}).  There are many linear dependence relations in $\mathbb{R}^{12}$ among these Hilbert basis elements; see Proposition \ref{prop:relations-in-the-square}.  In this last section, we study certain linearly independent subsets of the Hilbert basis $\mathcal{H}_{(\Box, \mathcal{T})}$ that have  topological interpretations in terms of webs.  

More specifically, we show that each of the 42 web families $\mathcal{W}_i \subset \mathcal{W}_\Box$ (Section \ref{ssec:42-reduced-web-families-in-the-square} and Figure \ref{figure:9cases}) corresponds to a 12-dimensional subcone $\mathcal{C}_\mathcal{T}^i \subset \mathcal{C}_\mathcal{T}$ (called a sector) generated by 12 Hilbert basis elements.  Moreover, every point in the KTGS cone $\mathcal{C}_\mathcal{T}$ lies in such a sector $\mathcal{C}_\mathcal{T}^i$.  These sectors have a geometric description in terms of  tropical integer $\mathcal{X}$-coordinates (Figure \ref{fig:tropical-X-coords}) for reduced webs $W \in \mathcal{W}_\Box$, which are functions of the corresponding positive tropical integer $\mathcal{A}$-coordinates (Figure \ref{fig:coordinates-example}); we already encountered some of these ideas in Section \ref{sssec:a-linear-isomorphism}.   

In summary, this analysis gives us a deeper understanding of the combinatorial, geometric, and topological properties of the KTGS cone $\mathcal{C}_\mathcal{T} \subset \mathbb{Z}_+^{12}$ for the square; see Figure \ref{figure:wallscross}.   (In this section, we will use some of the terminology and results of Appendix \ref{sec:cones}.)

\subsection{Web families in the square}
\label{ssec:42-reduced-web-families-in-the-square}
	  
Recall the notion of a web schematic; see Section \ref{sssec:splitidealtriangulationsandgoodposition} and Remark \ref{rem:cornerarcschematics}.  Recall also Definitions \ref{def:corner-arcs} and \ref{def:cornerless-webs-in-the-square}, for the notions of corner webs $W=W_r \in \mathcal{R}$ and cornerless webs $W=W_c$.

\begin{prop}
\label{lem:42families}
We can write the reduced webs in the triangulated square as a union 
\begin{equation*}
\mathcal{W}_\Box = \cup_{i=1}^{42} \mathcal{W}_i
\end{equation*}
of $42$ families $\mathcal{W}_i \subset \mathcal{W}_\Box$ of reduced webs, where by definition $W \in \mathcal{W}_i$ if its cornerless part $W_c$ can be represented by the `$i$-th cornerless schematic',  $9$ of which are shown in Figure {\upshape\ref{figure:9cases}}; in fact, up to rotation, reflection, and orientation-reversing symmetry \textup{(}see the caption of Figure {\upshape\ref{figure:9cases}}\textup{)}, every family $\mathcal{W}_i$ falls into one of these $9$ cases.  
\end{prop}

\begin{proof}
This is a direct combinatorial count, done by hand.  We note that the number of possibilities is restricted by the topology of web good positions; see Observation \ref{lemma:bigon}.  
\end{proof}

\begin{notation}
\label{not:web-families}
Completely arbitrarily, the index $i_j$ for the family  $\mathcal{W}_{i_j}$ whose cornerless schematic  is labeled $(j)$ in Figure \ref{figure:9cases} $(j=1,2,\dots,9)$ is
\begin{equation*}
i_1 = 29, \,\, i_2=30, \,\, i_3 =42, \,\, i_4=17, \,\, i_5=5, \,\, i_6=6, \,\, i_7=2, \,\, i_8=1, \,\, i_9=33.
\end{equation*}
See also Remark  \ref{rem:symmetry-groupings} just below.  

As we will see later in this section, the family $\mathcal{W}_i$ corresponds to the $i$-th sector shown in Figure \ref{figure:wallscross}.
\end{notation}

\begin{remark}
\label{rem:symmetry-groupings}
If we define an equivalence relation on the 42 families $\mathcal{W}_i$ by rotation, reflection, and orientation-reversing symmetry, then (using Notation \ref{not:web-families}) the symmetry class of:
\begin{enumerate}[label=(\arabic*)]
\item $\mathcal{W}_{i_1}$ has four members,     $\mathcal{W}_{i_1}=\mathcal{W}_{29}$, $\mathcal{W}_{21},\mathcal{W}_{24},\mathcal{W}_{32}$;
\item  $\mathcal{W}_{i_2}$ has four members,      $\mathcal{W}_{i_2}=\mathcal{W}_{30}$, $\mathcal{W}_{23},\mathcal{W}_{22},\mathcal{W}_{31}$;
\item  $\mathcal{W}_{i_3}$ has eight members,      $\mathcal{W}_{i_3}=\mathcal{W}_{42}$, $\mathcal{W}_{36},\mathcal{W}_{37},\mathcal{W}_{38},\mathcal{W}_{39},\mathcal{W}_{40},\mathcal{W}_{41},\mathcal{W}_{35}$;
\item  $\mathcal{W}_{i_4}$ has eight members,      $\mathcal{W}_{i_4}=\mathcal{W}_{17}$, $\mathcal{W}_{18},\mathcal{W}_{19},\mathcal{W}_{20},\mathcal{W}_{25},\mathcal{W}_{26},\mathcal{W}_{27},\mathcal{W}_{28}$;
\item  $\mathcal{W}_{i_5}$ has four members,      $\mathcal{W}_{i_5}=\mathcal{W}_{5}$, $\mathcal{W}_{8},\mathcal{W}_{13},\mathcal{W}_{16}$;
\item  $\mathcal{W}_{i_6}$ has four members,      $\mathcal{W}_{i_6}=\mathcal{W}_{6}$, $\mathcal{W}_{7},\mathcal{W}_{14},\mathcal{W}_{15}$;
\item  $\mathcal{W}_{i_7}$ has four members,      $\mathcal{W}_{i_7}=\mathcal{W}_{2}$, $\mathcal{W}_{3},\mathcal{W}_{10},\mathcal{W}_{11}$;
\item  $\mathcal{W}_{i_8}$ has four members,      $\mathcal{W}_{i_8}=\mathcal{W}_{1}$, $\mathcal{W}_{4},\mathcal{W}_{9},\mathcal{W}_{12}$;
\item  $\mathcal{W}_{i_9}$ has two members,      $\mathcal{W}_{i_9}=\mathcal{W}_{33}$, $\mathcal{W}_{34}$.
\end{enumerate}
\end{remark}

We emphasize that each schematic in Figure \ref{figure:9cases} represents a subset $\mathcal{W}_i \subset \mathcal{W}_\Box$ of reduced webs in the square.  Note these subsets are not disjoint.  Indeed, each intersection $\mathcal{W}_i \cap \mathcal{W}_j$ is at least `8-dimensional', in an appropriate sense (see later in this section), since the set of corner webs $\mathcal{R}$ is contained in each family $\mathcal{W}_i$.  This intersection can contain more than just the corner webs.  For instance, the intersection $\mathcal{W}_{29} \cap \mathcal{W}_{30}$, corresponding to schematics (1) and (2) in Figure \ref{figure:9cases}, is `11-dimensional' (thus, in Figure \ref{figure:wallscross}, sectors 29 and 30 are separated by a wall); the last, 12-th, dimension comes from the source or sink labeled with the weight $x \in \mathbb{Z}_+$ in schematics (1) and (2).  As another example, $\mathcal{W}_{29} \cap \mathcal{W}_{33}$, corresponding to schematics (1) and (9) in Figure \ref{figure:9cases}, is `10-dimensional' (thus, in Figure \ref{figure:wallscross}, sectors 29 and 33 are not separated by a wall).  In fact, each family $\mathcal{W}_i$ is `12-dimensional' (intuitively, this is because the square has 12 Fock-Goncharov coordinates): 8 dimensions come from the corner part $W_r$, and 4 dimensions come from the cornerless part $W_c$.  Correspondingly, each cornerless schematic in Figure \ref{figure:9cases} has four weights $x,y,z,t \in \mathbb{Z}_+$.  

We remind (Remark \ref{rem:cornerarcschematics}) that, in schematics (1) and (2) in Figure \ref{figure:9cases}, we could have reversed the orientations of the two arc components, without changing the class of the web in $\mathcal{W}_\Box$.  On the other hand, the orientation of the weight $x$ component distinguishes schematic (1) from (2); note the caption of Figure \ref{figure:9cases}.  Also, the $t$ and $z$ strands in schematic (3), for example, do not cross in the upper triangle, for otherwise the web would have an external H-4-face (Section \ref{sssec:splitidealtriangulationsandgoodposition}) on the boundary, violating the reduced property.  

Lastly, we remark that the web families $\mathcal{W}_i$ depend on the choice of triangulation $\mathcal{T}$ of the square $\Box$; compare Example (family ($7$)) in Appendix \ref{ssec:flip-examples}, in particular the difference between the cases $z\geq t$ and $z\leq t$, the former case which is demonstrated in Figures \ref{figure:flip7}-\ref{fig:case7flip}.

\begin{figure}[t]
\includegraphics[scale=.57]{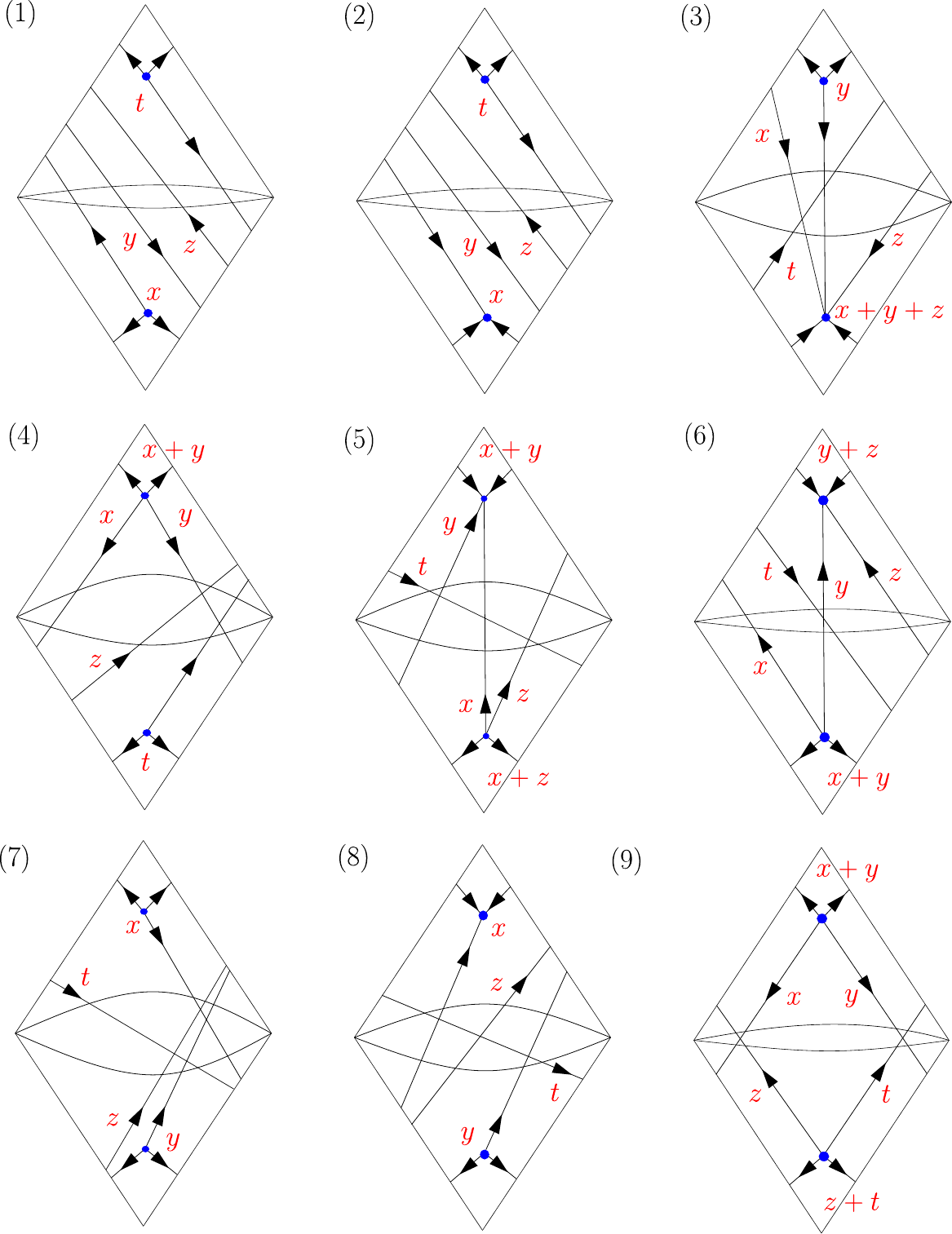}
\caption{     Families (1)-(9).    Schematics for cornerless webs $W=W_c$.  There are $9$ reduced web families up to rotation, reflection, and orientation-reversing symmetry.  (Note that orientation-reversing symmetry means simultaneously reversing the orientations of all components of the web.)}
\label{figure:9cases}
\end{figure}

\subsection{Sector decomposition of the KTGS cone for the triangle and the square}
\label{ssec:sector-decomposition-of-the-KTGS-cone-for-the-triangle-and-the-square}
	   
Recall the notion of a sector decomposition $\{ C_i \}_i$ of a full cone $C \subset \mathbb{R}^k$, and of a wall between two full cones; see Definitions \ref{def:cone-defs} and \ref{def:sector-decomp}.  
		
\begin{definition}
\label{def:completed-ktgs-cone}
Let $\widehat{S}$ be a marked surface, either the ideal triangle or the ideal square, and let $\mathcal{T}$ be an ideal triangulation of $\widehat{S}$.  The \emph{completed Knutson-Tao-Goncharov-Shen (KTGS) cone} $C_\mathcal{T}$ is the completion $C_\mathcal{T}= \overline{\mathcal{C}}_\mathcal{T} \subset \mathbb{R}_+^N$ of the KTGS cone $\mathcal{C}_\mathcal{T} \subset \mathbb{Z}_+^N$; see Definitions \ref{def:KTGS-cone}, \ref{def:completion},  Propositions \ref{prop:KTGS-is-positive-integer-cone}, \ref{prop:hilb-base-triangle}, and Theorem \ref{theorem:basis} (note however Remark \ref{rem:commentsabouthilbertbasisutility}).  
\end{definition}

\subsubsection{Sector decomposition for the triangle}
\label{sssec:sector-decomposition-for-the-triangle}
	   
Let $S=\Delta$ be the ideal triangle.  We will use the notation of Section \ref{sssec:hilbert-basis-for-the-triangle}.  
	
\begin{prop}
\label{prop:sector-decomp-triangle}
The completed KTGS cone $C_\Delta \subset \mathbb{R}_+^7$ is  $7$-dimensional.  Putting
\begin{gather*}
\mathcal{C}_\Delta^{out}=\mathrm{span}_{\mathbb{Z}_+}( \{ \Phi_\Delta(W^H); 
W^H = L_a, R_a, L_b, R_b, L_c, R_c, T_{out} \} )
\subset \mathcal{C}_\Delta\subset \mathbb{Z}_+^7,
\\	\mathcal{C}_\Delta^{in}=\mathrm{span}_{\mathbb{Z}_+}( \{ \Phi_\Delta(W^H); 
W^H = L_a, R_a, L_b, R_b, L_c, R_c, T_{in} \} )
\subset \mathcal{C}_\Delta\subset \mathbb{Z}_+^7,
\\  C_\Delta^{out} = \overline{\mathcal{C}}_\Delta^{out},\,\,
C_\Delta^{in} = \overline{\mathcal{C}}_\Delta^{in}
 \subset C_\Delta  \subset \mathbb{R}_+^7
\end{gather*}
yields a sector decomposition $\{ C_\Delta^{out}, C_\Delta^{in} \}$ of $C_\Delta$.  Moreover, $C_\Delta^{out} \cap C_\Delta^{in}$ is a $6$-dimensional wall, generated by the cone points $\Phi_\Delta(W^H)$ corresponding to the $6$ corner arcs in $\mathcal{W}_\Delta$.  

Moreover, the rank \textup{(}Definition {\upshape\ref{def:cone-defs}}\textup{)} of the completed KTGS cone $C_\Delta \subset \mathbb{R}_+^7$ is $8$.
\end{prop}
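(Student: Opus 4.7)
The plan is to establish the proposition via a combination of the single linear relation that exists among the eight Hilbert basis elements of $\mathscr{C}_\Delta$ (already noted in Remark \ref{rem:word-of-caution}) together with the topological fact that any reduced web in an ideal triangle contains at most one honeycomb, whose orientation is either ``in'' or ``out''.

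First, I will verify by direct computation from Figure \ref{figure:triangle} the single linear relation
\begin{equation*}
	\Phi_\Delta(T_{in}) + \Phi_\Delta(T_{out}) = \Phi_\Delta(L_a) + \Phi_\Delta(L_b) + \Phi_\Delta(L_c) \quad \in \mathbb{Z}_+^7.
\end{equation*}
Next, I will check by elementary linear algebra that any seven-element subset of $\mathscr{H}_\Delta$ obtained by removing either $T_{in}$ or $T_{out}$ is linearly independent in $\mathbb{R}^7$ (the six corner arcs contribute independent nonzero entries to the six edge coordinates while $T_{out}$ or $T_{in}$ is the unique generator contributing to the face coordinate). Combined with Proposition \ref{prop:hilb-base-triangle}, this shows $\mathrm{dim}(C_\Delta)=7$ and also that each of the generating sets for $\mathscr{C}_\Delta^{out}$ and $\mathscr{C}_\Delta^{in}$ consists of seven linearly independent vectors, so $\mathrm{rank}(C_\Delta^{out})=\mathrm{dim}(C_\Delta^{out})=7$ and similarly for $C_\Delta^{in}$; that is, both are full sectors.

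For the decomposition $C_\Delta=C_\Delta^{out}\cup C_\Delta^{in}$, I would first argue at the level of the monoid $\mathscr{C}_\Delta=\mathscr{C}_\Delta^{out}\cup \mathscr{C}_\Delta^{in}$. Given any reduced web $W\in\mathscr{W}_\Delta$ in good position, the defining formula of \S \ref{sssec:definitionofthetropicalwebcoordinates} expresses $\Phi_\Delta(W)$ as a $\mathbb{Z}_+$-linear combination of the eight basis elements, where the coefficient on $T_{in}$ and the coefficient on $T_{out}$ cannot both be nonzero (since $W|_\Delta$ contains a single honeycomb of one fixed orientation, or none at all). Thus $\Phi_\Delta(W)$ lies in $\mathscr{C}_\Delta^{out}$ or $\mathscr{C}_\Delta^{in}$. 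Applying Lemma \ref{lem:actual-geometry} to the completions upgrades this to $C_\Delta=C_\Delta^{out}\cup C_\Delta^{in}$.

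To identify the intersection, suppose $v\in C_\Delta^{out}\cap C_\Delta^{in}$, so
\begin{equation*}
	v = \sum_{i=1}^6 a_i\,\Phi_\Delta(W_i^{\mathrm{corner}}) + a_{out}\,\Phi_\Delta(T_{out}) = \sum_{i=1}^6 b_i\,\Phi_\Delta(W_i^{\mathrm{corner}}) + b_{in}\,\Phi_\Delta(T_{in})
\end{equation*}
with all coefficients in $\mathbb{R}_+$. Substituting the relation of the first step to eliminate $T_{out}$ yields an equality expressing $(a_{out}+b_{in})\,\Phi_\Delta(T_{in})$ as an $\mathbb{R}$-linear combination of only the six corner arcs; by the linear independence of the seven vectors in $\mathscr{C}_\Delta^{in}$'s generating set, $a_{out}+b_{in}=0$, hence $a_{out}=b_{in}=0$. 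Therefore $v$ lies in the $\mathbb{R}_+$-span of the six corner arcs, and the reverse inclusion is obvious; linear independence of the six corner arcs then gives that this wall has dimension exactly $6=k-1$. The main obstacle is really just bookkeeping in this final step, making sure the nonnegativity constraints are used correctly when the relation is substituted; the topological input that reduces the union statement to a combinatorial one is essentially immediate from the structure of honeycomb webs in a triangle.
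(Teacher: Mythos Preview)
Your proof is correct and close in spirit to the paper's, but the key technical step is handled differently. The paper invokes a specific linear isomorphism $f:\mathbb{R}^7\to\mathbb{R}^7$ (coming from the proof of \cite[Proposition~45]{DouglasArxiv20}) that sends the six corner arc vectors to the first six standard basis elements and sends $\Phi_\Delta(T_{in}),\Phi_\Delta(T_{out})$ to $(0,\dots,0;1)$ and $(0,1,0,1,0,1;-1)$ respectively, so that $f(C_\Delta)\subset\mathbb{R}_+^6\times\mathbb{R}$ and the intersection $f(C_\Delta^{out})\cap f(C_\Delta^{in})=\mathbb{R}_+^6\times\{0\}$ is visibly a $6$-dimensional wall. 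You instead argue directly from the single linear relation $\Phi_\Delta(T_{in})+\Phi_\Delta(T_{out})=\Phi_\Delta(L_a)+\Phi_\Delta(L_b)+\Phi_\Delta(L_c)$ together with linear independence of $\{\text{six corner arcs},T_{in}\}$, using nonnegativity to force $a_{out}=b_{in}=0$. This is a clean and self-contained substitute for the change of coordinates; the paper's isomorphism $f$ is really just a repackaging of the same relation (note $f(\Phi_\Delta(T_{in}))+f(\Phi_\Delta(T_{out}))=e_2+e_4+e_6$). One small caution: your parenthetical justification that ``$T_{out}$ or $T_{in}$ is the unique generator contributing to the face coordinate'' is not literally correct (corner arcs do contribute to the face coordinate, as the computation $3\cdot 3+4\cdot 1+3\cdot 2=19$ in \S\ref{sssec:definitionofthetropicalwebcoordinates} shows), so the linear independence should be verified by an honest rank computation rather than that heuristic.
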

	
\begin{proof}
This is a consequence of \cite[Proposition 6.6]{DouglasArxiv20} and its proof.  		

	Indeed, by  \cite[Proposition 6.6]{DouglasArxiv20} we have that $\mathcal{C}_\Delta = \mathcal{C}_\Delta^{out} \cup \mathcal{C}_\Delta^{in}$.  Thus, $C_\Delta = C_\Delta^{out} \cup C_\Delta^{in}$ by Lemma \ref{lem:actual-geometry}.  Also, again by  \cite[Proposition 6.6]{DouglasArxiv20}   (or elementary linear algebra), the cones $C_\Delta$, $C_\Delta^{out}$, $C_\Delta^{in} \subset \mathbb{R}_+^7$ are full; in particular, $C_\Delta^{out}$ and $C_\Delta^{in}$ are sectors.  
	
	  As mentioned in passing during the proof of Proposition \ref{prop:hilb-base-triangle}, by the proof of  \cite[Proposition 6.6]{DouglasArxiv20} there is a linear isomorphism $f : \mathbb{R}^7 \to \mathbb{R}^7$ satisfying:    
	\begin{enumerate}[label=\textnormal{(\Roman*)}]
	\item $f(C_\Delta) \subset \mathbb{R}_+^6 \times \mathbb{R}$;
	\item $f$ sends the corner arcs points $\Phi_\Delta(W^H)$ for $W^H=L_a, R_a, L_b, R_b, L_c, R_c$ to the first six standard basis elements $e_i$ of $\mathbb{R}^6 \times \mathbb{R}$ for $i=1,2,\dots,6$;
	\item  $f(\Phi_\Delta(T_{in}))=(0,0,0,0,0,0;1)$ and  $f(\Phi_\Delta(T_{out}))=(0,1,0,1,0,1;-1)$.  
	\end{enumerate}
	It follows that  $f(C_\Delta^{out}) \cap f(C_\Delta^{in}) = \mathbb{R}_+^6 \times \left\{ 0 \right\}$ has empty interior; moreover, it is a 6-dimensional wall.  As these properties are preserved by isomorphisms of $\mathbb{R}^7$, we conclude all but the last sentence of the result.

	It remains to show the rank of $C_\Delta$ is $8$.  By Proposition \ref{prop:hilb-base-triangle}, the Hilbert basis $\mathcal{H}_\Delta$ of the positive integer cone $\mathcal{C}_\Delta \subset \mathbb{Z}_+^7$ has 8 elements.  It follows by Observation \ref{obs:rank-hilbert-basis-estimate} that $\mathrm{rank}(C_\Delta) \leq 8$.

	By the first part of the proposition already proved, $\mathrm{rank}(C_\Delta) \geq 7$.  We show there is no generating set with 7 elements.

We again work in the isomorphic  cone $C := f(C_\Delta) \subset \mathbb{R}_+^6 \times \mathbb{R}$ from above.  Suppose $\left\{ c_i \right\}_{i=1,2,\dots,7}$ is a generating set of $C$.  Let $\pi_6 : \mathbb{R}^6 \times \mathbb{R} \to \mathbb{R}^6$ be the natural projection.  Since the standard basis element $e_i$ is in $C$ for $i=1,2,\dots,6$, and since $\pi_6(C)\subset\text{(in fact, equals) }\mathbb{R}_+^6$, after possibly changing indices we can assume, for $i=1,2,\dots,6$, that $c_i = (e^{(6)}_i; \alpha_i)$ where $e^{(6)}_i$ is the $i$-th standard basis element of $\mathbb{R}^6$ and $\alpha_i \in \mathbb{R}$.  

Since, by above, $C$ has a generating set where just the one generator $(0,1,0,1,0,1;-1)$ has a negative last coordinate, it follows that any point of $C$ with a negative last coordinate has  positive second, fourth, and sixth coordinates.  Thus,  $\alpha_i \geq 0$ for $i=1,2,\dots,6$.  

The remaining generator $c_7$ must therefore have a  negative last coordinate.  We conclude  $(0,0,0,0,0,0;1) \in C$ cannot be generated with $\left\{ c_i \right\}_{i=1,2,\dots,7}$.  
\end{proof}

\subsubsection{Sector decomposition for the square}
\label{sssec:sector-decomposition-for-the-square}
	   
Let $S=\Box$ be the ideal square, equipped with an ideal triangulation $\mathcal{T}$, namely a choice of diagonal.  We will use the notation of Section \ref{sssec:hilbert-basis-for-the-square-statement}.  In particular, recall the 22 Hilbert basis webs $W^H_j$ in $\mathcal{W}_\Box$ ($j=1,2,\dots,22$) associated to $\mathcal{T}$; see Figures \ref{figure:square1} and \ref{figure:square2}.  
	
We define 42 subcones $C_\mathcal{T}^i \subset C_\mathcal{T}$ ($i=1,2,\dots,42$)  of the completed KTGS cone $C_\mathcal{T} \subset \mathbb{R}_+^{12}$ as follows.  First, define 42 web subsets $\mathcal{Q}_i \subset \mathcal{W}_\Box$ ($i=1,2,\dots,42$), each consisting of four Hilbert basis webs $W^H$, by:
\begin{enumerate}[label=(\arabic*)]
\item $\mathcal{Q}_1=\{[T_{in}, L_b],[R_c, T_{out}],[R_c, L_b],[R_b, L_c]\} $;
\item $ \mathcal{Q}_2=\{[T_{out}, L_c],[R_c, T_{out}],[R_c, L_b],[R_b, L_c]\} $;
\item $ \mathcal{Q}_3=\{[T_{in}, L_b],[R_b, L_c],[R_b, T_{in}],[R_c, L_b]\} $;
\item $ \mathcal{Q}_4=\{[T_{out}, L_c],[R_b, L_c],[R_b, T_{in}],[R_c, L_b]\} $;
\item $ \mathcal{Q}_5=\{[T_{in}, T_{out}],[T_{in}, L_b],[R_c, T_{out}],[R_b, L_c]\} $;
\item $ \mathcal{Q}_6=\{[L_b, T_{out}],[T_{in}, T_{out}],[T_{in}, R_c],[R_b, L_c]\} $;
\item $ \mathcal{Q}_7=\{[T_{in}, L_b],[L_c, R_b],[T_{in}, T_{out}],[R_c, T_{out}]\} $;
\item $ \mathcal{Q}_8=\{[T_{in}, T_{out}],[L_c,R_b],[L_b, T_{out}],[T_{in}, R_c]\} $;
\item $ \mathcal{Q}_9=\{[T_{out}, R_b],[L_c,R_b],[L_c, T_{in}],[L_b, R_c]\} $;
\item $ \mathcal{Q}_{10}=\{[T_{in}, R_c],[L_c,R_b],[L_c, T_{in}],[L_b, R_c]\} $;
\item $ \mathcal{Q}_{11}=\{[T_{out}, R_b],[L_c,R_b],[L_b, T_{out}],[L_b, R_c]\} $;
\item $ \mathcal{Q}_{12}=\{[T_{in}, R_c],[L_c,R_b],[L_b, T_{out}],[L_b, R_c]\} $;
\item $ \mathcal{Q}_{13}=\{[T_{out},T_{in}],[T_{out}, R_b],[L_c, T_{in}],[L_b, R_c]\} $;
\item $ \mathcal{Q}_{14}=\{[T_{out},L_c],[R_b, T_{in}],[T_{out},T_{in}],[L_b, R_c]\} $;
\item $ \mathcal{Q}_{15}=\{[T_{out},T_{in}],[T_{out}, R_b],[L_c, T_{in}],[R_c, L_b]\} $;
\item $ \mathcal{Q}_{16}=\{[T_{out},L_c],[R_b, T_{in}],[T_{out},T_{in}],[R_c, L_b]\} $;
\item $ \mathcal{Q}_{17}=\{[T_{out}, R_b],[T_{out},L_c],[R_c, L_b],[R_c,T_{out}]\} $;
\item $ \mathcal{Q}_{18}=\{[T_{in},L_b],[R_b, T_{in}],[L_c,T_{in}],[R_c, L_b]\} $;
\item $ \mathcal{Q}_{19}=\{[T_{in},L_b],[L_c,R_b],[L_c,T_{in}],[T_{in}, R_c]\} $;
\item $ \mathcal{Q}_{20}=\{[T_{out}, R_b],[L_c,R_b],[L_b, T_{out}],[R_c,T_{out}]\} $;
\item $ \mathcal{Q}_{21}=\{[T_{out}, R_b],[L_c,R_b],[R_c,T_{out}],[R_c,L_b]\} $;
\item $ \mathcal{Q}_{22}=\{[T_{out}, R_b],[L_c,R_b],[L_c,T_{in}],[R_c,L_b]\} $;
\item $ \mathcal{Q}_{23}=\{[T_{in}, L_b],[L_c,R_b],[R_c,T_{out}],[R_c,L_b]\} $;
\item $ \mathcal{Q}_{24}=\{[T_{in}, L_b],[L_c,R_b],[L_c,T_{in}],[R_c,L_b]\} $;
\item $ \mathcal{Q}_{25}=\{[T_{out},L_c],[T_{out}, R_b],[L_b, T_{out}],[L_b, R_c]\} $;
\item $ \mathcal{Q}_{26}=\{[T_{in}, R_c],[R_b,T_{in}],[L_c, T_{in}],[L_b, R_c]\} $;
\item $ \mathcal{Q}_{27}=\{[T_{in}, L_b],[R_b, L_c],[R_b, T_{in}],[T_{in}, R_c]\} $;
\item $ \mathcal{Q}_{28}=\{[T_{out},L_c],[R_b, L_c],[L_b, T_{out}],[R_c, T_{out}]\} $;
\item $ \mathcal{Q}_{29}=\{[L_b, T_{out}],[R_b, L_c],[L_b,R_c],[T_{out},L_c]\} $;
\item $ \mathcal{Q}_{30}=\{[R_b, T_{in}],[R_b, L_c],[L_b,R_c],[T_{out},L_c]\} $;
\item $ \mathcal{Q}_{31}=\{[T_{in},R_c],[R_b, L_c],[L_b, T_{out}],[L_b,R_c]\} $;
\item $ \mathcal{Q}_{32}=\{[T_{in},R_c],[R_b, L_c],[R_b, T_{in}],[L_b,R_c]\} $;
\item $ \mathcal{Q}_{33}=\{[T_{out}, R_b],[T_{out},L_c],[L_b, T_{out}],[R_c, T_{out}]\} $;
\item $ \mathcal{Q}_{34}=\{[T_{in},L_b],[R_b, T_{in}],[L_c,T_{in}],[T_{in}, R_c]\} $;
\item $ \mathcal{Q}_{35}=\{[T_{in},T_{out}],[T_{in}, L_b],[T_{in}, R_c],[R_b, L_c]\} $;
\item $ \mathcal{Q}_{36}=\{[T_{in},T_{out}],[R_b, L_c],[L_b, T_{out}],[R_c, T_{out}]\} $;
\item $ \mathcal{Q}_{37}=\{[T_{in}, L_b],[L_c,R_b],[T_{in}, T_{out}],[T_{in}, R_c]\} $;
\item $ \mathcal{Q}_{38}=\{[T_{in}, T_{out}],[L_c,R_b],[L_b, T_{out}],[R_c,T_{out}]\} $;
\item $ \mathcal{Q}_{39}=\{[T_{out},L_c],[T_{out}, R_b],[T_{out}, T_{in}],[L_b, R_c]\} $;
\item $ \mathcal{Q}_{40}=\{[T_{out}, T_{in}],[R_b,T_{in}], [L_c, T_{in}],[L_b, R_c]\} $;
\item $ \mathcal{Q}_{41}=\{[T_{out},L_c],[T_{out}, R_b],[T_{out}, T_{in}],[R_c,L_b]\} $;
\item $ \mathcal{Q}_{42}=\{[R_b,T_{in}],[T_{out}, T_{in}], [L_c, T_{in}],[R_c,L_b]\} $.	
\end{enumerate}
	
The $i$-th web subset $\mathcal{Q}_i \subset \mathcal{W}_i \subset \mathcal{W}_\Box$ is moreover a subset of the $i$-th web family $\mathcal{W}_i$ (Section \ref{ssec:42-reduced-web-families-in-the-square}).  More precisely, each of the four Hilbert basis webs $W^H \in \mathcal{Q}_i$ is determined by the schematic picture for the web family $\mathcal{W}_i$ (as in Figure \ref{figure:9cases}) by putting all but one of the variables $x,y,z,t$ to 0 and the remaining variable to 1.  Recall that the 9 specific web families denoted $(j)$ in Figure \ref{figure:9cases} are  the families $\mathcal{W}_{i_j}$ as explained in Notation \ref{not:web-families}.  

\begin{definition} 
\label{def:topological type}
For $i=1,2,\dots,42$, let $\mathcal{Q}_i \subset \mathcal{W}_i \subset \mathcal{W}_\Box$ be the set  of four webs defined just above, and recall that $W_j^H$ for $j=1,2,\dots,8$ are the 8 corner arcs in the square.  

Define the \emph{$i$-th completed KTGS subcone} $C_\mathcal{T}^i \subset C_\mathcal{T} \subset \mathbb{R}_+^{12}$ as the completion
\begin{equation*}
C_\mathcal{T}^i = \overline{\mathcal{C}}_\mathcal{T}^i
\end{equation*}
of the \emph{$i$-th KTGS submonoid} $\mathcal{C}_\mathcal{T}^i \subset \mathcal{C}_\mathcal{T} \subset \mathbb{Z}_+^{12}$, defined by
\begin{equation*}
\mathcal{C}_\mathcal{T}^i = \mathrm{span}_{\mathbb{Z}_+}(\{ \Phi_\mathcal{T}(W_1^H), \Phi_\mathcal{T}(W_2^H), \dots, \Phi_\mathcal{T}(W_8^H) \} \cup \{ \Phi_\mathcal{T}(W^H);  W^H \in \mathcal{Q}_i \} )
\end{equation*}
where $\Phi_\mathcal{T}(W^H) \in \mathcal{C}_\mathcal{T} \subset \mathbb{Z}_+^{12}$ is the point in the KTGS positive integer cone $\mathcal{C}_\mathcal{T}$ assigned to $W^H$ by the web tropical coordinate map $\Phi_\mathcal{T} : \mathcal{W}_\Box \to \mathcal{C}_\mathcal{T}$.  

The set  $\mathcal{Q}_i$ of four webs is called the \emph{topological type} of the completed KTGS subcone  $C_\mathcal{T}^i$.  
\end{definition}

By construction of the web tropical coordinate map $\Phi_\mathcal{T}$, we can immediately say:

\begin{observation}
\label{obs:families-and-their-cones}
For $i=1,2,\dots,42$, we have the image $\Phi_\mathcal{T}(\mathcal{W}_i)=\mathcal{C}_\mathcal{T}^i \subset \mathbb{Z}_+^{12}$.  \qed
\end{observation}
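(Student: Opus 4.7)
The plan is to establish both inclusions $\Phi_\mathscr{T}(\mathscr{W}_i) \subset \mathscr{C}_\mathscr{T}^i$ and $\mathscr{C}_\mathscr{T}^i \subset \Phi_\mathscr{T}(\mathscr{W}_i)$, the first by decomposing a general web of the $i$-th family into a corner part and a cornerless part and showing that the tropical coordinates depend $\mathbb{Z}_+$-linearly on the schematic parameters, and the second by explicit construction.

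For the first inclusion, given $W \in \mathscr{W}_i$, write $W = W_r \cup W_c$ where $W_r$ consists of corner arcs (with multiplicities $n_1,\dots,n_8 \in \mathbb{Z}_+$ corresponding to the eight types $L_a,R_a,\dots,L_d,R_d$) and $W_c$ is the cornerless part represented by the $i$-th cornerless schematic of Figure~\ref{figure:9cases} with weights $(x,y,z,t) \in \mathbb{Z}_+^4$. By Observation~\ref{lemma:ww} applied iteratively,
\begin{equation*}
\Phi_\mathscr{T}(W) = \sum_{j=1}^8 n_j \Phi_\mathscr{T}(W_j^H) + \Phi_\mathscr{T}(W_c) \quad \in \mathbb{Z}_+^{12}.
\end{equation*}
The key step is then to verify, by direct inspection of the good position of $W_c$ and using the coordinate computation recipe of \S\ref{sssec:definitionofthetropicalwebcoordinates}, that there is an identity of the form
\begin{equation*}
\Phi_\mathscr{T}(W_c) = x \Phi_\mathscr{T}(V_i^1) + y \Phi_\mathscr{T}(V_i^2) + z \Phi_\mathscr{T}(V_i^3) + t \Phi_\mathscr{T}(V_i^4) - \sum_{j=1}^8 \ell_j^i(x,y,z,t) \Phi_\mathscr{T}(W_j^H),
\end{equation*}
where $V_i^1,\dots,V_i^4$ are the four Hilbert basis webs in $\mathscr{Q}_i$ and the corrections $\ell_j^i(x,y,z,t) \geq 0$ (in $\mathbb{Z}_+$) account for the corner arcs implicit in the chosen $V_i^k$. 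Absorbing these corrections into the $n_j$'s (noting that by construction $n_j \geq \ell_j^i$ whenever the chosen schematic is valid — the corner-arc components of the $V_i^k$ are all present in $W_r$ automatically whenever the $V_i^k$ are activated) places $\Phi_\mathscr{T}(W)$ in $\mathscr{C}_\mathscr{T}^i$.

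For the reverse inclusion, start with an arbitrary element $c = \sum_{j=1}^8 m_j \Phi_\mathscr{T}(W_j^H) + \sum_{k=1}^4 p_k \Phi_\mathscr{T}(V_i^k) \in \mathscr{C}_\mathscr{T}^i$ and construct $W \in \mathscr{W}_i$ realizing it: set the schematic weights of $W_c$ to $(x,y,z,t) = (p_1,p_2,p_3,p_4)$, then add the corner arcs with multiplicities $n_j = m_j + \ell_j^i(p_1,p_2,p_3,p_4)$ from the identity above. One checks that the result is a valid element of $\mathscr{W}_i$ (all multiplicities nonnegative), and $\Phi_\mathscr{T}(W) = c$ by the computation of the previous paragraph. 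By the rotational, reflective, and orientation-reversing symmetries listed in Remark~\ref{rem:symmetry-groupings}, the verification reduces to the nine representative families $(1),(2),\dots,(9)$ of Figure~\ref{figure:9cases}.

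The principal obstacle lies in the case-by-case verification of the $\mathbb{Z}_+$-linearity identity for $\Phi_\mathscr{T}(W_c)$, which requires reading off the good position of the web from the schematic and computing the twelve tropical coordinates explicitly: this is precisely the type of computation carried out in \S\ref{ssec:flip-examples} for families (1), (3), and (7), and those computations already exhibit the necessary linear dependence on the weights; the remaining six representative families should yield to the same direct method. A subsidiary subtlety is ensuring that the $\ell_j^i$ corrections have the correct sign, so that the construction in the reverse inclusion produces genuine nonnegative corner arc multiplicities; this is expected to follow from the fact that the Hilbert basis webs in $\mathscr{Q}_i$ were chosen to be minimal within the family $\mathscr{W}_i$, making their corner-arc content intrinsic to the family's geometry.
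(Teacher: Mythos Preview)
Your approach has the right skeleton (decompose into corner and cornerless parts, use additivity), but you have overcomplicated matters by introducing the correction terms $\ell_j^i$, and this creates a genuine logical gap. The webs $V_i^k \in \mathscr{Q}_i$ are \emph{themselves cornerless}: by definition they are obtained from the $i$-th cornerless schematic by setting one weight to $1$ and the others to $0$. Hence there are no ``corner arcs implicit in the chosen $V_i^k$'' to correct for, and the identity you need is simply
\[
\Phi_\mathscr{T}(W_c(x,y,z,t)) \;=\; x\,\Phi_\mathscr{T}(V_i^1) + y\,\Phi_\mathscr{T}(V_i^2) + z\,\Phi_\mathscr{T}(V_i^3) + t\,\Phi_\mathscr{T}(V_i^4),
\]
with \emph{no} corrections. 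This exact linearity is immediate from the construction of $\Phi_\mathscr{T}$ in \S\ref{sssec:definitionofthetropicalwebcoordinates}: within each triangle the coordinates are $\mathbb{Z}_+$-linear in the triangle schematic data (honeycomb size and corner-arc counts), and for a fixed family $\mathscr{W}_i$ that triangle data is in turn $\mathbb{Z}_+$-linear in the schematic weights $(x,y,z,t)$. The explicit computations in \S\ref{ssec:flip-examples} for families (1), (3), (7) confirm this: every coordinate is a $\mathbb{Z}_+$-linear form in $x,y,z,t$.

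Your hedge ``noting that by construction $n_j \geq \ell_j^i$'' would actually fail if any $\ell_j^i$ were positive, since for a cornerless $W=W_c$ one has $n_j=0$; so the argument as written is incomplete until you establish $\ell_j^i\equiv 0$, at which point the corrections vanish and both inclusions become one-liners. This is why the paper treats the observation as immediate from the construction.
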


The main result of the second half of the paper is:

\begin{theorem}
\label{theorem:decomp}
Consider the completed KTGS cone $C_\mathcal{T} \subset \mathbb{R}_+^{12}$ for the triangulated square $(\Box, \mathcal{T})$; see Definition {\upshape\ref{def:completed-ktgs-cone}}.  Then the following properties hold.
\begin{enumerate}[label=\textnormal{(\Roman*)}]
\item  $C_\mathcal{T}$ is $12$-dimensional.  Namely, $C_\mathcal{T}$ is a full cone; see Definition {\upshape\ref{def:cone-defs}}.	
\item  	The completed KTGS subcones $C_\mathcal{T}^i \subset C_\mathcal{T}$ are full sectors forming a sector decomposition  $\{ C^i_\mathcal{T} \}_{i=1,2,\dots,42}$  of $C_\mathcal{T}$; see Definition {\upshape\ref{def:sector-decomp}}.    
\item  	The intersection $C_\mathcal{T}^i \cap C_\mathcal{T}^\ell$ is a wall if and only if $\mathcal{Q}_i \cap \mathcal{Q}_\ell$ has $3$ elements; that is, if and only if the topological types of $C_\mathcal{T}^i$ and $C_\mathcal{T}^\ell$ differ by a single web.  In this case, 
\begin{equation*}\tag{k}
\label{eq:theorem-equation}
C_\mathcal{T}^i \cap C_\mathcal{T}^\ell = \mathrm{span}_{\mathbb{R}_+}(\{ \Phi_\mathcal{T}(W_1^H), \Phi_\mathcal{T}(W_2^H), \dots, \Phi_\mathcal{T}(W_8^H) \} \cup \{ \Phi_\mathcal{T}(W^H);  W^H \in \mathcal{Q}_i \cap \mathcal{Q}_\ell \} )  \subset \mathbb{R}_+^{12}.  
\end{equation*}

Moreover, for each web $W^H \in \mathcal{Q}_i$, there exists a unique index $i^\ast(i,W^H) \in \{ 1, 2, \dots, 42 \}$ such that 
\begin{equation*}
\mathcal{Q}_i \cap \mathcal{Q}_{i^\ast(i,W^H)} = \mathcal{Q}_i - \{ W^H \};
\end{equation*}
that is,  such that there is a web $W^\ast(i,W^H) \in \mathcal{Q}_{i^\ast(i,W^H)}$ distinct from $W^H$ satisfying the property that the topological type  $\mathcal{Q}_{i^\ast(i,W^H)}$ of $C_\mathcal{T}^{i^\ast(i,W^H)}$ is obtained from the topological type  $\mathcal{Q}_i$ of $C_\mathcal{T}^i$ by swapping $W^H$ with $W^\ast(i,W^H)$.   
	
In particular, each sector $C^i_\mathcal{T}$ has $4$ walls.  See Figure {\upshape\ref{figure:wallscross}}.  
\end{enumerate}
\end{theorem}

\begin{example}
\label{ex:example-of-web-mutation}
As an example of the second paragraph of the third item of Theorem \ref{theorem:decomp}, consider $i=i_1=29$, corresponding to family (1) in Figure \ref{figure:9cases}.  If $W^H =   [L_b, T_{out}] \in \mathcal{Q}_{29}$, then $i^\ast(29,W^H)=i_2=30$ corresponding to family (2)   in Figure \ref{figure:9cases}, and $W^\ast(29,W^H) = [R_b, T_{in}] \in \mathcal{Q}_{30}$.   One similarly checks that $i^\ast(29, [L_b, R_c])=28$ and $W^\ast(29, [L_b, R_c])=[R_c, T_{out}] \in \mathcal{Q}_{28}$; that $i^\ast(29, [R_b, L_c])=25$ and $W^\ast(29, [R_b, L_c])=[T_{out}, R_b] \in \mathcal{Q}_{25}$; and, that $i^\ast(29, [T_{out}, L_c])=31$ and $W^\ast(29, [T_{out}, L_c])=[T_{in}, R_c] \in \mathcal{Q}_{31}$.  

Note that Figure \ref{figure:wallscross} provides some, but not  all, of this topological information; the full information is contained in the definition of the subsets $\mathcal{Q}_i$ above.  
\end{example}

\begin{question}
By Theorem \ref{theorem:basis}, the Hilbert basis $\mathcal{H}_{(\Box, \mathcal{T})}$ of the positive integer cone $\mathcal{C}_\mathcal{T} \subset \mathbb{Z}_+^{12}$ has 22 elements.  It follows by Observation \ref{obs:rank-hilbert-basis-estimate} that $\mathrm{rank}(C_\mathcal{T}) \leq 22$.   By Theorem  \ref{theorem:decomp}, $\mathrm{rank}(C_\mathcal{T}) \geq 12$.  What is the rank of $C_\mathcal{T}$?  (Compare the last paragraph of Proposition \ref{prop:sector-decomp-triangle}.)
\end{question}

\begin{remark}
\label{rem:topwallcrossphen}
The authors enjoy imagining Theorem \ref{theorem:decomp} as expressing a kind of `topological wall-crossing phenomenon', where we interpret the swapping of topological types upon crossing a wall as a kind of `web mutation'.  Investigating how this phenomenon relates to other wall-crossing phenomena appearing in cluster geometry \cite{kontsevich2008stability} could be of potential interest.  In particular, there should be a relationship with the so-called $D_4$ cluster complex (see, for example, \cite{FominArxiv17}).  This is also related to Remark \ref{rem:hilbert-basis-depends-on-triangulation} and the last paragraph of Section \ref{ssec:42-reduced-web-families-in-the-square}.  
\end{remark}

\subsection{Proof of Theorem \ref{theorem:decomp}}
\label{ssec:proof-of-decomp-theorem}

We make some preparations before proving the theorem.  
	
\subsubsection{Two linear isomorphisms:  second isomorphism $\phi_\mathcal{T}$ by tropical $\mathcal{X}$-coordinates}
\label{sssec:second-linear-isomorphism}
		   
In Section \ref{sssec:a-linear-isomorphism}, to each ideal triangulation $\mathcal{T}$ of the square $\Box$ we constructed a linear isomorphism $\theta_\mathcal{T} : \mathbb{R}^{12} \to V_\mathcal{T} \subset \mathbb{R}^{18}$.  This map sends 12 real numbers $A_1, A_2, \dots, A_{12}$, called the `(real) tropical $\mathcal{A}$-coordinates', to their 18 rhombus numbers $\beta_1, \beta_2, \dots, \beta_{18}$.  There are 6 relations (see \eqref{eq:tropical-x-coordinates} in Section \ref{sssec:a-linear-isomorphism})  defining the 12-dimensional subspace $V_\mathcal{T} \subset \mathbb{R}^{18}$, which determine four real numbers $X_1, X_2, X_3, X_4$, called the `(real) tropical $\mathcal{X}$-coordinates':  they are four numbers assigned to any 18-tuple in $V_\mathcal{T}$ of rhombus numbers.  See Figure \ref{fig:tropical-X-coords}.  See also \cite{XieArxiv13}.    
		
\begin{remark}  
The tropical $\mathcal{X}$-coordinates originate in Fock-Goncharov theory as tropicalized  double and triple ratios \cite{FockIHES06, FockAdvMath07}, and can be thought of in the following geometric way.  For $X_1$, say, consider the hexagon in the top triangle in the top left square of Figure \ref{fig:tropical-X-coords}.  There are six tropical $\mathcal{A}$-coordinates assigned to the vertices of this hexagon.  Then $X_1$ is the signed sum of these coordinates, as indicated in the figure.  Similarly for $X_2, X_3, X_4$.  
\end{remark}
		
\begin{definition}
Let $V_\mathcal{T} \subset \mathbb{R}^{18}$ be the 12-dimensional subspace just discussed.  Define a linear map 
\begin{equation*}
\phi_\mathcal{T} : V_\mathcal{T} \to \mathbb{R}^{8} \times \mathbb{R}^4
\end{equation*}
by 
\begin{equation*}
\phi_\mathcal{T}(\beta_1, \beta_2, \beta_3, \dots, \beta_{18})=(\beta_1, \beta_2, \beta_4, \beta_5, \beta_7, \beta_8, \beta_{10}, \beta_{11};  X_1, X_2, X_3, X_4).  
\end{equation*}
\end{definition}

See Figure \ref{fig:tropical-X-coords}, where the  eight rhombi appearing in the first eight coordinates of the  image of $\phi_\mathcal{T}$ are colored green.  

For example, the images under $\phi_\mathcal{T}$ of $\theta_\mathcal{T}$ applied to the 22 Hilbert basis elements, $\theta_\mathcal{T}(\Phi_\mathcal{T}(W_j^H)) \in V_\mathcal{T}$, can be computed from Figures \ref{figure:square1} and \ref{figure:square2}, or from the computations in Section \ref{sssec:a-linear-isomorphism}, to be:
\begin{enumerate}[label=(\arabic*)]
\item $\phi_\mathcal{T}(\theta_\mathcal{T}(\Phi_\mathcal{T}([R_a])))=(1,0,0,0,0,0,0,0;0,0,0,0)	 $;
\item $\phi_\mathcal{T}(\theta_\mathcal{T}(\Phi_\mathcal{T}([L_a])))=(0,1,0,0,0,0,0,0;0,0,0,0) $;
\item $\phi_\mathcal{T}(\theta_\mathcal{T}(\Phi_\mathcal{T}([R_b])))=(0,0,1,0,0,0,0,0;0,0,0,0) $;
\item $\phi_\mathcal{T}(\theta_\mathcal{T}(\Phi_\mathcal{T}( [L_b])))=(0,0,0,1,0,0,0,0;0,0,0,0) $;
\item $\phi_\mathcal{T}(\theta_\mathcal{T}(\Phi_\mathcal{T}( [R_c])))=(0,0,0,0,1,0,0,0;0,0,0,0) $;
\item $\phi_\mathcal{T}(\theta_\mathcal{T}(\Phi_\mathcal{T}( [L_c])))=(0,0,0,0,0,1,0,0;0,0,0,0) $;
\item $\phi_\mathcal{T}(\theta_\mathcal{T}(\Phi_\mathcal{T}(  [R_d])))=(0,0,0,0,0,0,1,0;0,0,0,0) $;
\item $\phi_\mathcal{T}(\theta_\mathcal{T}(\Phi_\mathcal{T}( [L_d])))=(0,0,0,0,0,0,0,1;0,0,0,0) $;
\item $\phi_\mathcal{T}(\theta_\mathcal{T}(\Phi_\mathcal{T}([T_{out}, R_b])))=(0,0,0,0,0,0,0,0;		1,-1,0,0)   $;
\item $\phi_\mathcal{T}(\theta_\mathcal{T}(\Phi_\mathcal{T}([T_{out}, L_c])))=(0,0,0,0,0,0,0,0;		1,0,0,0)	 $;
\item $\phi_\mathcal{T}(\theta_\mathcal{T}(\Phi_\mathcal{T}([T_{in}, L_b])))=(0,1,0,1,0,1,0,0;		-1,0,0,0)	 $;
\item $\phi_\mathcal{T}(\theta_\mathcal{T}(\Phi_\mathcal{T}( [T_{in}, R_c])))=(0,1,0,1,0,1,0,0;		-1,0,0,1)	 $;
\item $\phi_\mathcal{T}(\theta_\mathcal{T}(\Phi_\mathcal{T}( [L_b, T_{out}])))=(0,0,0,1,0,0,0,0;		0,0,1,0)	 $;
\item $\phi_\mathcal{T}(\theta_\mathcal{T}(\Phi_\mathcal{T}(  [L_c, T_{in}])))=(0,0,0,0,0,1,0,1;		0,0,-1,0)	 $;
\item $\phi_\mathcal{T}(\theta_\mathcal{T}(\Phi_\mathcal{T}(  [R_b, T_{in}])))=(0,0,1,0,0,0,0,1;		0,1,-1,0)	 $;
\item $\phi_\mathcal{T}(\theta_\mathcal{T}(\Phi_\mathcal{T}( [R_c, T_{out}])))=(0,0,0,0,1,0,0,0;		0,0,1,-1)	 $;
\item $\phi_\mathcal{T}(\theta_\mathcal{T}(\Phi_\mathcal{T}(  [T_{out}, T_{in}])))=(0,0,0,0,0,0,0,1;		1,0,-1,0)	 $;
\item $\phi_\mathcal{T}(\theta_\mathcal{T}(\Phi_\mathcal{T}( [T_{in}, T_{out}])))=(0,1,0,1,0,1,0,0;		-1,0,1,0)	 $;
\item $\phi_\mathcal{T}(\theta_\mathcal{T}(\Phi_\mathcal{T}( [L_b, R_c])))=(0,0,0,1,0,0,0,0;			0,0,0,1)	 $;
\item $\phi_\mathcal{T}(\theta_\mathcal{T}(\Phi_\mathcal{T}( [R_b, L_c])))=(0,0,1,0,0,0,0,0;			0,1,0,0)		 $;
\item $\phi_\mathcal{T}(\theta_\mathcal{T}(\Phi_\mathcal{T}( [R_c, L_b])))=(0,0,0,0,1,0,0,0;			0,0,0,-1)	 $;
\item $\phi_\mathcal{T}(\theta_\mathcal{T}(\Phi_\mathcal{T}( [L_c, R_b])))=(0,0,0,0,0,1,0,0;			0,-1,0,0) $.
\end{enumerate}

\begin{prop}
\label{prop:second-isomorphism}
The linear map $\phi_\mathcal{T} : V_\mathcal{T} \to \mathbb{R}^{8} \times \mathbb{R}^4$ is an isomorphism.  Consequently, letting $\theta_\mathcal{T} : \mathbb{R}^{12} \to V_\mathcal{T}$ be the isomorphism from {\upshape Section \ref{sssec:a-linear-isomorphism}}, we have that the composition 
\begin{equation*}
\phi_\mathcal{T} \circ \theta_\mathcal{T} : \mathbb{R}^{12} \overset{\sim}{\to} \mathbb{R}^{8} \times \mathbb{R}^4
\end{equation*}
is a linear isomorphism.  
\end{prop}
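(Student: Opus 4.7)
My plan is to establish the isomorphism $\phi_\mathscr{T} : V_\mathscr{T} \to \mathbb{R}^8 \times \mathbb{R}^4$ by a direct dimension-and-injectivity argument. Since the codomain $\mathbb{R}^8 \times \mathbb{R}^4$ has dimension $12$, and Proposition \ref{obs:theta-is-injective} already tells us $\dim V_\mathscr{T} = 12$, it suffices to show $\phi_\mathscr{T}$ is injective. Then the ``consequently'' clause follows immediately by composing with the isomorphism $\theta_\mathscr{T}: \mathbb{R}^{12} \overset{\sim}{\to} V_\mathscr{T}$ from \S \ref{sssec:a-linear-isomorphism}.

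To establish injectivity, I would assume $(\beta_1, \beta_2, \dots, \beta_{18}) \in V_\mathscr{T}$ satisfies $\phi_\mathscr{T}(\beta_1, \dots, \beta_{18}) = 0$ and chase the definitions. By the first eight coordinates of $\phi_\mathscr{T}$, we immediately get $\beta_1 = \beta_2 = \beta_4 = \beta_5 = \beta_7 = \beta_8 = \beta_{10} = \beta_{11} = 0$. From $X_1 = 0$, using the defining equations \eqref{eq:tropical-x-coordinates} of $V_\mathscr{T}$, we have $\beta_3 = \beta_2$, $\beta_6 = \beta_5$, and $\beta_9 = \beta_8$, which forces $\beta_3 = \beta_6 = \beta_9 = 0$. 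From $X_2 = 0$ and $X_4 = 0$, using \eqref{eq:tropical-x-coordinates} we deduce $\beta_{13} = \beta_4 = 0$, $\beta_{17} = \beta_9 = 0$, $\beta_{16} = \beta_7 = 0$, and $\beta_{15} = \beta_5 = 0$. Finally, from $X_3 = 0$ we get $\beta_{12} = \beta_{11} = 0$, $\beta_{14} = \beta_{15} = 0$, and $\beta_{18} = \beta_{17} = 0$. Thus all $\beta_i$ vanish, proving $\ker(\phi_\mathscr{T}) = 0$.

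This is a straightforward linear-algebra exercise, so there is no substantive obstacle; the only thing to be careful about is the bookkeeping of which $\beta$'s appear in which $X_i$ (best verified by consulting Figure \ref{fig:tropical-X-coords}). As a sanity check, one could also verify surjectivity directly by inspecting the images of the $22$ Hilbert basis elements under $\phi_\mathscr{T} \circ \theta_\mathscr{T}$ listed just above the proposition: the first eight images are precisely the first eight standard basis vectors of $\mathbb{R}^8 \times \mathbb{R}^4$, and elementary row operations on the last $14$ rows yield the remaining four standard basis vectors, confirming that the $22$ images span a $12$-dimensional subspace --- namely all of $\mathbb{R}^8 \times \mathbb{R}^4$.
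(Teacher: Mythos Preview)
Your proof is correct. You establish the isomorphism via injectivity (a direct kernel chase through the defining relations of $V_\mathscr{T}$), whereas the paper establishes it via surjectivity, checking that the $22$ explicit images $\phi_\mathscr{T}(\theta_\mathscr{T}(\Phi_\mathscr{T}(W_j^H)))$ span $\mathbb{R}^8 \times \mathbb{R}^4$. Both routes are valid once $\dim V_\mathscr{T} = 12$ is known from Proposition~\ref{obs:theta-is-injective}. Your approach has the advantage of being entirely self-contained from the definitions of $\phi_\mathscr{T}$ and $V_\mathscr{T}$, without needing the Hilbert basis computations; the paper's approach, on the other hand, reuses those computations (which are needed anyway for the sector analysis in \S\ref{section:ssq}) and so avoids introducing a separate argument. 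Amusingly, the ``sanity check'' you append at the end is exactly the paper's proof.
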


\begin{proof}
Since $V_\mathcal{T}$ is 12-dimensional (Proposition \ref{obs:theta-is-injective}), it suffices to show that the image of $\phi_\mathcal{T}$ spans $\mathbb{R}^{8} \times \mathbb{R}^4$.  Indeed, one checks that the above 22 images $\{ \phi_\mathcal{T}(\theta_\mathcal{T}(\Phi_\mathcal{T}(W_j^H)))\}_{j=1,2,\dots,22} \subset \mathbb{R}_+^8 \times \mathbb{R}^4$ span $\mathbb{R}^{8} \times \mathbb{R}^4$. 
\end{proof}

\begin{definition}
\label{def:isomorphic-cone}
The linear isomorphism $\phi_\mathcal{T} \circ \theta_\mathcal{T} : \mathbb{R}^{12} \to \mathbb{R}^{8} \times \mathbb{R}^4$ of Proposition \ref{prop:second-isomorphism} maps the completed KTGS cone $C_\mathcal{T} \subset \mathbb{R}_+^{12}$ to the \emph{isomorphic cone} 
\begin{equation*}
C:=\phi_\mathcal{T}(\theta_\mathcal{T}(C_\mathcal{T}))
 \subset \mathbb{R}_+^8 \times \mathbb{R}^4.
\end{equation*}
\end{definition}

Note that $C$ indeed lies in $\mathbb{R}_+^8 \times \mathbb{R}^4$ because $C_\mathcal{T} \subset \mathbb{R}_+^{12}$ is the completion of the KTGS cone $\mathcal{C}_\mathcal{T} \subset \mathbb{Z}_+^{12}$, which by definition has all nonnegative (integer) rhombus numbers $\{ \beta_i \}_{i=1,2,\dots,18}$.
	
Observe, in particular, that the 8 corner arcs $W^H_1, W^H_2, \dots, W^H_8$ of  Figure \ref{figure:square1} correspond via $\phi_\mathcal{T} \circ \theta_\mathcal{T} \circ \Phi_\mathcal{T}$ to the first 8 standard basis elements $e_i$ of $\mathbb{R}^8 \times \mathbb{R}^4$.

\begin{figure}[t]
\includegraphics[scale=.51]{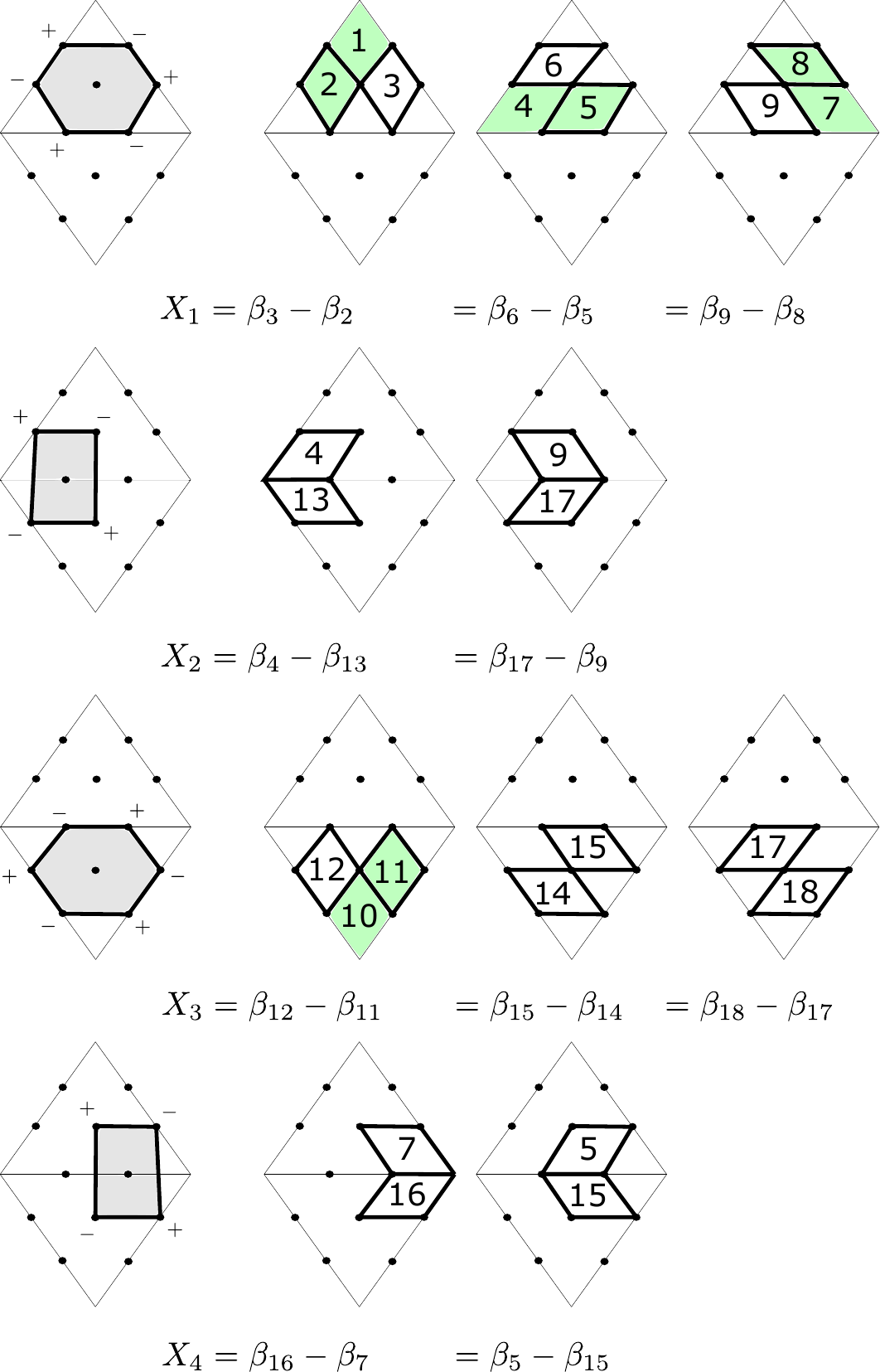}
\caption{     Shown are the 18 rhombus numbers $\{ \beta_i \}_{i=1,2,\dots,18}$ for the square, and the associated 4 tropical integer $\mathcal{X}$-coordinates $\{ X_i \}_{i=1,2,3,4}$.  The latter can be computed either as differences of rhombus numbers, or as alternating sums of the 12 positive tropical integer $\mathcal{A}$-coordinates $\{ A_i \}_{i=1,2,\dots,12}$ around the polygons displayed on the left.  The rhombi colored green are those involved in the first 8 coordinates of the isomorphism $\phi_\mathcal{T}: V_\mathcal{T} \to \mathbb{R}^{12}$.}
\label{fig:tropical-X-coords}
\end{figure}
	
\subsubsection{Sector decomposition of $\mathbb{R}^4$ via the isomorphism $\phi_\mathcal{T} \circ \theta_\mathcal{T}$}
\label{sssec:sector-decomposition-of-r4-via-the-ktgs-cone-for-the-square}
	   
Let $C \subset \mathbb{R}_+^8 \times \mathbb{R}^4$ be the cone defined in Definition \ref{def:isomorphic-cone}, which is isomorphic, via the isomorphism $\phi_\mathcal{T} \circ \theta_\mathcal{T}: \mathbb{R}^{12} \to \mathbb{R}^{8} \times \mathbb{R}^4$, to the completed KTGS cone $C_\mathcal{T} \subset \mathbb{R}_+^{12}$ for the triangulated square $(\Box, \mathcal{T})$.  
	
\begin{notation}
\label{not:relating-to-cone-lemmas}
Put $k=8$, $n=4$, $m=14$ ($=22-8$), and $p=42$ (compare Section \ref{ssec:technical-statements}).  
\end{notation}
	
For $j=1,2,\dots,m$, define cone points $x_j \in C$ by
\begin{equation*}
x_j = \phi_\mathcal{T}(\theta_\mathcal{T}(\Phi_\mathcal{T}(W^H_{k+j})))  \in C
\end{equation*}
where $\Phi_\mathcal{T}(W^H_{j^\prime})$ is the $j^\prime$-th Hilbert basis element for the triangulated square; see Figures \ref{figure:square1} and \ref{figure:square2}.  Note that the  points  $\{x_j\}_{i=1,2,\dots,m} \subset C$ are displayed explicitly in Section \ref{sssec:second-linear-isomorphism}.  
	
For $i=1,2,\dots,p$, define index sets $J_i \subset \{ 1,2,\dots,m \}$ of constant size $n$ as follows.  Given $i$, consider the topological type $\mathcal{Q}_i \subset \mathcal{W}_i \subset \mathcal{W}_\Box$ of the completed KTGS subcone $C_\mathcal{T}^i \subset \mathbb{R}_+^{12}$ (Definition \ref{def:topological type}).  By definition of the topological type $\mathcal{Q}_i$, there are four Hilbert basis webs $W^{H}_{k+j^{(i)}_1}, W^{H}_{k+j^{(i)}_2}, W^{H}_{k+j^{(i)}_3}, W^{H}_{k+j^{(i)}_4}$, where  the indices $j_r^{(i)} \in \{ 1,2,\dots,m \}$, such that  $\mathcal{Q}_i = \{ W^{H}_{k+j^{(i)}_r} \}_{r=1,2,3,4}$.  We then define the index set $J_i$ by
\begin{equation*}
J_i = \{ j^{(i)}_1, j^{(i)}_2, j^{(i)}_3, j^{(i)}_4 
\}  \subset \{ 1,2,\dots, m \}.  
\end{equation*}

\begin{definition}
\label{def:sectors-in-Rn}
Recalling Notation \ref{not:relating-to-cone-lemmas}: for each $i=1,2,\dots,42$, define subcones $D_i \subset \mathbb{R}^4$ by (compare Section \ref{ssec:technical-statements})
\begin{equation*}
D_i = \mathrm{span}_{\mathbb{R}_+}(\{ \pi_4(x_j); j \in J_i \} )  \subset \mathbb{R}^4.
\end{equation*} 
Here,  $\pi_4 : \mathbb{R}^8 \times \mathbb{R}^4 \to \mathbb{R}^4$ is the natural projection.  
	 
Just as for the subcones $C_\mathcal{T}^i \subset C_\mathcal{T}$, we call $\mathcal{Q}_i$ the \emph{topological type} of the subcone $D_i \subset \mathbb{R}^4$. 
\end{definition}

\begin{remark}
\label{rem:14-vectors-distinct}
Note, by the calculations of Section \ref{sssec:second-linear-isomorphism}, that the 14 vectors $\{ \pi_4(x_j) \}_{j=1,2,\dots,14} \subset \mathbb{R}^4$ are nonzero and distinct.
\end{remark}

\begin{prop}
\label{prop:subcones-sect-decomp-r4}
The subcones $D_i \subset \mathbb{R}^4$ are full sectors  forming a sector decomposition $\{ D_i \}_{i=1,2,\dots,42}$ of $\mathbb{R}^4$ \textup{(}Definition {\upshape\ref{def:sector-decomp}}\textup{)}.  
\end{prop}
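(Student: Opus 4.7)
The plan is to verify the sector decomposition by direct calculation, organized around the $4$ tropical integer $\mathcal{X}$-coordinates $X_1, X_2, X_3, X_4$ and guided by the symmetries of the triangulated square. First I would list the $14$ projected vectors $\pi_4(x_j) \in \mathbb{R}^4$ explicitly, reading the last four coordinates from the table in \S\ref{sssec:second-linear-isomorphism}; these are $14$ distinct vectors, $10$ of which lie in the coordinate axes and $4$ of which are of the form $\pm e_i \mp e_k$ (for instance $\pi_4(x_1) = (1,-1,0,0)$ for $[T_{out},R_b]$ and $\pi_4(x_9) = (1,0,-1,0)$ for $[T_{out},T_{in}]$).

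Next, for each $i=1,2,\dots,42$, I would verify that the four vectors $\left\{ \pi_4(x_j); \; j\in J_i \right\}$ are linearly independent; this is a straightforward $4 \times 4$ determinant check, so each $D_i$ has dimension $4 = \mathrm{rank}(D_i)$ and is therefore a full sector of $\mathbb{R}^4$. To cut down the bookkeeping, I would exploit the fact that the indexing $\mathscr{Q}_i$ was built to realize the $42$ web families $\mathscr{W}_i$ of Proposition \ref{lem:42families}, which fall into only $9$ symmetry classes under the rotation/reflection/orientation-reversing action of the square (Remark \ref{rem:symmetry-groupings}); the corresponding action on $\mathbb{R}^4$ permutes the $X_i$ (with signs), so verifying one representative per class handles all $42$ cases.

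The heart of the argument is showing $\bigcup_{i=1}^{42} D_i = \mathbb{R}^4$ and that the pairwise intersections $D_i \cap D_{i'}$ have empty interior. The natural way to do this is to stratify $\mathbb{R}^4$ by the signs of the coordinates and the signs of the differences $X_1 - X_3$ etc., and to observe that the grouping displayed in Figure \ref{figure:wallscross} already indicates which sectors populate each orthant: once the sign pattern of $(X_1,X_2,X_3,X_4)$ is fixed, only a small number of the $D_i$ meet that open orthant, and one can check directly (by inverting the relevant $4 \times 4$ matrices) that the $\mathbb{R}_+$-combinations of the four generators of each such $D_i$ fill a wedge whose union covers the orthant. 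Emptiness of interior of pairwise intersections then follows from the linear independence of each generating set together with the fact that, for $i \neq i'$, any common interior point of $D_i$ and $D_{i'}$ would give two distinct $\mathbb{R}_{>0}$-expansions in the full $14$-vector configuration, contradicting the explicit sign patterns.

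The main obstacle is the sheer bulk of the case analysis: there are $42$ cones and $\binom{42}{2}=861$ potential pairwise intersections to control. The hard part is therefore not any single computation but organizing them so the argument remains finite and verifiable. I would manage this by (i) reducing to $9$ symmetry representatives as above, (ii) further reducing by the involution $X_i \mapsto -X_i$ that corresponds to reversing all web orientations, and (iii) using the companion Proposition \ref{lem:42families}, which guarantees that the images $\Phi_\mathscr{T}(\mathscr{W}_i)$ already cover $\mathscr{C}_\mathscr{T} \subset \mathbb{Z}_+^{12}$; combined with Lemma \ref{lem:actual-geometry}, this forces $\bigcup_i D_i$ to cover the image of $C_\mathscr{T}$ under $\pi_4 \circ \phi_\mathscr{T} \circ \theta_\mathscr{T}$, which is all of $\mathbb{R}^4$ because the corner arc generators $e_1,\dots,e_8$ are killed by $\pi_4$ and the $14$ vectors $\pi_4(x_j)$ already span $\mathbb{R}^4$ with every sign pattern represented.
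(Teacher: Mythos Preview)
Your overall strategy---direct computation organized by the signs of the $X_i$---is the same as the paper's, and your alternative covering argument in (iii) is valid: it is essentially the argument the paper deploys later in the proof of Theorem~\ref{theorem:decomp} (via Theorem~\ref{thm:main-theorem}, Proposition~\ref{lem:42families}, Observation~\ref{obs:families-and-their-cones}, and Lemma~\ref{lem:actual-geometry}), and since $\pm e_1,\pm e_2,\pm e_3,\pm e_4$ all occur among the $14$ vectors $\pi_4(x_j)$, the $\mathbb{R}_+$-span is indeed all of $\mathbb{R}^4$. Pulling that argument forward here is a legitimate shortcut.

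However, there are two gaps. First, your symmetry reduction to $9$ classes is not quite right: the $9$ classes of Remark~\ref{rem:symmetry-groupings} are defined using the full dihedral group of the square, including the $90^\circ$ rotation, which sends $\mathscr{T}$ to $\mathscr{T}'$ and hence does \emph{not} act by a signed permutation on the $\mathcal{X}$-coordinates $(X_1,X_2,X_3,X_4)$ attached to $\mathscr{T}$. Only the symmetries preserving the diagonal (together with web-orientation reversal) act on $\mathbb{R}^4$ in the way you want, and these give more than $9$ orbits on the $42$ families. The paper accordingly writes out all $42$ cones $D_i$ explicitly by inequalities.

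Second, and more seriously, your empty-interior argument is not an argument. Saying that a common interior point of $D_i$ and $D_{i'}$ ``would give two distinct $\mathbb{R}_{>0}$-expansions \dots\ contradicting the explicit sign patterns'' begs the question: two full sectors sharing an open region is precisely the scenario you must rule out, and nothing about linear independence of each generating set prevents it. The paper handles this by explicit case analysis within each of the $16$ orthants: sectors in different orthants automatically have codimension-$\geq 1$ intersection, and for sectors in the same orthant one checks the defining inequalities directly (e.g.\ in $\mathbb{R}_-\times\mathbb{R}_+\times\mathbb{R}_+\times\mathbb{R}_+$ the intersection $D_{31}\cap D_{35}$ forces $X_3 \leq -X_1-X_4 \leq 0$, hence $X_3=0$). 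You need to carry out this orthant-by-orthant check, or something equivalent, to close the argument.
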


\begin{proof}
Let us begin by giving some examples of how to describe the subcones $D_i$.  Specifically, we will describe those 9 subcones $D_{i_j} \subset \mathbb{R}^4 $ corresponding to the  topological types $\mathcal{Q}_{i_j} \subset \mathcal{W}_{i_j} \subset \mathcal{W}_\Box$, which in particular are subsets of the $(j)$ web families $\mathcal{W}_{i_j}$ displayed in Figure  \ref{figure:9cases}; see Notation \ref{not:web-families} and Remark \ref{rem:symmetry-groupings}.  

\begin{remark}
\label{rem:remark-about-rows}
In each of the 9 examples just below, note that the ordering of the rows of the matrix, chosen to match Figure \ref{figure:9cases}, does not affect the description of  the subcone $D_i \subset \mathbb{R}^4$.
\end{remark}

\begin{example*}[$i_1=29$]  For $\mathcal{Q}_{29}$, let us write the four vectors $\pi_4 (\phi_\mathcal{T} \circ \theta_\mathcal{T} \circ \Phi_{\mathcal{T}}(\mathcal{Q}_{29})) \subset \mathbb{R}^4$ in rows to form a $4\times 4$ matrix $M_{29}=\left(
\begin{matrix}
0&0&1&0 \\
0&1&0&0 \\ 
0&0&0&1 \\
1&0&0&0 
\end{matrix} 
\right)$. Then for real numbers $x,y,z,t \geq 0$, we get
\begin{equation*}
\begin{pmatrix}   x & y & z & t  \end{pmatrix}
M_{29}     =\begin{pmatrix}
t&y&x&z
\end{pmatrix}.
\end{equation*}
Thus $D_{29}=\{(X_1, X_2,X_3,X_4)\in \mathbb{R}_{+}\times \mathbb{R}_{+} \times \mathbb{R}_{+} \times \mathbb{R}_{+}\}$.  
\end{example*}

\begin{example*}[$i_2=30$]  Similarly, for $\mathcal{Q}_{30}$, writing the four vectors $\pi_4 (\phi_\mathcal{T}\circ \theta_\mathcal{T}\circ \Phi_{\mathcal{T}}(\mathcal{Q}_{30}))$ in rows, we get a $4\times 4$ matrix $M_{30}=\left(
\begin{matrix}
0&1&-1&0\\
0&1&0&0\\ 
0&0&0&1\\ 
1&0&0&0
\end{matrix} 
\right)$. Then for $x,y,z,t \geq 0$, we get
\begin{equation*}
\begin{pmatrix}   x & y & z & t  \end{pmatrix} 
M_{30}     =\begin{pmatrix}
t&x+y&-x&z
\end{pmatrix}.
\end{equation*}
Thus $D_{30}=\{(X_1, X_2,X_3,X_4)\in \mathbb{R}_{+}\times \mathbb{R}_{+} \times \mathbb{R}_{-} \times \mathbb{R}_{+} ; X_2+X_3\geq 0\}$.
\end{example*}

\begin{example*}[$i_3=42$]  For $\mathcal{Q}_{42}$, writing the four vectors $\pi_4 (\phi_\mathcal{T}\circ \theta_\mathcal{T}\circ \Phi_{\mathcal{T}}(\mathcal{Q}_{42}))$ in rows, we get a $4\times 4$ matrix $M_{42}=\left(
\begin{matrix}
0&1&-1&0 \\
1&0&-1&0\\ 
0&0&-1&0\\
0&0&0&-1 
\end{matrix} 
\right)$. Then for $x,y,z,t \geq 0$, we get
\begin{equation*}
\begin{pmatrix}   x & y & z & t  \end{pmatrix} 
M_{42}     =\begin{pmatrix}
y & x & -x -y-z & -t
\end{pmatrix}.
\end{equation*}
Thus $D_{42}=\{(X_1, X_2,X_3,X_4)\in \mathbb{R}_{+}\times \mathbb{R}_{+} \times \mathbb{R}_{-} \times \mathbb{R}_{-}; X_1 + X_2 + X_3 \leq 0\}$.
\end{example*}

\begin{example*}[$i_4=17$]  For $\mathcal{Q}_{17}$, writing the four vectors $\pi_4 (\phi_\mathcal{T}\circ \theta_\mathcal{T}\circ \Phi_{\mathcal{T}}(\mathcal{Q}_{17}))$ in rows, we get  a $4\times 4$ matrix $M_{17}=\left(
\begin{matrix}
1&-1&0&0\\
1&0&0&0\\ 
0&0&0&-1\\
0&0&1&-1 
\end{matrix}
 \right)$. Then for $x,y,z,t \geq 0$, we get
\begin{equation*}
\begin{pmatrix}   x & y & z & t  \end{pmatrix} 
M_{17}     =\begin{pmatrix}
x + y & -x & t & -z-t
\end{pmatrix}.
\end{equation*}
Thus $D_{17}=\{(X_1, X_2,X_3,X_4)\in \mathbb{R}_{+}\times \mathbb{R}_{-} \times \mathbb{R}_{+} \times \mathbb{R}_{-}; X_1 + X_2 \geq 0 \text{ and }  X_3 + X_4 \leq 0\}$.
\end{example*}

\begin{example*}[$i_5=5$]  For $\mathcal{Q}_{5}$, writing the four vectors $\pi_4 (\phi_\mathcal{T}\circ \theta_\mathcal{T}\circ \Phi_{\mathcal{T}}(\mathcal{Q}_{5}))$ in rows, we get  a $4\times 4$ matrix $M_{5}=\left(
\begin{matrix}
-1&0&1&0\\
-1&0&0&0\\ 
0&0&1&-1\\
0&1&0&0 
\end{matrix} 
\right)$. Then for $x,y,z,t \geq 0$, we get
\begin{equation*}
\begin{pmatrix}   x & y & z & t  \end{pmatrix} 
M_{5}     =\begin{pmatrix}
-x-y & t & x+z & -z
\end{pmatrix}.
\end{equation*}
Thus $D_{5}=\{(X_1, X_2,X_3,X_4)\in \mathbb{R}_{-}\times \mathbb{R}_{+} \times \mathbb{R}_{+} \times \mathbb{R}_{-};  -X_1 \geq X_3+X_4 \geq 0\}$.
\end{example*}

\begin{example*}[$i_6=6$]  For $\mathcal{Q}_{6}$, writing the four vectors $\pi_4 (\phi_\mathcal{T}\circ \theta_\mathcal{T}\circ \Phi_{\mathcal{T}}(\mathcal{Q}_{6}))$ in rows, we get  a $4\times 4$ matrix $M_{6}=\left(
\begin{matrix}
0&0&1&0\\
-1&0&1&0\\ 
-1&0&0&1\\ 
0&1&0&0
\end{matrix} 
\right)$. Then for $x,y,z,t \geq 0$, we get
\begin{equation*}
\begin{pmatrix}   x & y & z & t  \end{pmatrix} 
M_{6}     =\begin{pmatrix}
-y-z & t & x+y & z 
\end{pmatrix}.
\end{equation*}
Thus $D_{6}=\{(X_1, X_2,X_3,X_4)\in \mathbb{R}_{-}\times \mathbb{R}_{+} \times \mathbb{R}_{+} \times \mathbb{R}_{+}; -X_3 \leq X_1 + X_4 \leq 0\}$.
\end{example*}

\begin{example*}[$i_7=2$]  For $\mathcal{Q}_{2}$, writing the four vectors $\pi_4 (\phi_\mathcal{T}\circ \theta_\mathcal{T}\circ \Phi_{\mathcal{T}}(\mathcal{Q}_{2}))$ in rows, we get a $4\times 4$ matrix $M_{2}=\left(
\begin{matrix}
1&0&0&0\\
0&0&1&-1\\ 
0&0&0&-1\\ 
0&1&0&0
\end{matrix} 
\right)$. Then for $x,y,z,t \geq 0$, we get
\begin{equation*}
\begin{pmatrix}   x & y & z & t  \end{pmatrix} 
M_{2}     =\begin{pmatrix}
x & t & y & -y - z 
\end{pmatrix}.
\end{equation*}
Thus $D_{2}=\{(X_1, X_2,X_3,X_4)\in \mathbb{R}_{+}\times \mathbb{R}_{+} \times \mathbb{R}_{+} \times \mathbb{R}_{-}; X_3 + X_4 \leq 0\}$.
\end{example*}

\begin{example*}[$i_8=1$]  For $\mathcal{Q}_1$, writing the four vectors $\pi_4 (\phi_\mathcal{T}\circ \theta_\mathcal{T}\circ \Phi_{\mathcal{T}}(\mathcal{Q}_1))$ in rows, we get  a $4\times 4$ matrix $M_1=\left(
\begin{matrix}
-1&0&0&0\\
0&0&1&-1\\ 
0&0&0&-1\\ 
0&1&0&0
\end{matrix} 
\right)$. Then for $x,y,z,t \geq 0$, we get
\begin{equation*}
\begin{pmatrix}  x & y & z & t \end{pmatrix} 
M_1     =\begin{pmatrix}
-x & t & y & -y-z
\end{pmatrix}.
\end{equation*}
Thus $D_1=\{(X_1, X_2,X_3,X_4)\in \mathbb{R}_{-}\times \mathbb{R}_{+} \times \mathbb{R}_{+} \times \mathbb{R}_{-}; X_3+X_4\leq 0\}$.
\end{example*}

\begin{example*}[$i_9=33$]  For $\mathcal{Q}_{33}$, writing the four vectors $\pi_4 (\phi_\mathcal{T}\circ \theta_\mathcal{T}\circ \Phi_{\mathcal{T}}(\mathcal{Q}_{33}))$ in rows, we get  a $4\times 4$ matrix $M_{33}=\left(
\begin{matrix}
1&-1&0&0\\
1&0&0&0\\ 
0&0&1&0\\ 
0&0&1&-1
\end{matrix} 
\right)$. Then for $x,y,z,t \geq 0$, we get
\begin{equation*}
\begin{pmatrix}  x & y & z & t \end{pmatrix} 
M_{33}     =\begin{pmatrix}
x+y & -x & z+t&-t
\end{pmatrix}.
\end{equation*}
Thus $D_{33}=\{(X_1, X_2,X_3,X_4)\in \mathbb{R}_{+}\times \mathbb{R}_{-} \times \mathbb{R}_{+} \times \mathbb{R}_{-};  X_1 + X_2 \geq 0 \text{ and } X_3 + X_4 \geq 0  \}$.
\end{example*}

In the same way as the 9 examples just demonstrated, we compute   directly by hand the subcones $D_i \subset \mathbb{R}^4 $ for $i=1,2,\dots,42$ as follows:  
\begin{enumerate}[label=(\arabic*)]
\item  $D_1=\{(X_1, X_2,X_3,X_4)\in \mathbb{R}_{-}\times \mathbb{R}_{+} \times \mathbb{R}_{+} \times \mathbb{R}_{-}; X_3+X_4\leq 0\} $;
\item  $ D_2=\{(X_1, X_2,X_3,X_4)\in \mathbb{R}_{+}\times \mathbb{R}_{+} \times \mathbb{R}_{+} \times \mathbb{R}_{-}; X_3+X_4\leq 0\} $;
\item  $ D_3=\{(X_1, X_2,X_3,X_4)\in \mathbb{R}_{-}\times \mathbb{R}_{+} \times \mathbb{R}_{-} \times \mathbb{R}_{-}; X_2+X_3\geq 0\} $;
\item  $ D_4=\{(X_1, X_2,X_3,X_4)\in \mathbb{R}_{+}\times \mathbb{R}_{+} \times \mathbb{R}_{-} \times \mathbb{R}_{-}; X_2+X_3\geq 0\} $;
\item  $ D_5=\{(X_1, X_2,X_3,X_4)\in \mathbb{R}_{-}\times \mathbb{R}_{+} \times \mathbb{R}_{+} \times \mathbb{R}_{-}; -X_1\geq X_3+X_4\geq 0\} $;
\item  $ D_6=\{(X_1, X_2,X_3,X_4)\in \mathbb{R}_{-}\times \mathbb{R}_{+} \times \mathbb{R}_{+} \times \mathbb{R}_{+}; -X_3 \leq X_1+X_4\leq 0\} $;
\item  $ D_7=\{(X_1, X_2,X_3,X_4)\in \mathbb{R}_{-}\times \mathbb{R}_{-} \times \mathbb{R}_{+} \times \mathbb{R}_{-}; -X_1 \geq  X_3+X_4\geq 0\} $;
\item  $ D_8=\{(X_1, X_2,X_3,X_4)\in \mathbb{R}_{-}\times \mathbb{R}_{-} \times \mathbb{R}_{+} \times \mathbb{R}_{+};-X_3 \leq X_1+X_4\leq 0\} $;
\item  $ D_9=\{(X_1, X_2,X_3,X_4)\in \mathbb{R}_{+}\times \mathbb{R}_{-} \times \mathbb{R}_{-} \times \mathbb{R}_{+}; X_1+X_2\leq 0\} $;
\item  $ D_{10}=\{(X_1, X_2,X_3,X_4)\in \mathbb{R}_{-}\times \mathbb{R}_{-} \times \mathbb{R}_{-} \times \mathbb{R}_{+}; X_1+X_4\geq 0\} $;
\item  $ D_{11}=\{(X_1, X_2,X_3,X_4)\in \mathbb{R}_{+}\times \mathbb{R}_{-} \times \mathbb{R}_{+} \times \mathbb{R}_{+}; X_1+X_2\leq 0\} $;
\item  $ D_{12}=\{(X_1, X_2,X_3,X_4)\in \mathbb{R}_{-}\times \mathbb{R}_{-} \times \mathbb{R}_{+} \times \mathbb{R}_{+}; X_1+X_4\geq 0\} $;
\item  $ D_{13}=\{(X_1, X_2,X_3,X_4)\in \mathbb{R}_{+}\times \mathbb{R}_{-} \times \mathbb{R}_{-} \times \mathbb{R}_{+}; -X_3\geq X_1+X_2\geq 0\} $;
\item  $ D_{14}=\{(X_1, X_2,X_3,X_4)\in \mathbb{R}_{+}\times \mathbb{R}_{+} \times \mathbb{R}_{-} \times \mathbb{R}_{+}; -X_1\leq X_2+X_3\leq 0\} $;
\item  $ D_{15}=\{(X_1, X_2,X_3,X_4)\in \mathbb{R}_{+}\times \mathbb{R}_{-} \times \mathbb{R}_{-} \times \mathbb{R}_{-}; -X_3\geq X_1+X_2\geq 0\} $;
\item  $ D_{16}=\{(X_1, X_2,X_3,X_4)\in \mathbb{R}_{+}\times \mathbb{R}_{+} \times \mathbb{R}_{-} \times \mathbb{R}_{-}; -X_1\leq  X_2+X_3\leq 0\} $;
\item  $ D_{17}=\{(X_1, X_2,X_3,X_4)\in \mathbb{R}_{+}\times \mathbb{R}_{-} \times \mathbb{R}_{+} \times \mathbb{R}_{-}; X_1+X_2\geq 0, X_3+X_4\leq 0\} $;
\item  $ D_{18}=\{(X_1, X_2,X_3,X_4)\in \mathbb{R}_{-}\times \mathbb{R}_{+} \times \mathbb{R}_{-} \times \mathbb{R}_{-}; X_2+X_3\leq 0\} $;
\item  $ D_{19}=\{(X_1, X_2,X_3,X_4)\in \mathbb{R}_{-}\times \mathbb{R}_{-} \times \mathbb{R}_{-} \times \mathbb{R}_{+}; X_1+X_4\leq 0\} $;
\item  $ D_{20}=\{(X_1, X_2,X_3,X_4)\in \mathbb{R}_{+}\times \mathbb{R}_{-} \times \mathbb{R}_{+} \times \mathbb{R}_{-}; X_1+X_2\leq 0, X_3+X_4\geq 0\} $;
\item  $ D_{21}=\{(X_1, X_2,X_3,X_4)\in \mathbb{R}_{+}\times \mathbb{R}_{-} \times \mathbb{R}_{+} \times \mathbb{R}_{-}; X_1+X_2\leq 0, X_3+X_4\leq 0\} $;
\item  $ D_{22}=\{(X_1, X_2,X_3,X_4)\in \mathbb{R}_{+}\times \mathbb{R}_{-} \times \mathbb{R}_{-} \times \mathbb{R}_{-}; X_1+X_2\leq 0\} $;
\item  $ D_{23}=\{(X_1, X_2,X_3,X_4)\in \mathbb{R}_{-}\times \mathbb{R}_{-} \times \mathbb{R}_{+} \times \mathbb{R}_{-}; X_3+X_4\leq 0\} $;
\item  $ D_{24}=\{(X_1, X_2,X_3,X_4)\in \mathbb{R}_{-}\times \mathbb{R}_{-} \times \mathbb{R}_{-} \times \mathbb{R}_{-}\} $;
\item  $ D_{25}=\{(X_1, X_2,X_3,X_4)\in \mathbb{R}_{+}\times \mathbb{R}_{-} \times \mathbb{R}_{+} \times \mathbb{R}_{+}; X_1+X_2\geq 0\} $;
\item  $ D_{26}=\{(X_1, X_2,X_3,X_4)\in \mathbb{R}_{-}\times \mathbb{R}_{+} \times \mathbb{R}_{-} \times \mathbb{R}_{+}; X_1+X_4\geq 0, X_2+X_3\leq 0\} $;
\item  $ D_{27}=\{(X_1, X_2,X_3,X_4)\in \mathbb{R}_{-}\times \mathbb{R}_{+} \times \mathbb{R}_{-} \times \mathbb{R}_{+}; X_1+X_4\leq 0, X_2+X_3\geq 0\} $;
\item  $ D_{28}=\{(X_1, X_2,X_3,X_4)\in \mathbb{R}_{+}\times \mathbb{R}_{+} \times \mathbb{R}_{+} \times \mathbb{R}_{-}; X_3+X_4\geq 0\} $;
\item  $ D_{29}=\{(X_1, X_2,X_3,X_4)\in \mathbb{R}_{+}\times \mathbb{R}_{+} \times \mathbb{R}_{+} \times \mathbb{R}_{+}\} $;
\item  $ D_{30}=\{(X_1, X_2,X_3,X_4)\in \mathbb{R}_{+}\times \mathbb{R}_{+} \times \mathbb{R}_{-} \times \mathbb{R}_{+}; X_2+X_3\geq 0\} $;
\item  $ D_{31}=\{(X_1, X_2,X_3,X_4)\in \mathbb{R}_{-}\times \mathbb{R}_{+} \times \mathbb{R}_{+} \times \mathbb{R}_{+}; X_1+X_4\geq 0\} $;
\item  $ D_{32}=\{(X_1, X_2,X_3,X_4)\in \mathbb{R}_{-}\times \mathbb{R}_{+} \times \mathbb{R}_{-} \times \mathbb{R}_{+}; X_1+X_4\geq 0, X_2+X_3\geq 0\} $;
\item  $ D_{33}=\{(X_1, X_2,X_3,X_4)\in \mathbb{R}_{+}\times \mathbb{R}_{-} \times \mathbb{R}_{+} \times \mathbb{R}_{-}; X_1+X_2\geq 0, X_3+X_4\geq 0\} $;
\item  $ D_{34}=\{(X_1, X_2,X_3,X_4)\in \mathbb{R}_{-}\times \mathbb{R}_{+} \times \mathbb{R}_{-} \times \mathbb{R}_{+}; X_1+X_4\leq 0, X_2+X_3\leq 0\} $;
\item  $ D_{35}=\{(X_1, X_2,X_3,X_4)\in \mathbb{R}_{-}\times \mathbb{R}_{+} \times \mathbb{R}_{+} \times \mathbb{R}_{+}; X_1+X_3+X_4\leq 0\} $;
\item  $ D_{36}=\{(X_1, X_2,X_3,X_4)\in \mathbb{R}_{-}\times \mathbb{R}_{+} \times \mathbb{R}_{+} \times \mathbb{R}_{-}; X_1+X_3+X_4\geq 0\} $;
\item  $ D_{37}=\{(X_1, X_2,X_3,X_4)\in \mathbb{R}_{-}\times \mathbb{R}_{-} \times \mathbb{R}_{+} \times \mathbb{R}_{+}; X_1+X_3+X_4\leq 0\} $;
\item  $ D_{38}=\{(X_1, X_2,X_3,X_4)\in \mathbb{R}_{-}\times \mathbb{R}_{-} \times \mathbb{R}_{+} \times \mathbb{R}_{-}; X_1+X_3+X_4\geq 0\} $;
\item  $ D_{39}=\{(X_1, X_2,X_3,X_4)\in \mathbb{R}_{+}\times \mathbb{R}_{-} \times \mathbb{R}_{-} \times \mathbb{R}_{+}; X_1+X_2+X_3\geq 0\} $;
\item  $ D_{40}=\{(X_1, X_2,X_3,X_4)\in \mathbb{R}_{+}\times \mathbb{R}_{+} \times \mathbb{R}_{-} \times \mathbb{R}_{+}; X_1+X_2+X_3\leq 0\} $;
\item  $ D_{41}=\{(X_1, X_2,X_3,X_4)\in \mathbb{R}_{+}\times \mathbb{R}_{-} \times \mathbb{R}_{-} \times \mathbb{R}_{-}; X_1+X_2+X_3\geq 0\} $;
\item  $D_{42}=\{(X_1, X_2,X_3,X_4)\in \mathbb{R}_{+}\times \mathbb{R}_{+} \times \mathbb{R}_{-} \times \mathbb{R}_{-}; X_1+X_2+X_3\leq 0\} $.
\end{enumerate}

In particular, each subcone $D_i \subset \mathbb{R}^4 $ has dimension 4, since its explicit description via inequalities shows that it has nonempty interior in $\mathbb{R}^4$.  Equivalently, one can check  directly by hand that the corresponding $4 \times 4$ matrix, such as in the 9 examples above, has rank 4.  

Since, by Definition \ref{def:sectors-in-Rn}, the  subcones $D_i \subset \mathbb{R}^4 $ are generated by 4 elements, we gather that each $D_i$ is a full subsector of $\mathbb{R}^4$.  

One checks directly by hand  that the 16 orthants $\mathbb{R}_\pm \times \mathbb{R}_\pm \times \mathbb{R}_\pm \times \mathbb{R}_\pm \subset \mathbb{R}^4$ decompose as:
\begin{enumerate}[label=(\arabic*)]
\item  $\mathbb{R}_{-}\times \mathbb{R}_{-} \times \mathbb{R}_{-} \times \mathbb{R}_{-} = D_{24} $;
\item  $ \mathbb{R}_{+}\times \mathbb{R}_{+} \times \mathbb{R}_{+} \times \mathbb{R}_{+} = D_{29} $;
\item  $ \mathbb{R}_{+}\times \mathbb{R}_{+} \times \mathbb{R}_{+} \times \mathbb{R}_{-} = D_{2}\cup D_{28} $;
\item  $\mathbb{R}_{-}\times \mathbb{R}_{+} \times \mathbb{R}_{-} \times \mathbb{R}_{-} = D_{3}\cup D_{18} $;
\item  $ \mathbb{R}_{-}\times \mathbb{R}_{+} \times \mathbb{R}_{+} \times \mathbb{R}_{+} = D_{6}\cup D_{31}\cup D_{35} $;
\item  $ \mathbb{R}_{-}\times \mathbb{R}_{-} \times \mathbb{R}_{+} \times \mathbb{R}_{-} = D_{7}\cup D_{23}\cup D_{38} $;
\item  $ \mathbb{R}_{-}\times \mathbb{R}_{-} \times \mathbb{R}_{-} \times \mathbb{R}_{+} = D_{10}\cup D_{19} $;
\item  $ \mathbb{R}_{+}\times \mathbb{R}_{-} \times \mathbb{R}_{+} \times \mathbb{R}_{+} = D_{11}\cup D_{25} $;
\item  $ \mathbb{R}_{+}\times \mathbb{R}_{+} \times \mathbb{R}_{-} \times \mathbb{R}_{+} = D_{14}\cup D_{30}\cup D_{40} $;
\item  $ \mathbb{R}_{+}\times \mathbb{R}_{-} \times \mathbb{R}_{-} \times \mathbb{R}_{-} = D_{15}\cup D_{22}\cup D_{41} $;
\item  $ \mathbb{R}_{-}\times \mathbb{R}_{+} \times \mathbb{R}_{+} \times \mathbb{R}_{-} = D_{1}\cup D_{5}\cup D_{36} $;
\item  $ \mathbb{R}_{+}\times \mathbb{R}_{+} \times \mathbb{R}_{-} \times \mathbb{R}_{-} = D_{4}\cup D_{16}\cup D_{42} $;
\item  $ \mathbb{R}_{-}\times \mathbb{R}_{-} \times \mathbb{R}_{+} \times \mathbb{R}_{+} = D_{8}\cup D_{12}\cup D_{37} $;
\item  $ \mathbb{R}_{+}\times \mathbb{R}_{-} \times \mathbb{R}_{-} \times \mathbb{R}_{+} = D_{9}\cup D_{13}\cup D_{39} $;
\item  $ \mathbb{R}_{+}\times \mathbb{R}_{-} \times \mathbb{R}_{+} \times \mathbb{R}_{-} = D_{17}\cup D_{20}\cup D_{21}\cup D_{33} $;
\item  $\mathbb{R}_{-}\times \mathbb{R}_{+} \times \mathbb{R}_{-} \times \mathbb{R}_{+} = D_{26}\cup D_{27}\cup D_{32} \cup D_{34} $.
\end{enumerate}

It follows that $\mathbb{R}^4 = \cup_{i=1}^{42} D_i$.  It remains to show that $D_i \cap D_\ell$ has empty interior for all pairwise-distinct $i, \ell$.  Since this is true if $D_i$ and $D_\ell$ lie in different orthants, we only need to check those pairs $D_i$, $D_\ell$ lying in the same orthant.  

This is  done directly by hand; however, the cases fall into only four types.  First, for orthants 1,2:  There is nothing to check.  Second, for orthants 3, 4, 7, 8:  In 3, say, the inequalities $X_3 + X_4 \leq 0$ and $X_3 + X_4 \geq 0$ have codimension 1 intersection even when defined on all of $\mathbb{R}^4$, so they do as well when restricted to the orthant.  The other cases go the same.  Third, for orthants 15, 16:  This is similar to the second type.  Lastly, for orthants 5, 6, 9, 10, 11, 12, 13, 14:  In 5, say, the sectors $D_{6}$ and $D_{31}$ similarly have codimension 1 intersection, as do the sectors $D_6$ and $D_{35}$.  This is also true for $D_{31} ( X_1+X_4\geq 0 )$ and $D_{35} ( X_1+X_3+X_4\leq 0 )$ so long as $D_{31} \cap D_{35}$ implies $X_3=0$, which it does, since it implies $X_3 \leq -X_1 - X_4 \leq 0$ whereas we are restricted to the orthant $\mathbb{R}_{-}\times \mathbb{R}_{+} \times \mathbb{R}_{+} \times \mathbb{R}_{+}$.  The other cases go the same.  
\end{proof}

We now analyze the walls (Definition \ref{def:sector-decomp}) in the sector decomposition $\{ D_i \}_{i=1,2,\dots,42}$ of $\mathbb{R}^4$.  Recall (Definition \ref{def:sectors-in-Rn}) that $\mathcal{Q}_i$ is called the topological type of the sector $D_i$.  

\begin{prop}
\label{prop:walls-for-Dis}
The third item of Theorem {\upshape\ref{theorem:decomp}} holds word-for-word, except with  $C_\mathcal{T}^i$ replaced by $D_i$, and replacing \eqref{eq:theorem-equation} by \eqref{eq:anotherequationaboutR4}.  
	
In particular, $D_i \cap D_\ell$ is a wall if and only if the intersection $J_i \cap J_\ell$ of their corresponding index sets has $3$ elements \textup{(}see the beginning of this sub-subsection, {\upshape Section \ref{sssec:sector-decomposition-of-r4-via-the-ktgs-cone-for-the-square}}\textup{)}. In this case,
\begin{equation*}\tag{l}
\label{eq:anotherequationaboutR4}
D_i \cap D_\ell = \mathrm{span}_{\mathbb{R}_+}
( 
\{ \pi_4(x_j);  j \in J_i \cap J_\ell \} 
)
  \subset \mathbb{R}^4. 
\end{equation*}
\end{prop}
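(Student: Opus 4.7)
The plan is to prove the proposition by combining a general geometric argument with the explicit data provided in the proof of Proposition \ref{prop:subcones-sect-decomp-r4}. The preliminary observation is that each $D_i$ is a \emph{simplicial} full sector in $\mathbb{R}^4$: by the rank computation verifying that each $4 \times 4$ matrix $M_i$ appearing in that proof has rank $4$, the four generating vectors $\{\pi_4(x_j) : j \in J_i\}$ are linearly independent. Consequently, each $D_i$ has exactly four codimension-one faces (facets), namely the subcones $F_i^{(j)} := \mathrm{span}_{\mathbb{R}_+}(\{\pi_4(x_{j'}) : j' \in J_i \setminus \{j\}\})$ for $j \in J_i$, each supported by a unique $3$-dimensional hyperplane $\widetilde{F_i^{(j)}}$ in $\mathbb{R}^4$.

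For the ``if'' direction of the wall characterization, suppose $|J_i \cap J_\ell| = 3$, so that $J_i \cap J_\ell = J_i \setminus \{j_i\} = J_\ell \setminus \{j_\ell\}$ for distinct $j_i, j_\ell$. The right-hand side of \eqref{eq:anotherequationaboutR4} is then the common facet $F_i^{(j_i)} = F_\ell^{(j_\ell)}$, which is $3$-dimensional by linear independence; the $\supseteq$ inclusion is immediate. For $\subseteq$, I would take $x \in D_i \cap D_\ell$ and express $x$ in both the $D_i$-basis and the $D_\ell$-basis with nonnegative coefficients; subtracting the two expressions yields a linear relation among the five vectors $\{\pi_4(x_j) : j \in J_i \cup J_\ell\}$, and using the linear independence of the four vectors in $J_i$, there is (up to scaling) a unique such relation. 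Combined with the sign constraints from both decompositions, this forces the coefficients on $\pi_4(x_{j_i})$ and $\pi_4(x_{j_\ell})$ to vanish, exactly as in the proof of Lemma \ref{lem:third-cone-lemma}.

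For the ``only if'' direction, assume $D_i \cap D_\ell$ is a wall, that is, a $3$-dimensional convex subcone of $\mathbb{R}^4$. By the sector decomposition property the interiors of $D_i$ and $D_\ell$ in $\mathbb{R}^4$ are disjoint, so $D_i \cap D_\ell$ lies in the topological boundary of each $D_i$ and $D_\ell$. A connected $3$-dimensional convex subcone inside the boundary of a simplicial $4$-dimensional cone must lie in a single facet, since two distinct facets of a simplicial cone intersect in codimension $2$. Thus $D_i \cap D_\ell \subseteq F_i^{(j_i)} \cap F_\ell^{(j_\ell)}$ for unique $j_i \in J_i$ and $j_\ell \in J_\ell$, and consequently $\widetilde{F_i^{(j_i)}} = \widetilde{F_\ell^{(j_\ell)}}$ by a dimension count. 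The main obstacle lies here: a priori two different triples of generators could span the same $3$-dim subspace yet produce different simplicial $3$-cones with $3$-dimensional intersection. To overcome this, I would restrict the sector decomposition $\{D_k\}$ to the common hyperplane $\widetilde{F_i^{(j_i)}}$ and exploit that $F_i^{(j_i)}$ and $F_\ell^{(j_\ell)}$ meet in full dimension inside this hyperplane, combined with Remark \ref{rem:14-vectors-distinct} (distinctness of the $14$ vectors $\pi_4(x_j)$), to pin down that the two triples of generators coincide as unordered sets; equivalently, this is a finite case check against the explicit descriptions of the $D_k$'s computed in the proof of Proposition \ref{prop:subcones-sect-decomp-r4}.

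Finally, for the uniqueness of $i^*(i,W)$ and the ``$4$ walls per sector'' statement, uniqueness follows from Observation \ref{lem:stupid-lemma-about-walls}. For existence, each of the four facets $F_i^{(j)}$, $j \in J_i$, of $D_i$ must be shared with some other sector, for otherwise $F_i^{(j)}$ would lie in the boundary of the cover $\cup_k D_k = \mathbb{R}^4$, which is empty. I would verify this directly by scanning the enumerated list of the 42 index sets $\{J_i\}$ to confirm that for each $i$ and each $j \in J_i$, there is exactly one index $\ell \neq i$ with $J_i \setminus \{j\} \subset J_\ell$; this is a finite combinatorial check using the data tabulated just before Definition \ref{def:topological type}.
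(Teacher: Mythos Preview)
Your overall strategy is reasonable and more conceptual than the paper's, which proceeds almost entirely by direct case check against the explicit inequality descriptions of the $D_i$ and the graph $\mathscr{G}$ in Figure~\ref{figure:wallscross}. However, there is a genuine gap in your ``if'' direction, and your ``only if'' direction ultimately reduces to the same finite check the paper performs.

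\textbf{Gap in the ``if'' direction.} Your claim that the sign constraints force $\lambda_{j_i}=\mu_{j_\ell}=0$ does not follow from uniqueness of the relation alone. In the subtracted relation, the coefficient on $\pi_4(x_{j_i})$ is $\lambda_{j_i}\geq 0$ and on $\pi_4(x_{j_\ell})$ is $-\mu_{j_\ell}\leq 0$. If the unique (up to scale) relation among the five vectors happens to have $c_{j_i}$ and $c_{j_\ell}$ of \emph{opposite} signs, then a positive scalar multiple is perfectly consistent with these constraints, and nothing forces vanishing. Your appeal to Lemma~\ref{lem:third-cone-lemma} is misplaced: there, the equality $D_i\cap D_\ell=\mathrm{span}_{\mathbb{R}_+}\{\pi_n(x_j):j\in J_i\cap J_\ell\}$ is a \emph{hypothesis}, not a conclusion. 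The clean fix is to invoke Proposition~\ref{prop:subcones-sect-decomp-r4}: since $D_i\cap D_\ell$ is convex with empty interior it lies in a hyperplane, which must be $\widetilde{F}$ (as it contains the $3$-dimensional facet $F$); then $D_i\cap\widetilde{F}=F$ by simpliciality, giving $D_i\cap D_\ell\subseteq F$. Alternatively, one must check by hand that $c_{j_i}c_{j_\ell}>0$ in each case---which is effectively what the paper does via its two sample computations (same orthant, different orthant).

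\textbf{The ``only if'' direction.} Your reduction to equality of facet hyperplanes is fine, but the obstacle you flag is real: two distinct triples among the $14$ vectors could span the same hyperplane and produce simplicial $3$-cones with $3$-dimensional overlap, and Remark~\ref{rem:14-vectors-distinct} does not rule this out. The paper does not attempt to resolve this conceptually; it simply reads off from the explicit inequality descriptions which pairs $(i,\ell)$ give $3$-dimensional intersection, records these as the edges of $\mathscr{G}$, and then verifies for each such edge that $|J_i\cap J_\ell|=3$. Your fallback to a finite check is therefore the actual content here. Also, your assertion that $D_i\cap D_\ell$ lies in $\partial D_i$ because the interiors are disjoint is not immediate (disjoint interiors only gives $D_i\cap D_\ell\cap\mathrm{int}(D_i)\subseteq\partial D_\ell$); it can be justified, but again most easily via the convex/empty-interior argument above.
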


\begin{proof}
Any wall $D_i \cap D_\ell$ must,  by definition, be $3$ dimensional.   This restricts which indices $\{ i, \ell \}$ can yield walls.  Through a  direct  by hand check, using   the   explicit  description   by inequalities   of the sector decomposition   $\{ D_i \}_i $ as in the proof of Proposition \ref{prop:subcones-sect-decomp-r4},  one verifies that a necessary condition for $D_i \cap D_\ell$ to be a wall is for $D_i$ and $D_\ell$ to be connected by an edge in the graph $\mathcal{G}$ depicted in Figure \ref{figure:wallscross}.   We  show this is also a sufficient condition.  

More precisely,      the goal is to show, for any two sectors $D_i$ and $D_\ell$ connected by an edge in $\mathcal{G}$, that $J_i \cap J_\ell$ has 3 elements and \eqref{eq:anotherequationaboutR4} holds.  In particular, $D_i \cap D_\ell$ is a cone of dimension $3$.  Note the inclusion $\supset$ in \eqref{eq:anotherequationaboutR4} holds  automatically; see  Definition \ref{def:sectors-in-Rn}.  

We checked this  directly   by hand.  There are two types of calculations,  depending on whether   the sectors are in different orthants or the same orthant. 

As an example where the sectors are in different orthants: We demonstrate this for $D_{29}$ and $D_{30}$, which were computed in detail in the proof of Proposition \ref{prop:subcones-sect-decomp-r4}.  There, one sees that three rows of the corresponding $4 \times 4$ matrix $M_{29}$ appear as rows in the matrix $M_{30}$ (recall  also  Remark \ref{rem:remark-about-rows}).  This means that $J_i \cap J_\ell$ has 3 elements; see Remark \ref{rem:14-vectors-distinct}.    Note that the row in $M_{29}$  that is not in $M_{30}$ corresponds to the variable $x$, and the row in $M_{30}$  that is not in $M_{29}$ corresponds to the variable $x^\prime$.  The inclusion $\subset$ in \eqref{eq:anotherequationaboutR4} is true since
\begin{equation*}
(t, y, x, z) = (t^\prime, x^\prime+y^\prime, -x^\prime, z^\prime)	 \in \mathbb{R}^4 \,\, ( x,y,z,t,x^\prime,y^\prime,z^\prime,t^\prime \geq 0 )
\end{equation*}
implies $x=x^\prime=0$.  The other different-orthant cases are similar.  

As an example where the sectors are in the same orthant: We demonstrate this for $D_5$ and $D_1$, which were also computed in detail in the proof of Proposition \ref{prop:subcones-sect-decomp-r4}.  There, one sees that three rows of the corresponding $4 \times 4$ matrix $M_{5}$ appear as rows in the matrix $M_{1}$ (recall  also Remark \ref{rem:remark-about-rows}).  This means that $J_i \cap J_\ell$ has 3 elements; see Remark \ref{rem:14-vectors-distinct}.  The row in $M_{5}$ not in $M_{1}$ corresponds to the variable $x$, and the row in $M_{1}$ not in $M_{5}$ corresponds to the variable $z^\prime$.  The inclusion $\subset$ in \eqref{eq:anotherequationaboutR4} is true since
\begin{equation*}
(-x-y, t, x+z, -z) = (-x^\prime, t^\prime, y^\prime, -y^\prime-z^\prime) \in \mathbb{R}^4 \,\, ( x,y,z,t,x^\prime,y^\prime,z^\prime,t^\prime \geq 0 )
\end{equation*}
implies, by adding the third and fourth  entries, that $x=-z^\prime$ hence $x=z^\prime=0$.  The other same-orthant cases are similar.  

We gather $D_i \cap D_\ell$ is a wall if and only if $D_i$ and $D_\ell$ are connected by an edge of the graph $\mathcal{G}$,   in which case $J_i \cap J_\ell$ has 3 elements.  In particular, since $\mathcal{G}$ is 4-valent, the last paragraph of the third item of Theorem \ref{theorem:decomp} holds (again, with $D_i$ in place of $C_\mathcal{T}^i$).  

To finish justifying the second paragraph of Proposition \ref{prop:walls-for-Dis} (equivalently, the first paragraph of the third item of Theorem \ref{theorem:decomp}, appropriately substituted), we need to show that if $J_i \cap J_\ell$ has 3 elements (equivalently, $\mathcal{Q}_i \cap \mathcal{Q}_\ell$ has 3 elements), then $D_i \cap D_\ell$ is a wall.

So far, we have exhibited, for each $i$, 4 topological types $\mathcal{Q}_\ell$ such that $\mathcal{Q}_i \cap \mathcal{Q}_\ell$ has 3 elements,  all corresponding to walls $D_i \cap D_\ell$.  We thus need to show there are no more indices $\ell$ such that $\mathcal{Q}_i \cap \mathcal{Q}_\ell$ has 3 elements.  For this, it  suffices to establish the second paragraph of the third item of Theorem \ref{theorem:decomp}, which is a purely topological  statement about webs in good position on the triangulated square; compare   Observation \ref{lemma:bigon} and Proposition \ref{lem:42families}.  We checked this directly by hand; compare Example \ref{ex:example-of-web-mutation}.  
\end{proof}

\subsubsection{Finishing}
\label{sssec:proof-of-main-theorem-2}
	  
We are now prepared to prove the theorem.  

\begin{proof}[Proof of Theorem {\upshape \ref{theorem:decomp}}]
	
Let $C \subset \mathbb{R}_+^8 \times \mathbb{R}^4$ be the cone isomorphic to the completed KTGS cone $C_\mathcal{T} \subset \mathbb{R}_+^{12}$ via  the linear isomorphism $\phi_\mathcal{T} \circ \theta_\mathcal{T} : \mathbb{R}^{12} \to \mathbb{R}^{8} \times \mathbb{R}^4$; see Definition \ref{def:isomorphic-cone}.  
	
Recall also Notation \ref{not:relating-to-cone-lemmas} from the discussion at the beginning of Section \ref{sssec:sector-decomposition-of-r4-via-the-ktgs-cone-for-the-square}, which should help  with  comparing the general lemmas of Section \ref{ssec:technical-statements}  to  the current application.  
	
By the explicit calculation of the Hilbert basis elements $\phi_\mathcal{T} \circ \theta_\mathcal{T} \circ \Phi_\mathcal{T}(W_i^H) \in C$ for $i=1,2,\dots,22$ in Section \ref{sssec:second-linear-isomorphism}, together with Proposition \ref{prop:subcones-sect-decomp-r4}, we see that $C \subset \mathbb{R}_+^8 \times \mathbb{R}^4$ satisfies the hypotheses of Lemma \ref{lem:rank-lemma}.   Indeed, $D_i \subset \pi_4(C)$ by definition, for each $i$. Therefore, $C$ is 12 dimensional, so the isomorphic  completed KTGS cone $C_\mathcal{T}$ is  12 dimensional as well.  This establishes the first item of Theorem \ref{theorem:decomp}.
		
Let us prove that $C_\mathcal{T} = \cup_{i=1}^{42} C_\mathcal{T}^i \subset \mathbb{R}_+^{12}$; see Definition \ref{def:topological type}.  This follows by Theorem \ref{thm:main-theorem}, Proposition \ref{lem:42families}, and Lemma  \ref{lem:actual-geometry}.  Indeed, by Theorem \ref{thm:main-theorem}, every point in the KTGS positive integer cone $\mathcal{C}_\mathcal{T} \subset \mathbb{Z}_+^{12}$ is equal to $\Phi_\mathcal{T}(W)$ for some reduced web $W \in \mathcal{W}_\Box$ in the square.  By Proposition \ref{lem:42families}, the web $W$ is an element of one of the 42 web families:  $W \in \mathcal{W}_i \subset \mathcal{W}_\Box$.  By Observation \ref{obs:families-and-their-cones}, we  have that $\Phi_\mathcal{T}(W) \in \mathcal{C}_\mathcal{T}^i$.  We gather that $\mathcal{C}_\mathcal{T} = \cup_{i=1}^{42} \mathcal{C}_\mathcal{T}^i \subset \mathbb{Z}_+^{12}$.   Also, the submonoids $\mathcal{C}_\mathcal{T}^i \subset \mathcal{C}_\mathcal{T} $ are finitely generated by Definition \ref{def:topological type}.  We conclude by Lemma \ref{lem:actual-geometry} that  we have the equality $C_\mathcal{T} = \cup_{i=1}^{42} C_\mathcal{T}^i \subset \mathbb{R}_+^{12}$  of completions, as desired.  
	
We return to the isomorphic cone $C \subset \mathbb{R}_+^8 \times \mathbb{R}^4$.  Let $\{ x_j \}_{j=1,2,\dots,14}$ and $\{ J_i \}_{i=1,2,\dots,42}$ be defined as in the beginning of Section \ref{sssec:sector-decomposition-of-r4-via-the-ktgs-cone-for-the-square}.  For $i=1,2,\dots,42$, let the subcones $C_i \subset C$ be defined as in the  statement  of Lemma \ref{lem:second-cone-lemma}.  Equivalently, the subcone $C_i = \phi_\mathcal{T} \circ \theta_\mathcal{T}(C_\mathcal{T}^i) \subset \mathbb{R}_+^8 \times \mathbb{R}^4$ is the isomorphic counterpart to the subcone $C_\mathcal{T}^i \subset C_\mathcal{T} \subset \mathbb{R}_+^{12}$.  It follows by the previous paragraph that $C = \cup_{i=1}^{42} C_i \subset \mathbb{R}_+^8 \times \mathbb{R}^4$ in the isomorphic cone $C$.  By this, together with another application of Proposition \ref{prop:subcones-sect-decomp-r4}, we see that the hypotheses of Lemma \ref{lem:second-cone-lemma} are satisfied.  Therefore, by Lemma \ref{lem:second-cone-lemma}, we obtain the second item of Theorem \ref{theorem:decomp}, except with $C_\mathcal{T}$ and $C_\mathcal{T}^i$ replaced by $C$ and $C_i$, respectively.  Since this property is preserved by linear isomorphisms, we conclude the second item of Theorem \ref{theorem:decomp} as stated.  
	
Lastly, by Proposition \ref{prop:walls-for-Dis}, in particular \eqref{eq:anotherequationaboutR4}, the hypothesis of Lemma \ref{lem:third-cone-lemma} is satisfied.  Therefore,  by Lemma \ref{lem:third-cone-lemma}, the set $\{ \text{walls of } \{ C_i \}_i \text { in } C \}$ is in one-to-one correspondence with the set $\{ \text{walls of } \{ D_i \}_i \text { in } \mathbb{R}^4 \}$  in the obvious way by the projection $\pi_4$.  Moreover, a given wall $C_i \cap C_\ell$ can be computed by \eqref{eq:technical-equation-intersection} in Lemma \ref{lem:third-cone-lemma}.  We conclude by the  remainder  of Proposition \ref{prop:walls-for-Dis} that the first and third paragraphs of the third item of Theorem \ref{theorem:decomp} are valid, except with $C_\mathcal{T}$, $C_\mathcal{T}^i$, and \eqref{eq:theorem-equation} replaced by $C$, $C_i$, and \eqref{eq:technical-equation-intersection}, respectively.  Since the inverse of the linear isomorphism $\phi_\mathcal{T} \circ \theta_\mathcal{T}$ preserves these properties, and maps \eqref{eq:technical-equation-intersection} to \eqref{eq:theorem-equation} (see the beginning of Section \ref{sssec:sector-decomposition-of-r4-via-the-ktgs-cone-for-the-square}),  we conclude the first and third paragraphs of the second item of Theorem \ref{theorem:decomp} as stated.  The second paragraph is a purely topological statement about  webs in the square, and was already established during the proof of Proposition \ref{prop:walls-for-Dis}. 
\end{proof}

The following consequence is immediate from the proof of Theorem \ref{theorem:decomp}.

\begin{cor}
\label{cor:canonical-map-surjection}
The function 
\begin{equation*}
\pi_4 \circ \phi_\mathcal{T} \circ \theta_\mathcal{T} : C_\mathcal{T} \twoheadrightarrow \mathbb{R}^4
\end{equation*}
is a surjection from  the completed KTGS cone $C_\mathcal{T} \subset \mathbb{R}_+^{12}$ \textup{(}in fact, from a `$4$ dimensional'  proper subset of $C_\mathcal{T}$\textup{)} onto $\mathbb{R}^4$.  \qed
\end{cor}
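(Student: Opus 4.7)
The plan is to unpack what was already established in the proof of Theorem \ref{theorem:decomp}. First, by Definition \ref{def:isomorphic-cone}, the image of $C_\mathscr{T}$ under the linear isomorphism $\phi_\mathscr{T} \circ \theta_\mathscr{T} : \mathbb{R}^{12} \to \mathbb{R}^8 \times \mathbb{R}^4$ is exactly the cone $C \subset \mathbb{R}_+^8 \times \mathbb{R}^4$. Second, the proof of Theorem \ref{theorem:decomp} showed that $C = \bigcup_{i=1}^{42} C_i$, where $C_i = \phi_\mathscr{T} \circ \theta_\mathscr{T}(C_\mathscr{T}^i)$ is the isomorphic image of the $i$-th completed KTGS subcone. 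Combining these two facts yields
\begin{equation*}
\pi_4 \circ \phi_\mathscr{T} \circ \theta_\mathscr{T}(C_\mathscr{T}) = \pi_4(C) = \pi_4\left(\bigcup_{i=1}^{42} C_i\right) = \bigcup_{i=1}^{42} \pi_4(C_i).
\end{equation*}
By Lemma \ref{lem:second-cone-lemma} applied in the setting of Theorem \ref{theorem:decomp}, each $\pi_4(C_i)$ equals the sector $D_i \subset \mathbb{R}^4$ from Definition \ref{def:sectors-in-Rn}. By Proposition \ref{prop:subcones-sect-decomp-r4}, these sectors form a sector decomposition of $\mathbb{R}^4$, so $\bigcup_{i=1}^{42} D_i = \mathbb{R}^4$. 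This establishes the surjectivity.

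For the stronger parenthetical claim, recall that each $C_i$ was defined as $\mathrm{span}_{\mathbb{R}_+}(\{e_1,\ldots,e_8\} \cup \{x_j : j \in J_i\})$, and that the eight generators $e_1,\ldots,e_8 \in \mathbb{R}_+^8 \times \mathbb{R}^4$ lie in the kernel of $\pi_4$. Consequently, the restricted cone $C_i^\flat := \mathrm{span}_{\mathbb{R}_+}(\{x_j : j \in J_i\}) \subset C_i$ still satisfies $\pi_4(C_i^\flat) = D_i$ (since the $e_r$ contribute nothing under $\pi_4$). Taking the preimage under the isomorphism $\phi_\mathscr{T} \circ \theta_\mathscr{T}$ of the subset $\bigcup_{i=1}^{42} C_i^\flat \subset C$ gives the desired ``$4$-dimensional'' proper subset of $C_\mathscr{T}$ that already surjects onto $\mathbb{R}^4$; each piece $C_i^\flat$ is the $\mathbb{R}_+$-span of only $4$ vectors, and the union is proper in $C_\mathscr{T}$ because it omits all contributions from the corner arc generators.

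No obstacles are anticipated: every ingredient (the isomorphism $\phi_\mathscr{T} \circ \theta_\mathscr{T}$, the equality $C = \bigcup_i C_i$, the computation $\pi_4(C_i) = D_i$, and the covering $\bigcup_i D_i = \mathbb{R}^4$) has already been assembled during the proof of Theorem \ref{theorem:decomp}, and the corollary amounts to reading off these facts together.
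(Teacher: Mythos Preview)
Your proposal is correct and takes essentially the same approach as the paper, which simply marks the corollary as immediate from the proof of Theorem~\ref{theorem:decomp} with a \qed. Your unpacking of the ingredients---$\phi_\mathscr{T}\circ\theta_\mathscr{T}(C_\mathscr{T})=C=\bigcup_i C_i$, $\pi_4(C_i)=D_i$, and $\bigcup_i D_i=\mathbb{R}^4$---is exactly what that sentence is pointing to, and your $C_i^\flat$ for the parenthetical claim is the same subcone the paper later denotes $C_i'$ in the proof of Corollary~\ref{cor:restricted-function}.
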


Recall the notion of a cornerless web $W=W_c$ in the square; see Definition \ref{def:cornerless-webs-in-the-square}.  Let $\mathcal{W}_\Box^c \subset \mathcal{W}_\Box$ denote the set of cornerless webs up to equivalence.  Note for each $i=1,2,\dots,42$ that $\mathcal{Q}_i \subset \mathcal{W}_i \cap \mathcal{W}_\Box^c$.  

Consider also the function $\pi_4 \circ \phi_\mathcal{T} \circ \theta_\mathcal{T} \circ \Phi_\mathcal{T} : \mathcal{W}_\Box \to \mathbb{Z}^4 \subset \mathbb{R}^4$ defined on $\mathcal{W}_\Box$.  See for example the nine computations at the beginning of the proof of Proposition \ref{prop:subcones-sect-decomp-r4}.  

Another consequence of the proof of Theorem \ref{theorem:decomp} is:  

\begin{cor}
\label{cor:restricted-function}
The restricted function
\begin{equation*}
\pi_4 \circ \phi_\mathcal{T} \circ \theta_\mathcal{T} \circ \Phi_\mathcal{T} : \mathcal{W}_\Box^c \twoheadrightarrow \mathbb{Z}^4
\end{equation*}
restricted to the cornerless webs $\mathcal{W}_\Box^c \subset \mathcal{W}_\Box$ is a surjection onto the integer lattice $\mathbb{Z}^4 \subset \mathbb{R}^4$.  
	
In particular, the function $\pi_4 \circ \phi_\mathcal{T} \circ \theta_\mathcal{T}$ from Corollary {\upshape\ref{cor:canonical-map-surjection}} maps \textup{(}a `$4$ dimensional'  proper subset of\textup{)} the KTGS cone $\mathcal{C}_\mathcal{T} \subset \mathbb{Z}_+^{12}$ surjectively onto $\mathbb{Z}^4$.  
\end{cor}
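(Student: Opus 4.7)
The plan is to exploit the sector decomposition of $\mathbb{R}^4$ from Proposition \ref{prop:subcones-sect-decomp-r4} together with the explicit parametrization of each web family $\mathscr{W}_i$ by $\mathbb{Z}_+^4$ via its schematic (\S \ref{ssec:42-reduced-web-families-in-the-square}, Figure \ref{figure:9cases}). Let $y \in \mathbb{Z}^4$ be given. By Proposition \ref{prop:subcones-sect-decomp-r4}, there is some $i \in \{1, 2, \dots, 42\}$ such that $y \in D_i \subset \mathbb{R}^4$. Write the four generators of $D_i$ as the rows of a $(4 \times 4)$ matrix $M_i$ whose rows are $\{\pi_4(x_j) : j \in J_i\}$, as in the nine examples computed in the proof of Proposition \ref{prop:subcones-sect-decomp-r4}.

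The first main step is to observe that each $M_i$ has determinant $\pm 1$; one checks this directly for the nine matrices $M_{i_j}$ $(j=1,\dots,9)$ listed in that proof, and the remaining 33 cases are obtained from these by the rotational, reflectional, and orientation-reversing symmetries recorded in Remark \ref{rem:symmetry-groupings}, which preserve the absolute value of the determinant. Given this unimodularity, the rows of $M_i$ form a $\mathbb{Z}$-basis of $\mathbb{Z}^4$, and since $D_i$ is the $\mathbb{R}_+$-cone spanned by these same vectors, the elementary fact about unimodular simplicial cones gives
\begin{equation*}
D_i \cap \mathbb{Z}^4 = \mathbb{Z}_+ \pi_4(x_{j_1^{(i)}}) + \mathbb{Z}_+ \pi_4(x_{j_2^{(i)}}) + \mathbb{Z}_+ \pi_4(x_{j_3^{(i)}}) + \mathbb{Z}_+ \pi_4(x_{j_4^{(i)}}).
\end{equation*}
Therefore $y = \sum_{r=1}^4 n_r \, \pi_4(x_{j_r^{(i)}})$ for some $n_r \in \mathbb{Z}_+$.

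The second main step is to realize these nonnegative integer coefficients by an actual cornerless web $W \in \mathscr{W}_\Box^c \cap \mathscr{W}_i$. By Proposition \ref{lem:42families} and its proof, every cornerless schematic in family $\mathscr{W}_i$ admits four nonnegative integer parameters $(x, y, z, t) \in \mathbb{Z}_+^4$; assigning $(x, y, z, t) = (n_1, n_2, n_3, n_4)$ determines a cornerless reduced web $W \in \mathscr{W}_i$ whose tropical coordinate image decomposes, by construction of the schematic and the definition of $\Phi_\mathscr{T}$ in \S \ref{sssec:definitionofthetropicalwebcoordinates}, as $\Phi_\mathscr{T}(W) = \sum_{r=1}^4 n_r \, \Phi_\mathscr{T}(W_{k+j_r^{(i)}}^H) \in \mathscr{C}_\mathscr{T}^i$, where the Hilbert basis webs $W_{k+j_r^{(i)}}^H$ are precisely the elements of $\mathscr{Q}_i$. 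Applying the linear map $\pi_4 \circ \phi_\mathscr{T} \circ \theta_\mathscr{T}$ to both sides yields $\pi_4 \circ \phi_\mathscr{T} \circ \theta_\mathscr{T} \circ \Phi_\mathscr{T}(W) = \sum_{r=1}^4 n_r \, \pi_4(x_{j_r^{(i)}}) = y$, establishing the desired surjectivity. The final statement of the corollary then follows immediately, since $\Phi_\mathscr{T}(\mathscr{W}_\Box^c) \subset \mathscr{C}_\mathscr{T}$, and this image is ``4-dimensional'' in the sense that each family contributes a $\mathbb{Z}_+^4$ parameter space.

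The main obstacle will be the bookkeeping for the unimodularity verification: while each of the nine symmetry-class representatives in the proof of Proposition \ref{prop:subcones-sect-decomp-r4} is easily seen to have determinant $\pm 1$, one must confirm that the ambient symmetries preserve this property, or alternatively perform the direct check for all 42 matrices. A subtler point is ensuring that the schematic parametrization in step two is genuinely bijective onto a subset of $\mathscr{W}_i$ and is compatible with $\Phi_\mathscr{T}$ as a $\mathbb{Z}_+$-linear combination of the four Hilbert basis images, which follows from the additivity formula in \S \ref{sssec:definitionofthetropicalwebcoordinates} applied locally in each triangle of $\mathscr{T}$, together with the fact that the four extremal cornerless webs in $\mathscr{Q}_i$ are obtained by setting three of the four schematic parameters to zero.
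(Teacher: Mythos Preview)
Your proposal is correct and follows essentially the same approach as the paper. The only cosmetic difference is that you phrase the key integrality step as unimodularity of the $4\times 4$ matrices $M_i$ (checked on the nine symmetry-class representatives), whereas the paper phrases the equivalent fact as a direct back-substitution showing that $\pi_4(c)\in\mathbb{Z}^4$ forces the parametrizing coefficients $x,y,z,t\in\mathbb{Z}$, organizing the 42 cases into five computational types and demonstrating one; both amount to verifying that $M_i^{-1}$ has integer entries.
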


\begin{proof}
We know that $\mathbb{R}^4=\cup_{i=1}^{42} D_i$ and $D_i = \pi_4 \circ \phi_\mathcal{T} \circ \theta_\mathcal{T}(C_\mathcal{T}^i)$ where $C_\mathcal{T}^i \supset \mathcal{C}_\mathcal{T}^i$ (Definition \ref{def:topological type}).  We also know that the cone points $\Phi_\mathcal{T}(W^H_j)$ in $\mathcal{C}_\mathcal{T}^i$ for $j=1,2,\dots,8$, corresponding to the 8 corner arcs in the square, are sent by $\phi_\mathcal{T} \circ \theta_\mathcal{T}$ to the first 8 standard basis elements $e_j$ of $\mathbb{R}_+^8 \times \mathbb{R}^4$.  We gather $\pi_4 \circ \phi_\mathcal{T} \circ \theta_\mathcal{T}$ is still a surjection onto $D_i$ when restricted to the subset
\begin{equation*}
C_\mathcal{T}^{\prime i} := \mathrm{span}_{\mathbb{R}_+}
(  \{  \Phi_\mathcal{T}(W^H);  W^H \in \mathcal{Q}_i 
\}  )
  \subset C_\mathcal{T}^i
  \subset  \mathbb{R}_+^{12}.
\end{equation*}
Note also that (similar to Observation \ref{obs:families-and-their-cones})
\begin{equation*}
\Phi_\mathcal{T}(\mathcal{W}_i \cap \mathcal{W}_\Box^c) 
=
\mathrm{span}_{\mathbb{Z}_+}
(  \{  \Phi_\mathcal{T}(W^H);  W^H \in \mathcal{Q}_i 
\}  )
\subset C_\mathcal{T}^{\prime i}.
\end{equation*}  
It thus suffices to show: for any $c \in C_\mathcal{T}^{\prime i}$, if $\pi_4 \circ \phi_\mathcal{T} \circ \theta_\mathcal{T}(c) \in \mathbb{Z}^4 \cap D_i$, then $c \in \Phi_\mathcal{T}(\mathcal{W}_i \cap \mathcal{W}_\Box^c)$. 

Once again, we work in the isomorphic cone $C_i=\phi_\mathcal{T} \circ \theta_\mathcal{T}(C_\mathcal{T}^i) \subset \mathbb{R}_+^8 \times \mathbb{R}^4$, which projects to $D_i$ by $\pi_4$.  Put (see the beginning of Section \ref{sssec:sector-decomposition-of-r4-via-the-ktgs-cone-for-the-square})
\begin{equation*}
C_i^\prime := \phi_\mathcal{T} \circ \theta_\mathcal{T}(C_\mathcal{T}^{\prime i})=\mathrm{span}_{\mathbb{R}_+}( \{ x_j;  j \in J_i \} )
  \subset  C_i   \subset  \mathbb{R}_+^8 \times \mathbb{R}^4
\end{equation*}
and note also that
\begin{equation*}
\phi_\mathcal{T} \circ \theta_\mathcal{T} \circ \Phi_\mathcal{T}(\mathcal{W}_i \cap \mathcal{W}_\Box^c) =\mathrm{span}_{\mathbb{Z}_+}( \{ x_j;  j \in J_i \} )
  \subset  C_i^\prime.
\end{equation*}
The above property is then equivalent to showing:  for any $c \in C_i^{\prime}$ such that $\pi_4(c) \in  \mathbb{Z}^4 \cap D_i$, we have $c \in \phi_\mathcal{T} \circ \theta_\mathcal{T} \circ \Phi_\mathcal{T}(\mathcal{W}_i \cap \mathcal{W}_\Box^c)$; that is, if such a $c \in C_i^{\prime}$ is written $c=x x_{1^{(i)}} + y x_{2^{(i)}} + z x_{3^{(i)}} + t x_{4^{(i)}}$ for $x,y,z,t \geq 0$ (see the beginning of Section \ref{sssec:sector-decomposition-of-r4-via-the-ktgs-cone-for-the-square}),  we want to show $x, y, z, t \in \mathbb{Z}$.  

This is accomplished through a direct by hand check, taking advantage of the explicit description of the sectors $D_i$ provided in Section \ref{sssec:sector-decomposition-of-r4-via-the-ktgs-cone-for-the-square}.  As before, although there are 42 cases, these fall into only five types, each represented among the 9 examples demonstrated in the proof of Proposition \ref{prop:subcones-sect-decomp-r4}:  Type 1 corresponds to $i_1$; Type 2 corresponds to $i_2, i_7, i_8$; Type 3 corresponds to $i_3$; Type 4 corresponds to $i_4, i_9$; and Type 5 corresponds to $i_5, i_6$.  We will only demonstrate  the most nontrivial case, Type 5 (for $i_5$, say); the other cases are similar.  

So consider $i_5=5$, and assume $\pi_4(c) =x \pi_4(x_{1^{(5)}}) + y \pi_4(x_{2^{(5)}}) + z \pi_4(x_{3^{(5)}}) + t \pi_4(x_{4^{(5)}}) \in D_i$ is, in addition, in $\mathbb{Z}^4$ for some $x,y,z,t \geq 0$.  Note the vector $\pi_4(x_{j^{(5)}})$ is the $j$-th row of the matrix displayed in the $i_5$ example in the proof of Proposition \ref{prop:subcones-sect-decomp-r4}.  From this example, we gather that $\pi_4(c)=(-x-y, t, x+z, -z) \in \mathbb{Z}^4$.  So $t, z \in \mathbb{Z}$; implying by $x+z \in \mathbb{Z}$ that $x \in \mathbb{Z}$; implying by $-x-y \in \mathbb{Z}$ that $y \in \mathbb{Z}$, as desired.  
\end{proof}

\begin{remark}
We expect that the restricted function from Corollary \ref{cor:restricted-function} is also an injection.   We suspect that there may be a proof of this result via a conjectural generalization of \eqref{eq:theorem-equation} to higher codimension intersections.
\end{remark}

\begin{conceptremark}
\label{rem:last-conceptual-remark}
Recall Remark \ref{rem:sec6conceptremark}.  

We view the real cone $C_\mathcal{T} \subset \mathbb{R}_+^{12}$ as the isomorphic $\mathcal{T}$-chart $C_\mathcal{T}  \cong -\mathcal{A}_{\mathrm{SL}_3, \Box}^+(\mathbb{R}^t)_\mathcal{T}$.  

On the other hand, via the isomorphism $\theta_\mathcal{T} : \mathbb{R}^{12} \to V_\mathcal{T} \subset \mathbb{R}^{18}$ we may view the real cone $\theta_\mathcal{T}(C_\mathcal{T}) \subset V_\mathcal{T}$ as the isomorphic $\mathcal{T}$-chart $\theta_\mathcal{T}(C_\mathcal{T}) \cong -\mathcal{A}_{\mathrm{PGL}_3, \Box}^+(\mathbb{R}^t)_\mathcal{T}$.  

Recall, in addition to the 12 dimensional $\mathcal{A}$-moduli spaces $\mathcal{A}_{\mathrm{SL}_3, \Box}$ and $\mathcal{A}_{\mathrm{PGL}_3, \Box}$, Fock-Goncharov \cite{FockIHES06} and Goncharov-Shen \cite{GoncharovInvent15, GoncharovArxiv19} defined the $\mathcal{X}$- and $\mathcal{P}$-moduli spaces $\mathcal{X}_{\mathrm{PGL}_3, \Box}$ and $\mathcal{P}_{\mathrm{PGL}_3, \Box}$, which are 4- and 12-dimensional, respectively.  In addition, there are canonical  maps $p : \mathcal{A}_{\mathrm{SL}_3, \Box} \to \mathcal{X}_{\mathrm{PGL}_3, \Box}$ and $\overline{p}:   \mathcal{A}_{\mathrm{SL}_3, \Box}  \to \mathcal{P}_{\mathrm{PGL}_3, \Box}$ (the notation $\overline{p}$ may be nonstandard).  The tropical points $\mathcal{X}_{\mathrm{PGL}_3, \Box}(\mathbb{R}^t)$ and $\mathcal{P}_{\mathrm{PGL}_3, \Box}(\mathbb{R}^t)$ of these spaces are also defined, inducing tropicalizations $p^t : \mathcal{A}_{\mathrm{SL}_3, \Box}(\mathbb{R}^t) \to \mathcal{X}_{\mathrm{PGL}_3, \Box}(\mathbb{R}^t)$ and $\overline{p}^t:   \mathcal{A}_{\mathrm{SL}_3, \Box}(\mathbb{R}^t)  \to \mathcal{P}_{\mathrm{PGL}_3, \Box}(\mathbb{R}^t)$ of the canonical maps.  

In terms of $\mathcal{T}$-charts, we view $\mathcal{X}_{\mathrm{PGL}_3, \Box}(\mathbb{R}^t)_\mathcal{T} \cong \mathbb{R}^4$ and      $\mathcal{P}_{\mathrm{PGL}_3, \Box}(\mathbb{R}^t)_\mathcal{T} \cong \mathbb{R}^8 \times \mathbb{R}^4$.  We think of the projection $\pi_4 \circ (\phi_\mathcal{T} \circ \theta_\mathcal{T}) : \mathbb{R}^{12} \to \mathbb{R}^8 \times \mathbb{R}^4 \to \mathbb{R}^4$ as the canonical map $p^t$ written in coordinates.  We think of the isomorphism $\phi_\mathcal{T} \circ \theta_\mathcal{T} : \mathbb{R}^{12} \to \mathbb{R}^8 \times \mathbb{R}^4$ as a coordinate version of the canonical map $\overline{p}^t$.  

We interpret Corollary \ref{cor:canonical-map-surjection} as saying that, when expressed in coordinates, the canonical map $p^t ( \approx \pi_4 \circ \phi_\mathcal{T} \circ \theta_\mathcal{T} )$ also projects the subset $-\mathcal{A}_{\mathrm{SL}_3, \Box}^+(\mathbb{R}^t)_\mathcal{T}   \subset \mathcal{A}_{\mathrm{SL}_3, \Box}(\mathbb{R}^t)_\mathcal{T}    (\approx C_\mathcal{T}    \subset \mathbb{R}^{12}      )  $  of positive points onto   $\mathcal{X}_{\mathrm{PGL}_3, \Box}(\mathbb{R}^t)_\mathcal{T}    (  \approx \mathbb{R}^4   )$.  

In addition, since the positive integer cone $\mathcal{C}_\mathcal{T} \subset \mathbb{Z}_+^{12}$ is in bijection with the set of reduced webs $\mathcal{W}_\Box$ via the web tropical coordinate map $\Phi_\mathcal{T}$, and since $\mathcal{C}_\mathcal{T} \cong -3 \mathcal{A}_{\mathrm{PGL}_3, \Box}^+(\mathbb{Z}^t)_\mathcal{T}$ by Remark \ref{rem:conceptual-remarks},  we can interpret Corollary \ref{cor:restricted-function} as saying that, in coordinates, the canonical map $p^t ( \approx \pi_4 \circ \phi_\mathcal{T} \circ \theta_\mathcal{T} )$ projects a proper subset of $-3 \mathcal{A}_{\mathrm{PGL}_3, \Box}^+(\mathbb{Z}^t)_\mathcal{T} \subset \mathcal{A}_{\mathrm{SL}_3, \Box}(\mathbb{Z}^t)_\mathcal{T} ( \approx \mathcal{C}_\mathcal{T} \subset \mathbb{Z}^{12})$ onto $\mathcal{X}_{\mathrm{PGL}_3, \Box}(\mathbb{Z}^t)_\mathcal{T}    (  \approx \mathbb{Z}^4   )$.  
\end{conceptremark}

\appendix

\section{Background on Fock--Goncharov--Shen theory}
\label{sec:preliminary-for-tropical-points}

We briefly summarize some of the related concepts from Fock-Goncharov-Shen theory \cite{FockIHES06, GoncharovInvent15}.  Details can be found in the other references, for example, \cite[Section $4$]{HuangArxiv19}.  (This appendix assumes the terminology of Section \ref{ssec:markedsurfacesidealtriangulations}.)

\begin{remark}
Although not strictly required for the main theorems of the article, the material of this appendix is intended to emphasize the important conceptual concepts guiding  the rest of the paper.  See, in particular, the Conceptual Remarks \ref{rem:conceptual-remarks}, \ref{rem:equivariant}, \ref{rem:sec6conceptremark}, \ref{rem:last-conceptual-remark}.
\end{remark}

\subsection{\texorpdfstring{$\mathrm{SL}_3$}{SL3}-decorated local systems}

Let $E$ be a 3-dimensional vector space equipped with a volume form $\Omega$.  Let $\mathcal{A}$ denote the collection of decorated (complete) flags in $E$.  

Let $\widehat{S}$ be a marked surface.  Fix a base point $x_0$ in $\widehat{S}$, henceforth suppressed in the notation.  For each puncture $p_i \in m_p$ let $\alpha_i$ be an oriented peripheral closed curve around the puncture.  An $\mathrm{SL}_3$-decorated local system on $\widehat{S}$ determines a pair $(\rho,\xi)$ consisting of:
\begin{enumerate}[label=\textnormal{(\Roman*)}]
\item  a surface group representation $\rho\in\mathrm{Hom}(\pi_1(\widehat{S}),\mathrm{SL}_3)$ with unipotent monodromy along each peripheral curve $\alpha_i$; 
\item  a flag map $\xi:m_b\cup m_p \rightarrow \mathcal{A}$ such that each peripheral monodromy $\rho(\alpha_i)$ fixes the decorated flag $\xi(p_i)\in\mathcal{A}$.  (More precisely, the flag map $\xi$ should be defined equivariantly at the level of the universal cover $\widetilde{S}$.)
\end{enumerate}
A point of the Fock-Goncharov moduli space $\mathcal{A}_{\mathrm{SL}_3,\widehat{S}}$ determines an $\mathrm{SL}_3$-decorated local system up to suitable equivalence.  

Let $V_{\mathcal{T}}$ (resp. $V_{\mathcal{T}_3}$) be the set of vertices of an ideal triangulation $\mathcal{T}$ (resp. ideal $3$-triangulation $\mathcal{T}_3$) of $\widehat{S}$.  Note that $V_{\mathcal{T}}=m_b \cup m_p \subset V_{\mathcal{T}_3}$.  Put
\begin{equation*}
I_3:=
\{
V\in V_{\mathcal{T}_3} - V_{\mathcal{T}}
;
V\text{ lies on an edge of } \mathcal{T}
\}\text{ and }
J_3:= V_{\mathcal{T}_3} - ( V_{\mathcal{T}} \cup I_3).
\end{equation*}
We denote a vertex $V\in I_3 \cup J_3$ on a triangle $(a,b,c)$ oriented counterclockwise by $v^{i,j,k}_{a,b,c}$ where the three nonnegative integers $i,j,k$ summing to $3$ are the least number of edges of $\mathcal{T}_3$ from $V$ to $\overline{bc}$, from $V$ to $\overline{ac}$, and from $V$ to $\overline{ab}$, respectively, where $\overline{bc}$, $\overline{ac}$, $\overline{ab}$ denote the unoriented edges of $\mathcal{T}$ (see Figure \ref{figure:acoor}). 

Consider a vertex $V \in I_3 \cup J_3$ contained in a counterclockwise oriented ideal triangle $\Delta=(a,b,c)$.  For an $\mathrm{SL}_3$-decorated local system $(\rho,\xi)$ in $\mathcal{A}_{\mathrm{SL}_3,\widehat{S}}$ let 
\begin{equation*}
(a_1,a_2,a_{3}), (b_1,b_2,b_3), (c_1,c_2,c_3)  \in E^3
\end{equation*}
be bases adapted to the decorated flags $\xi(a)$, $\xi(b)$, $\xi(c) \in \mathcal{A}$ with respect to the volume form $\Omega$.  The Fock-Goncharov $\mathcal{A}$-coordinate at $V=v_{a,b,c}^{i,j,k} \in I_3 \cup J_3$ is 
\begin{equation*}  
A_{V}(\rho,\xi):=A_{V}:=A_{a,b,c}^{i,j,k}:=\Omega(a^i \wedge b^j \wedge c^k)
\end{equation*}
where $a^1=a_1$, $a^2=a_1 \wedge a_2$, etc.  This is independent of the choice of bases.  (Also put $A_{a,b,c}^{3,0,0} = \Omega(a^3) = \Omega(a_1 \wedge a_2 \wedge a_3) = 1$ and, similarly, $A_{a,b,c}^{0,3,0}=A_{a,b,c}^{0,0,3}=1$.)  

Given the quiver defined with respect to the orientation of the surface as in Figure \ref{figure:acoor}, let
\begin{equation*}
\varepsilon_{VW} =  |\{\text{arrows from } V  \text{ to }  W\}| -|\{\text{arrows from } W  \text{ to }  V\}|.
\end{equation*} 
For any $V \in I_3 \cup J_3$, $V \notin \partial \widehat{S}$, the Fock-Goncharov $\mathcal{X}$-coordinate at $V$ is
\begin{equation*}
X_{V}(\rho,\xi):=X_{V}:= \prod_{W \in I_3 \cup J_3} A_{W}^{\varepsilon_{VW}}.
\end{equation*}

\begin{remark}
By \cite{FockIHES06}, the moduli space $\mathcal{A}_{\mathrm{SL}_3,\widehat{S}}$ has a cluster algebraic structure \cite{FominJAmerMathSoc02} described by quivers on the surface, such as that in Figure \ref{figure:acoor}.  In particular, each $\mathcal{A}$-coordinate transition map between different triangulations $\mathcal{T}$ and $\mathcal{T}^\prime$ is positive rational.
\end{remark}

Let $D=\{(2,1,0),(1,2,0),(1,1,1)\}.$  Suppose the, now pointed, ideal triangle $\Delta=(a;b,c)$ is counterclockwise oriented, as in Figure \ref{figure:acoor1}.  For $(i,j,k)\in D$, the monomials
\begin{equation*}
\alpha_{a;b,c}^{i,j,k} := 
A_{a,b,c}^{i-1,j,k+1} A_{a,b,c}^{i+1,j-1,k}/A_{a,b,c}^{i,j,k} A_{a,b,c}^{i,j-1,k+1}
\end{equation*}
correspond to the three rhombi in Figure \ref{figure:acoor1}.  Define 
\begin{equation*}
P(\Delta):=P(a;b,c):=\alpha_{a;b,c}^{2,1,0}+\alpha_{a;b,c}^{1,2,0}+\alpha_{a;b,c}^{1,1,1}.
\end{equation*}
Let $\Theta$ be the collection of counterclockwise oriented pointed ideal triangles of $\mathcal{T}$.  The Goncharov-Shen potential is 
\begin{equation*}
P=\sum_{\Delta\in \Theta} P(\Delta).
\end{equation*}
Note for a given ideal triangulation $\mathcal{T}$, the Goncharov-Shen potential is a positive Laurent polynomial in the $\mathcal{A}$-coordinates for $\mathcal{A}_{\mathrm{SL}_3, \widehat{S}}$.  

\begin{remark}
The Goncharov-Shen potential is mapping class group equivariant and defines a rational positive function on the moduli space $\mathcal{A}_{\mathrm{SL}_3, \widehat{S}}$.  In \cite{GoncharovInvent15}, the Goncharov-Shen potentials are related to the mirror Landau-Ginzburg potentials.  In \cite[Section $4$]{HuangArxiv19}, the Goncharov-Shen potentials are related to generalized horocycle lengths. 
\end{remark}

The points $\mathcal{A}_{\mathrm{SL}_3, \widehat{S}}(\mathbb{P})$ are defined over any semifield $\mathbb{P}$.  The tropical semifield $\mathbb{R}^t = (\mathbb{R}, \otimes, \oplus)$ is defined by $x \otimes y =  x+y$ and  $x \oplus y = \max\{x,y\}$. The isomorphism $x\rightarrow -x$ sends $(\mathbb{R}^t, +, \max)$ to $(\mathbb{R}^t, +, \min)$. In this appendix, we use $(\mathbb{R}^t, +, \min)$.  The tropical semifields $\mathbb{Z}^t$ and $(1/3) \mathbb{Z}^t$ are similarly defined.  To each  ideal triangulation $\mathcal{T}$ there is associated a $\mathcal{T}$-chart $\mathcal{A}_{\mathrm{SL}_3, \widehat{S}}(\mathbb{P})_\mathcal{T}$.  We have $\mathcal{A}_{\mathrm{SL}_3, \widehat{S}}(\mathbb{R}^t)_\mathcal{T}\cong\mathbb{R}^N$ and $\mathcal{A}_{\mathrm{SL}_3, \widehat{S}}(\mathbb{Z}^t)_\mathcal{T}\cong\mathbb{Z}^N$ and $\mathcal{A}_{\mathrm{SL}_3, \widehat{S}}((1/3) \mathbb{Z}^t)_\mathcal{T}\cong((1/3) \mathbb{Z})^N$ where $N$  is the number of $\mathcal{A}$-coordinates.  

A positive Laurent polynomial $f$  has tropicalization
\begin{equation*}
f^t(x_1,\cdots, x_k)= \lim_{C\rightarrow -\infty} \log f(e^{C x_1}, \cdots, e^{C x_k})/C.
\end{equation*}
The tropical $\mathcal{A}$-coordinates are denoted $A_V^t$ for $V \in I_3 \cup J_3$.  (Note $(A_{a,b,c}^{3,0,0})^t = 0$ since $A_{a,b,c}^{3,0,0}=1$.)  So
\begin{equation*}
(\alpha_{a;b,c}^{i,j,k})^t= 
(A_{a,b,c}^{i-1,j,k+1})^t+ (A_{a,b,c}^{i+1,j-1,k})^t - (A_{a,b,c}^{i,j,k})^t-( A_{a,b,c}^{i,j-1,k+1})^t
\end{equation*}
and the tropicalized Goncharov-Shen potential is
\begin{equation*}
P^t=\min\{(\alpha_{a;b,c}^{i,j,k})^t\}_{\text{all } \alpha_{a;b,c}^{i,j,k} \text{ of }P}
\end{equation*}
where the minimum is taken over all rhombi in all pointed ideal triangles of $\mathcal{T}$.    

The condition $P^t\geq0$ on $\mathcal{A}_{\mathrm{SL}_3, \widehat{S}}(\mathbb{R}^t)$ determines the space $\mathcal{A}_{\mathrm{SL}_3,\widehat{S}}^+(\mathbb{R}^t)$ of positive real tropical points as well as its $\mathcal{T}$-chart $\mathcal{A}_{\mathrm{SL}_3,\widehat{S}}^+(\mathbb{R}^t)_\mathcal{T}$.  The spaces $\mathcal{A}_{\mathrm{SL}_3,\widehat{S}}^+(\mathbb{Z}^t)$ and $\mathcal{A}_{\mathrm{SL}_3,\widehat{S}}^+((1/3)\mathbb{Z}^t)$ as well as their $\mathcal{T}$-charts $\mathcal{A}_{\mathrm{SL}_3,\widehat{S}}^+(\mathbb{Z}^t)_\mathcal{T}$ and $\mathcal{A}_{\mathrm{SL}_3,\widehat{S}}^+((1/3)\mathbb{Z}^t)_\mathcal{T}$ are similarly defined.  

\begin{figure}[t]
\includegraphics[scale=0.3]{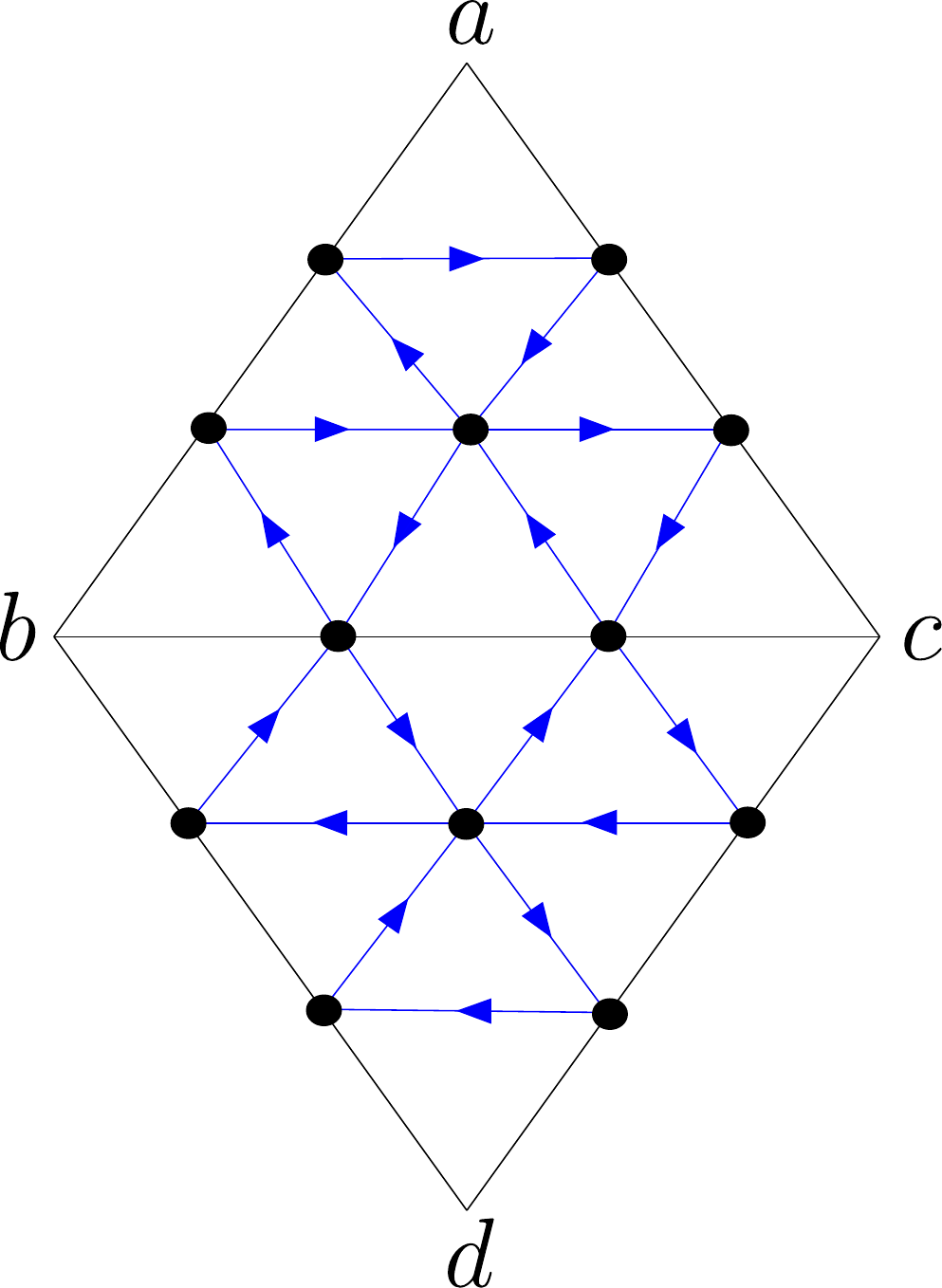}
\caption{     $3$-triangulation with quiver.}
\label{figure:acoor}
\end{figure}

\begin{figure}[t]
\includegraphics[scale=0.38]{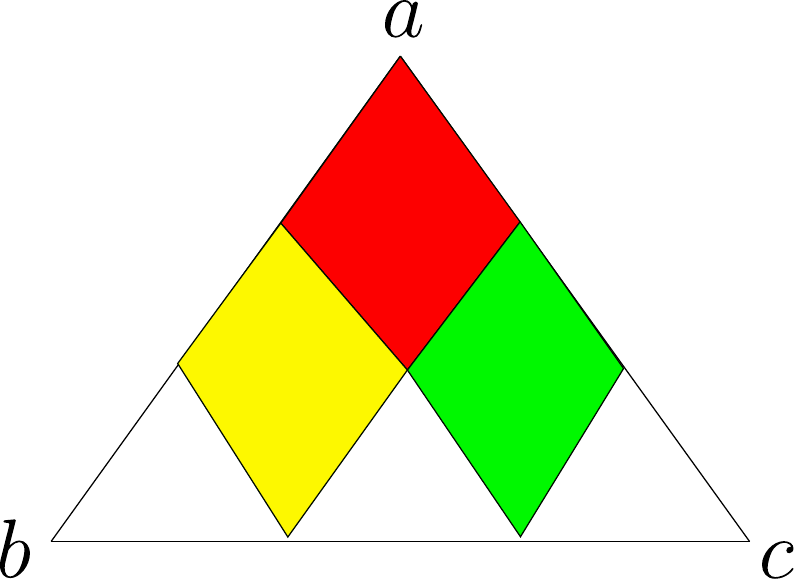}
\caption{     Red rhombus for $\alpha_{a;b,c}^{2,1,0}$, yellow rhombus for $\alpha_{a;b,c}^{1,2,0}$, and green rhombus for $\alpha_{a;b,c}^{1,1,1}$ in a pointed ideal triangle.}
\label{figure:acoor1}
\end{figure}

\subsection{\texorpdfstring{$\mathrm{PGL}_3$}{PGL3}-decorated local systems}

The moduli space $\mathcal{A}_{\mathrm{PGL}_3,\widehat{S}}$ is defined analogously to $\mathcal{A}_{\mathrm{SL}_3,\widehat{S}}$.  Although the $\mathcal{A}$-coordinates $A_{a,b,c}^{i,j,k}$ are no longer defined for $\mathcal{A}_{\mathrm{PGL}_3,\widehat{S}}$ (as they depend on the choices of scale), the rhombus numbers $\alpha_{a;b,c}^{i,j,k}$, and so the Goncharov-Shen potential $P$, are defined (the choices of scale cancel out in the ratio defining $\alpha_{a;b,c}^{i,j,k}$).  So, the tropical points $\mathcal{A}_{\mathrm{PGL}_3,\widehat{S}}(\mathbb{P})$ and their positive parts $\mathcal{A}^+_{\mathrm{PGL}_3,\widehat{S}}(\mathbb{P})$ with respect to the tropicalized Goncharov-Shen potential $P^t$ are defined as well.  We refer to the space $\mathcal{A}_{\mathrm{PGL}_3,\widehat{S}}(\mathbb{Z}^t)$ as the space of lamination-tropical points, and we refer to the space $\mathcal{A}^+_{\mathrm{PGL}_3,\widehat{S}}(\mathbb{Z}^t)$ as the space of web-tropical points.  

The $\mathcal{T}$-chart $\mathcal{A}_{\mathrm{PGL}_3,\widehat{S}}(\mathbb{Z}^t)_\mathcal{T}$ is determined by imposing the condition $(\alpha_{a;b,c}^{i,j,k})^t \in \mathbb{Z}$ for all $(\alpha_{a;b,c}^{i,j,k})^t$ of $P^t$ on $\mathcal{A}_{\mathrm{SL}_3,\widehat{S}}((1/3) \mathbb{Z}^t)_\mathcal{T}  \cong  ((1/3) \mathbb{Z})^N$, and the $\mathcal{T}$-chart $\mathcal{A}^+_{\mathrm{PGL}_3,\widehat{S}}(\mathbb{Z}^t)_\mathcal{T}$ is determined by imposing the condition $(\alpha_{a;b,c}^{i,j,k})^t \in \mathbb{Z}_+$ for all $(\alpha_{a;b,c}^{i,j,k})^t$ on $\mathcal{A}_{\mathrm{SL}_3,\widehat{S}}((1/3) \mathbb{Z}^t)_\mathcal{T}  \cong  ((1/3) \mathbb{Z})^N$.  

In summary, we have the following relations among $\mathcal{T}$-charts:
\begin{gather*}
\mathbb{Z}^N\cong\mathcal{A}_{\mathrm{SL}_3,\widehat{S}}(\mathbb{Z}^t)_\mathcal{T} \subset \mathcal{A}_{\mathrm{PGL}_3,\widehat{S}}(\mathbb{Z}^t)_\mathcal{T} \subset \mathcal{A}_{\mathrm{SL}_3,\widehat{S}}((1/3)\mathbb{Z}^t)_\mathcal{T}\cong((1/3) \mathbb{Z})^N,\\
\mathcal{A}_{\mathrm{SL}_3,\widehat{S}}^+(\mathbb{Z}^t)_\mathcal{T} \subset \mathcal{A}_{\mathrm{PGL}_3,\widehat{S}}^+(\mathbb{Z}^t)_\mathcal{T} \subset \mathcal{A}_{\mathrm{SL}_3,\widehat{S}}^+((1/3)\mathbb{Z}^t)_\mathcal{T},\\
\mathcal{A}_{\mathrm{PGL}_3, \widehat{S}}(\mathbb{R}^t)_\mathcal{T} = \mathcal{A}_{\mathrm{SL}_3, \widehat{S}}(\mathbb{R}^t)_\mathcal{T}\cong\mathbb{R}^N,\,\,
\mathcal{A}_{\mathrm{PGL}_3,\widehat{S}}^+(\mathbb{R}^t)_\mathcal{T}=\mathcal{A}_{\mathrm{SL}_3,\widehat{S}}^+(\mathbb{R}^t)_\mathcal{T}.
\end{gather*}

\begin{remark}
\label{rem:realsolutionsareinteger} 
$ $
\begin{enumerate}[label=\textnormal{(\Roman*)}]
\item  \label{item:annoyingminusisgn}
By \cite[Remark 6.5]{DouglasArxiv20}, we equally well could have imposed the conditions defining $\mathcal{A}_{\mathrm{PGL}_3,\widehat{S}}(\mathbb{Z}^t)_\mathcal{T}$ and $\mathcal{A}_{\mathrm{PGL}_3,\widehat{S}}^+(\mathbb{Z}^t)_\mathcal{T}$ on $\mathcal{A}_{\mathrm{SL}_3,\widehat{S}}( \mathbb{R}^t)_\mathcal{T} \cong \mathbb{R}^N$ rather than on $\mathcal{A}_{\mathrm{SL}_3,\widehat{S}}((1/3) \mathbb{Z}^t)_\mathcal{T}  \cong  ((1/3) \mathbb{Z})^N$.  That is, all real solutions are, in fact, one third integer solutions.  Moreover, in the case of $\mathcal{A}_{\mathrm{PGL}_3,\widehat{S}}^+(\mathbb{Z}^t)_\mathcal{T}$, these one third integer solutions are nonpositive (this is because the tropicalized rhombus numbers $(\alpha_{a;b,c}^{i,j,k})^t$ are defined with the opposite sign compared to those appearing in \cite{DouglasArxiv20}).  (Note that less confusing conventions should be possible without significant effort.)
\item  \label{rem2:realsolutionsareinteger}  
The space $\mathcal{A}_{\mathrm{PGL}_3,\widehat{S}}(\mathbb{Z}^t)$ of lamination-tropical points is called the space of `balanced points' in \cite{KimArxiv20}.
\item  By \cite{GoncharovInvent15}, when $\widehat{S}$ is a disk with three marked points on its boundary, the positive integer tropical points are identified with the Knutson-Tao hive  \cite{KnutsonJAmerMathsoc99}.
\end{enumerate}
\end{remark}

\section{Cones}
\label{sec:cones}

\begin{remark}
Some of our terminology might be non-standard, adapted for the purposes of this paper.  For example, using the terminology of \cite{maclagan2021introduction}, a `polyhedral cone' is what we call a cone; a `simplicial polyhedral cone' is what we call a sector; and, a `$k$-dimensional pure simplicial polyhedral fan' determines what we call a sector decomposition of a cone.
\end{remark}

\subsection{Positive integer cones and Hilbert bases}
\label{ssec:hilbert-bases}
		   
Recall that $\mathbb{Z}_+$ denotes the set of nonnegative integers.  
			
\begin{definition}
\label{def:submonoid}
A subset $\mathcal{M} \subset \mathbb{Z}^k$ (or $\subset \mathbb{R}^k$) is a \emph{submonoid} if $\mathcal{M}$ is closed under addition and contains 0.  
\end{definition}
			
\begin{definition}
Let $\mathcal{M} \subset \mathbb{Z}^k$ (or $\subset \mathbb{R}^k$) be a submonoid.  An element $x \in \mathcal{M}$ is \emph{irreducible} if $x$ is nonzero, and $x$ cannot be written as the sum of two nonzero elements of $\mathcal{M}$.  
			
We denote by $\mathcal{H} \subset \mathcal{M}$ the set of irreducible elements of $\mathcal{M}$.  
			
A subset $\mathcal{D} \subset \mathcal{M}$ is:
\begin{enumerate}[label=\textnormal{(\Roman*)}]
\item  \emph{$\mathbb{Z}_+$-spanning} if every $x \in \mathcal{M}$ is of the form $x=\lambda_1 x_1 + \lambda_2 x_2 + \cdots + \lambda_m x_m$ for some   $x_i \in \mathcal{D}$ and $\lambda_i \in \mathbb{Z}_+$, in which case we write $x \in \mathrm{span}_{\mathbb{Z}_+}(\mathcal{D})$; 
\item  a \emph{minimum} $\mathbb{Z}_+$-spanning set  if, in addition, for every  $\mathbb{Z}_+$-spanning set $\mathcal{D}^\prime \subset \mathcal{M}$ we have $\mathcal{D} \subset \mathcal{D}^\prime$.  				
\end{enumerate}
\end{definition}
			
Note that a minimum $\mathbb{Z}_+$-spanning set is unique if it exists.

\begin{definition}
\label{def:positive-integer-cone}
A \emph{positive integer cone} $\mathcal{C}$ is a submonoid of  $\mathbb{Z}_+^k$.
\end{definition}
	
\begin{prop}
\label{prop:irreducible-elements}
The subset $\mathcal{H} \subset \mathcal{C} \subset \mathbb{Z}_+^k$ of irreducible elements of a positive integer cone $\mathcal{C} \subset \mathbb{Z}_+^k $  is the unique minimum $\mathbb{Z}_+$-spanning subset of $\mathcal{C}$.  		
\end{prop}

\begin{proof}
	If $x$ is irreducible and is in the $\mathbb{Z}_+$-span of $x^\prime_1, x^\prime_2, \dots, x^\prime_m$ for $x_i^\prime \in \mathcal{C}$, then $x=x^\prime_{i_0}$ for some $i_0$ by the irreducibility property.  Thus $\mathcal{H}$ is contained in any $\mathbb{Z}_+$-spanning set $\mathcal{D}$.  

It remains to show that every element $x$ of $\mathcal{C}$ is in the $\mathbb{Z}_+$-span of $\mathcal{H}$.      We argue by induction on the sum $\Sigma(x) \in \mathbb{Z}_+$ of the coordinates of $x \in \mathbb{Z}_+^k$; that is, on the quantity $\Sigma(x):=\sum_{i=1}^k n_i$ where $x=(n_1, n_2, \dots, n_k) \in \mathcal{C} \subset \mathbb{Z}_+^k$.  This is true if $x=0$, where $\Sigma(x)=0$.  So assume that $x$ is nonzero, and that $x^\prime$ is in the $\mathbb{Z}_+$-span of $\mathcal{H}$ whenever $\Sigma(x^\prime)<\Sigma(x)$.  If $x$ is irreducible, we are done.  Else, $x=y+z$ with both $y, z \in \mathcal{C} \subset \mathbb{Z}_+^k$ nonzero.  So $\Sigma(y) < \Sigma(x)$ and $\Sigma(z) < \Sigma(x)$.  Thus, $y$ and $z$ are in the $\mathbb{Z}_+$-span of $\mathcal{H}$ by hypothesis, so $x$ is as well.  
\end{proof}

\begin{remark}
\label{rem:spanningpropertiesofsubmonoids}
Note that the $\mathbb{Z}_+$-spanning property of $\mathcal{H}$ in Proposition \ref{prop:irreducible-elements}  is not true if we had only assumed that $\mathcal{C}=\mathcal{M}$ is a submonoid of $\mathbb{Z}^k$.  For example, the monoid $\mathcal{M}=\mathbb{Z}^k$ has no irreducible elements.  However, it is also not essential that the submonoid be contained in a single orthant; note Remark \ref{rem:moregeneralhilbertbasis} as well.
\end{remark}

\begin{definition}
\label{def:hilbert-basis}
Let $\mathcal{H} \subset \mathcal{C} \subset \mathbb{Z}_+^k$ be as in Proposition \ref{prop:irreducible-elements}.  If $\mathcal{H}$ is a finite set, then it is called the \emph{Hilbert basis} of the positive integer cone $\mathcal{C} \subset \mathbb{Z}_+^k$.  
\end{definition}

\begin{example} 
\label{ex:hilbbasis}
$ $ 
	\begin{enumerate}[label=\textnormal{(\Roman*)}]
		\item  \label{ex1:hilbbasis}
	The standard basis  of $\mathbb{Z}^k$ is the Hilbert basis $\mathcal{H}$ of the positive integer cone $\mathcal{C}=\mathbb{Z}_+^k$.  
	
		\item  \label{ex2:hilbbasis}
		By Proposition \ref{prop:irreducible-elements}, the set $\mathcal{H} = \left\{ (1,0), (1,1), (0, 2) \right\}$ is the Hilbert basis of the positive integer cone $\mathcal{C}=\mathbb{Z}_+ (1, 0)+\mathbb{Z}_+ (1,1) + \mathbb{Z}_+ (0,2) \subset \mathbb{Z}_+^2$.  
		
		\item  \label{ex3:hilbbasis}
		Similarly, again by Proposition \ref{prop:irreducible-elements}, the set $\mathcal{H} = \left\{ (0,1,0), (1,1,0), (1,0,1), (0,1,1) \right\}$ is the Hilbert basis of the positive integer cone $\mathcal{C}=\mathrm{span}_{\mathbb{Z}_+}(\mathcal{H}) \subset \mathbb{Z}_+^3$.

		\end{enumerate}
\end{example}

\begin{remark}
\label{rem:moregeneralhilbertbasis}
Hilbert bases \cite{hilbert1890ueber, schrijver1981total} appear in linear algebra and linear programming, and are defined in more generality than what we have defined here.  
\end{remark}

\subsection{Cones over the real numbers}
\label{ssec:sector-decompositions-of-cones}
	   
Recall that $\mathbb{R}_+$ (resp. $\mathbb{R}_-$) denotes the set of nonnegative (nonpositive) real numbers.

\subsubsection{Cones and sector decompositions}
\label{sssec:cones-and-sector-decompositions}
	
\begin{definition}	
\label{def:cone-defs}
$ $
\begin{enumerate}[label=\textnormal{(\Roman*)}]
\item  A \emph{(real) cone} $C \subset \mathbb{R}^k$ is a subset of $\mathbb{R}^k$ such that $	C = \{ \sum_{i=1}^m \lambda_i c_i;  \lambda_i \in \mathbb{R}_+ \} $ for some finite set $\{ c_i \}_{i=1,2,\dots,m} \subset \mathbb{R}^k$, called a \emph{generating set} of $C$.   We also write $C=\mathrm{span}_{\mathbb{R}_+}(c_1, c_2, \dots, c_m)$. 
\item  The minimum number of elements of a generating set is called the \emph{rank} of the cone $C$.  A generating set $\left\{ c_i \right\}_i$ of minimum size is called a \emph{basis} of $C$.
\item  The subspace $\widetilde{C} \subset \mathbb{R}^k$ defined by $ \widetilde{C} = \{ \sum_{i} \widetilde{\lambda}_i c_i;  \widetilde{\lambda}_i \in \mathbb{R} \} $ is independent of the choice of generating set $\{ c_i \}_i$, and its dimension is called the \emph{dimension} of the cone $C$.  Note $ \mathrm{dim}(C) \leq \mathrm{rank}(C) < \infty$.  
\item  A cone $C$ is a \emph{sector}  if $\mathrm{dim}(C)=\mathrm{rank}(C)$.
\item  A cone $C \subset \mathbb{R}^k$ is \emph{full} if $\mathrm{dim}(C)=k$.  	
\end{enumerate}
\end{definition}

	\begin{example}
	\label{ex:cone-examples}   
	$ $
	
	\begin{enumerate}[label=\textnormal{(\Roman*)}]

\item  \label{ex1:cone-examples}  $C=\mathbb{R}$ is a full cone with basis $\left\{ 1, -1 \right\}$.  Its rank is 2, and its dimension is 1, so it is not a sector.

\item  \label{ex2:cone-examples}  $C=\mathbb{R}^2$ is a full cone with basis $\left\{ e_1, e_2, -e_1-e_2 \right\}$.  Its rank is 3, and its dimension is 2, so it is not a sector.  Here, $e_i$ is the $i$-th standard basis element.

The subcone $C^\prime = \mathbb{R} e_1 \subset C$ is not full and is not a sector.

\item  \label{ex3:cone-examples}  One checks that the full cone $C=\mathbb{R}^k$ has rank $k+1$, by taking as a basis the vertices of the $k$-dimensional simplex centered at the origin.

\item  \label{ex4:cone-examples}  For $j=1,2,\dots,k$, the cone $C^{(j)} = \sum_{i=1}^j \mathbb{R}_+ e_i \subset \mathbb{R}^k$ is a sector of rank $j$.  It is full only for $j=k$, namely when $C^{(j)}=C^{(k)} = \mathbb{R}_+^k$ is the positive orthant.

	\end{enumerate}

	\end{example}

\begin{remark}
	Note that if a cone $C$ has rank $r$ and if $\left\{ c_i \right\}_i$ is a generating set for $C$, there does not necessarily exist a subset $\left\{ c_{i_j} \right\}_{j=1,2,\dots,r}$ that is a basis.  For example, take $C=\mathbb{R}^2$ with generating set $\left\{ e_1, -e_1, e_2, -e_2 \right\}$.
\end{remark}
	
\begin{definition} 
\label{def:sector-decomp}
$ $
\begin{enumerate}[label=\textnormal{(\Roman*)}]
\item  A \emph{sector decomposition} of a full cone $C \subset \mathbb{R}^k$ is a finite collection $\{ C_i \}_{i=1,2,\dots,p}$ of subcones $C_i \subset C$ satisfying:
\begin{enumerate}[label=\textnormal{(\roman*)}]
\item  each $C_i$ is a full sector;
\item  $C = C_1 \cup C_2 \cup \cdots \cup C_p$ is the union of the sectors $C_i$;
\item  for each distinct $i,j \in \{ 1,2,\dots,m \}$, the intersection $C_i \cap C_j \subset \mathbb{R}^k$ has empty interior, that is, does not contain an open subset of $\mathbb{R}^k$.
\end{enumerate}
\item  If $C$ and $C^\prime$ are two full cones in $\mathbb{R}^k$, then the intersection $C \cap C^\prime$ is a \emph{wall} if $C \cap C^\prime$ is a cone of dimension $k-1$. 
\end{enumerate}
\end{definition}

\begin{example} $ $
\label{ex:second-cone-example}

\begin{enumerate}[label=\textnormal{(\Roman*)}]
	\item 
In Example \ref{ex:cone-examples}\eqref{ex1:cone-examples}, putting $C_1 = \mathbb{R}_+$ and $C_2 = \mathbb{R}_-$ yields a sector decomposition of the cone $C=\mathbb{R}$.  The intersection $C_1 \cap C_2 =\left\{ 0 \right\}$ is a wall.

	\item    \label{ex2prime:second-cone-example}
In  Example \ref{ex:cone-examples}\eqref{ex2:cone-examples}, putting $C_1=\mathbb{R}_+ e_1 + \mathbb{R}_+ e_2$, $C_2=\mathbb{R}_+ e_2 + \mathbb{R}_+ (-e_1 - e_2)$, and $C_3=\mathbb{R}_+  (-e_1 - e_2) + \mathbb{R}_+ e_1$ yields a sector decomposition of $C=\mathbb{R}^2$.  Each sector faces two walls, and each pairwise-distinct intersection $C_i \cap C_j$ is a wall.
	
	\item  \label{ex2:second-cone-example}  Again in Example \ref{ex:cone-examples}\eqref{ex2:cone-examples}, alternatively putting $C_1=\mathrm{span}_{\mathbb{R}_+}(e_1, e_2)$, $C_2 = \mathrm{span}_{\mathbb{R}_+}(e_2, -e_1)$, $C_3=\mathrm{span}_{\mathbb{R}_+}(-e_1, -e_2)$, and $C_4=\mathrm{span}_{\mathbb{R}_+}(-e_2, e_1)$ yields a sector decomposition of $C=\mathbb{R}^2$.  Each sector faces two walls, the intersections $C_i \cap C_{i+1}$ are walls, but the intersections $C_1 \cap C_3 = C_2 \cap C_4 = \left\{ 0 \right\}$ are not walls.
	
	\item  In Example \ref{ex:cone-examples}\eqref{ex4:cone-examples}, putting $C_1=C^{(k)} = \mathbb{R}_+^k$ yields a  sector decomposition with a single sector.
\end{enumerate}
\end{example}

\begin{observation}
\label{lem:stupid-lemma-about-walls}
If $\{ C_i \}_{i=1,2,\dots,p}$ is a sector decomposition of a full cone $C \subset \mathbb{R}^k$, and if $W=C_i \cap C_\ell$ is a wall, then there is no other pair of sectors giving this wall:  $W=C_{i^\prime}\cap C_{\ell^\prime}$ if and only if $\{ i, \ell \} = \{ i^\prime, \ell^\prime \}$.
\end{observation}

This essentially follows since walls have codimension 1 in $\mathbb{R}^k$.  We give a proof here for completeness.

\begin{proof}[Proof of Observation {\upshape\ref{lem:stupid-lemma-about-walls}}]

Let the $k-1$ dimensional subcone $W \subset C$ be generated by $\left\{ w_j \right\}_{j=1,2,\dots,m}$.  Let $v_i$ in the $k$ dimensional cone $C_i$ be linearly independent from $\left\{ w_j \right\}_j$.

Let $w$ be a fixed point in the interior of $W$, namely $w=\sum_{j=1}^m \lambda_j w_j$ for some $\lambda_j > 0$.  Then there is $\epsilon_i > 0$ such that 
	\begin{equation*}
	\label{eq:stupid-wall-lemma}
	\tag{$\dagger$}
		\left\{ w+\lambda^{(i)} v_i  + \sum_{j=1}^m \widetilde{\lambda}_j w_j; \quad 0 \leq \lambda^{(i)} < \epsilon_i \quad \text{and} \quad \lambda_j-\epsilon_i < \widetilde{\lambda}_j < \lambda_j+\epsilon_i \right\} \subset C_i.
	\end{equation*}
Let $v^\perp_i \in \widetilde{W}^\perp - \left\{ 0 \right\} \subset \mathbb{R}^k$ be the nonzero component of $v_i$ perpendicular to the subspace $\widetilde{W}$ (with respect to the standard inner product, say); note $v_i^\perp$ is not necessarily in $C_i$.  By shrinking $\epsilon_i$, we can arrange that \eqref{eq:stupid-wall-lemma} holds with $v_i$ replaced by $v_i^\perp$; denote   the resulting subset \eqref{eq:stupid-wall-lemma} by $\overline{U}_i \subset C_i$.  Similarly, define a subset $\overline{U}_\ell \subset C_\ell$ depending on some $v_\ell^\perp \neq 0 \in \widetilde{W}^\perp$ and   $\epsilon_\ell > 0$.  By further shrinking, let us arrange that $\epsilon_i = \epsilon_\ell$.

So far, we have not used the assumption that $k$ is the dimension of the ambient space $\mathbb{R}^k$.  We now use this assumption, to note that $\widetilde{W}^\perp \subset \mathbb{R}^k$ is 1 dimensional, and that $\overline{U}_i$ and $\overline{U}_\ell$ have nonempty interiors in $\mathbb{R}^k$.

Without loss of generality, assume $|v_i^\perp| \leq |v_\ell^\perp|$.  If $v_\ell^\perp$ pointed in the same direction as $v_i^\perp$, then by construction $C_i \cap C_\ell \supset \overline{U}_i \cap \overline{U}_\ell = \overline{U}_i$ would have nonempty interior, violating the hypothesis that $C_i$ and $C_\ell$ are part of a sector decomposition of the full cone $C \subset \mathbb{R}^k$.  
Thus, $v_\ell^\perp$ points in the opposite direction as $v_i^\perp$.

Now, assume $C_{i^\prime} \cap C_{\ell^\prime} = W$ as well;
define $\overline{U}_{i^\prime} \subset C_{i^\prime}$ and $\overline{U}_{\ell^\prime} \subset C_{\ell^\prime}$ as above.  After possibly swapping indices, we have that $v_i^\perp$ and $v_{i^\prime}^\perp$ (resp. $v_\ell^\perp$ and $v_{\ell^\prime}^\perp$) point in the same direction.  Arguing as above, it follows that $\overline{U}_i \cap \overline{U}_{i^\prime} \subset C_i \cap C_{i^\prime}$ and $\overline{U}_\ell \cap \overline{U}_{\ell^\prime}\subset C_\ell \cap C_{\ell^\prime}$ have nonempty interiors.  Therefore, $i=i^\prime$ and $\ell=\ell^\prime$.
\end{proof}

\begin{remark}
	Observation \ref{lem:stupid-lemma-about-walls} is false for higher codimension intersections.  For instance, in Example \ref{ex:second-cone-example}\eqref{ex2:second-cone-example}, we have $C_1 \cap C_3 = C_2 \cap C_4 = \left\{ 0 \right\}$.

\end{remark}

\subsubsection{Some technical statements about cones of the form $C \subset \mathbb{R}_+^k \times \mathbb{R}^n$}
\label{ssec:technical-statements}
	
\begin{lem}
\label{lem:rank-lemma}
Let $C \subset \mathbb{R}_+^k \times \mathbb{R}^n$ be a cone satisfying the following properties:
\begin{enumerate}[label=\textnormal{(\Roman*)}]
\item  $e_i \in C$ for $i=1,2,\dots,k$, where $e_i$ is the $i$-th standard basis element of $ \mathbb{R}^k \times \mathbb{R}^n$;
\item  $\pi_n(C) = \mathbb{R}^n$, where $\pi_n : \mathbb{R}^k \times \mathbb{R}^n \to \mathbb{R}^n$ is the natural projection.  
\end{enumerate}
Then, $\mathrm{dim}(C)=k+n$.  Namely, $C$ is full.  
\end{lem}

\begin{example*}[part 1]
	The rank of such a cone $C$ as in Lemma \ref{lem:rank-lemma} can equal the dimension or be strictly greater.  For example, $C=\mathbb{R}_+ \times \mathbb{R}$ has rank 3, whereas $C^\prime=\mathrm{span}_{\mathbb{R}_+}(\left\{ (1;1), (0;-1) \right\}) \subset \mathbb{R}_+ \times \mathbb{R}$ has rank 2.  
\end{example*}

\begin{proof}[Proof of Lemma {\upshape\ref{lem:rank-lemma}}]
	Note $\mathbb{R}_+^k \times \left\{ 0 \right\} \subset C$, so $\mathbb{R}^k \times \left\{ 0 \right\} \subset \widetilde{C}$; see Definition \ref{def:cone-defs}.  The hypothesis $\pi_n(C)=\mathbb{R}^n$ thus implies $\left\{ 0 \right\} \times \mathbb{R}^n \subset \widetilde{C}$.  It follows that $\widetilde{C}=\mathbb{R}^k \times \mathbb{R}^n$.  
\end{proof}

\begin{lem}
\label{lem:second-cone-lemma}
Consider a full cone $C \subset \mathbb{R}_+^k \times \mathbb{R}^n$ as in Lemma {\upshape\ref{lem:rank-lemma}}.  Let $\{ x_j \}_{j=1,2,\dots,m}$ be a finite subset of $C$ with $m \geq n$, and let $\{ J_i \}_{i=1,2,\dots,p}$ for some $p$ be a collection of index sets $J_i=\{j^{(i)}_1, j^{(i)}_2, \dots, j^{(i)}_{n} \}  \subset \{ 1, 2, \dots, m \}$ of constant size $n$ \textup{(}$|J_i|=n$\textup{)}.  
	
Assume in addition:
\begin{enumerate}[label=\textnormal{(\Roman*)}]
\item  $C=\cup_{i=1}^{p} C_i$ is the union of the subcones
\begin{equation*}
C_i
=	\mathrm{span}_{\mathbb{R}_+}( 
\{ e_1, e_2, \dots, e_k \} 
\cup	\{ x_j;  j \in J_i \})
  \subset C \subset \mathbb{R}_+^k \times \mathbb{R}^n
\,\,  ( i=1,2,\dots,p );
\end{equation*}
\item  the subcones
\begin{equation*}
D_i = \mathrm{span}_{\mathbb{R}_+}(\{ \pi_n(x_j);  j \in J_i \})\subset \mathbb{R}^n\,\,( i=1,2,\dots,p )
\end{equation*}
are full sectors forming a sector decomposition $\{ D_i \}_{i=1,2,\dots,p}$ of $\mathbb{R}^n$ \textup{(}Definition {\upshape\ref{def:sector-decomp}}\textup{)}.  
\end{enumerate}
Then, the subcones $C_i \subset C$ are full sectors forming a sector decomposition $\{ C_i \}_{i=1,2,\dots,p}$ of $C$.  
	
Moreover, $\pi_n(C_i \cap C_\ell)=D_i \cap D_{\ell}$ for all  $i, \ell$.  In particular, the sector $C_i$ projects via $\pi_n$ to the sector $D_i$.  
\end{lem}

\begin{example*}[part 2]
For $C \subset \mathbb{R}_+ \times \mathbb{R}$ as in part 1, put $x_1=(0;1)$ and $x_2=(0;-1)$.  For $C^\prime \subset \mathbb{R}_+ \times \mathbb{R}$ as in part 1, put $x^\prime_1=(1;1)$ and $x^\prime_2=(0;-1)$.  In both cases, put $J_1=J^\prime_1=\left\{ 1 \right\}$ and $J_2=J^\prime_2=\left\{ 2 \right\}$.  

Then, in both cases, $D_1=D_1^\prime=\mathbb{R}_+$ and $D_2=D_2^\prime=\mathbb{R}_-$.  For $C$, we have $C_1=\mathbb{R}_+ \times \mathbb{R}_+$ and $C_2=\mathbb{R}_+ \times \mathbb{R}_-$.  For $C^\prime$, we have $C^\prime_1=\mathrm{span}_{\mathbb{R}_+}(\left\{(1;0), (1;1)\right\})$ and $C^\prime_2=\mathbb{R}_+ \times \mathbb{R}_-$.  

Lastly, $C_1 \cap C_2=C_1^\prime \cap C_2^\prime=\mathbb{R}_+ \times \left\{ 0 \right\}$, which projects by $\pi_1$ to $\left\{0\right\} = D_1 \cap D_2=D_1^\prime \cap D_2^\prime$.  
\end{example*}

\begin{proof}[Proof of Lemma {\upshape\ref{lem:second-cone-lemma}}]
	By construction $\pi_n(C_i)=D_i$, so $\pi_n(C_i \cap C_\ell) \subset D_i \cap D_\ell$.  To see that the reverse inclusion holds requires a little argument.  Consider a general element
	\begin{equation*}
		y=\pi_n \left( \sum_{j \in J_i} \lambda_j x_j \right) 
		= \pi_n \left( \sum_{j \in J_\ell} \lambda^\prime_j x_j \right)
		\quad  \in  D_i \cap D_\ell \subset \mathbb{R}^n
		\quad\quad  \left( \lambda_j, \lambda^\prime_j \in \mathbb{R}_+ \right).
	\end{equation*}
	For $k^\ast = 1,2,\dots, k$, the $k^\ast$-coordinate of $v_i := \sum_{j \in J_i} \lambda_j x_j \in C_i$ is either $\leq$ or $\geq$ the $k^\ast$-coordinate of $v_\ell := \sum_{j \in J_\ell} \lambda^\prime_j x_j \in C_\ell$; without loss of generality, assume it is $\leq$.    Putting $\alpha \geq 0$ to be the difference of these $k^\ast$-coordinates, we have that $v^\ast_i := v_i + \alpha e_{k^\ast} \in C_i$ has the same $k^\ast$-coordinate as $v_\ell$ and still satisfies $\pi_n(v^\ast_i)=y=\pi_n(v_\ell) \in D_i \cap D_\ell$.  We then put $v^\ast_i$ to be the new $v_i$, and reiterate this procedure for each $k^\ast$, updating $v_i$ or $v_\ell$ at each step.  The end result is that $v_i = v_\ell \in C_i \cap C_\ell$ and $\pi_n(v_i)=y=\pi_n(v_\ell)$ as desired.

	We move on to establishing that $\left\{ C_i \right\}_i$ is a sector decomposition of $C$.

	To see that $C_i \subset \mathbb{R}^k \times \mathbb{R}^n$ is a full cone, use the hypothesis that $D_i$ is open in $\mathbb{R}^n$, and proceed as in the proof of Lemma \ref{lem:rank-lemma}.  By definition, $C_i$ has a generating set consisting of $k+n$ elements.  Thus, $k+n=\mathrm{dim}(C_i)\leq\mathrm{rank}(C_i)\leq k+n$, so $C_i$ is a sector.

	That $C=\cup_i C_i$ is by hypothesis.

	It remains to show that $C_i \cap C_\ell$ has  empty interior for all pairwise-distinct $i, \ell$.  Suppose otherwise, so that there is a nonempty open subset $U \subset \mathbb{R}^k \times \mathbb{R}^n$ such that $U \subset C_i \cap C_\ell$.  Since projections are open maps, $\pi_n(U)$ is a nonempty open subset of $\mathbb{R}^n$ contained in $D_i \cap D_\ell$, contradicting that $\left\{ D_i \right\}_i$ is a sector decomposition.  
\end{proof}

\begin{lem}
\label{lem:third-cone-lemma}
Let the full cone $C \subset \mathbb{R}_+^k \times \mathbb{R}^n$, the sector decomposition $\{ D_i \}_{i=1,2,\dots,p}$ of $\mathbb{R}^n$, and the sector decomposition $\{ C_i \}_{i=1,2,\dots,p}$ of $C$ be as in Lemma {\upshape\ref{lem:second-cone-lemma}}.  
	
Assume in addition:
\begin{enumerate}[label=\textnormal{(\Roman*)}]
\item  for each $i, \ell$ such that $D_i \cap D_\ell$ is a wall in $\mathbb{R}^n$ \textup{(}Definition {\upshape\ref{def:sector-decomp}}\textup{)}, we have more specifically that the intersection $J_i \cap J_\ell \subset \{ 1, 2, \dots, m \}$ of index sets has $n-1$ elements, and  
\begin{equation*}
D_i \cap D_\ell = \mathrm{span}_{\mathbb{R}_+}
( 
\{ \pi_n(x_j);  j \in J_i \cap J_\ell \} 
)
  \subset \mathbb{R}^n. 
\end{equation*}
\end{enumerate}
Then, for any $i, \ell$, we have that $C_i \cap C_\ell \subset \mathbb{R}_+^k \times \mathbb{R}^n$ is a wall in $C$ if and only if $D_i \cap D_\ell \subset \mathbb{R}^n$ is a wall in $\mathbb{R}^n$.  In particular, if this is the case for a given $i, \ell$, then
\begin{equation*}\tag{m}
\label{eq:technical-equation-intersection}
C_i \cap C_\ell = \mathrm{span}_{\mathbb{R}_+}( \{ e_1, e_2, \dots, e_k \} \cup \{ x_j;  j \in J_i \cap J_\ell \} )  \subset \mathbb{R}_+^k \times \mathbb{R}^n.
\end{equation*}
This furnishes a $1$-to-$1$ correspondence $\{ \text{walls of } \{ D_i \}_i \text { in } \mathbb{R}^n \} \leftrightarrow \{ \text{walls of } \{ C_i \}_i \text { in } C \}$.
\end{lem}

\begin{example*}[part 3]
	For $C, C^\prime \subset \mathbb{R}_+ \times \mathbb{R}$ as in part 2, the sole wall in $\mathbb{R}$ is $\left\{ 0 \right\}$, corresponding to the sole wall $\mathbb{R}_+ \times \left\{ 0 \right\}$ in $C, C^\prime$.  
	
	As a non-example, where the hypothesis and part of the conclusion, \eqref{eq:technical-equation-intersection}, fail:  In $\mathbb{R}_+ \times \mathbb{R}^2$ let $x_{1}=(0;0,1)$, $x_{2}=(1;1,0)$, $x_{3}= (0;0,-1)$, $x_{4}=(2;1,0)$, $x_5=(0;-1,0)$.  Put $J_1=\left\{1,2\right\}$, $J_2=\left\{3,4\right\}$, $J_3=\left\{1,5\right\}$, $J_4=\left\{3,5\right\}$.  Put $C=\cup_{i=1}^4 C_i$.  This is indeed a cone, generated by $x_1, \dots, x_5$ and $e_1$.  
	Then $D_1 \cap D_2 = \mathbb{R}_+ \times \left\{ 0 \right\} \subset \mathbb{R}^2$ is a wall, but $J_1 \cap J_2 = \emptyset$ and $C_1 \cap C_2 = \mathrm{span}_{\mathbb{R}_+}(\left\{ e_1, x_4 \right\} \supsetneq \mathrm{span}_{\mathbb{R}_+}(\left\{ e_1 \right\})$.  Note that $x_2 \in \pi_2^{-1}(D_1 \cap D_2) - (C_1 \cap C_2)$.  On the other hand, $D_3 \cap D_4 = \mathbb{R}_- \times \left\{ 0 \right\}$, $J_3 \cap J_4 = \left\{ 5 \right\}$, and $C_3 \cap C_4 = \mathbb{R}_+ \times \mathbb{R}_{-} \times \left\{ 0 \right\}  = \mathrm{span}_{\mathbb{R}_+}(\left\{ e_1, x_5 \right\})$.  Moreover, $\pi_2^{-1}(D_3 \cap D_4)=C_3 \cap C_4$.  
\end{example*}

\begin{proof}[Proof of Lemma {\upshape\ref{lem:third-cone-lemma}}]
	Assume first that $C_i \cap C_\ell$ is a wall in $C$.  By Lemma \ref{lem:second-cone-lemma}, $\pi_n(C_i \cap C_\ell)=D_i \cap D_\ell$.  Thus, since $C_i \cap C_\ell$ is a cone, we have that $D_i \cap D_\ell$ is a cone.  It  follows that the subspace $\widetilde{D}_{i\ell} := \widetilde{D_i \cap D_\ell} \subset \mathbb{R}^n$ is equal to the projection under $\pi_n$ of the subspace $\widetilde{C}_{i\ell} := \widetilde{C_i \cap C_\ell} \subset \mathbb{R}^k \times \mathbb{R}^n$ (Definition \ref{def:cone-defs}), the latter which is $k+n-1$ dimensional by assumption.  By definition of $C_i$ and $C_\ell$, we have $\mathbb{R}^k \times \left\{ 0 \right\} \subset \widetilde{C}_{i\ell}$.  Thus $\widetilde{C}_{i\ell} = \mathbb{R}^k \times \pi_n(\widetilde{C}_{i\ell})=\mathbb{R}^k \times \widetilde{D}_{i\ell}$.  Hence $\widetilde{D}_{i\ell}$ is $n-1$ dimensional, as desired.

Conversely, assume $D_i \cap D_\ell$ is a wall in $\mathbb{R}^n$.  Let $j_i$ (resp. $j_\ell$) be the unique index in $J_i - (J_i \cap J_\ell)$ (resp. $J_\ell - (J_i \cap J_\ell)$).  Since $D_i$ (resp. $D_\ell$) has dimension $n$, the set of vectors $\left\{ \pi_n(x_j); j \in J_i \right\}$ (resp. $\left\{ \pi_n(x_j); j \in J_\ell \right\}$) in $\mathbb{R}^n$ is linearly independent.   Since $D_i \cap D_\ell \subset \text{(in fact, equals) } \mathrm{span}_{\mathbb{R}_+}(\left\{\pi_n(x_j); j \in J_i \cap J_\ell\right\})$ by hypothesis, it follows that any expression in $\mathbb{R}^n$   of the form 
\begin{equation*}
	\lambda_{j_i} \pi_n(x_{j_i}) + \sum_{j \in J_i \cap J_\ell} \lambda_j \pi_n(x_j)  \quad \in D_i \cap D_\ell
	\quad\quad \text{ or } \quad\quad
	\lambda^\prime_{j_\ell} \pi_n(x_{j_\ell}) + \sum_{j \in J_i \cap J_\ell} \lambda^\prime_j \pi_n(x_j) \quad \in D_i \cap D_\ell
\end{equation*}
 must imply $\lambda_{j_i}=0$ or $\lambda_{j_\ell}=0$, respectively.

Consequently, since $\pi_n(C_i \cap C_\ell) \subset D_i \cap D_\ell$ (in fact, this is an equality by Lemma \ref{lem:second-cone-lemma}), any expression in $C \subset \mathbb{R}_+^k \times \mathbb{R}^n$ of the form 
\begin{equation*}
		\sum_{r=1}^k \eta_r e_r + \lambda_{j_i} x_{j_i} + \sum_{j \in J_i \cap J_\ell} \lambda_j x_j
		=
		\sum_{r=1}^k \eta^\prime_r e_r + \lambda^\prime_{j_\ell} x_{j_\ell} + \sum_{j \in J_i \cap J_\ell} \lambda^\prime_j x_j
	 \quad \in C_i \cap C_\ell
	 \quad\quad
	 \left( 
	 \eta_r, \lambda_{j_i}, \lambda_j, \eta^\prime_r, \lambda^\prime_{j_\ell}, \eta^\prime_\ell \in \mathbb{R}_+
	 \right)
\end{equation*}
implies, by applying $\pi_n$, that $\lambda_{j_i}=\lambda^\prime_{j_\ell}=0$.  We gather that the $\subset$ inclusion of \eqref{eq:technical-equation-intersection} is true.   Since the reverse inclusion holds by the definitions of $C_i$ and $C_\ell$, we have that \eqref{eq:technical-equation-intersection} is true.  In particular, $C_i \cap C_\ell$ is a cone.

To finish, we show $C_i \cap C_\ell$ is $k+n-1$ dimensional. By the same argument as in the first paragraph of this proof, we have $\widetilde{C}_{i \ell}=\mathbb{R}^k \times \widetilde{D}_{i \ell}$.  Since $\widetilde{D}_{i \ell}$ is $n-1$ dimensional by hypothesis, the claim is true.

It remains to construct the desired bijection $f: \left\{ \text{walls of } \left\{ D_i \right\}_i \text {in } \mathbb{R}^n \right\} \to \left\{ \text{walls of } \left\{ C_i \right\}_i \text {in } C \right\}$.  Indeed, let $W=D_i \cap D_\ell$ be a wall in $\mathbb{R}^n$.  Put $f(W)$ to be the wall $C_i \cap C_\ell$ in $C$.  By Observation \ref{lem:stupid-lemma-about-walls}, the choice of $\left\{ i, \ell \right\}$ such that $W=D_i \cap D_\ell$ is unique, so $f$ is well-defined.  Conversely, if $W^\prime \subset \mathbb{R}_+^k \times \mathbb{R}^n$ is a wall of $\left\{ C_i \right\}_i$ in $C$, put $g(W^\prime)=\pi_n(W^\prime)\subset \mathbb{R}^n$.  Since $W^\prime = C_i \cap C_\ell$ for some $i, \ell$ by definition,  we have that $g(W^\prime)=\pi_n(C_i \cap C_\ell)=D_i \cap D_\ell$ is a wall in $\mathbb{R}^n$.  Thus, we have  defined a function $g : \left\{ \text{walls of } \left\{ C_i \right\}_i \text {in } C \right\} \to \left\{ \text{walls of } \left\{ D_i \right\}_i \text {in } \mathbb{R}^n \right\}$ (note we did not need to use Observation \ref{lem:stupid-lemma-about-walls} for this direction).  It is immediate from the construction that $f$ and $g$ are inverses of each other.  
\end{proof}

\subsubsection{Cone completions}
\label{sssec:cone-completions}

\begin{definition}
\label{def:completion}
Let $\mathcal{M} \subset \mathbb{R}^k$ be a submonoid (Definition \ref{def:submonoid})  having a finite $\mathbb{Z}_+$-spanning set $\{ c_i \}_{i=1,2,\dots,m}$ (we say $\mathcal{M}$ is \emph{finitely generated}).  Then, its \emph{completion} $\overline{\mathcal{M}} \subset \mathbb{R}^k$ is the corresponding real cone with the same generating set $\{ c_i \}_i$ (this is  independent of the choice of generating set).  
\end{definition}

By Proposition \ref{prop:irreducible-elements}, we immediately have:

\begin{observation}
\label{obs:rank-hilbert-basis-estimate}
Let $\mathcal{C} \subset \mathbb{Z}_+^k$ be a positive integer cone \textup{(}Definition {\upshape\ref{def:positive-integer-cone}}\textup{)} admitting a Hilbert basis $\mathcal{H} \subset \mathcal{C}$ \textup{(}Definition {\upshape\ref{def:hilbert-basis}}\textup{)}.  Then, the rank of its completion $\overline{\mathcal{C}} \subset \mathbb{R}_+^k$ is less than or equal to the number of elements of the Hilbert basis $\mathcal{H}$.  \qed
\end{observation}

\begin{example} $ $
\label{ex:hib-basis-cone-examples}
	\begin{enumerate}[label=\textnormal{(\Roman*)}]

		\item  In Example \ref{ex:hilbbasis}\eqref{ex1:hilbbasis}, we see $\overline{\mathcal{C}}=\mathbb{R}_+^k$ and $\mathrm{dim}(\overline{\mathcal{C}})=\mathrm{rank}(\overline{\mathcal{C}})=k=|\mathcal{H}|$.  
		
		\item  In Example \ref{ex:hilbbasis}\eqref{ex2:hilbbasis}, we see that $\overline{\mathcal{C}}=\mathbb{R}_+^2$ and $\mathrm{dim}(\overline{\mathcal{C}})=\mathrm{rank}(\overline{\mathcal{C}})=2<3=|\mathcal{H}|$.  
		
		\item  In Example \ref{ex:hilbbasis}\eqref{ex3:hilbbasis}, we have that $\overline{\mathcal{C}}=\mathrm{span}_{\mathbb{R}_+}(\mathcal{H}) \subset \mathbb{R}_+^3$ and one checks that $\mathrm{dim}(\overline{\mathcal{C}})=3<4=\mathrm{rank}(\overline{\mathcal{C}})=|\mathcal{H}|$, as no point of $\mathcal{H}$ lies in the $\mathbb{R}_+$-span of the other three points.  
	\end{enumerate}
\end{example}

\begin{lem}
\label{lem:actual-geometry}
Let $\mathcal{M} \subset \mathbb{R}^k$ be a monoid.  Assume there are finitely generated submonoids $\mathcal{M}_1, \mathcal{M}_2, \dots, \mathcal{M}_p \subset \mathcal{M}$  such that $\mathcal{M}=\cup_{i=1}^p \mathcal{M}_i$ \textup{(}in particular, $\mathcal{M}$ is finitely generated\textup{)}.  Then, $\overline{\mathcal{M}}=\cup_{i=1}^p \overline{\mathcal{M}}_i$.  
\end{lem}

\begin{proof}

	The inclusion $\supset$ is by definition.  Let $\left\{ c_j \right\}_{j=1,2,\dots,m}$ be a finite generating set for $\mathcal{M}$.  Put $C = \sum_{j=1}^m |c_j|$, where $|c_j|$ is the Euclidean length in $\mathbb{R}^k$.  Then for each $x \in \overline{\mathcal{M}}$, there exists some $y \in \mathcal{M}$ such that $|x-y| \leq C$.  Indeed, writing $x=\sum_{j=1}^m \lambda_j c_j$, put $y=\sum_{j=1}^m n_j c_j \in \mathcal{M}$ where $n_j \geq 0$ is the largest integer less than or equal to $\lambda_j$.  We have that
	\begin{equation*}
		|x-y|=\left| \sum_{j=1}^m (\lambda_j - n_j) c_j \right|
		\leq \sum_{j=1}^m (\lambda_j - n_j) |c_j| \leq C.
	\end{equation*}

We now argue by contradiction.  Put $A=\cup_{i=1}^p \overline{\mathcal{M}}_i$ and suppose there is $x \in \overline{\mathcal{M}} - A$.  Note $x \neq 0$.  Let $\pi : \mathbb{R}^k - \left\{ 0 \right\} \to S^{k-1}$ be the natural projection onto the unit sphere.  It suffices to show there exists an open ball $B \subset \mathbb{R}^k$ such that $\pi(x) \in B \cap S^{k-1}$ and such that $\pi(A-\left\{0\right\})$ does not intersect $B \cap S^{k-1}$.  Indeed, since $\mathcal{M} \subset A$ by hypothesis, this would contradict that there is some point of $\mathcal{M}$ in $\pi^{-1}(B \cap S^{k-1}) \subset \mathbb{R}^k$ at distance at most $C$ from a point in the ray $\mathbb{R}_{>0} x \subset \overline{\mathcal{M}} \cap \pi^{-1}(B \cap S^{k-1})$.

Suppose such a $B$ does not exist.  Then, there is a sequence $(a_n)$ in $A-\left\{0\right\}$ such that $\mathrm{lim}_{n \to \infty} \pi(a_n)=\pi(x)$ in $S^{k-1}$.  Since the subcones $\overline{\mathcal{M}}_i$ are finitely generated, they are closed subsets of $\mathbb{R}^k$; thus, $A$ is closed as well.  It follows that the intersection
\begin{equation*}
	A^c = A \cap \left\{ z \in \mathbb{R}^k; \quad \frac{1}{2} \leq |z| \leq \frac{3}{2} \right\}
\end{equation*}
is compact; thus, $\pi(A^c) \subset S^{k-1}$ is compact as well.  We gather $\pi(A^c)$ is closed in $S^{k-1}$.  

Now, since the ray through any point of $A$ intersects $A^c$, we have $\pi(a_n) \in \pi(A^c)$ for all $n$.  It follows by the closedness of $\pi(A^c)$ that $\pi(x) \in \pi(A^c)$; in particular, the ray through $x$ intersects $A=\cup_{i=1}^p \overline{\mathcal{M}}_i$.  But this ray cannot intersect any subcone $\overline{\mathcal{M}}_i$, as $\overline{\mathcal{M}}_i$ is closed under scaling by $\mathbb{R}_+$ and $x \notin A$ by assumption.  This is a contradiction.  
\end{proof}

\section{Flip examples in the square}
\label{ssec:flip-examples}

In Section \ref{section:fe}, we proved the naturality of the web tropical coordinates without having to see what the new good position of a web in the square looks like after flipping the diagonal, which is topologically subtle.  In this section, we give some examples of seeing the good position after the flip.  This gives us another way to check the formulas of Theorem \ref{thm:second-main-theorem}; see also  Remark \ref{rem:regarding-flip-proof} at the beginning of Section \ref{section:fe}.
	  	
The 9 symmetry classes of web families (see Figure \ref{figure:9cases} and Remark \ref{rem:symmetry-groupings}) fall into roughly three types.  Let us study the flip a bit more intensively for one example of each type.  
	
Let $W=W_c \in \mathcal{W}_\Box$ be a cornerless web in the square and belonging to the family $\mathcal{W}_{i_j} \subset \mathcal{W}_\Box$, where the value of $i_j$ depends on which of the 9 cases we are considering ($j=1,2,\dots,9$); see Notation \ref{not:web-families}.  
	
Recall that $\mathcal{T}$ (resp. $\mathcal{T}^\prime$) is the triangulation shown in the left hand side (resp. right hand side) of Figure \ref{figure:flip}.  Consider the web tropical coordinate maps $\Phi_\mathcal{T} : \mathcal{W}_\Box \to \mathcal{C}_\mathcal{T}$ and $\Phi_{\mathcal{T}^\prime} : \mathcal{W}_\Box \to \mathcal{C}_{\mathcal{T}^\prime}$ (see Section \ref{section:wa}).  Denote  $c = \Phi_\mathcal{T}(W) \in \mathcal{C}_\mathcal{T} \subset \mathbb{Z}_+^{12}$ by
\begin{equation*}
c=(c_j)_{j=1,2,\dots,12} = (x_1, x_2, x_3, x_4, x_5, x_6, x_7, x_8, y_1, y_2, y_3, y_4)
\end{equation*}
and $c^\prime = \Phi_{\mathcal{T}^\prime} \circ \Phi_\mathcal{T}^{-1}(c) \in \mathcal{C}_{\mathcal{T}^\prime} \subset \mathbb{Z}_+^{12}$ by 
\begin{equation*}
c^\prime=(c^\prime_j)_{j=1,2,\dots,12}=(x^\prime_1, x^\prime_2, x^\prime_3, x^\prime_4, x^\prime_5, x^\prime_6, x^\prime_7, x^\prime_8, z_1, z_2, z_3, z_4)
\end{equation*}
(compare Definition \ref{def:mutationmap} and Figure \ref{figure:flip}).  We know right away that $x_i=x_i^\prime$ for $i=1,2,\dots,8$.

Our goal is to check that  \eqref{equation:mu1}, \eqref{equation:mu2}, \eqref{equation:mu3}, \eqref{equation:mu4}  are satisfied, by presenting the explicit good position of the family $\mathcal{W}_{i_j}$ after the flip, allowing us to calculate the coordinates directly.  We do this in the three example cases $j=1, 3, 7$.  

Recall in particular $x,y,z,t \in \mathbb{Z}_+$ in Figure \ref{figure:9cases}.  

\begin{example*}[family $(1)$]     
The simplest cases are given by schematics (1) and (2) of Figure \ref{figure:9cases}.  We verify case (1) here.  The other case is similar.  We compute the $c_j$'s and $c_j^\prime$'s via Figure \ref{figure:triangle}; see also Section \ref{sssec:definitionofthetropicalwebcoordinates}.  

Notice in this case it is obvious that the web on the right hand side of Figure \ref{figure:flip1} is in good position with respect to the flipped triangulation.  

Left hand side of Figure \ref{figure:flip1},  coordinates $c_j   (\text{for }j=1,2,\dots,12)$:
\begin{enumerate}[label=(\arabic*)]
\item  $2x + y + 2z + 2t$;
\item  $x+2y+z+t$;
\item  $2x$;
\item  $x$;
\item  $2x+2y+z+2t$;
\item  $x+y+2z+t$;
\item  $2t$;
\item  $t$;
\item  $2x+y+2z+3t$;
\item  $x+2y+z+2t$;
\item  $3x+2y+z+2t$;
\item  $2x+y+2z+t$.
\end{enumerate}

Right hand side of Figure  \ref{figure:flip1},  coordinates $c^\prime_j  (\text{for }j=9,\dots,12)$:
\begin{enumerate}[label=($\arabic*^\prime$)]\setcounter{enumi}{8}
\item  $x+y+2z+2t$;
\item  $3x+2y+z+t$;
\item  $2x+2y+z+t$;
\item  $x+y+2z+3t$.
\end{enumerate}

The following computations verify  \eqref{equation:mu1}, \eqref{equation:mu2}, \eqref{equation:mu3}, \eqref{equation:mu4} in this case. 
\begin{gather*}
\text{\eqref{equation:mu1}:}
\max\{(x+2y+z+t)+(3x+2y+z+2t),(2x+y+2z+3t)+2x\}-(x+2y+z+2t)\\=\max\{4x+4y+2z+3t, 4x+y+2z+3t\}-(x+2y+z+2t)=3x+2y+z+t.
\end{gather*}
\begin{gather*}
\text{\eqref{equation:mu2}:}
\max\{(2x+y+2z+3t)+(x+y+2z+t),2t+(3x+2y+z+2t)\}-(2x+y+2z+t)
\\=  \mathrm{max}\{  
3x + 2y + 4z + 4t, 3x + 2y + z + 4t
\}-(2x+y+2z+t)
=x+y+2z+3t.
\end{gather*}
\begin{gather*}
\text{\eqref{equation:mu3}:}
\max\{(2x+y+2z+2t)+(x+y+2z+3t),t+(3x+2y+z+t)\}-(2x+y+2z+3t)
\\=\mathrm{max}\{    3x + 2y + 4z + 5t,    3x + 2y   + z + 2t
\}-(2x+y+2z+3t)
=x+y+2z+2t.
\end{gather*}
\begin{gather*}
\text{\eqref{equation:mu4}:}
\max\{(3x+2y+z+t)+(2x+2y+z+2t),(x+y+2z+3t)+x\}-(3x+2y+z+2t)
\\=\mathrm{max}\{  5x + 4y + 2z + 3t,   2x + y + 2z + 3t
\}-(3x+2y+z+2t)
=2x+2y+z+t.
\end{gather*}
\end{example*}

\begin{example*}[family $(3)$]
The next simplest cases are given by schematics (3), (4), (5), (6), and (9) of Figure \ref{figure:9cases}.  We verify case (3) here.  The other four cases are similar.  We compute the $c_j$'s and $c_j^\prime$'s via Figure \ref{figure:triangle}; see also Section \ref{sssec:definitionofthetropicalwebcoordinates}.  

Unlike in the previous example, it is less obvious that the schematic appearing on the right hand side of Figure \ref{figure:flip3} faithfully displays how the good position looks after the flip.  That this is indeed the case is a bit subtle topologically, however can be verified by an explicit procedure that draws the desired flipped bigon on top of the starting web as  represented by the left hand side of Figure \ref{figure:flip3}.  We demonstrate this bigon drawing procedure in Figure \ref{fig:case3flip}.     

The schematic diagram of the web in good position restricted to the flipped bigon in the right hand side of Figure \ref{figure:flip3} is  shown in the left hand side of Figure \ref{figure:flip3l}.  It is an enjoyable exercise to check that this bigon schematic agrees with the web example  schematically shown in Figure \ref{fig:case3flip}.  

Another guiding example showing the web in good position after the flip (without using schematics), is provided in the right hand side of Figure \ref{figure:flip3l}.

Left hand side of Figure \ref{figure:flip3},  coordinates $c_j   (\text{for }j=1,2,\dots,12)$:
\begin{enumerate}[label=(\arabic*)]
\item  $x + 2y $;
\item  $2x + y $;
\item  $x + y + z + t $;
\item  $2x+2y+2z+2t $;
\item  $x+y+z $;
\item  $2x+2y+2z $;
\item  $2y+z+2t $;
\item  $y+2z+t $;
\item  $x + 3y + 2z + t $;
\item  $2x+2y+2z+t $;
\item  $3x + 3y+  3z+2t $;
\item  $x+y+z+2t $.
\end{enumerate}

Right hand side of Figure  \ref{figure:flip3},  coordinates $c^\prime_j  (\text{for }j=9,\dots,12)$:
\begin{enumerate}[label=($\arabic*^\prime$)]\setcounter{enumi}{8}
\item  $2x + 3y +z + t $;
\item  $3x+2y+z+t $;
\item  $x+3y+2z+2t $;
\item  $2x+4y+3z+2t $.
\end{enumerate}

The following computations verify \eqref{equation:mu1}, \eqref{equation:mu2}, \eqref{equation:mu3}, \eqref{equation:mu4} in this case.
\begin{gather*}
\text{\eqref{equation:mu1}:}
\max\{(2x+y)+(3x+3y+3z+2t), (x+3y+2z+t)+(x+y+z+t)\}-(2x+2y+2z+t)
\\=\mathrm{max}\{ 5x+4y+3z+2t, 2x+4y+3z+2t  \}-(2x+2y+2z+t)
=3x+2y+z+t.
\end{gather*}
\begin{gather*}
\text{\eqref{equation:mu2}:}
\max\{    (x+3y+2z+t)+(2x+2y+2z), (2y+z+2t)+(3x+3y+3z+2t)   \}-(x+y+z+2t)
\\=\mathrm{max}\{ 3x + 5y + 4z + t, 3x+5y+4z+4t \}-(x+y+z+2t)
=2x+4y+3z+2t.
\end{gather*}
\begin{gather*}
\text{\eqref{equation:mu3}:}
\max\{(x+2y)+(2x+4y+3z+2t), (y+2z+t)+(3x+2y+z+t)\}-(x+3y+2z+t)
\\=\mathrm{max}\{  3x+6y+3z+2t,  3x+3y+3z+2t  \}-(x+3y+2z+t)
=  2x+3y+z+t.
\end{gather*}
\begin{gather*}
\text{\eqref{equation:mu4}:}
\max\{(3x+2y+z+t)+(x+y+z), (2x+4y+3z+2t)+(2x+2y+2z+2t)\}\\-(3x+3y+3z+2t)
\\=\mathrm{max}\{  4x+3y+2z+t, 4x+6y+5z+4t    \}-(3x+3y+3z+2t)
=  x+3y+2z+2t.
\end{gather*}
\end{example*}

\begin{example*}[family $(7)$]
The last group of  cases are given by schematics (7) and (8) of Figure \ref{figure:9cases}.  We verify case (7) here.  The other case is similar.  We compute the $c_j$'s and $c_j^\prime$'s via Figure \ref{figure:triangle}; see also Section \ref{sssec:definitionofthetropicalwebcoordinates}.  

Note that, unlike for the previous two examples, in this case there are two possibilities:  $z \geq t$ and $z \leq t$.  In Figure \ref{figure:flip7}, we display the case when $z \geq t$ (the $z \leq t$ case is similar).  

As for the previous example, it is not immediately obvious that the schematic appearing on the right hand side of Figure \ref{figure:flip7} displays  the correct good position.  We again verify this by explicitly drawing the flipped bigon, as shown in Figure \ref{fig:case7flip}.     

The schematic diagram of the web in good position restricted to the flipped bigon in the right hand side of Figure \ref{figure:flip7} is  shown in the left hand side of Figure \ref{figure:flip7l}.  It is an enjoyable exercise to check that this bigon schematic agrees with the web example  schematically shown in Figure \ref{fig:case7flip}.  

Another guiding example showing the web in good position after the flip (without using schematics), is provided in the right hand side of Figure \ref{figure:flip7l}.

We demonstrate the calculation when $z \geq t$ (the case $z \leq t$ is similar).  

Left hand side of Figure \ref{figure:flip7},  coordinates $c_j   (\text{for }j=1,2,\dots,12)$:
\begin{enumerate}[label=(\arabic*)]
\item  $2x + t $;
\item  $x + 2t $;
\item  $2y+z $;
\item  $y+2z $;
\item  $2x + 2y + 2t $;
\item  $x + y + t $;
\item  $2x + 2y + 2z $;
\item  $x+y+z $;
\item  $3x + y + z + t $;
\item  $2x + y + z + 2t $;
\item  $2x+3y+2z+2t $;
\item  $x + 2y + 2z + t $.
\end{enumerate}

Right hand side of Figure  \ref{figure:flip7},  coordinates $c^\prime_j  (\text{for }j=9,\dots,12)$:
\begin{enumerate}[label=($\arabic*^\prime$)]\setcounter{enumi}{8}
\item  $2x + 2y  + z + t $;
\item  $x + 2y + z + 2t $;
\item  $x + y + 2z - t $;
\item  $3x + 3y + 2z  + t $.
\end{enumerate}

The following computations verify  \eqref{equation:mu1}, \eqref{equation:mu2}, \eqref{equation:mu3}, \eqref{equation:mu4} in this case.  Note that the last equation uses the assumption $z \geq t$.  
\begin{gather*}
\text{\eqref{equation:mu1}:}
\max\{     (x+2t)+(2x+3y+2z+2t),    (3x+y+z+t)+(2y+z)
\}-(2x+y+z+2t)
\\=\mathrm{max}\{     3x+3y+2z+4t, 3x+3y+2z+t
\}-(2x+y+z+2t)
=x+2y+z+2t.
\end{gather*}
\begin{gather*}
\text{\eqref{equation:mu2}:}
\max\{(3x+y+z+t)+(x+y+t), (2x+2y+2z)+(2x+3y+2z+2t)
\}-(x+2y+2z+t)
\\=\mathrm{max}\{ 4x+2y+z+2t,   4x+5y+4z+2t
\}-(x+2y+2z+t)
=3x+3y+2z+t.
\end{gather*}
\begin{gather*}
\text{\eqref{equation:mu3}:}
\max\{(2x+t)+(3x+3y+2z+t), (x+y+z)+(x+2y+z+2t)\}-(3x+y+z+t)
\\=\mathrm{max}\{  5x+3y+2z+2t,   2x+3y+2z+2t
\}-(3x+y+z+t)
=2x+2y+z+t.
\end{gather*}
\begin{gather*}
\text{\eqref{equation:mu4}:}
\max\{    (x+2y+z+2t)+(2x+2y+2t),   (3x+3y+2z+t)+(y+2z)
\}-(2x+3y+2z+2t)
\\=\mathrm{max}\{   3x+4y+z+4t,     3x+4y+4z+t
\}-(2x+3y+2z+2t)
\\=(3x+4y+4z+t)-(2x+3y+2z+2t)
=x+y+2z-t.
\end{gather*}
\end{example*}

\begin{figure}[t]
\includegraphics[scale=0.8]{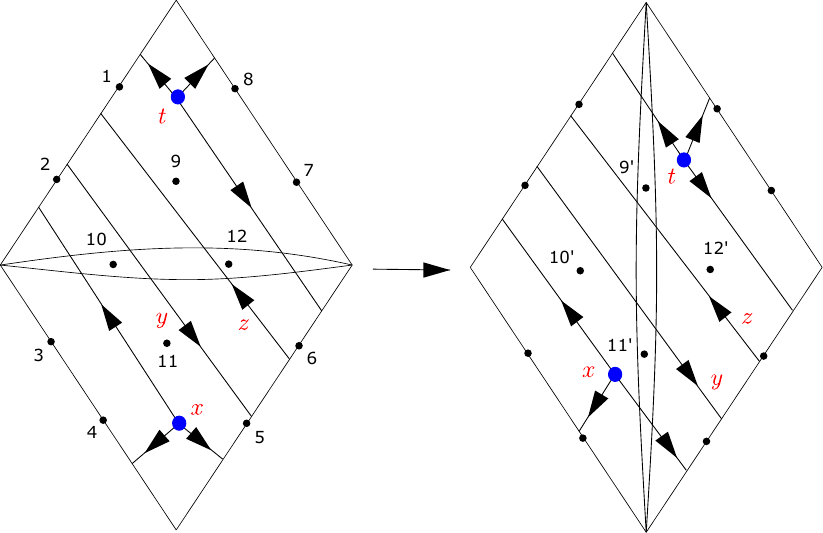}
\caption{     Family (1).}
\label{figure:flip1}
\end{figure}

\begin{figure}[t]
\includegraphics[scale=0.8]{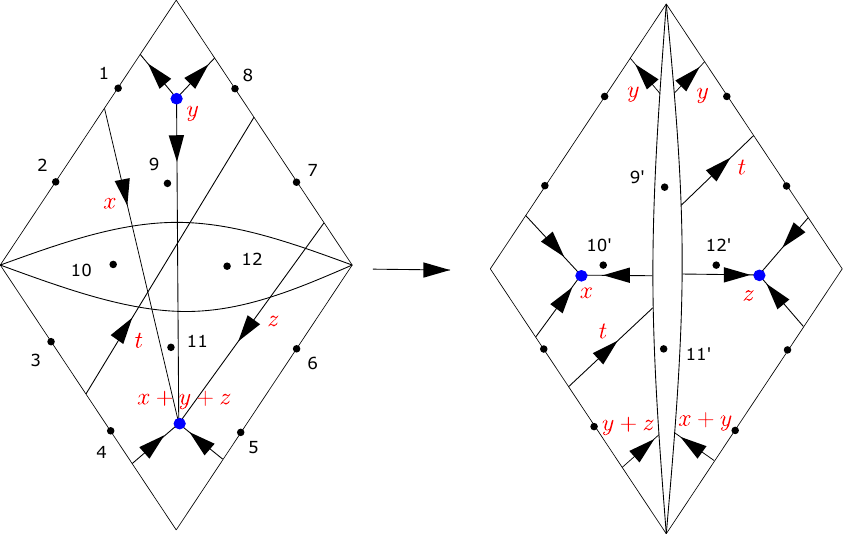}
\caption{     Family (3).}
\label{figure:flip3}
\end{figure}

\begin{figure}[t]
\includegraphics[scale=0.8]{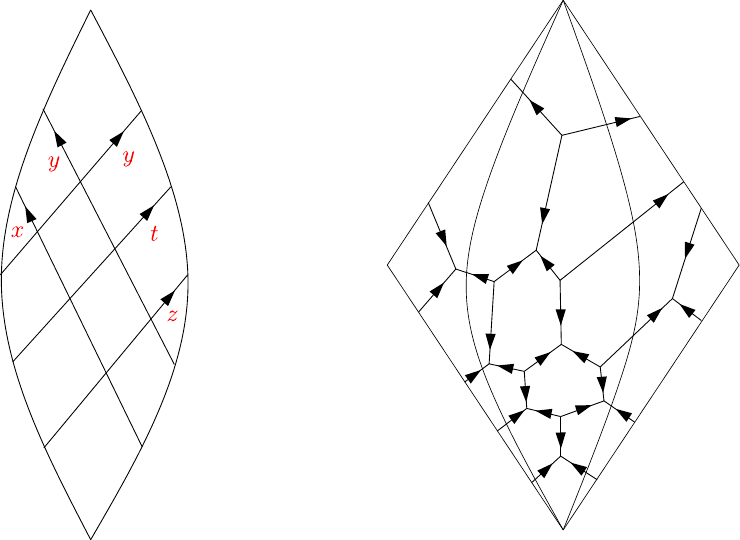}
\caption{     Family (3).  Left: Bigon schematic after the flip. Right: An example of the web in good position after the flip:  $x=y=z=t=1$.}
\label{figure:flip3l}
\end{figure}  

\begin{figure}[t]
\includegraphics[height=.8\textheight]{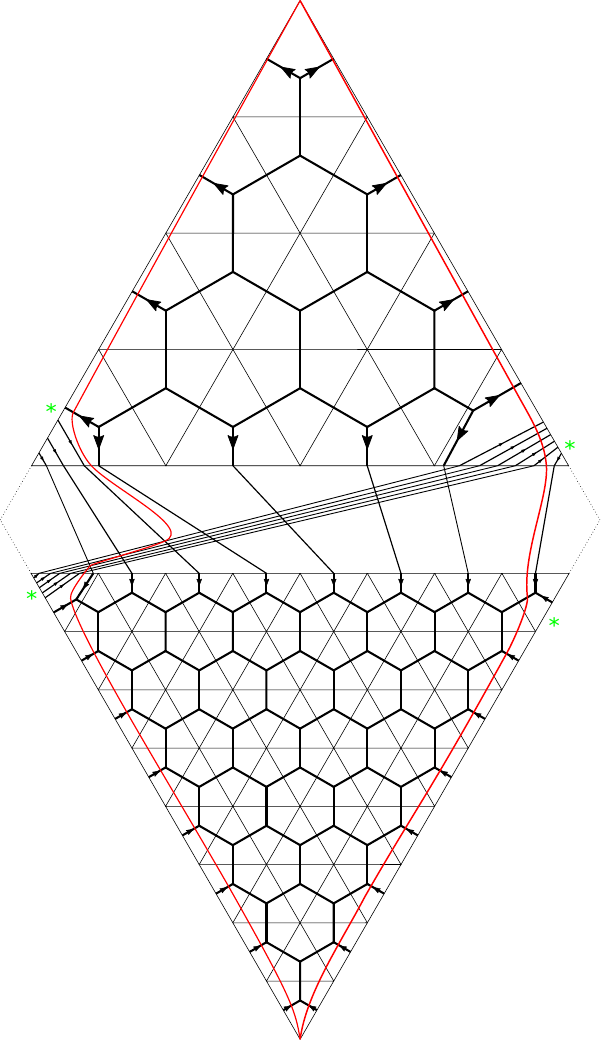}
\caption{     Family (3).  Bigon drawing procedure in the example $x=3, y=4, z=1, t=5$.  The web represented by the schematic is in good position with respect to the red bigon.  The green asterisks separate the honeycombs from the corner arcs in the flipped triangulation.  Compare Figures \ref{figure:flip3} and \ref{figure:flip3l}.  }
\label{fig:case3flip}
\end{figure}

\begin{figure}[t]
\includegraphics[scale=0.8]{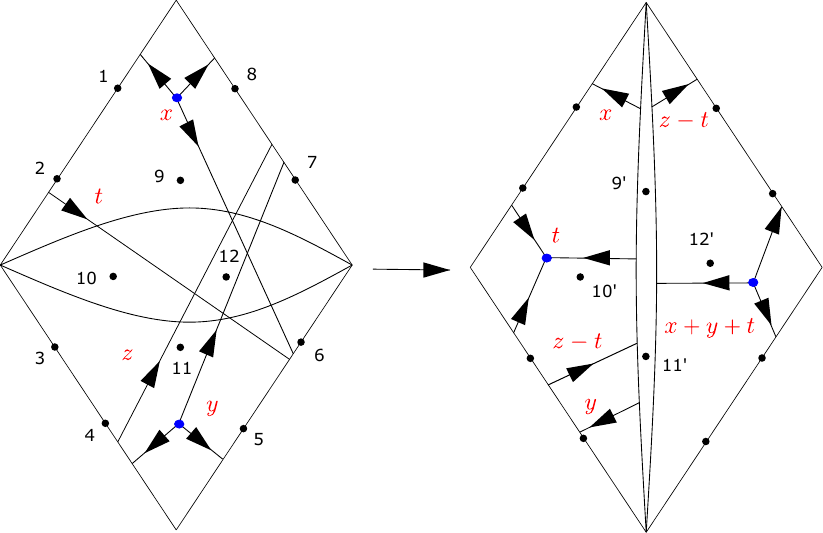}
\caption{     Family (7), shown when $z \geq t$.}
\label{figure:flip7}
\end{figure}

\begin{figure}[t]
\includegraphics[scale=0.8]{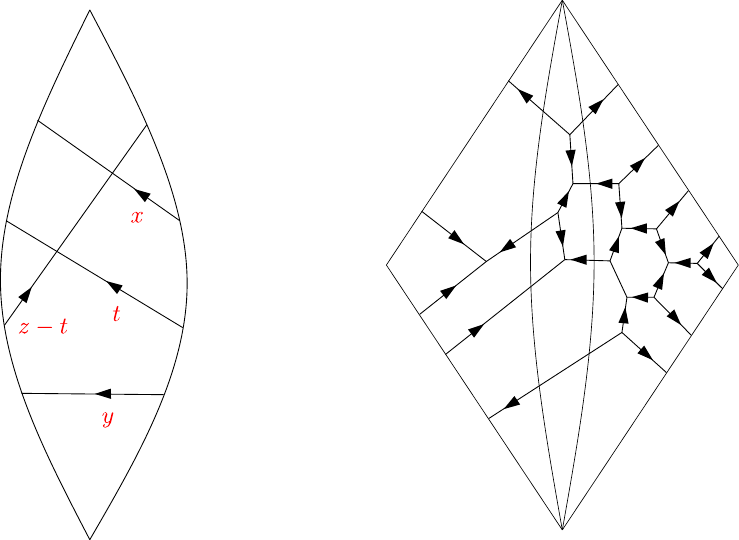}
\caption{     Family (7), $z \geq t$.  Left: Bigon schematic after the flip. Right: An example of the web in good position after the flip:  $x=y=t=1$ and $z=2$.}
\label{figure:flip7l}
\end{figure}

\begin{figure}[t]
\includegraphics[height=.8\textheight]{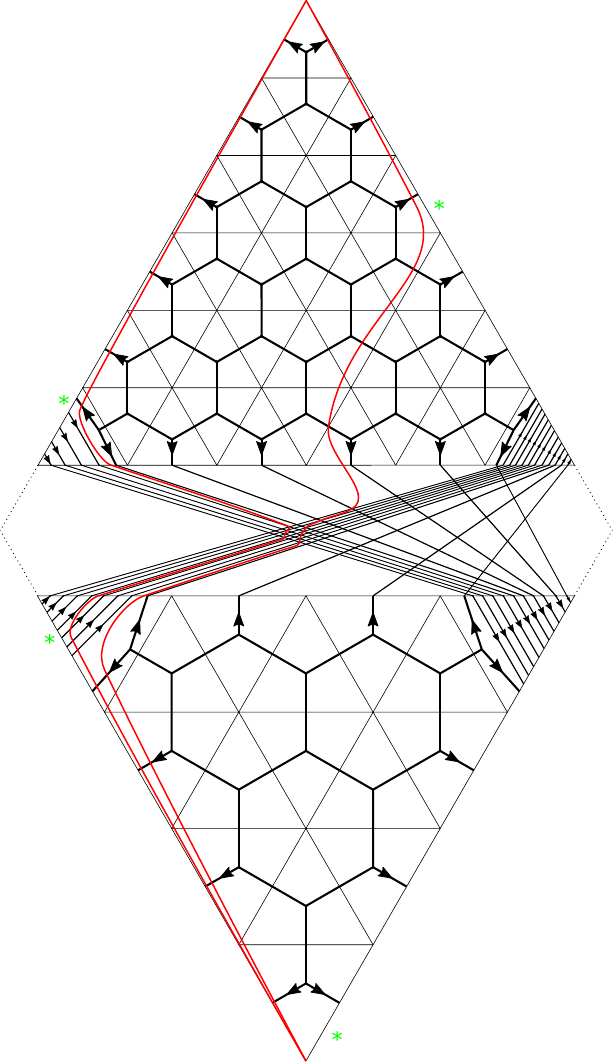}
\caption{     Family (7).  Bigon drawing procedure in the example $x=6, y=4, z=7, t=4$.  The web represented by the schematic is in good position with respect to the red bigon.  The green asterisks separate the honeycombs from the corner arcs in the flipped triangulation.  Compare Figures \ref{figure:flip7} and \ref{figure:flip7l}.  }
\label{fig:case7flip}
\end{figure}

	\section{Pentagon relation for \texorpdfstring{$\mathrm{SL}_3$}{SL3}}
	\label{app:first-appendix}
	
	The Mathematica notebook presented in Figures \ref{fig:appendixApdf1} and \ref{fig:appendixApdf2} below provides the expression for $f_5(x_1, x_2, \dots, x_{17})$ as defined in Example (part 5) from  \S \ref{def:tropical-A-coordinate-cluster-transformation}.  See Remark \ref{rem:seven-identities}.

\begin{figure}[t]
	\includegraphics[height=.95\textheight]{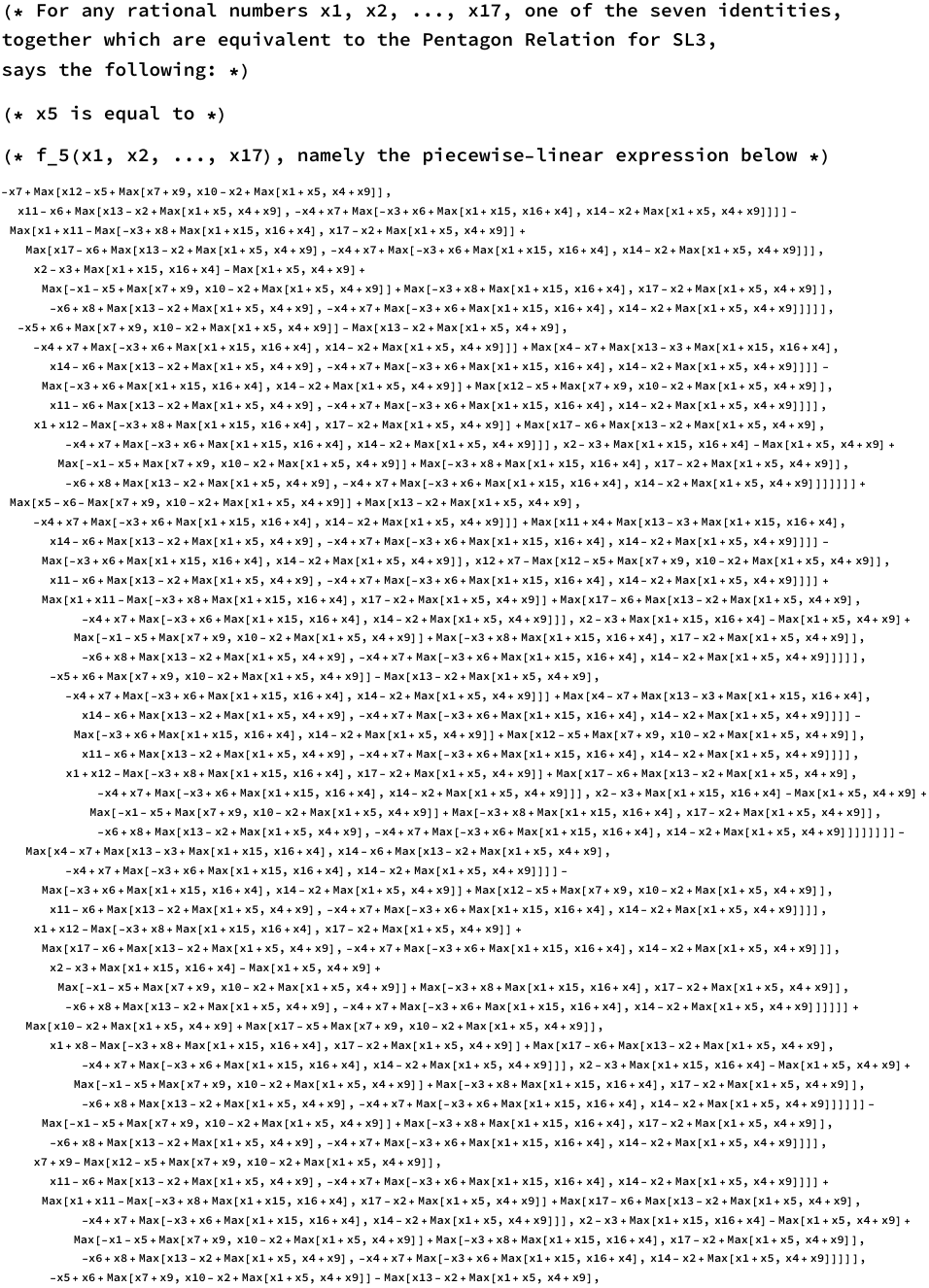}
	\caption{(Page 1 of 2; see also Figure \ref{fig:appendixApdf2}.)}
	\label{fig:appendixApdf1}
\end{figure}

\begin{figure}[t]
	\includegraphics[height=.95\textheight]{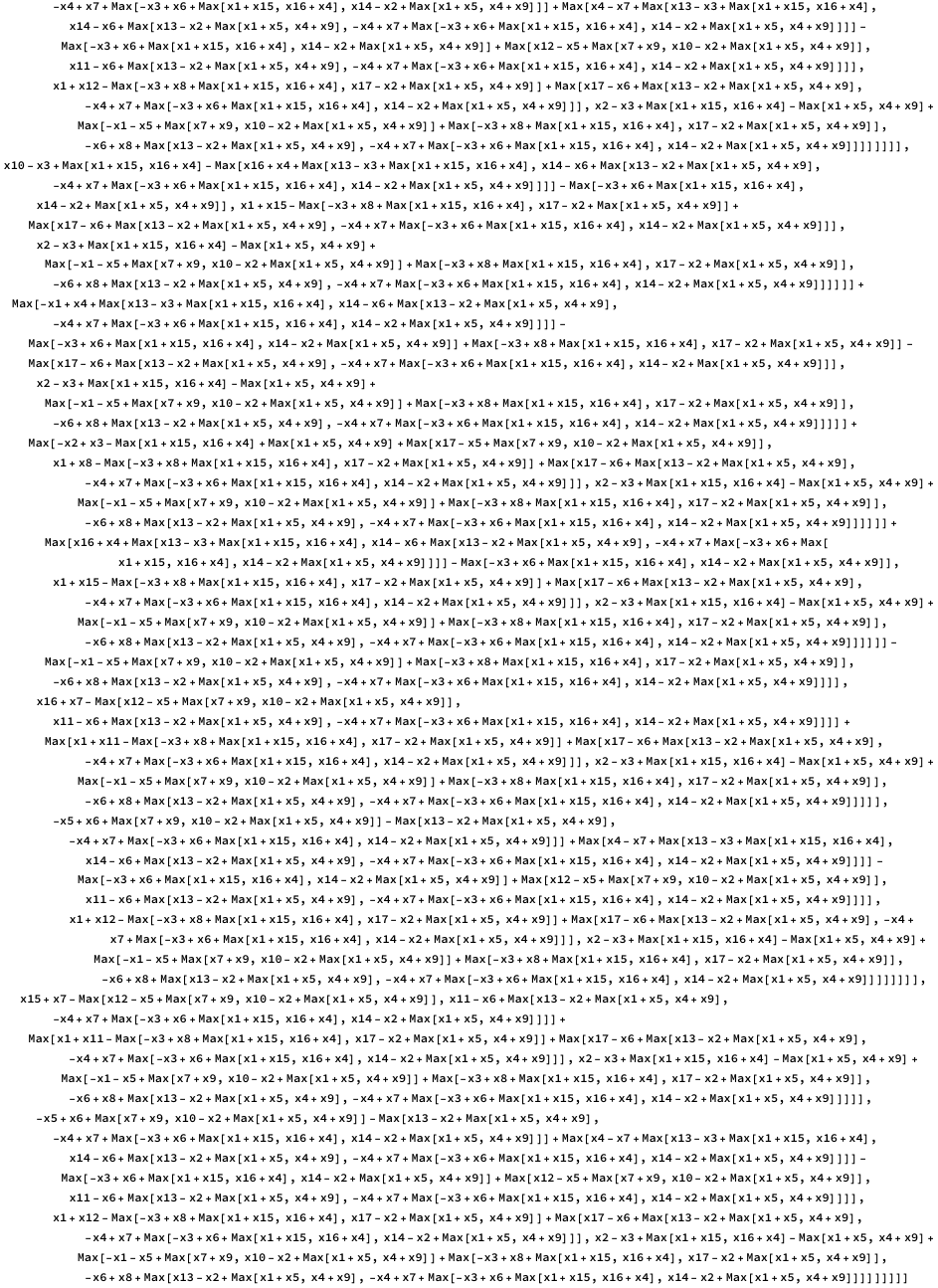}
	\caption{(Page 2 of 2; see also Figure \ref{fig:appendixApdf1}.)}
	\label{fig:appendixApdf2}
\end{figure}

\DeclareEmphSequence{\itshape}
\bibliographystyle{plain}
\bibliography{references.bib}

\end{document}